    \let\usingAmsArtXII\usepackage  
  \def \useHugeSize {}
  \def \numberingIsThrough {}
  \def\germ#1{{\mathfrak{#1}}}
    \def\atSign{@@}
    \def\mathbb{\Bbb}
    \def\mathfrak{\frak}
    \def\mathbf{\bold}
      \def\boldsymbol#1{{\bold #1}}
    \def\mathbit{\boldsymbol}
    \newenvironment{proof}{%
         \@ifnextchar[{%
                       \expandafter\let\expandafter\end@proof
                         \csname endpf*\endcsname
                         \my@proof
                      }{\let\end@proof\endpf\pf}%
        }{\end@proof}
    \def\my@proof[#1]{\@nameuse{pf*}{#1}}
    \def\xrightarrow[#1]#2{@>{#2}>{#1}>}
    \def\xleftarrow[#1]#2{@<{#2}<{#1}<}
    \def\providecommand#1{\def#1}
    \def\emph#1{{\em #1}}
    \def\textbf#1{{\bf #1}}
    \def\mathring{\overset{\,\,{}_\circ}}
    \let\usingAmsArtXII\usepackage
        \DeclareMathAccent{\mathring}{\mathalpha}{operators}{"17}
      \long\def\FAKEendPROOF{\endtrivlist}
      \def\endproof{\qed\endtrivlist}
        \DeclareMathAlphabet{\mathbit}{OML}{cmm}{b}{it}
      \def\atSign{@}
      \def\Sb#1\endSb{_{\substack{#1}}}
      \def\Sp#1\endSp{^{\substack{#1}}}
                \def\mathcal{\cal}
                \def\cyr{\protect\pcyr}
                \def\pcyr{%
                        \def\default@family{UWCyr}%
                        \let\oldSl@\sl
                        \def\sl{\def\default@shape{it}\oldSl@}%
                        \cyracc
                        \language\Russian\family{UWCyr}\selectfont
                }
                \DeclareFontFamily{OT2}{cmr}{\hyphenchar\font45 }
                \DeclareFontShape{OT2}{cmr}{m}{n}{%
                     <5><6><7><8><9><10>gen*wncyr %
                     <10.95><12><14.4><17.28><20.74><24.88> wncyr10 %
                }{}
                \DeclareFontShape{OT2}{cmr}{m}{it}{%
                     <5><6><7><8><9><10> gen * wncyi%
                     <10.95><12><14.4><17.28><20.74><24.88> wncyi10%
                }{}
                \DeclareFontShape{OT2}{cmr}{bx}{n}{%
                     <5><6><7><8><9><10> gen * wncyb%
                     <10.95><12><14.4><17.28><20.74><24.88> wncyb10%
                }{}
                \DeclareFontShape{OT2}{cmr}{m}{sl}{%
                     <-> ssub * cmr/m/it%
                }{}
                \DeclareFontShape{OT2}{cmr}{m}{sc}{%
                     <5><6><7><8><9><10>%
                     <10.95><12><14.4><17.28><20.74><24.88> wncysc10%
                }{}
                \DeclareFontFamily{OT2}{cmss}{\hyphenchar\font45 }
                \DeclareFontShape{OT2}{cmss}{m}{n}{%
                     <8><9><10> gen * wncyss%
                     <10.95><12><14.4><17.28><20.74><24.88> wncyss10%
                }{}
                \def\cyr{\protect\pcyr}
                \def\cyrencodingdefault{OT2}
                \def\pcyr{%
                        \cyracc
                        \let\encodingdefault\cyrencodingdefault
                        \language\Russian\fontencoding{OT2}\selectfont
                }
        \def\theorembodyfont#1{\relax}
          \let\@@th@plain\th@plain
          \def\th@plain{ \@@th@plain \slshape }
        \let\normalshape\relax
     \def\cprime{$'$}
  \def\@sect@my#1#2#3#4#5#6[#7]#8{%
\ifnum #2>\c@secnumdepth
   \let\@svsec\@empty
 \else
   \refstepcounter{#1}%
\edef\@svsec{\ifnum#2<\@m
             \@ifundefined{#1name}{}{\csname #1name\endcsname\ }\fi
\noexpand\rom{\csname the#1\endcsname.}\enspace}\fi
 \@tempskipa #5\relax
 \ifdim \@tempskipa>\z@ 
   \begingroup #6\relax
   \@hangfrom{\hskip #3\relax\@svsec}{\interlinepenalty\@M #8\par}%
   \endgroup
   \if@article\else\csname #1mark\endcsname{%
        \ifnum \c@secnumdepth >#2\relax\csname the#1\endcsname. \fi#7}\fi
\ifnum#2>\@m \else
       \let\@tempf\\ \def\\{\protect\\}\addcontentsline{toc}{#1}%
{\ifnum #2>\c@secnumdepth \else
             \protect\numberline{%
               \ifnum#2<\@m
               \@ifundefined{#1name}{}{\csname #1name\endcsname\ }\fi
               \csname the#1\endcsname.}\fi
           #8}\let\\\@tempf
     \fi
 \else
  \def\@svsechd{#6\hskip #3\@svsec
    \@ifnotempty{#8}{\ignorespaces#8\unskip
       \ifnum\spacefactor<1001.\fi}%
        \ifnum#2>\@m \else
          \let\@tempf\\ \def\\{\protect\\}\addcontentsline{toc}{#1}%
            {\ifnum #2>\c@secnumdepth \else
              \protect\numberline{%
                \ifnum#2<\@m
                \@ifundefined{#1name}{}{\csname #1name\endcsname\ }\fi
                \csname the#1\endcsname.}\fi
             #8}\let\\\@tempf\fi}%
 \fi
\@xsect{#5}}
  \let\@sect\@sect@my             
  \def\th@remark@my{\theorempreskipamount6\p@\@plus6\p@
    \theorempostskipamount\theorempreskipamount
    \def\theorem@headerfont{\it}\normalshape}
    \let\th@remark\th@remark@my
    \let\o@@remark\th@remark
      \def\th@remark{\o@@remark
    \ifdim\theorempostskipamount < 2pt\relax
      \theorempostskipamount\theorempreskipamount
         \multiply\theorempostskipamount\tw@
         \divide\theorempostskipamount\thr@@
    \fi
      }
\let\myLabel\@gobble
\def\labelsONmargin{\@mparswitchfalse\def\myLabel##1{\@bsphack\marginpar
                                  {\normalshape\tiny\rm Label ##1}\@esphack}}
  \def\url#1{{\tt #1}}%
\def\cyracc{\def\u##1{
                \if \i##1\char"1A%
                \else \if I##1\char"12%
                \else \accent"24 ##1\fi\fi }%
\def\"##1{\if e##1{\char"1B}%
                \else \if E##1{\char"13}%
                \else \accent"7F ##1\fi\fi }%
\def\9##1{\if##1z\char"19
\else\if##1Z\char"11
\else\if##1E\char"03
\else\if##1e\char"0B
\else\if##1u\char"18
\else\if##1U\char"10
\else\if##1A\char"17
\else\if##1a\char"1F
\else\if##1p\char"7E
\else\if##1P\char"5E
\else\if##1Q\char"5F
\else\if##1q\char"7F
\else\if##1i\char"1A
\else\if##1I\char"12
\else\if##1N\char"7D
\fi
\fi
\fi
\fi
\fi
\fi
\fi
\fi
\fi
\fi
\fi
\fi
\fi
\fi
\fi
}%
\def\cydot{{\kern0pt}}}%
\def\cydot{$\cdot$}
        \def\Russian{0\relax
    \message{Don't know the hyphenation rules for Russian^^J
                        Please do INITeX with `input  russhyph' in the
                        command line}%
                \gdef\Russian{0\relax}%
        }
  \def\@putname#1#2#3#4{\def\@@ref{#3}\let\old@bf\bf
        \def\bf##1{\old@bf\if?\noexpand##1?{#4}\else##1\fi}%
    #1{#2}%
        \let\bf\old@bf}
  \def\@putname#1#2#3#4{\def\@@ref{#3}\let\old@bf\bf    
    \let\old@reset@font\reset@font          
        \def\bf##1{\old@bf\if?\noexpand##1?{#4}\else##1\fi}%
    \def\reset@font##1##2{\old@reset@font##1\if?\noexpand##2?{#4}\else##2\fi}#1{#2}%
        \let\bf\old@bf\let\reset@font\old@reset@font}
\let\my@ref=\ref
\def\ref#1{\@putname\my@ref{#1}{#1}{\tiny\rm\@@ref}}
\let\my@pageref=\pageref
\def\pageref#1{\@putname\my@pageref{#1}{#1}{\tiny\rm\@@ref}}
\let\my@cite=\cite
\def\cite#1{\@putname\my@cite{#1}{\@citeb}{\tiny\rm\@@ref}}
\theoremstyle{plain} 
\numberwithin{equation}{section}
\theoremstyle{definition}
\newtheorem{definition}{Definition}[section]
\newtheorem{definition}{Definition}
\newtheorem{example}[definition]{Example}
\theoremstyle{remark}
\newtheorem{remark}[definition]{Remark} 
\newtheorem{note}{Note}[section] 
\newtheorem{summary}{Summary}[section] 
\theoremstyle{plain} 
\newtheorem{theorem}[definition]{Theorem}
\newtheorem{lemma}[definition]{Lemma}
\newtheorem{corollary}[definition]{Corollary}
\newtheorem{proposition}[definition]{Proposition}
\begin{document}
\bibliographystyle{amsplain}

\ifx\useHugeSize\undefined
\else
\Huge
\fi

\relax

\title{ Cuspidal representations of $\mathfrak{sl}\left(n+1\right) $ }

\author{ Dimitar Grantcharov and Vera Serganova }

\date{ \today }

\address{ Dept. of Mathematics, University of Texas at Arlington,
Arlington, TX 76019 } \email{grandim\atSign{}uta.edu}

\address{ Dept. of Mathematics, University of California at Berkeley,
Berkeley, CA 94720 } \email{serganov\atSign{}math.berkeley.edu}

\maketitle

\begin{abstract}
In this paper we study the subcategory of cuspidal modules of the
category of weight modules over the Lie algebra
$\mathfrak{sl}(n+1)$. Our main result is a complete classification
and an explicit description of indecomposable cuspidal modules.\\

\noindent 2000 MSC: 17B10\\

\noindent \keywords{Keywords and phrases}: Lie algebra,
indecomposable representations, quiver, weight modules
\end{abstract}

\section{Introduction}
The category of weight representations has attracted considerable
mathematical attention in the last thirty years. General weight
modules have been extensively studied by  G. Benkart, D. Britten,
S. Fernando, V. Futorny, A. Joseph, F. Lemire, and others (see
e.g. \cite{BBL}, \cite{BL1}, \cite{BL2}, \cite{Fe}, \cite{Fu}).
Following their works, in 2000 O. Mathieu, \cite{M}, established
the classification of all simple weight modules with
finite-dimensional weight spaces over reductive Lie algebras.
An important role in this classification plays the category
${\mathcal C}$ of all cuspidal modules, i.e. weight modules on
which all root vectors of the Lie algebra act bijectively. This
role is due to the parabolic induction theorem of Fernando and
Futorny. The theorem states that every simple weight module $M$ with
finite-dimensional weight spaces over a reductive
finite-dimensional Lie algebra ${\mathfrak g}$ is isomorphic to the unique simple
quotient of a parabolically induced
generalized Verma module $U(\mathfrak g) \otimes_{U({\mathfrak p})}S$, where $S$ is
a cuspidal module over the Levi component of ${\mathfrak p}$. The
Fernando--Futorny result naturally initiates the study of the
category ${\mathcal C}$ as a necessary step towards the study of
all weight modules with finite weight multiplicities.

Let $\mathfrak g$ be a simple complex finite-dimensional Lie
algebra. A result of Fernando implies that ${\mathcal C}$ is nontrivial
 for Lie algebras $\mathfrak g$
of type $A$ and $C$ only. In the symplectic case the category
$\mathcal C$ is semisimple (see \cite{BKLM}). In the present paper
we focus on the  remaining case, i.e.  $\mathfrak{g} =
\mathfrak{sl} (n+1)$. The main result is a classification of the
indecomposable modules in this case.

There are two major differences between the two algebra types that
make the study of the cuspidal modules much harder in the $A$-type
case. First, in the $\mathfrak{sl}$-case, the translation functor
does not provide equivalence of the subcategories ${\mathcal
C}^{\chi}$ of ${\mathcal C}$ for all central characters $\chi$.
This leads to a consideration of three essentially different
central character types: nonintegral, regular integral and
singular. Second, in many cases a simple cuspidal module has a
nontrivial self-extension. Because of this the category of
cuspidal modules does not have projective and injective objects,
and one has to use a certain completion $\overline{\mathcal C}$ of ${\mathcal C}$. The most interesting
central character type is, without a doubt, the regular integral
one, because in this case there are $n$ up to isomorphism simple
objects in every block  of ${\mathcal C}^{\chi}$. A convenient way to approach this case is
to use methods and results from the quiver theory.
 Our main result can be formulated as follows

\begin{theorem}
(a) Every singular and nonintegral block of the category ${\mathcal C}$ is
equivalent to the category of finite-dimensional modules over the
algebra of power series in one variable.

(b) Every regular integral block of ${\mathcal C}$  is equivalent to the category
of locally nilpotent modules over the quiver
$$
\xymatrix{\bullet \ar@(ul,ur)[]|{y} \ar@<0.5ex>[r]^x & \bullet
\ar@<0.5ex>[l]^x \ar@<0.5ex>[r]^y & \bullet \ar@<0.5ex>[l]^y
\ar@<0.5ex>[r]^x & \ar@<0.5ex>[l]^x... \ar@<0.5ex>[r] & \bullet
\ar@<0.5ex>[l] \ar@(ul,ur)[]|{}}
$$
with relations $xy=yx=0$.
\end{theorem}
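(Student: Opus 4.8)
The plan is to reduce everything to a computation of the $\operatorname{Ext}$-quiver-with-relations of each block, and then to reconstruct the block from that data using the completion $\overline{\mathcal C}$. Throughout I would work with Mathieu's description of cuspidal modules as twisted localizations: every simple object of $\mathcal C$ has the form $\Phi(L)$, where $\Phi$ is a twisted localization functor (localize at a product of root vectors, then twist by a complex parameter) and $L$ is a simple highest weight module; since $\Phi$ is exact and is an equivalence onto its essential image, it transports $\operatorname{Ext}$-groups between $\mathcal C$ and a localization of a block of category $\mathcal O$, where they are accessible. From Mathieu's classification one reads off the simple objects of a block: a single simple $L_\chi$ when $\chi$ is non-integral or singular, and exactly the $n$ simples $L_1,\dots,L_n$ of the introduction when $\chi$ is regular integral, which I would label so that $L_i$ and $L_{i+1}$ are obtained from one another by crossing one singular hyperplane in $\mathfrak h^{*}$. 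The reconstruction step is then a formal matter: since $\mathcal C$ has no projectives, one forms in $\overline{\mathcal C}$ the pro-projective cover $\widehat P$ of the sum of the simples as an inverse limit of finite-length modules, shows that $\operatorname{End}(\widehat P)$ is the completed quiver algebra modulo its relations, and checks that $\operatorname{Hom}(\widehat P,-)$ is the desired equivalence.

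For part (a) the heart of the matter is $\operatorname{Ext}^1_{\mathcal C^{\chi}}(L_\chi,L_\chi)\cong\mathbb C$. The non-trivial self-extension is the first-order deformation obtained by differentiating the natural one-parameter family of twisted localizations through $L_\chi$ --- the localized Verma modules with highest weight moving along the line on which the weight support stays fixed; this family is a smooth curve, so its deformation space at $L_\chi$ is one-dimensional, and at a singular or non-integral point the neighbouring members are non-isomorphic (distinct central characters), so the class is non-zero, while a support argument rules out anything else. I would then show that every indecomposable object of $\mathcal C^{\chi}$ is uniserial with all composition factors $\cong L_\chi$ --- equivalently, that this self-extension iterates and there are no further indecomposables --- by transporting through $\Phi$ and recognising the corresponding $\mathcal O$-modules as iterated self-extensions of a single simple. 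Hence $\mathcal C^{\chi}$ is the category of finite-length torsion modules over a discrete valuation ring (the length being bounded by the finite weight multiplicities), and on the completion the pro-object $\widehat P=\varprojlim P_k$ has $\operatorname{End}(\widehat P)\cong\mathbb C[[t]]$, so $\operatorname{Hom}(\widehat P,-)$ identifies $\mathcal C^{\chi}$ with finite-dimensional $\mathbb C[[t]]$-modules.

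For part (b) one must compute the $\operatorname{Ext}$-quiver of the regular integral block. Using the twisted-localization model of $L_1,\dots,L_n$ and explicit intertwiners built from ``multiplication by $f_\alpha^{\pm 1}$'' between adjacent members, I would show that $\operatorname{Ext}^1(L_i,L_{i+1})$ and $\operatorname{Ext}^1(L_{i+1},L_i)$ are each one-dimensional, that each end vertex carries a one-dimensional self-extension, and that all remaining $\operatorname{Ext}^1$ vanish; naming the two resulting families of arrows $x$ and $y$, this is exactly the displayed quiver. The relations then come from the Yoneda product: composing an $x$-arrow with a neighbouring $y$-arrow lands in an $\operatorname{Ext}^2$ that one checks is zero, which gives $xy=yx=0$; that these generate all the relations I would prove by a graded-dimension count, matching the pieces of the path algebra modulo $(xy,yx)$ at each idempotent against the Loewy layers of the modules produced by $\Phi$ (equivalently, against a block of a suitable parabolic category $\mathcal O$). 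The reconstruction proceeds as in (a), now with $\operatorname{End}(\widehat P)$ the completed quiver algebra with relations, so $\operatorname{Hom}(\widehat P,-)$ identifies the block with the (locally nilpotent) representations of the quiver.

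The principal obstacle is the $\operatorname{Ext}$-quiver-with-relations computation in part (b): pinning down the arrows exactly --- in particular the loops at the two ends and the absence of loops at interior vertices --- and proving that $xy=yx=0$ are \emph{all} the relations rather than merely a subset of them. A secondary difficulty is the bookkeeping around $\overline{\mathcal C}$: since $\mathcal C$ has neither enough projectives nor enough injectives, one must verify that the reconstruction functor is well defined on objects carrying self-extensions and that it is essentially surjective onto precisely the stated module category, and not onto something larger.
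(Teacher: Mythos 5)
Your high-level architecture --- reduce via translation functors, compute the Ext-quiver of each block, then rebuild the block from a (co)generator in a completion $\overline{\mathcal C}$ --- matches the paper's. But a key step in your plan rests on a claim that is false, and it would derail the Ext computation that everything else depends on.

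You write that since the twisted localization $\Phi$ ``is an equivalence onto its essential image, it transports $\operatorname{Ext}$-groups between $\mathcal C$ and a localization of a block of category $\mathcal O$.'' Being fully faithful onto the essential image only transports Ext if the essential image is extension-closed inside $\mathcal C$ --- and it is not. Concretely: in $\mathcal O$ (or any reasonable localization of it) a simple module has vanishing self-$\operatorname{Ext}^1$, yet the paper's Theorem 4.8 shows that $\operatorname{Ext}^1_{\mathfrak g,\mathfrak h}(\mathcal F_\mu,\mathcal F_\mu)=\mathbb C$ in the cuspidal category, with the non-split self-extension realized by the ``$\log$''-module $\mathcal F_\mu\oplus u\,\mathcal F_\mu$, $u=\log(t_0\cdots t_n)$ (Lemma 4.3). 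That extension is not a twisted localization of anything in $\mathcal O$: the $\log$-term lives outside the image of $\Phi$. So neither the assertion that Ext passes along $\Phi$, nor your follow-up that one can ``recognise the corresponding $\mathcal O$-modules as iterated self-extensions of a single simple,'' can be made to work. The paper in fact needs the whole of its Section 4 (relative Lie algebra cohomology $H^1(\mathfrak g,\mathfrak h;\operatorname{End}(\mathcal F_\mu))$, plus Lemmas 4.4--4.7 to pin the dimension down to exactly one) to establish the singular/nonintegral case, and for the regular integral block (Section 6) it sidesteps the $\operatorname{Ext}^1$/$\operatorname{Ext}^2$ computation you propose altogether: it constructs the indecomposable injectives $I^k=(\bar\Omega^{k-1}(\mu-\varepsilon_0)\otimes V)^{\chi_0}$ by applying a translation functor to the singular-block injectives, and reads the quiver with relations directly off $\operatorname{End}_{\mathfrak g}(I^1\oplus\cdots\oplus I^n)$ (Lemma 6.11 and Theorem 6.13).

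Two secondary remarks. First, you propose to work with projective covers and inverse limits; the paper deliberately builds the reconstruction theorem (Theorem 5.2) around injective hulls, direct limits, and the duality functor ${}^\vee$, precisely to avoid introducing a topology on the pro-completion and to keep the central-character decomposition of $\overline{\mathcal C}$ intact. You can probably make projectives work, but you would owe the reader a verification that the completion behaves. Second, in part (a) your argument that ``a support argument rules out anything else'' in $\operatorname{Ext}^1(L_\chi,L_\chi)$ is not a proof --- the support constraint only tells you the block is the whole source of extensions, not that the self-extension space is at most one-dimensional; that bound is exactly the hard content of Lemma 4.7.
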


The above quiver is special biserial and hence tame (see \cite{Er}). It was
originally studied by
Gelfand--Ponomarev in \cite{GP} in order to classify
indecomposable representations of the Lorentz group.

Another important aspect of the category ${\mathcal C}$ is the
geometric realization of its objects. The simple objects in
${\mathcal C}$, as well as their injective hulls in
${\bar{\mathcal C}}$, can be realized with the
aid of sections of vector bundles on the projective space
${\mathbb P}^n$. These realizations are especially helpful for the
explicit calculations of extensions of simple modules in the category ${\mathcal C}$.

The results in the present paper make a first step towards
the study of other interesting category: the category ${\mathcal
B}$ of all weight $\mathfrak g$-modules with uniformly bounded
weight multiplicities. The study of the category ${\mathcal B}$
was initiated in \cite{GS}, where the case of ${\mathfrak g} =
{\mathfrak s} {\mathfrak p} (2n)$ was completely solved. The
remaining case, i.e. ${\mathfrak g} = \mathfrak{sl}(n+1)$, will be
treated in a future work.

Another natural problem is to extend the study of cuspidal modules to the
 category of generalized weight modules, i.e. modules that decompose as direct sums of  generalized weight spaces on which $h - \lambda(h)$ act locally finitely for every weight $\lambda$ and $h$ in  the Cartan subalgebra
 of $\mathfrak g$. The category of generalized cuspidal modules have the
 same simple objects as the category of simple weight modules, but the
 indecomposables are different. In fact, it follows from Theorem
 \ref{genthm} that the category of generalized cuspidal modules is wild,
 so it should be studied using other methods. On the other hand,
the geometric constructions we obtain in this paper can be
easily generalized and lead to examples of generalized cuspidal modules
that are generalized weight modules but not weight modules (see Remark \ref{generalized}).

The organization of the paper is as follows. In Section 2
we introduce the main notions and with the aid of the translation
functor, reduce the general central character case to a specific
set of central characters (namely, those that correspond to
multiples of $\varepsilon_0$). In Section 3 we prove some
preparatory statements and, in particular, consider
the case of ${\mathfrak s} {\mathfrak l} (2)$. Let us mention
that the classification of the indecomposable
weight modules over ${\mathfrak s} {\mathfrak l} (2)$ with scalar
action of Casimir is usually attributed to Gabriel \cite{Gab1} and can
be found in \cite{Dix},  \S 7.8.16. The general case was treated in
\cite{Dr}.  In Section 4 we calculate extensions between simple
cuspidal modules with non-integral and singular central
characters. In Section 5 we extend the category $\mathcal C$ of
cuspidal modules to
$\bar{\mathcal C}$ by adding injective limits, and construct injective
objects in blocks with non-integral and singular central
characters. The case of a regular integral central character is
treated in Section 6, where we use translation functors (\cite{BG})
from singular to regular blocks to construct injective modules in the
regular case. In the last section we
provide an explicit realization of all indecomposable cuspidal
modules.
We expect that the description of the indecomposables of ${\mathcal C}$ will be useful for studying other categories of weight modules, including the category ${\mathcal B}$ of bounded modules.

\smallskip
\noindent{\bf Acknowledgements.} We would like to thank K.
Erdmann, S. Koenig, and C. Ringel for providing us with very
useful information about certain quivers. We are thankful to I.
Zakharevich for reading the first version of the manuscript and
providing valuable comments. Special thanks are due to the referee for
the very careful reading of the manuscript and providing numerous suggestions that substantially improved the paper.

\section{Cuspidal representations }

In this paper the ground field is $\mathbb C$, and ${\mathfrak
g}=\mathfrak{sl}\left(n+1\right) $. All tensor
products are assumed to be over $\mathbb C$ unless otherwise
stated. We fix a Cartan subalgebra $ {\mathfrak h}$ of ${\mathfrak
g} $ and denote by $(\, , )$ the Killing form on
$\mathfrak g$. The induced form on ${\mathfrak h}^*$ will be
denoted by $(\, ,)$ as well. For our convenience, we fix a
 basis $ \{ \varepsilon_{0},\dots ,\varepsilon_{n} \}$ in $ {\mathbb
C}^{n+1} $, such that $ {\mathfrak h}^{*} $ is identified with the
subspace of $ {\mathbb C}^{n+1} $ spanned by the simple roots $
\alpha_{1}=\varepsilon_{0}-\varepsilon_{1},\dots
,\alpha_{n}=\varepsilon_{n-1}-\varepsilon_{n} $.  By $ \gamma $ we
denote the projection $ {\mathbb C}^{n+1} \to
{\mathfrak h}^{*} $ with one-dimensional kernel
$\mathbb C(\varepsilon_0+\dots+\varepsilon_n)$.
By $ Q \subset {\mathfrak h}^{*}$ and $
\Lambda \subset {\mathfrak h}^{*}$ we denote the root lattice and
the weight lattice of $ {\mathfrak g} $, respectively. The basis $
\left\{\omega_{1},\dots ,\omega_{n}\right\} $ of $ \Lambda $
consists of the fundamental weights $\omega_i :=
\gamma(\varepsilon_0+...+\varepsilon_{i-1})$ for $i=1,...,n$. Let
$U:=U(\mathfrak{g})$ be the universal enveloping algebra of
$\mathfrak g$. Denote by $Z:=Z(U(\mathfrak{g}))$ the center of $U$
and let $Z':=\mbox{Hom} (Z, \mathbb C)$ be the set of all central
chatacters (here $\mbox{Hom}$ stands for
homomorphisms of unital $\mathbb C$-algebras). By $\chi_{\lambda}\in Z'$ we
denote the central character of the irreducible highest weight
module with highest weight $\lambda$. Recall that
$\chi_{\lambda}=\chi_{\mu}$ iff $\lambda+\rho=w(\mu+\rho)$ for
some element $w$ of the Weyl group $W$, where, as usual, $\rho$
denotes the half-sum of positive roots. We say that
$\chi=\chi_{\lambda}$ is {\it regular} if the stabilizer of
$\lambda+\rho$ in $W$ is trivial (otherwise $\chi$ is called {\it
singular}), and that $\chi=\chi_{\lambda}$ is {\it integral} if
$\lambda\in\Lambda$. We say that two weights $\lambda$ and $\nu\in
\lambda+\Lambda$ are in the same Weyl chamber if for any positive
root $\alpha$ such that $(\lambda,\alpha)\in \mathbb Z$,
$(\lambda,\alpha)\in \mathbb Z_{\geq 0}$ if and only if
$(\mu,\alpha)\in \mathbb Z_{\geq
  0}$. Finally recall that $\lambda$ is {\it dominant integral} if
$(\lambda,\alpha)\in \mathbb Z_{\geq 0}$ for all positive roots $\alpha$.

A $ {\mathfrak g} $-module $ M $ is a {\it generalized weight module} if $
M=\bigoplus_{\mu \in {\mathfrak h}^*} M^{(\mu)} $, where
\begin{equation}
M^{(\mu)}:=\left\{m\in M \mid \left( h - \mu \, {\rm Id} \right)^N m = 0, \mbox{ for
every } h\in{\mathfrak h} \mbox{, and
some } N = N(h,m) \right\}. \notag
\end{equation}

A generalized weight $ {\mathfrak g} $-module $ M $ is called a {\it weight module} if $
M^{(\mu)}  =  M^{\mu} $, where
\begin{equation}
M^{\mu}:=\left\{m\in M \mid hm=\mu\left(h\right)m, \mbox{ for
every } h\in{\mathfrak h}\right\}. \notag\end{equation} By
definition the {\it support of $M$},  $ \operatorname{supp} M $,
is the set of weights $ \mu\in{\mathfrak h}^{*} $ such that $
M^{\mu}\not=0 $. A weight $\mathfrak g$-module $ M $ is {\em
cuspidal\/} if $ M $ is finitely generated, all $ M^{\mu} $ are
finite-dimensional, and $ X:M^{\mu} \to M^{\mu+\alpha} $ is an
isomorphism for every root vector $ X\in{\mathfrak g}_{\alpha} $.
Denote by ${\mathcal C}$ the category of all cuspidal $\mathfrak
g$-modules.

It is clear that, if $M$ is a cuspidal module, then  $
\mu\in\operatorname{supp} M $ implies $
\mu+Q\subset\operatorname{supp} M $. Hence for every cuspidal
module $ M $ one can define $ s\left(M\right)\subset{\mathfrak
h}^{*}/Q $ as the image of $ \operatorname{supp} M $ under the
natural projection $ {\mathfrak h}^{*} \to {\mathfrak h}^{*}/Q $.
As $ M $ is finitely generated, $ s\left(M\right) $ is a finite
set.

It is not difficult to see that a submodule and a quotient of a
cuspidal module are cuspidal. Hence the category $ {\mathcal C} $
is an abelian category. It is also clear that every cuspidal
module has finite Jordan--H\"older series. Since the center $Z$ of $U$
preserves weight spaces, it acts locally finitely on the cuspidal
modules. For every central character $ \chi\in Z'$ let $ {\mathcal
C}^{\chi} $ denote the category of all cuspidal modules $ M $ with
generalized central character $\chi$, i.e. such that for some $n\left(M\right),
\left(z-\chi\left(z\right)\right)^{n\left(M\right)}=0 $ on $M$ for
all $ z\in Z$. It is clear that every cuspidal module $ M $ is a
direct sum of finitely many $ M_{i}\in{\mathcal C}^{\chi_{i}} $.
Furthermore, if $ \operatorname{Ext}_{\mathcal C}$ stands for
the extension functor in the category $\mathcal C$, then
$ \operatorname{Ext}_{\mathcal C} \left(M,N\right)\not=0 $ implies
$ s\left(M\right)\cap s\left(N\right)\not=\varnothing $. Therefore
every $ M\in{\mathcal C}^{\chi} $ is a direct sum $
M=\bigoplus_{\bar{\nu}\in s\left(M\right)}M[\bar{{\nu}}] $, where,
for $\bar{\nu}:=\nu + Q \in {\mathfrak h}^*/Q$,  $ M[\bar{{\nu}}]$
denotes the maximal submodule of $ M $ such that $
s\left(M[{\bar{\nu}}]\right)=\left\{\bar{\nu}\right\} $. Thus, one
can write
\begin{equation}
{\mathcal C}=\bigoplus_{\chi \in Z',\; \bar{\nu} \in {\mathfrak
h}^*/Q}{\mathcal C}^{\chi}_{\bar{\nu}} \notag\end{equation} where
$ {\mathcal C}^{\chi}_{\bar{\nu}} $ is the category of all modules
$ M $ in $ {\mathcal C}^{\chi} $ such that $
s\left(M\right)=\left\{\bar{\nu}\right\} $.

In this section we describe the simple cuspidal $ {\mathfrak
g}-$modules following the classification of Mathieu in \cite{M}.
We formulate Mathieu's result in convenient for us terms.

We fix an ${\mathfrak h}$-eigenbasis of the natural representation
of $\mathfrak g$. Let $ E_{ij},i,j=0,\dots ,n $
denote the elementary matrices. Define a $ {\mathbb Z} $-grading $
{\mathfrak g}={\mathfrak g}_{-1}\oplus{\mathfrak
g}_{0}\oplus{\mathfrak g}_{1} $, where $ {\mathfrak g}_{1} $ is
spanned by $ E_{i0} $ for all $ i>0 $, $ {\mathfrak g}_{-1} $ is
spanned by $ E_{0i} $ for all $ i>0 $, and $ {\mathfrak
g}_{0}\cong  \mathfrak{gl}\left(n\right) $. The subalgebra $
{\mathfrak p}={\mathfrak g}_{0}\oplus{\mathfrak g}_{1} $ is a
maximal parabolic subalgebra in $ {\mathfrak g} $. If $
G=\operatorname{SL}\left(n+1\right) $ and $ P\subset G $ is the
subgroup with the Lie algebra $ {\mathfrak p} $, then $ G/P $ is
isomorphic to $ {\mathbb P}^{n} $. Every point $t$ in  $ {\mathbb
P}^{n}$ can be represented by its homogeneous coordinates
$\left[t_{0},\dots ,t_{n}\right]$. Then $ {\mathfrak g} $ defines
an algebra of vector fields on $ {\mathbb P}^{n} $ via the map
\begin{equation}
E_{ij} \mapsto t_{i}\frac{\partial}{\partial t_{j}}.
\label{equ1}\end{equation}\myLabel{equ1,}\relax By $ E $ we denote
the Euler vector field $
\sum_{i=0}^{n}t_{i}\frac{\partial}{\partial t_{i}} $.

Let $\mathcal {U}$  be the affine open subset of ${\mathbb P}^{n}
$ consisting of all points $[t_0,...,t_n]$ such that $ t_{0}\not=0
$. Introduce local coordinates $ x_{1},\dots ,x_{n} $ on $
\mathcal {U} $ by setting $ x_{i}:=\frac{t_{i}}{t_{0}} $ for all $
i=1,\dots ,n $. The ring $ {\mathcal O}={\mathbb
C}\left[x_{1},\dots ,x_{n}\right] $ of regular functions on $
\mathcal {U} $ is naturally a $ {\mathfrak g} $-module. We say
that $ M $ is a $ \left({\mathfrak g},{\mathcal O}\right) ${\it
-module} if $ M $ is both a $ {\mathfrak g} $-module and an
${\mathcal O} $-module, and
\begin{equation}
g\left(f m\right)-f\left(g m\right)=g\left(f\right)m
\notag\end{equation} for any $ m\in M $, $ g\in{\mathfrak g} $, $
f\in {\mathcal O} $. In particular, $ {\mathcal O} $ is a $ \left({\mathfrak
g},{\mathcal O}\right) $-module. If $ M $ and $ N $ are $
\left({\mathfrak g},{\mathcal O}\right) $-modules, then $
M\otimes_{{\mathcal O}}N $ is a $ ({\mathfrak g}, {\mathcal O})
$-module as well.

For $ \mu\in{\mathbb C}^{n+1} $, let $ |\mu| $ denote $
\mu_{0}+\dots +\mu_{n} $ and $ t^{\mu} $ denote $
t_{0}^{\mu_{0}}\dots t_{n}^{\mu_{n}} $. Then the vector space $
t^{\mu}{\mathbb C}\left[t_{0}^{\pm1},\dots ,t_{n}^{\pm1}\right] $
has a natural structure of a $ \left({\mathfrak g},{\mathcal
O}\right) $-module. Let
\begin{equation}
{\mathcal F}_{\mu}:=\left\{f\in t^{\mu}{\mathbb
C}\left[t_{0}^{\pm1},\dots ,t_{n}^{\pm1}\right] \mid
Ef=|\mu|f\right\}. \notag\end{equation} One can check that $
{\mathcal F}_{\mu} $ is a $ \left({\mathfrak g},{\mathcal
O}\right) $-module. It is not hard to see that $ {\mathcal
F}_{\mu} $ is an irreducible cuspidal $ {\mathfrak g} $-module iff
$ \mu_{i}\notin{\mathbb Z} $ for all $ i=0,\dots ,n $, in that case
all weight spaces are one-dimensional. It is also
clear from the construction that $ {\mathcal F}_{\mu}\cong
{\mathcal F}_{\mu'} $ if $ \mu-\mu'\in Q $.

Now we recall a slightly more general construction of a cuspidal
module (see \cite{Sh}).
Let $ V_{0} $ be a finite-dimensional $ P $-module. Then $ V_{0} $
induces a vector bundle on $ {\mathbb P}^{n} $ which we denote by
$ {\mathcal V}_{0} $. The space of its sections $
\Gamma\left(\mathcal {U},{\mathcal V}_{0}\right) $ on the affine
open set $ \mathcal {U}\subset G/P $ is another example of a $
\left({\mathfrak g},{\mathcal O}\right) $-module. Since $
{\mathcal V}_{0} $ is trivial on $ \mathcal {U} $ one can identify
{\cyr G}($ \mathcal {U},{\mathcal V}_{0} $) with $ {\mathcal
O}\otimes V_{0} $. Define a new $ {\mathfrak g} $-module
\begin{equation}
{\mathcal F}_{\mu}\left(V_{0}\right)={\mathcal
F}_{\mu}\otimes_{{\mathcal O}}\Gamma\left(\mathcal {U},{\mathcal
V}_{0}\right). \notag\end{equation} Again, one easily checks that
$ {\mathcal F}_{\mu}\left(V_{0}\right) $ is cuspidal iff $
\mu_{i}\notin{\mathbb Z} $.

\begin{remark} \label{rem99}\myLabel{rem99}\relax  If $ V $ is a
$ {\mathfrak g} $-module, then $ \Gamma\left(\mathcal {U},{\mathcal
V}\right)\cong {\mathcal O}\otimes V $ as a $\mathfrak g$-module, and
we have the following isomorphisms of $\mathfrak g$-modules
\begin{eqnarray*}
{\mathcal F}_{\mu}\left(V_{0}\right)\otimes V &\cong & {\mathcal
F}_{\mu}\left(V_{0}\right)\otimes_{{\mathcal
O}}\Gamma\left(\mathcal {U},{\mathcal V}\right) \cong {\mathcal
F}_{\mu}\otimes_{{\mathcal O}}\Gamma\left(\mathcal {U},{\mathcal
V}_{0}\right)\otimes_{{\mathcal O}}\Gamma\left(\mathcal {U},{\mathcal
V}\right)\\ &\cong  &{\mathcal F}_{\mu}\otimes_{{\mathcal
O}}\Gamma\left(\mathcal {U},{\mathcal V}\otimes{\mathcal V}_{0}\right)\cong
{\mathcal F}_{\mu}\left(V_{0}\otimes V\right). \notag
\end{eqnarray*}
\end{remark}

\begin{remark} \label{rem100}\myLabel{rem100}\relax  An exact sequence of $ P $-modules
\begin{equation}
0 \to V_{0} \to V_{1} \to V_{2} \to 0
\notag\end{equation}
induces the exact sequence of $ {\mathfrak g} $-modules
\begin{equation}
0 \to {\mathcal F}_{\mu}\left(V_{0}\right) \to {\mathcal F}_{\mu}\left(V_{1}\right) \to {\mathcal F}_{\mu}\left(V_{2}\right) \to\text{ 0.}
\notag\end{equation}
\end{remark}

Recall that  $ \chi_{\lambda} $ denotes the central character of the simple
highest weight $\mathfrak g$-module with highest weight $ \lambda $. For
simplicity we put $ {\mathcal C}^{\lambda}:={\mathcal
C}^{\chi_{\lambda}} $ and $ {\mathcal
C}^{\lambda}_{\bar{\nu}}:={\mathcal
C}^{\chi_{\lambda}}_{\bar{\nu}}$.

\begin{lemma} \label{lm1}\myLabel{lm1}\relax
Let $ V_{0} $ be an irreducible $ P $-module with highest weight $ \tau $.
Then $ {\mathcal F}_{\mu}\left(V_{0}\right) $ admits a central
character $ \chi_{\lambda} $, where $
\lambda=\gamma\left(|\mu|\varepsilon_{0}\right)+\tau $. Moreover,
$ s\left({\mathcal F}_{\mu}\left(V_{0}\right)\right) $ coincides
with the class of $ \gamma\left(\mu\right)+\tau $ in $ {\mathfrak
h}^{*}/Q $.

\end{lemma}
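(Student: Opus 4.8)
The plan is to reduce everything to an explicit computation on the ``big cell'' $\mathcal U\cong\mathbb A^n$, using the identification $\mathcal F_\mu(V_0)\cong\mathcal F_\mu\otimes_{\mathcal O}(\mathcal O\otimes V_0)$. The key point is that the action of the central element comes entirely from the principal symbol data plus the $\mathfrak h$-action, both of which are visible on $\mathcal U$. First I would exhibit the highest weight vector. Since $V_0$ is irreducible over $P=\mathfrak g_0\oplus\mathfrak g_1$ with highest weight $\tau$, fix a nonzero $v_\tau\in V_0$ annihilated by the nilradical part of the Borel of $\mathfrak g_0$ and by $\mathfrak g_1$; write $\mathfrak g_1=\operatorname{span}\{E_{i0}:i>0\}$, which in the vector-field realization \eqref{equ1} acts by $t_i\,\partial/\partial t_0=x_i\,\partial/\partial x_1\cdot(\text{correction})$ — more precisely, on $\mathcal O\otimes V_0$ the operator $E_{i0}$ is $x_i\partial_{x_1}\dots$, one must be a little careful. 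A cleaner route: inside $\mathcal F_\mu(V_0)$ consider the vector $t^\mu\otimes v_\tau$ (using $t^\mu\in\mathcal F_\mu$ and $v_\tau\in V_0$ under the identification $\Gamma(\mathcal U,\mathcal V_0)\cong\mathcal O\otimes V_0$ with $1\otimes v_\tau$). I claim this is a weight vector of weight $\gamma(\mu)+\tau$: the $\mathfrak h$-weight of $t^\mu$ is $\gamma(\mu)$ because $h\mapsto\sum \mu_i\,$(diagonal entries) acts on $t^\mu$ by $\mu(h)$ and descends through $\gamma$, while $v_\tau$ contributes $\tau$; and one checks directly that $E_{i0}$ and the positive root vectors of $\mathfrak g_0$ kill it, so it generates a highest weight module inside $\mathcal F_\mu(V_0)$ after projecting appropriately. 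That yields $\chi_{\lambda}$ as the central character with $\lambda=\gamma(|\mu|\varepsilon_0)+\tau$, once one observes $\gamma(\mu)\equiv\gamma(|\mu|\varepsilon_0)\pmod Q$ — indeed $\mu-|\mu|\varepsilon_0\in\operatorname{span}(\varepsilon_i-\varepsilon_0)\subset\gamma^{-1}(Q)$ — so $\chi_{\gamma(\mu)+\tau}=\chi_{\gamma(|\mu|\varepsilon_0)+\tau}$.

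For the honest argument that $\mathcal F_\mu(V_0)$ \emph{admits} the central character $\chi_\lambda$ (i.e. $Z$ acts by $\chi_\lambda$, not merely that some subquotient does), I would argue that $\mathcal F_\mu(V_0)$, while generally not irreducible, has all its composition factors sharing one central character: this is because $Z$ acts by scalars on each weight space's ``formal character data'' and the module is built by a flat base change $\mathcal F_\mu\otimes_{\mathcal O}-$ that is compatible with the $\mathfrak g$-action up to the Leibniz rule, so the $Z$-action is governed by the leading term, which is the same for $\mathcal F_\mu\otimes V_0$ as a $(\mathfrak g,\mathcal O)$-module and is computed on $t^\mu\otimes v_\tau$. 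Concretely: the Casimir (and every element of $Z$) acts on $\mathcal F_\mu(V_0)$; restrict to the $\mathcal O$-free generators, where the action agrees with the action on $\mathcal F_\mu\otimes V_0$ twisted by first-order terms; since $\mathcal F_\mu\otimes V_0$ has a filtration whose associated graded is a sum of $\mathcal F_{\mu}$'s tensored with weight lines of $V_0$, and all of those have central character $\chi_\lambda$ by the highest-weight computation above (the weights of $V_0$ differ by elements of $Q$, so $\gamma(\mu)+(\text{weight})$ all lie in $\lambda+Q$ and, being in the $W$-orbit governed by $\mathfrak g_0$, give the same $\chi$), we conclude $Z$ acts by $\chi_\lambda$ on the whole module.

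For the statement about $s(\mathcal F_\mu(V_0))$: the support of $\mathcal F_\mu$ is $\gamma(\mu)+Q$ (all of it, since $\mu_i\notin\mathbb Z$ forces $\mathcal F_\mu$ to be the ``dense'' module with one-dimensional weight spaces at every point of $\gamma(\mu)+Q$), and tensoring over $\mathcal O$ with $\mathcal O\otimes V_0$ — whose weights lie in $Q+(\text{weights of }V_0)$, all within a single coset of $Q$ since the weights of the irreducible $V_0$ differ by roots — cannot leave the coset $\gamma(\mu)+\tau+Q$; conversely every weight in that coset appears because $\mathcal F_\mu$ already surjects onto each such coset-point and $V_0\neq 0$. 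Hence $\operatorname{supp}\mathcal F_\mu(V_0)\subseteq\gamma(\mu)+\tau+Q$ with equality, so its image in $\mathfrak h^*/Q$ is the single class $\overline{\gamma(\mu)+\tau}$. The main obstacle I anticipate is the first claim done \emph{rigorously}: one must be careful that ``admits central character $\chi_\lambda$'' is a statement about the generalized central character of a possibly non-semisimple, possibly reducible module, so the cleanest proof identifies the $Z$-action via the $P$-highest-weight vector and a filtration argument rather than by checking irreducibility — and one must double-check the normalization of $\gamma$ and $\rho$ so that $\chi_{\gamma(\mu)+\tau}=\chi_{\gamma(|\mu|\varepsilon_0)+\tau}$ really holds (it does, because the two highest weights differ by an element of $Q$, and $Q$-translates that are also $W$-translates — here via the stabilizer structure — give equal central characters; more simply, $\gamma(\mu-|\mu|\varepsilon_0)\in Q$ directly).
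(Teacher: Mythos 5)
There is a genuine gap at the very heart of the proposal. You claim that $t^\mu\otimes v_\tau$ is annihilated by $E_{i0}$ and by the positive root vectors of $\mathfrak g_0$, and hence ``generates a highest weight module inside $\mathcal F_\mu(V_0)$.'' This cannot be right in general: when all $\mu_i\notin\mathbb Z$, the module $\mathcal F_\mu(V_0)$ is cuspidal, so every root vector acts injectively and there are \emph{no} $\mathfrak b$-singular vectors at all. Concretely, $E_{i0}$ acts on the $\mathcal F_\mu$ tensor factor as $t_i\partial/\partial t_0$, which sends $t^\mu$ to $\mu_0\,t^{\mu+\varepsilon_i-\varepsilon_0}\neq 0$ since $\mu_0\notin\mathbb Z$; the contribution from $V_0$ does not cancel this. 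The subsequent ``filtration argument'' is meant to rescue the computation, but it rests on a second error: two weights that differ by an element of $Q$ need \emph{not} yield the same central character (e.g. $0$ and a simple root $\alpha_1$). The equality $\chi_\eta=\chi_{\eta'}$ requires $\eta+\rho$ and $\eta'+\rho$ to be in the same Weyl orbit, and that is precisely the nontrivial content one has to establish; you cannot get it for free from $\gamma(\mu-|\mu|\varepsilon_0)\in Q$.

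The paper's proof circumvents both issues with a different idea. One writes the $\mathfrak g$-action on $\mathcal F_\mu(V_0)$ through an algebra homomorphism $\chi_{\mu,\tau}:U(\mathfrak g)\to\mathcal A_n\otimes\operatorname{End}(V_0)$ (with $\mathcal A_n$ the Weyl algebra in $x_1,\dots,x_n$), given by the explicit formulae in the proof. The crucial observation is that these formulae depend on $\mu$ \emph{only through} $|\mu|$. One may therefore replace $\mu$ by $\nu=|\mu|\varepsilon_0$ without changing the image of $U(\mathfrak g)$, and in particular without changing the image of $Z$. For this special $\nu$, the module $\mathcal F_\nu(V_0)$ is \emph{not} cuspidal: $t_0^{|\mu|}\otimes v_0$ is a genuine $\mathfrak b$-singular vector, generating a highest weight submodule $L$ of highest weight $\lambda=\gamma(\nu)+\tau$, which is moreover a faithful $\mathcal A_n\otimes\operatorname{End}(V_0)$-module. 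Faithfulness forces the image of $z\in Z$ in $\mathcal A_n\otimes\operatorname{End}(V_0)$ to be $\chi_\lambda(z)\cdot 1$, and since the image is the same for all $\mu$ with the given $|\mu|$, every $\mathcal F_\mu(V_0)$ has central character $\chi_\lambda$. So the argument you want to make -- compute a highest weight -- can be made, but only after passing to a degenerate $\nu$ where the highest weight vector actually exists, and the passage is justified by the $|\mu|$-invariance of the Weyl-algebra realization, not by a $Q$-congruence of weights. Your treatment of the support statement is fine and matches the paper (which dismisses it as trivial).
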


\begin{proof} As a $\mathfrak g_0$-module $ {\mathcal
    F}_{\mu}\left(V_{0}\right) $ is isomorphic to
\begin{equation}\label{restr}
\mathcal F_{\mu}\otimes V_0=x_{1}^{\mu_{1}}\dots x_{n}^{\mu_{n}}{\mathbb
C}\left[x_{1}^{\pm1},\dots ,x_{n}^{\pm1}\right]\otimes V_{0}.
\notag
\end{equation}
We will  consider $ {\mathfrak g} $ as a Lie subalgebra of $
{\mathcal A}_{n}\otimes\operatorname{End}\left(V_{0}\right) $,
where $ {\mathcal A}_{n} $ is the algebra of polynomial
differential operators in $ {\mathbb C}\left[x_{1},\dots
,x_{n}\right] $. The embedding $ {\mathfrak g} \to {\mathcal
A}_{n}\otimes\operatorname{End}\left(V_{0}\right) $ is defined by
the formulae
\begin{equation}
E_{0i} \mapsto \frac{\partial}{\partial x_{i}}\text{, }E_{ij}
\mapsto x_{i}\frac{\partial}{\partial x_{j}}\otimes1+1\otimes
E_{ij}\text{, }i,j>0,i\not=j, \notag\end{equation}
\begin{equation}
E_{00}-E_{i i} \mapsto
\left(-|\mu|-E'-x_{i}\frac{\partial}{\partial
x_{i}}\right)\otimes1+1\otimes\left(E_{00}-E_{i i}\right)\text{,
}i>0, \notag\end{equation}
\begin{equation}
E_{i0} \mapsto
\left(-x_{i}\left(E'+|\mu|\right)\right)\otimes1-\sum_{j\not=i}x_{j}\otimes
E_{ij}+x_{i}\otimes\left(E_{00}-E_{i i}\right),
\notag\end{equation} where $E':=\sum_{i=1}^n x_i\frac{\partial}
{\partial x_i}$. This embedding can be extended to a homomorphism $
\chi_{\mu,\tau}:U\left({\mathfrak g}\right) \to {\mathcal
A}_{n}\otimes\operatorname{End}\left(V_{0}\right) $. Note now that
the formulae defining $ \chi_{\mu,\tau} $ depend only on $ |\mu| $
and $ \tau $. Hence $\chi_{\nu,\tau}=\chi_{\mu,\tau} $
 if $ |\nu|=|\mu| $. Let $ \nu=|\mu|\varepsilon_{0}
$. Then $ {\mathcal F}_{\nu}\left(V_{0}\right) $ is not cuspidal and
contains a ${\mathfrak b} $-singular vector $ t_{0}^{|\mu|}\otimes v_{0} $,
where $ v_{0} $ is a highest weight vector of $ V_{0} $ and
$\mathfrak b$ is the standard Borel subalgebra of $\mathfrak g$.
Hence $ {\mathcal F}_{\nu}\left(V_{0}\right) $ contains a
submodule $ L $ with the highest weight $
\gamma\left(\nu\right)+\tau=\lambda $. It is not difficult to see
that $ L $ is a faithful module over $ {\mathcal
A}_{n}\otimes\operatorname{End}\left(V_{0}\right) $. Since $ L $
admits a central character $ \chi_{\lambda} $,  $
\chi_{\mu,\tau}\left(z\right)=\chi_{\nu,\tau}\left(z\right)=\chi_{\lambda}\left(z\right)
$ for any $ z $ in $Z$. The second statement of the lemma is
trivial.\end{proof}

The formulae in the proof of the previous lemma depend only on $|\mu|$
and the representation $V_0$ of
$\left[{\mathfrak g}_{0},{\mathfrak
    g}_{0}\right]\simeq \mathfrak {sl}(n)\subset \mathfrak g$.
Therefore we obtain the following corollary.

\begin{corollary} \label{cor1}\myLabel{cor1}\relax  If $ \left(\tau,\alpha_{i}\right)=\left(\tau',\alpha_{i}\right) $ for $ i=2,\dots ,n $ and
$ \mu'-\mu\in Q+\left(\tau-\tau',\alpha_{1}\right)\varepsilon_{0} $, then $ {\mathcal F}_{\mu}\left(V_{0}\right) $ is
isomorphic to $ {\mathcal F}_{\mu'}\left(V'_{0}\right) $, where $ \tau $ and $ \tau' $ are the highest weights of $ V_{0} $ and
$ V'_{0} $ respectively.

\end{corollary}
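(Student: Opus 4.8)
The plan is to deduce the corollary from the explicit embedding $\chi_{\mu,\tau}\colon U(\mathfrak{g})\to\mathcal{A}_n\otimes\operatorname{End}(V_0)$ used in the proof of Lemma~\ref{lm1}, after pinning down exactly which data that embedding depends on. First I would reinterpret the hypotheses. The roots $\alpha_2,\dots,\alpha_n$ span a hyperplane of $\mathfrak{h}^*$ on which $(\,,\,)$ is nondegenerate, so the vectors orthogonal to all of them form a line; that line contains $\omega_1$, hence $(\tau,\alpha_i)=(\tau',\alpha_i)$ for $i=2,\dots,n$ forces $\tau-\tau'=s\,\omega_1$ with $s:=(\tau-\tau',\alpha_1)$. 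In particular $V_0$ and $V_0'$ have isomorphic restrictions to $[\mathfrak{g}_0,\mathfrak{g}_0]\cong\mathfrak{sl}(n)$; choose an $\mathfrak{sl}(n)$-linear isomorphism $\varphi\colon V_0\to V_0'$ and use it to identify $\mathcal{A}_n\otimes\operatorname{End}(V_0)$ with $\mathcal{A}_n\otimes\operatorname{End}(V_0')$ compatibly with the $\mathfrak{sl}(n)$-actions. Since $\omega_1=\gamma(\varepsilon_0)$, the second hypothesis says precisely that $\mu'\equiv\mu+s\,\varepsilon_0\pmod Q$; in particular $(\mu_1,\dots,\mu_n)\equiv(\mu'_1,\dots,\mu'_n)\pmod{\mathbb{Z}^n}$, as the $\varepsilon_0$-component does not affect the coordinates $1,\dots,n$ and $Q$ is integral there.

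Second, I would make the remark preceding the corollary precise. Set $z_0:=\sum_{i=1}^n(E_{00}-E_{ii})$; this is central in $\mathfrak{g}_0$, so on the irreducible $\mathfrak{g}_0$-module $V_0$ it acts by a scalar, which (using $|\tau|=0$, valid since $\tau\in\mathfrak{h}^*$) equals $\tau(z_0)=(n+1)\tau_0$, where $\tau_0$ is the $\varepsilon_0$-coordinate of $\tau$. Writing $E_{00}-E_{ii}=\tfrac1n z_0+m_i$ with $m_i\in\mathfrak{sl}(n)$, so that $1\otimes(E_{00}-E_{ii})=\tfrac{(n+1)\tau_0}{n}\cdot 1+1\otimes m_i$, and substituting into the formulae of the proof of Lemma~\ref{lm1}, one checks that $\mu$ and $\tau$ enter those formulae \emph{only} through the $\mathfrak{sl}(n)$-action on $V_0$ and through one scalar $\kappa(\mu,\tau)$ which is affine in $|\mu|$ and $\tau_0$; explicitly $\kappa(\mu,\tau)=|\mu|+\tfrac{n+1}{n}\tau_0$. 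The crux is the identity $\kappa(\mu,\tau)=\kappa(\mu',\tau')$: from $\tau-\tau'=s\,\omega_1$ and $\omega_1=\gamma(\varepsilon_0)$ one gets $\tau_0-\tau'_0=s\,(\omega_1)_0=\tfrac{n}{n+1}\,s$ (from the definition of $\gamma$), while $\mu'\equiv\mu+s\,\varepsilon_0\pmod Q$ gives $|\mu'|-|\mu|=s$ (since $|q|=0$ for $q\in Q$); substituting, the change of $|\mu|$ and the change of $\tau_0$ cancel. Thus, under the identification fixed above, $\chi_{\mu,\tau}$ and $\chi_{\mu',\tau'}$ are literally the same homomorphism $U(\mathfrak{g})\to\mathcal{A}_n\otimes\operatorname{End}(V_0)$.

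Third, the underlying $\mathcal{A}_n\otimes\operatorname{End}(V_0)$-module of $\mathcal{F}_\mu(V_0)$ is $x_1^{\mu_1}\cdots x_n^{\mu_n}\,\mathbb{C}[x_1^{\pm1},\dots,x_n^{\pm1}]\otimes V_0$; its $\mathcal{A}_n$-factor depends only on $(\mu_1,\dots,\mu_n)$ modulo $\mathbb{Z}^n$, and $\varphi$ identifies its $\operatorname{End}$-factor with that of $\mathcal{F}_{\mu'}(V_0')$. Combined with the previous paragraph, this presents $\mathcal{F}_\mu(V_0)$ and $\mathcal{F}_{\mu'}(V_0')$ as restrictions of one and the same $\mathcal{A}_n\otimes\operatorname{End}(V_0)$-module along one and the same homomorphism $U(\mathfrak{g})\to\mathcal{A}_n\otimes\operatorname{End}(V_0)$, hence as isomorphic $\mathfrak{g}$-modules. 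I expect the only subtle step to be the scalar identity $\kappa(\mu,\tau)=\kappa(\mu',\tau')$, i.e. checking that the summand $(\tau-\tau',\alpha_1)\,\varepsilon_0$ in the hypothesis is exactly the shift of $\mu$ that compensates the change of the central character of $V_0$ when $\tau$ is replaced by $\tau'$; equivalently, this is the statement $\mathcal{F}_\mu(V_0'\otimes\mathbb{C}_{s\omega_1})\cong\mathcal{F}_{\mu+s\varepsilon_0}(V_0')$, which can also be obtained geometrically by tensoring $\mathcal{F}_\mu$ over $\mathcal{O}$ with the line bundle on $G/P$ associated to the $P$-character $s\omega_1=\tau-\tau'$.
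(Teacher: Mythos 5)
Your proposal is correct and takes essentially the same route as the paper: the paper's one-sentence justification before the corollary is exactly the observation that the embedding formulae from the proof of Lemma~\ref{lm1} determine $\mathcal F_\mu(V_0)$ through the restriction of $V_0$ to $\mathfrak{sl}(n)$ and a single scalar. Your write-up fills in a real gap that the paper glosses over: the formulae also involve the central $\mathfrak g_0$-character (the scalar $\tau(z_0)=(n+1)\tau_0$ by which $z_0$ acts on $V_0$), not just $|\mu|$ and the $\mathfrak{sl}(n)$-action, and the corollary works precisely because the affine combination $|\mu|+\tfrac{n+1}{n}\tau_0$ is preserved under the hypotheses — a cancellation you make explicit. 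One remark on signs: if you literally substitute into the formulae as printed in the paper you would get $-|\mu|+\tfrac{n+1}{n}\tau_0$, which does \emph{not} cancel under the corollary's hypotheses; but the sign of $|\mu|$ in the paper's formula for $E_{00}-E_{ii}$ (and $E_{i0}$) is a typo (one can check this against the weight of the singular vector $t_0^{|\mu|}\otimes v_0$ in the proof of Lemma~\ref{lm1}), and your $\kappa=|\mu|+\tfrac{n+1}{n}\tau_0$ is the correct invariant.
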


The following theorem is proven in \cite{M}.

\begin{theorem} \label{th1}\myLabel{th1}\relax  Suppose that $ M $ is an irreducible cuspidal $ {\mathfrak g} $-module and
$ \chi $ be the central character of $ M $. Then $
\chi=\chi_{\lambda} $ where $ \lambda $ is integral dominant when
restricted to the subalgebra
$ \left[{\mathfrak g}_{0},{\mathfrak g}_{0}\right] $.
If $ \chi $ is non-integral or singular integral, then $ M $ is
isomorphic to $ {\mathcal F}_{\mu}\left(V_{0}\right) $ for some $
\mu\in{\mathbb C}^{n+1} $ and an irreducible $ P$-module $
V_{0} $.

\end{theorem}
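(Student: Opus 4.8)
Since the theorem is Mathieu's, the plan is to reduce it to the coherent-family machinery of \cite{M}, phrased in the language of this section. First I would recall that a cuspidal module has uniformly bounded weight multiplicities (Fernando, \cite{Fe}); let $d$ be the largest of them. Then $M$ embeds into a semisimple coherent family $\widetilde M$ of degree $d$: a weight module with $\operatorname{supp}\widetilde M={\mathfrak h}^*$ and $\dim\widetilde M^{\eta}=d$ for all $\eta\in{\mathfrak h}^*$, which splits as $\widetilde M=\bigoplus_{\bar\nu\in{\mathfrak h}^*/Q}\widetilde M[\bar\nu]$ with each summand irreducible over ${\mathfrak g}$ and $M=\widetilde M[\bar\nu_0]$, where $s(M)=\{\bar\nu_0\}$. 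Since $\widetilde M$ is obtained from $M$ by a twisted localization and the center preserves weight spaces, all of $\widetilde M$ carries the single central character $\chi$ of $M$.

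Next I would identify the constituents $\widetilde M[\bar\nu]$. Using the embedding of ${\mathfrak g}$ into the vector fields on ${\mathbb P}^n$ and the boundedness of $d$, one shows that $\widetilde M$ arises, through the construction ${\mathcal F}_{\mu}(V_0)$ and its twists as $\mu$ varies, from a \emph{finite-dimensional} $P$-module $V_0$; finite-dimensionality of $V_0$ is forced by boundedness of $d$, since a ${\mathfrak p}$-induced module with infinite-dimensional inducing space has unbounded multiplicities. Picking a coset $\bar\nu$ for which $\widetilde M[\bar\nu]$ is reducible (for instance any integral one), it acquires a ${\mathfrak b}$-singular vector, hence a highest weight submodule $L(\lambda)$; being a submodule of a module of bounded multiplicity, $L(\lambda)$ is locally $[{\mathfrak g}_0,{\mathfrak g}_0]$-finite, so $(\lambda,\alpha_i)\in{\mathbb Z}_{\geq 0}$ for $i=2,\dots,n$. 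Since $L(\lambda)$ has central character $\chi$ this yields $\chi=\chi_\lambda$ with $\lambda$ dominant integral on $[{\mathfrak g}_0,{\mathfrak g}_0]$ (consistently, $\lambda=\gamma(|\mu|\varepsilon_0)+\tau$ with $\tau$ the highest weight of $V_0$ and $\gamma(\varepsilon_0)$ orthogonal to $\alpha_2,\dots,\alpha_n$), which is the first assertion.

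For the second assertion, suppose $\chi=\chi_\lambda$ is non-integral or singular integral. The decisive fact from \cite{M} is that then no constituent $\widetilde M[\bar\nu]$ is reducible and the generic dimension $d$ equals $\dim V_0$ — the degenerations that in the regular integral case split a block into $n$ simple objects do not occur. Hence $M=\widetilde M[\bar\nu_0]$ is one of the cuspidal modules ${\mathcal F}_{\mu}(V_0)$. To pin $\mu$ and $V_0$ down I would take the irreducible $P$-module $V_0$ with highest weight $\tau$ satisfying $(\tau,\alpha_i)=(\lambda,\alpha_i)$ for $i\geq 2$; by Lemma \ref{lm1}, ${\mathcal F}_{\mu}(V_0)$ has central character $\chi_{\gamma(|\mu|\varepsilon_0)+\tau}$ and support-class $\gamma(\mu)+\tau$, so I would choose $|\mu|$ with $\gamma(|\mu|\varepsilon_0)+\tau+\rho\in W(\lambda+\rho)$ and, using Corollary \ref{cor1}, adjust the remaining coordinates of $\mu$ (keeping $\mu_i\notin{\mathbb Z}$, which is possible by the compatibility of $\chi$ and $\bar\nu_0$ required for ${\mathcal C}^{\chi}_{\bar\nu_0}\neq 0$) so that ${\mathcal F}_{\mu}(V_0)\in{\mathcal C}^{\chi}_{\bar\nu_0}$. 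Then $M$ and ${\mathcal F}_{\mu}(V_0)$ are irreducible constituents of the same degree-$d$ semisimple coherent family in the same $Q$-coset, whence $M\cong{\mathcal F}_{\mu}(V_0)$.

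The main obstacle is exactly the input from \cite{M} used above: that for non-integral and singular central characters the coherent family through $M$ is semisimple on the coset of $M$ with generic dimension precisely $\dim V_0$ — equivalently, that $M$ is never a proper subquotient of a twisted localization of a ${\mathfrak p}$-induced module. This dichotomy between these central characters and the regular integral one is the real content of Mathieu's classification; the remaining steps are bookkeeping with coherent families, twisted localization, and the vector-field realization of ${\mathfrak g}$ set up in this section.
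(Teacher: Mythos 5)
The paper does not give its own proof of Theorem~\ref{th1}: immediately before the statement it says that the result is Mathieu's and simply cites \cite{M}, after reformulating his classification in the language of the modules $\mathcal{F}_\mu(V_0)$. Your sketch is a plausible outline of Mathieu's coherent-family argument, and you are forthright about where the genuine content lies: the existence of a semisimple irreducible coherent family through $M$ and the dichotomy, special to $\mathfrak{sl}(n+1)$, between a regular integral central character (where the $Q$-coset of $M$ supports $n$ simples) and all other characters (where it supports a unique simple cuspidal). Two slips are worth flagging, though neither is fatal. First, you initially assert that each constituent $\widetilde M[\bar\nu]$ of the semisimple coherent family is irreducible over $\mathfrak g$, and then a few lines later explicitly invoke a coset where $\widetilde M[\bar\nu]$ is reducible; the correct statement is that the family is semisimple, hence each $\widetilde M[\bar\nu]$ is a finite direct sum of simples, irreducible only for generic $\bar\nu$. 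Second, the parenthetical ``(for instance any integral one)'' for locating a reducible constituent is accurate only when $\chi$ is integral; for a non-integral $\chi$ the degenerate cosets are those where a coordinate $\mu_i$ of the parameter hits $\mathbb Z$, and these lie in non-integral $Q$-cosets. Both are local inaccuracies in otherwise sound bookkeeping; the overall strategy agrees with the one the paper delegates to \cite{M}.
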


This theorem gives a complete description of cuspidal simple
modules with singular or non-integral central characters. We
provide a description of the simple cuspidal modules with regular
integral central character (also obtained by Mathieu) in Section
\ref{regint}.

Theorem~\ref{th1} implies:

\begin{corollary} \label{cor2}\myLabel{cor2}\relax  Let $\rho$ be the half-sum of the positive roots and
$ \lambda+\rho $ be a non-integral or a singular integral weight.
Then $ {\mathcal C}^{\lambda}_{\bar{\nu}} $ is not empty iff $
\left(\lambda,\omega_{n}\right)\notin\left(\nu,\omega_{n}\right)+{\mathbb
Z},
\left(\lambda,\omega_{i}-\omega_{i+1}\right)\notin\left(\nu,\omega_{i}-\omega_{i+1}\right)+{\mathbb
Z} $ for $ i=1,\dots ,n-1 $, and $
\left(\lambda,\omega_{1}-\omega_{2}\right)\notin -
\left(\nu,\omega_{1}\right)+{\mathbb Z} $. If $ {\mathcal
C}^{\lambda}_{\bar{\nu}}$ is not empty it has exactly one up to an
isomorphism simple object.

\end{corollary}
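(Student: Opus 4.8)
The plan is to turn the statement into an explicit system of congruences using Theorem~\ref{th1}, Lemma~\ref{lm1} and Corollary~\ref{cor1}, and then to solve it. Since $\lambda+\rho$ is non-integral or singular integral, Theorem~\ref{th1} tells us that every simple object of $\mathcal{C}^{\lambda}_{\bar\nu}$ is of the form $\mathcal{F}_{\mu}(V_0)$ with $\mu_i\notin\mathbb{Z}$ for all $i$ and $V_0$ an irreducible $P$-module of highest weight $\tau$; as $V_0$ is a finite-dimensional module over the group $P$, necessarily $\tau\in\Lambda$ and $(\tau,\alpha_i)\in\mathbb{Z}_{\geq 0}$ for $i=2,\dots,n$. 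By Lemma~\ref{lm1}, such a module lies in $\mathcal{C}^{\lambda}_{\bar\nu}$ precisely when $\chi_{\gamma(|\mu|\varepsilon_0)+\tau}=\chi_{\lambda}$ and $\gamma(\mu)+\tau\equiv\nu\pmod Q$. Hence $\mathcal{C}^{\lambda}_{\bar\nu}\neq\varnothing$ iff such a pair $(\mu,V_0)$ exists, and the last assertion of the corollary is the claim that any two such pairs yield isomorphic modules.

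The key step is the central character condition. Working in the $\varepsilon$-basis and using that $\gamma(|\mu|\varepsilon_0)=|\mu|\omega_1$ is orthogonal to $\alpha_2,\dots,\alpha_n$, both $\gamma(|\mu|\varepsilon_0)+\tau+\rho$ and $\lambda+\rho$ lie in the open dominant chamber of $[\mathfrak{g}_0,\mathfrak{g}_0]\cong\mathfrak{sl}(n)$, so their $\varepsilon$-coordinates with indices $1,\dots,n$ are strictly decreasing; being $W$-conjugate, the two have the same multiset of all $n+1$ coordinates. The only coordinate not forced into this strictly decreasing block is the one in the $\varepsilon_0$-slot, which carries the free parameter $|\mu|$. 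Here is where the hypothesis is used: because $\lambda+\rho$ is \emph{non-integral or singular}, the coordinate in its $\varepsilon_0$-slot is either incongruent modulo $\mathbb{Z}$ to all the others, or equal to exactly one of them, and in neither case can it be absorbed into the strictly decreasing $\mathfrak{sl}(n)$-part; comparing multisets then forces $\gamma(|\mu|\varepsilon_0)+\tau=\lambda$, i.e. $\tau=\lambda-|\mu|\omega_1$. Integrality of $\tau$ now forces $|\mu|\equiv(\lambda,\alpha_1)\pmod{\mathbb{Z}}$, so the only remaining freedom is the choice of $|\mu|$ inside this coset. (In the regular integral case the $\varepsilon_0$-coordinate of $\lambda+\rho$ can be matched in several inequivalent ways, which is the origin of the $n$ non-isomorphic simples appearing in part~(b) of the main theorem.)

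With $\tau=\lambda-|\mu|\omega_1$ fixed, the congruence $\gamma(\mu)+\tau\equiv\nu\pmod Q$ determines $\mu$ modulo the root lattice once $|\mu|$ is chosen. Writing $m_j:=\mu_0+\dots+\mu_{j-1}$, so that $\mu_0=m_1$, $\mu_i=m_{i+1}-m_i$ for $1\le i\le n-1$, and $\mu_n=|\mu|-m_n$, and pairing the support congruence with the fundamental coweights, one computes $m_j\equiv(\nu,\omega_j)-(\lambda,\omega_j)+|\mu|\pmod{\mathbb{Z}}$; since $|\mu|\equiv(\lambda,\alpha_1)$ and $\omega_1-\alpha_1=\omega_2-\omega_1$, the requirements $\mu_n\notin\mathbb{Z}$, $\mu_i\notin\mathbb{Z}$ ($1\le i\le n-1$) and $\mu_0\notin\mathbb{Z}$ become exactly the three displayed families of inequalities (the last one producing the term $-(\nu,\omega_1)$). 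Conversely, if all three conditions hold, set $|\mu|:=(\lambda,\alpha_1)$, take $V_0$ to be the irreducible $P$-module with highest weight $\lambda-|\mu|\omega_1$, pick $\mu$ in the prescribed class modulo $Q$, and observe that cuspidality is then automatic; this exhibits an object of $\mathcal{C}^{\lambda}_{\bar\nu}$.

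Finally, for uniqueness, the construction is rigid apart from the choice of $|\mu|$ in the coset $(\lambda,\alpha_1)+\mathbb{Z}$; replacing $|\mu|$ by $|\mu|+\delta$ with $\delta\in\mathbb{Z}$, together with $\tau$ by $\tau-\delta\omega_1$ and $\mu$ by $\mu+\delta\varepsilon_0$ modulo $Q$, is precisely a transformation of the kind in Corollary~\ref{cor1}, so all the modules $\mathcal{F}_{\mu}(V_0)$ so obtained are isomorphic, and $\mathcal{C}^{\lambda}_{\bar\nu}$ has a unique simple object up to isomorphism. The main obstacle I anticipate is making the multiset comparison of the second paragraph airtight --- i.e. genuinely using ``non-integral or singular'' to conclude $\gamma(|\mu|\varepsilon_0)+\tau=\lambda$ and thereby to exclude the regular-integral ambiguity --- together with the somewhat delicate coordinate bookkeeping required to see that the cuspidality conditions match the precise pairings in the statement.
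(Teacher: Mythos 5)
Your proof is correct and follows the same overall plan as the paper: reduce to modules of the form $\mathcal F_\mu(V_0)$ via Theorem~\ref{th1} and Lemma~\ref{lm1}, then translate cuspidality (all $\mu_i\notin\mathbb Z$) into the displayed non-integrality conditions by pairing the support relation with the fundamental weights. The one genuine piece of added value is your second paragraph: you justify the passage from $\chi_{\gamma(|\mu|\varepsilon_0)+\tau}=\chi_\lambda$ to the actual equality $\gamma(|\mu|\varepsilon_0)+\tau=\lambda$, which the paper's proof silently takes for granted (it simply opens with ``Recall that $\lambda=\gamma(|\mu|\varepsilon_0)+\tau$''); your multiset-of-coordinates argument, using the non-integral/singular hypothesis to see that the $\varepsilon_0$-entry cannot be swapped into the strictly decreasing $\mathfrak{sl}(n)$-part, is sound and is exactly the point at which the regular-integral ambiguity is excluded. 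The remaining bookkeeping with the partial sums $m_j=\mu_0+\dots+\mu_{j-1}$ is equivalent to the paper's direct extraction of $\mu_n,\mu_{n-1},\dots,\mu_0$ from the relations $(\varphi,\alpha_i)=\mu_{i-1}-\mu_i+(\lambda,\alpha_i)$, and your choice to invoke Corollary~\ref{cor1} at the end for uniqueness, rather than upfront as the normalization $(\tau,\alpha_1)=0$, is an equivalent rearrangement.
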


\begin{proof} Recall that $ \lambda=\gamma\left(|\mu|\varepsilon_{0}\right)+\tau $, where $ \tau $ is the highest weight of $ V_{0} $. As
follows from Corollary~\ref{cor1}, we can assume that $
\left(\tau,\alpha_{1}\right)=0 $. We fix $\mu \in {\mathfrak h}^*$
so that $ \varphi:=\gamma\left(\mu\right)+\tau $ is a
representative of $\bar{\nu}$ in ${\mathfrak h}^*/Q$, i.e.
$\bar{\varphi} = \bar{\nu}$. One has the following relations
between $ \mu,\lambda $ and $ \varphi $
\begin{equation}
\left(\varphi,\alpha_{1}\right)=\mu_{0}-\mu_{1}\text{,
}\left(\varphi,\alpha_{i}\right)=\mu_{i-1}-\mu_{i}+\left(\lambda,\alpha_{i}\right),
i=2,\dots ,n\text{, }\left(\lambda,\alpha_{1}\right)=|\mu|.
\notag\end{equation}

The above relations imply that
\begin{equation}
\mu_{n}\left(n+1\right)+\left(\varphi,n\alpha_{n}+\dots +
2\alpha_2 + \alpha_{1}\right)-\left(\lambda,n\alpha_{n}+\dots ++
2\alpha_2 + \alpha_{1}\right)=0. \notag\end{equation} Using that $
n\alpha_{n}+\dots + 2\alpha_2 +
 \alpha_{1}=\left(n+1\right)\omega_{n} $, we obtain
\begin{equation}
\mu_{n}=\left(\lambda-\varphi,\omega_{n}\right)\text{, }\mu_{n-1}=\left(\lambda-\varphi,\omega_{n-1}-\omega_{n}\right),\dots ,
\notag\end{equation}
\begin{equation}
\mu_{1}=\left(\lambda-\varphi,\omega_{1}-\omega_{2}\right),\mu_{0}=\left(\lambda,\omega_{1}-\omega_{2}\right)+\left(\varphi,\omega_{1}\right).
\notag\end{equation} Since $ {\mathcal F}_{\mu}\left(V_{0}\right)
$ is cuspidal iff $ \mu_{i}\notin{\mathbb Z} $ for all $ i=0,\dots
,n $, the proof is completed.\end{proof}

Recall now the definition of translation functors. Let $ V $ be a
finite-dimensional $ {\mathfrak g} $-module and $
\eta,\lambda\in{\mathfrak h}^{*} $ be such that $\tau = \lambda
-\eta$ is in the support of $V$. Let $ {\mathfrak g}^{\kappa}$-mod
denote the category of $ {\mathfrak g} $-modules which admit a
generalized central character $ \chi_{\kappa} $. The translation
functor $ T_{\eta}^{\lambda}:{\mathfrak g}^{\eta}\mbox{-mod} \to
{\mathfrak g}^{\lambda}\mbox{-mod}$ is defined by $
T_{\eta}^{\lambda}\left(M\right)=\left(M\otimes
V\right)^{\chi_\lambda} $, where $ \left(M\otimes
V\right)^{\chi_\lambda} $ stands for the direct summand of $
M\otimes V $ admitting generalized central character $
\chi_{\lambda} $. Assume in addition that $ \tau $ belongs to the
$W$-orbit of the highest weight of $ V $,
the stabilizers of $ \eta+\rho $ and $
\lambda+\rho $ in the Weyl group coincide and $ \nu+\rho $, $
\lambda+\rho $ lie in the same Weyl chamber. Then $
T_{\eta}^{\lambda}:{\mathfrak g}^{\eta}\mbox{-mod} \to {\mathfrak
g}^{\lambda}\mbox{-mod}$ defines an equivalence of categories (see
\cite{BG}).

\begin{lemma} \label{lm3}\myLabel{lm3}\relax  With the above notations, if $ M $ is cuspidal, then $ T_{\eta}^{\nu}\left(M\right) $
is cuspidal and $
s\left(M\right)+s\left(V\right)=s\left(T_{\eta}^{\nu}\left(M\right)\right)
$.

\end{lemma}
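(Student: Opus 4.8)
The plan is to verify the two assertions — that $T_\eta^\nu(M)$ is cuspidal and that $s(T_\eta^\nu(M)) = s(M) + s(V)$ — directly from the definitions, using that $M \otimes V$ is already a weight module whose behaviour under root vectors is controlled by that of $M$. First I would observe that for a finite-dimensional $\mathfrak g$-module $V$ and a weight module $M$, the tensor product $M \otimes V$ is again a weight module with $\operatorname{supp}(M\otimes V) \subseteq \operatorname{supp} M + \operatorname{supp} V$, each weight space of $M\otimes V$ is finite-dimensional (it is a finite sum of tensor products of finite-dimensional spaces), and $M\otimes V$ is finitely generated over $U(\mathfrak g)$ since $M$ is and $V$ is finite-dimensional. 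Taking the direct summand $(M\otimes V)^{\chi_\nu}$ preserves all of these properties, so the only substantive point is bijectivity of the root vectors $X \in \mathfrak g_\alpha$ on $T_\eta^\nu(M)$.

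For the bijectivity, the key idea is that $X$ acts on $M\otimes V$ as $X\otimes 1 + 1\otimes X$, and on the finite-dimensional factor $V$ the operator $1\otimes X$ is nilpotent of bounded order, whereas $X\otimes 1$ is bijective on each weight space since $M$ is cuspidal. More precisely, I would filter $M\otimes V$ by the span of weight vectors of $V$ arranged so that $X$ acts strictly upper-triangularly on the associated graded in the $V$-direction; on each graded piece $X$ acts as $X\otimes 1$, which is an isomorphism $M^\mu \otimes V^\beta \to M^{\mu+\alpha}\otimes V^\beta$ because $X : M^\mu \to M^{\mu+\alpha}$ is. A standard argument (upper-triangular with invertible diagonal blocks, in the appropriate finite-dimensional setting obtained by fixing a target weight of $M\otimes V$ and noting only finitely many $V$-weights contribute) then shows $X$ is bijective on $M\otimes V$, hence on any direct summand, in particular on $T_\eta^\nu(M)$. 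I expect this filtration/triangularity bookkeeping to be the main obstacle — not because it is deep, but because one must be careful that on a fixed weight space $(M\otimes V)^\sigma$ the relevant operator decomposes as a finite block-triangular matrix so that invertibility of the blocks gives invertibility of the whole; this is routine once set up correctly.

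Finally, for the support statement, the inclusion $s(T_\eta^\nu(M)) \subseteq s(M) + s(V)$ is immediate from $\operatorname{supp}(M\otimes V) \subseteq \operatorname{supp} M + \operatorname{supp} V$ together with the fact that passing to the summand $(M\otimes V)^{\chi_\nu}$ only shrinks the support. For the reverse inclusion I would use that $s(M)+s(V)$ is a finite union of $Q$-cosets, and on each such coset the corresponding summand of $M\otimes V$, if nonzero, is a cuspidal module (already handled above) so its support is the entire coset; it remains to check that each coset $\bar\mu + \bar\beta$ with $\bar\mu\in s(M)$, $\bar\beta\in s(V)$ actually survives in the $\chi_\nu$-summand. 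Here I would invoke the hypotheses on $\eta,\lambda=\nu,\tau$ ($\tau$ in the $W$-orbit of the highest weight of $V$, matching stabilizers, same Weyl chamber) to guarantee, just as in the classical equivalence-of-categories setting of \cite{BG}, that the translation does not kill these cosets; concretely one exhibits a nonzero weight vector in $(M\otimes V)^{\chi_\nu}$ of each required weight by choosing $m \in M^\mu$ and $v \in V^{\lambda-\eta-(\mu-\text{(appropriate weight)})}$ appropriately. The details of this last step mirror the proof of Lemma~\ref{lm1} and the standard translation-functor formalism, so I would only sketch it and refer to \cite{BG}.
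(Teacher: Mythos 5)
Your proposal is essentially correct and follows the same overall strategy as the paper: reduce to $M\otimes V$, establish cuspidality of $M\otimes V$, then pass to the direct summand. The place where you diverge is the argument for bijectivity of root vectors. The paper's proof is shorter: it observes that the $X$-action, being free (injective) on $M$, remains free on $M\otimes V$, and that the weight multiplicities of $M\otimes V$ are all equal (since those of $M$ are and $V$ is finite-dimensional); injectivity between finite-dimensional spaces of equal dimension then yields bijectivity without any filtration bookkeeping. Your block-triangular argument (filtering by the $V$-weight in the $\alpha$-direction, diagonal blocks $X\otimes 1$ invertible, off-diagonal $1\otimes X$ strictly raising the filtration) is a valid and perhaps more transparent alternative; it directly produces invertibility on each weight space without appealing to the multiplicity count. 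For the support statement, the paper simply declares it obvious; your more detailed version is correct but somewhat circuitous. The cleanest route, implicit in the paper's remark, is that $s(V)$ is a single coset of $Q$ in $\Lambda$ (since $V$ is the simple module with extreme weight $\tau$), so $s(M\otimes V) = s(M) + s(V)$ is immediate, and faithfulness of the equivalence $T_\eta^\lambda$ ensures no coset is lost when passing to the $\chi_\lambda$-summand; you arrive at the same conclusion by invoking the translation-functor hypotheses, but you do not explicitly use the single-coset observation, which is what makes the reverse inclusion genuinely one line rather than a coset-by-coset check.
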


\begin{proof} First we observe that $ M\otimes V $ is a semisimple  $ {\mathfrak h}$-module with finite weight
multiplicities. Let $ X\in{\mathfrak g}_{\alpha} $. Since $ M $ is
cuspidal the action of $ X $ is free, therefore the action of $ X
$ on $ M\otimes V $ is free as well. Since the multiplicities of
all weight spaces in $ M $ are the same, the weight multiplicities
of $ M\otimes V $ are all the same as well. Therefore $ M\otimes V
$ is cuspidal. Since $ \left(M\otimes V\right)^{\chi_{\nu}} $ is a
direct summand of $ M\otimes V $, $ \left(M\otimes
V\right)^{\chi_{\nu}} $ is also cuspidal. The second statement is
obvious.\end{proof}

\begin{lemma} \label{lm2}\myLabel{lm2}\relax  If the category $ {\mathcal C}^{\chi} $ is not empty, then it is equivalent to
$ {\mathcal C}^{\gamma\left(t\varepsilon_{0}\right)} $ for some
$ t\in ({\mathbb C}\backslash{\mathbb Z})\cup \{0,-1,\dots,-n\} $.

\end{lemma}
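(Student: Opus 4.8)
The plan is to realize the desired equivalence by a single translation functor, using Lemma~\ref{lm3} to guarantee that cuspidality is preserved in both directions. Since ${\mathcal C}^{\chi}$ is nonempty it contains a simple module, so by Theorem~\ref{th1} we may write $\chi=\chi_{\lambda}$ with $\lambda$ dominant integral when restricted to $\left[{\mathfrak g}_{0},{\mathfrak g}_{0}\right]\cong\mathfrak{sl}(n)$. Fix a representative of $\lambda$ in ${\mathbb C}^{n+1}$ with $\lambda_{1},\dots,\lambda_{n}\in{\mathbb Z}$ (possible since these differ by integers). Using that $\rho$ has $\varepsilon$-coordinates $(n,n-1,\dots,1,0)$, the $\varepsilon$-coordinates of $\lambda+\rho$ then have the form $(a;\,b_{1}>b_{2}>\dots>b_{n})$, where $b_{1},\dots,b_{n}\in{\mathbb Z}$ are the last $n$ coordinates (strictly decreasing by the dominance condition) and $a$ is the $0$-th coordinate, with $a\in{\mathbb Z}$ precisely when $\chi$ is integral. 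On the other hand $\gamma(t\varepsilon_{0})=t\omega_{1}$ has $\varepsilon$-coordinates $(t,0,\dots,0)$, so $\gamma(t\varepsilon_{0})+\rho$ has coordinates $(t+n;\,n-1,n-2,\dots,1,0)$; both its stabilizer in $W$ and the Weyl chamber containing it are governed entirely by the position of $t+n$ among $n-1,\dots,0$, exactly as those of $\lambda+\rho$ are governed by the position of $a$ among $b_{1},\dots,b_{n}$.

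The heart of the argument is to choose $t$ so that $\gamma(t\varepsilon_{0})+\rho$ has the same stabilizer as $\lambda+\rho$ and lies in the same Weyl chamber, subject to keeping $\tau:=\lambda-\gamma(t\varepsilon_{0})\in\Lambda$ (which only fixes the coset of $t$ modulo ${\mathbb Z}$). If $\chi$ is non-integral then $a\notin{\mathbb Z}$, both $\lambda+\rho$ and $\gamma(t\varepsilon_{0})+\rho$ are regular, and the only positive roots pairing integrally with either are the $\varepsilon_{i}-\varepsilon_{j}$ with $i,j\geq1$, on which both pair positively; so any $t\notin{\mathbb Z}$ in the right coset works and $t\in{\mathbb C}\setminus{\mathbb Z}$. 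If $\chi$ is regular integral we may take $\lambda$ itself dominant integral, so $\lambda+\rho$ is strictly dominant and $t=0$ works. If $\chi$ is singular integral then $b_{1},\dots,b_{n}$ are distinct and the only coincidence among the coordinates of $\lambda+\rho$ is $a=b_{j}$ for a unique $j\in\{1,\dots,n\}$; then $t=-j$ makes the $0$-th and $j$-th coordinates of $\gamma(t\varepsilon_{0})+\rho$ coincide, so the stabilizers (each generated by the reflection swapping those two positions) agree, the Weyl chambers agree, and $t\in\{-1,\dots,-n\}$. In all cases $t\in({\mathbb C}\setminus{\mathbb Z})\cup\{0,-1,\dots,-n\}$.

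Finally, let $V$ be the irreducible finite-dimensional ${\mathfrak g}$-module whose highest weight is the dominant weight in the $W$-orbit of $\tau\in\Lambda$; then $\tau$ is a weight of $V$ lying in the $W$-orbit of its highest weight, in particular $\tau\in\operatorname{supp}V$. Together with the matching of stabilizers and Weyl chambers arranged above, the hypotheses of the translation-functor theorem are met, so by \cite{BG} the functor $T^{\lambda}_{\gamma(t\varepsilon_{0})}\colon{\mathfrak g}^{\gamma(t\varepsilon_{0})}\mbox{-mod}\to{\mathfrak g}^{\lambda}\mbox{-mod}$ is an equivalence. By Lemma~\ref{lm3}, applied to it and to its quasi-inverse $T^{\gamma(t\varepsilon_{0})}_{\lambda}$ (built from $V^{*}$), both functors carry cuspidal modules to cuspidal modules, so $T^{\lambda}_{\gamma(t\varepsilon_{0})}$ restricts to an equivalence ${\mathcal C}^{\gamma(t\varepsilon_{0})}\cong{\mathcal C}^{\lambda}={\mathcal C}^{\chi}$. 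The step needing the most care is the choice of $t$ in the second paragraph: one must verify that in the integral cases the position of the $0$-th coordinate of $\lambda+\rho$ always corresponds to an admissible value $t\in\{0,-1,\dots,-n\}$, which works because in the regular case one is free to move $a$ above all the $b_{i}$, while in the singular case the defect of $\lambda+\rho$ reduces to a single coordinate coincidence with $1\leq j\leq n$.
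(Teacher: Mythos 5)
Your proof is correct and follows essentially the same route as the paper's: invoke Theorem~\ref{th1} to put $\lambda$ in the prescribed form, split into the non-integral, regular-integral and singular-integral cases, choose $t$ so that $\gamma(t\varepsilon_0)+\rho$ has the same stabilizer and Weyl chamber as $\lambda+\rho$ with $\tau=\lambda-\gamma(t\varepsilon_0)\in\Lambda$, and then apply the translation functor together with Lemma~\ref{lm3} to restrict the equivalence to the cuspidal categories. The only difference is cosmetic: you spell out the $\varepsilon$-coordinate bookkeeping that the paper leaves to the reader.
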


\begin{proof} Since $ {\mathcal C}^{\chi} $ is not empty, Theorem~\ref{th1} implies that $ \chi=\chi_{\lambda} $ where $ \lambda $ is
dominant when restricted to $ \left[{\mathfrak g}_{0},{\mathfrak
g}_{0}\right] \cong  \mathfrak{sl}\left(n\right) $. If $ \lambda $
itself is integral dominant it is well known that $ {\mathfrak
g}^{\lambda}$-mod is equivalent to $ {\mathfrak g}^{0}\mbox{-mod}
$ (see \cite{BG} for instance). Thus, by Lemma \ref{lm3},
$ {\mathcal C}^{\lambda} $ is equivalent to $ {\mathcal C}^{0}$.

Now consider the case when the central character of
$\chi=\chi_{\lambda}$ is not
integral. Using the action of the Weyl group, one can choose
$ \lambda=\gamma\left(t\varepsilon_{0}\right)+\tau
$, where $ \tau $ is some integral dominant weight and $
t\notin{\mathbb Z} $. One can easily see that $
\eta+\rho:=\gamma\left(t\varepsilon_{0}\right)+\rho $ and $
\lambda+\rho $ are both regular and belong to the same Weyl chamber.
Therefore $ T_{\eta}^{\lambda} $ defines an equivalence of the
categories $ {\mathcal C}^{\eta} $ and $ {\mathcal C}^{\lambda} $
and hence the lemma holds for a non-integral central character.

Finally, let us consider the case when $\chi=\chi_{\lambda}$ is
singular integral. The conditions on $\lambda$ in Theorem
\ref{th1} ensure that one can choose $\lambda$ satisfying
$\left(\lambda+\rho,\varepsilon_{0}-\varepsilon_{k}\right)=0 $ for
exactly one $ 0<k\leq n $. Let us put $
\eta=\gamma\left(-k\varepsilon_{0}\right) $, $ \tau=\lambda-\eta
$. Then the stabilizers of $ \eta+\rho $ and $ \lambda+\rho $
coincide, $ \eta+\rho $ and $ \lambda+\rho $ belong to the same
Weyl chamber and $ \tau $ is a regular integral weight. Hence
$\tau$ is in the $W$-orbit of the highest weight of some
finite-dimensional $ {\mathfrak g} $-module $ V $. Therefore $
T_{\eta}^{\lambda} $ defines an equivalence of the categories $
{\mathcal C}^{\eta} $ and $ {\mathcal C}^{\lambda} $, and the
lemma holds in this case as well.\end{proof}

Denote by $\sigma$ the antiautomorphism of $\mathfrak g$ defined
by $\sigma (X)=X^t$. For any weight module  $M=\bigoplus_{\mu \in
\mathfrak{h}^*} M^{\mu}$ one can construct a new module
$M^{\vee}:=\bigoplus_{\mu \in \mathfrak{h}^*} (M^\mu)^*$ with
$\mathfrak g$-action defined by the formula $\langle Xu,m
\rangle=\langle u,\sigma(X)m \rangle$ for any $X\in\mathfrak g,
u\in (M^{\mu})^*,$ and $m\in M^{\nu}$. Then \;
${}^{\vee}:{\mathcal C}\to {\mathcal C}$ is a contravariant exact
functor, which maps $ {\mathcal C}^{\chi}_{\bar{\nu}} $ to itself.
If $\chi$ is singular or non-integral  $ {\mathcal
  C}^{\chi}_{\bar{\nu}}$ has only one simple object and therefore
$L^{\vee} \cong L$ for any simple module $L$ in $ {\mathcal
  C}^{\chi}_{\bar{\nu}}$. We show in Section 6 (Lemma \ref{lm32})
that every simple
module $L$ in $ {\mathcal  C}^{0}_{\bar{\nu}}$ can be obtained as
a unique simple submodule
and a unique simple quotient in
$T^\lambda_\eta (L')$ for some singular $\eta$ and simple module $L'$
in $ {\mathcal  C}^{\eta}$. Since ${}^\vee$ commutes with
$T^\lambda_\eta$ we obtain that
$L^{\vee} \cong L$ for any simple module $L$ in $\mathcal C$.

\section{Extensions between cuspidal modules }

Let $ {\mathfrak s} $ be a Lie subalgebra of $\mathfrak g$
containing $ {\mathfrak h}$. We consider the  functors Ext in the
category of $ {\mathfrak s} $-modules that are semisimple over $
{\mathfrak h} $. If $ M $ and $ N $ are two $ {\mathfrak s}
$-modules that are semisimple over $ {\mathfrak h} $, then  $
\operatorname{Ext}_{{\mathfrak s},\mathfrak h}^{i}\left(M,N\right) $ can be
expressed in terms of relative Lie algebra cohomology. In
particular,
\begin{equation}
\operatorname{Ext}_{\mathfrak s,\mathfrak h}^{1}\left(M,N\right)\cong
H^{1}\left({\mathfrak s},{\mathfrak
h};\operatorname{Hom}_{\mathbb C}\left(M,N\right)\right), \notag\end{equation}
where the right hand side is the corresponding relative cohomology
group (see \cite{Fuk} sections 1.3 and 1.4 for instance). For a sake of completeness we
recall the definition of $ H^{1}\left({\mathfrak s},{\mathfrak
h};\operatorname{Hom}_{\mathbb C}\left(M,N\right)\right) $. The set of $ 1
$-cocycles $ C^{1}\left({\mathfrak s},{\mathfrak
h};\operatorname{Hom}_{\mathbb C}\left(M,N\right)\right) $ is the subspace of
all $ c\in\operatorname{Hom}_{{\mathfrak h}}\left({\mathfrak
s},\operatorname{Hom}_{\mathbb C}\left(M,N\right)\right) $ such that
\begin{equation}
c\left({\mathfrak h}\right)=0\text{,
}c\left(\left[g_{1},g_{2}\right]\right)=\left[g_{1},c\left(g_{2}\right)\right]-\left[g_{2},c\left(g_{1}\right)\right]
\label{equ3}\end{equation}\myLabel{equ3,}\relax for any $
g_{1},g_{2}\in{\mathfrak s} $. A $ 1 $-cocycle $ c $ is a
coboundary if $ c\left(g\right)=\left[g,\varphi\right] $ for some
$ \varphi\in\operatorname{Hom}_{{\mathfrak h}}\left(M,N\right) $.
Denote by $ B^{1}\left({\mathfrak s},{\mathfrak
h};\operatorname{Hom}_{\mathbb C}\left(M,N\right)\right) $ the space of all
coboundaries. Then
\begin{equation}
H^{1}\left({\mathfrak s},{\mathfrak
h};\operatorname{Hom}_{\mathbb C}\left(M,N\right)\right):=C^{1}\left({\mathfrak
s},{\mathfrak
h};\operatorname{Hom}_{\mathbb C}\left(M,N\right)\right)/B^{1}\left({\mathfrak
s},{\mathfrak h};\operatorname{Hom}_{\mathbb C}\left(M,N\right)\right).
\notag\end{equation}

For any multiplicative subset $X$ of $U(\mathfrak{s})$
we denote the localization of
$U(\mathfrak{s})$ relative to $X$  by
$U_X(\mathfrak{s})$. For any $\mathfrak s$-module
$M$, ${\mathcal D}_X M := U_X(\mathfrak{s})\otimes_{U(\mathfrak s)} M$ denotes
the localization of $M$ relative to $X$.

\begin{lemma} \label{lm4}\myLabel{lm4}\relax  Let ${\mathfrak s}={\mathfrak h}\oplus{\mathfrak g}_{1} $ or
 ${\mathfrak s}={\mathfrak h}\oplus{\mathfrak g}_{- 1} $, $ M $ be a simple cuspidal $ {\mathfrak g} $-module,
and $ \mu\in\operatorname{supp} M $. Then $
\operatorname{End}_{{\mathfrak s}}\left(M\right)\cong
\operatorname{End}_{\mathbb C} M^{\mu} $ and
$ \operatorname{Ext}_{{\mathfrak
s},\mathfrak h}^{1}\left(M,M\right)=0 $.

\end{lemma}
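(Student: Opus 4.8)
The plan is to exploit that $\mathfrak g_{1}$ is abelian (indeed $[\mathfrak g_{1},\mathfrak g_{1}]\subseteq\mathfrak g_{2}=0$), so that the elements $E_{10},\dots,E_{n0}$ act on $M$ as pairwise commuting \emph{bijective} operators $x_{1},\dots,x_{n}$, where $x_{i}$ restricts to an isomorphism $M^{\nu}\to M^{\nu+\beta_{i}}$ for every weight $\nu$, $\beta_{i}:=\varepsilon_{i}-\varepsilon_{0}$. The key combinatorial input is that $\{\beta_{1},\dots,\beta_{n}\}$ is a $\mathbb Z$-basis of the root lattice $Q$: since $\beta_{i}=-(\alpha_{1}+\dots+\alpha_{i})$, the transition matrix to the simple roots is unitriangular up to signs. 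As $M$ is simple and cuspidal, $\operatorname{supp}M=\mu+Q$, so for every $\nu\in\operatorname{supp}M$ there is a \emph{unique} monomial $g_{\nu}:=x_{1}^{k_{1}}\cdots x_{n}^{k_{n}}$, with $\nu-\mu=\sum_{i}k_{i}\beta_{i}$ and $k_{i}\in\mathbb Z$, and it restricts to an isomorphism $M^{\mu}\to M^{\nu}$; moreover $g_{\nu+\beta_{j}}=x_{j}g_{\nu}$ for each $j$. The case $\mathfrak s=\mathfrak h\oplus\mathfrak g_{-1}$ is entirely analogous (replace $\beta_{i}$ by $-\beta_{i}$ and $E_{i0}$ by $E_{0i}$), so I treat only $\mathfrak s=\mathfrak h\oplus\mathfrak g_{1}$.

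For the first assertion, an $\mathfrak s$-endomorphism $\phi$ of $M$ commutes with $\mathfrak h$, hence preserves each weight space, so restriction to $M^{\mu}$ gives an algebra homomorphism $\operatorname{End}_{\mathfrak s}(M)\to\operatorname{End}_{\mathbb C}M^{\mu}$. It is injective: if $\phi|_{M^{\mu}}=0$ then, since $\phi$ commutes with each $x_{i}$ and $x_{i}^{-1}$ while $M^{\nu}=g_{\nu}M^{\mu}$, one gets $\phi|_{M^{\nu}}=0$ for all $\nu\in\operatorname{supp}M$. It is surjective: given $\psi\in\operatorname{End}_{\mathbb C}M^{\mu}$, define $\phi$ on each weight space by $\phi|_{M^{\nu}}:=g_{\nu}\psi g_{\nu}^{-1}$. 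This $\phi$ is weight preserving, hence commutes with $\mathfrak h$, and it commutes with each $E_{j0}$ because on $M^{\nu}$ one has $g_{\nu+\beta_{j}}^{-1}x_{j}=g_{\nu}^{-1}$, so $\phi(x_{j}m)=g_{\nu+\beta_{j}}\psi g_{\nu+\beta_{j}}^{-1}x_{j}m=x_{j}g_{\nu}\psi g_{\nu}^{-1}m=x_{j}\phi(m)$. Thus $\phi\in\operatorname{End}_{\mathfrak s}(M)$ with $\phi|_{M^{\mu}}=\psi$, so the restriction map is an isomorphism.

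For $\operatorname{Ext}^{1}_{\mathfrak s,\mathfrak h}(M,M)=0$ I would show that every short exact sequence $0\to M\to E\to M\to 0$ of $\mathfrak s$-modules semisimple over $\mathfrak h$ splits. Passing to $\nu$-weight spaces is exact, so for each $\nu$ there is a short exact sequence $0\to M^{\nu}\to E^{\nu}\to M^{\nu}\to 0$, and $E_{i0}$ carries it to the one at $\nu+\beta_{i}$, inducing isomorphisms on both the sub and the quotient; by the five lemma $E_{i0}\colon E^{\nu}\to E^{\nu+\beta_{i}}$ is an isomorphism. Hence the $E_{i0}$ act bijectively on $E$ as well, the corresponding monomials $\tilde g_{\nu}$ are isomorphisms $E^{\mu}\to E^{\nu}$, and $\tilde g_{\nu+\beta_{j}}=E_{j0}\tilde g_{\nu}$. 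Now choose any $\mathbb C$-linear section $\iota_{\mu}\colon M^{\mu}\to E^{\mu}$ of the projection and set $\iota|_{M^{\nu}}:=\tilde g_{\nu}\,\iota_{\mu}\,g_{\nu}^{-1}$. Then $\iota$ is weight preserving, hence $\mathfrak h$-linear; it is a section of $E\to M$ since that projection intertwines the $g_{\nu}$ with the $\tilde g_{\nu}$; and it commutes with every $E_{j0}$ by the same computation as above. So $\iota$ is an $\mathfrak s$-splitting and the sequence splits. (Equivalently, in the cochain language recalled before the lemma: a $1$-cocycle $c$ with $c(\mathfrak h)=0$ is given by the weight-$\beta_{i}$ operators $c_{i}:=c(E_{i0})$ subject to $[x_{i},c_{j}]=[x_{j},c_{i}]$, and one solves $c_{i}=[x_{i},\varphi]$ for a weight-preserving $\varphi$ by discrete integration over $\mu+Q\cong\mathbb Z^{n}$, the cocycle relation being exactly the integrability condition.)

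The only substantive points are the five-lemma step that upgrades $E$ to a module on which the $E_{i0}$ act bijectively, and the bookkeeping that makes the propagated maps well defined and $\mathfrak g_{1}$-equivariant; both rest on the $x_{i}$ commuting and on $\{\beta_{1},\dots,\beta_{n}\}$ being a $\mathbb Z$-basis of $Q$, which is what makes $g_{\nu}$ unambiguous and forces $g_{\nu+\beta_{j}}=x_{j}g_{\nu}$. I expect that last identity, together with the resulting intertwining check, to be the place that needs the most care; the rest is formal.
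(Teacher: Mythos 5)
Your proof is correct and is essentially the paper's argument unpacked into explicit weight-space bookkeeping: where the paper realizes $M$ as the induced module $U_X(\mathfrak s)\otimes_{U(\mathfrak h)} M^{\mu}$ over the Ore localization and invokes Frobenius reciprocity for the endomorphism ring and freeness over $\mathbb C[X_1^{\pm1},\dots,X_n^{\pm1}]$ for the splitting, you carry out the same propagation by hand via the transport monomials $g_{\nu}$. The one tacit ingredient worth stating explicitly is that $\operatorname{supp}M=\mu+Q$ for a simple cuspidal $M$, which is exactly the fact that $s(M)$ is a singleton for an indecomposable cuspidal module.
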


\begin{proof} Let $\mathfrak s = {\mathfrak h}\oplus{\mathfrak g}_{1}$ (the case $\mathfrak s = {\mathfrak h}\oplus{\mathfrak g}_{-1}$ is treated
in the same way).  Let $ X_{1},\dots ,X_{n} $ be an $
\operatorname{ad}_{{\mathfrak h}} $-eigenbasis of $ {\mathfrak
g}_{1} $. Since $M$ is cuspidal, the action of $X_1,...,X_n$ is invertible.
Therefore the localization ${\mathcal D}_X M$ of $ M $
relative to $X:=\langle X_{1},\dots ,X_{n}\rangle \subset
U(\mathfrak{s}) $ is isomorphic to $ M $. In other words, $ M $ is
a module over $ U_X\left({\mathfrak s}\right) $. Moreover, if $\mu$ is a
weight of $M$, then $M$ is generated by $M^{\mu}$ as
$U_X(\mathfrak{s})$-module. Hence
$ M $ is isomorphic to the induced module $U_X(\mathfrak s)\otimes_{U(\mathfrak{h})} M^{\mu} $.   Therefore
\begin{equation}
\operatorname{End}_{{\mathfrak s}}\left(M\right) \cong
\operatorname{End}_{U_X\left({\mathfrak s}\right)}\left(M\right)
\cong \operatorname{Hom}_{U(\mathfrak h)}(M^{\mu},M)
\cong\operatorname{End}_{{\mathbb C}}\left(M^{\mu}\right).
\notag\end{equation}
Thus, we have an isomorphism $
\operatorname{End}_{{\mathfrak s}}\left(M\right)\cong
\operatorname{End}_{\mathbb C} M^{\mu} $. To prove the second statement note
that any $ {\mathfrak s} $-module $ M' $ (semisimple over $
{\mathfrak h} $ ) which can be included in an exact sequence of $
{\mathfrak s} $-modules
\begin{equation}
0 \to M \to M' \to M \to 0 \notag\end{equation} is a module over $
U_X\left({\mathfrak s}\right) $ since all $ X_{i} $ are
invertible. Since
$$U_X(\mathfrak s)\otimes_{U(\mathfrak{h})} M^{\mu}\simeq
{\mathbb C}\left[X_{1}^{\pm1},\dots
  ,X_{n}^{\pm1}\right]\otimes_{\mathbb{C}} M^{\mu}, $$
$ M $ is free over $ {\mathbb
C}\left[X_{1}^{\pm1},\dots ,X_{n}^{\pm1}\right] $, the exact
sequence splits over $ U_X\left({\mathfrak s}\right) $, and
therefore over $ {\mathfrak s} $ as well.\end{proof}

The following lemma is used in the next section.

\begin{lemma} \label{lm5}\myLabel{lm5}\relax  Let $ M $ be a simple cuspidal $ {\mathfrak g} $-module and $ c $ be a $ 1 $-cocycle
in $ C^{1}\left({\mathfrak g},{\mathfrak
h};\operatorname{End}\left(M\right)\right) $. Then there exists $
\psi \in\operatorname{End}_{{\mathfrak h}}\left(M\right) $ such
that for any $ g_{1}\in{\mathfrak g}_{1} $ and $
g_{0}\in{\mathfrak g}_{0} $,
\begin{equation}
c\left(g_{1}\right)=\left[g_{1},\psi\right]\text{, }\left[g_{0},\psi\right]+c\left(g_{0}\right)\in\operatorname{End}_{{\mathfrak g}_{1}}\left(M\right).
\notag\end{equation}
\end{lemma}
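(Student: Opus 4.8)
The plan is to reduce the statement to the subalgebra $\mathfrak s=\mathfrak h\oplus\mathfrak g_1$, where Lemma~\ref{lm4} supplies $\operatorname{Ext}^1_{\mathfrak s,\mathfrak h}(M,M)=0$, and then to extract the $\mathfrak g_0$-information from the cocycle identity together with the grading relation $[\mathfrak g_0,\mathfrak g_1]\subseteq\mathfrak g_1$.

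First I would restrict $c$ to $\mathfrak s$. Since $c$ is $\mathfrak h$-linear, annihilates $\mathfrak h$, and satisfies the cocycle identity~\eqref{equ3} throughout $\mathfrak g$, the restriction $c|_{\mathfrak s}$ lies in $C^1(\mathfrak s,\mathfrak h;\operatorname{End}(M))$. By Lemma~\ref{lm4} and the isomorphism $\operatorname{Ext}^1_{\mathfrak s,\mathfrak h}(M,M)\cong H^1(\mathfrak s,\mathfrak h;\operatorname{End}(M))$, this group is zero, so $c|_{\mathfrak s}$ is a coboundary: there is $\psi\in\operatorname{Hom}_{\mathfrak h}(M,M)=\operatorname{End}_{\mathfrak h}(M)$ with $c(g)=[g,\psi]$ for all $g\in\mathfrak s$. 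In particular $c(g_1)=[g_1,\psi]$ for every $g_1\in\mathfrak g_1$, which is the first claim.

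Next I would pass to $\tilde c:=c-\partial\psi$, where $(\partial\psi)(g)=[g,\psi]$; this is again a cocycle in $C^1(\mathfrak g,\mathfrak h;\operatorname{End}(M))$ (coboundaries lie in $C^1$) and vanishes identically on $\mathfrak s$, in particular on $\mathfrak g_1$. Fix $g_0\in\mathfrak g_0$ and $g_1\in\mathfrak g_1$. Since the $\mathbb Z$-grading gives $[g_0,g_1]\in\mathfrak g_1$, we have $\tilde c([g_0,g_1])=0$; on the other hand the cocycle identity applied to $\tilde c$ reads $\tilde c([g_0,g_1])=[g_0,\tilde c(g_1)]-[g_1,\tilde c(g_0)]=-[g_1,\tilde c(g_0)]$, using $\tilde c(g_1)=0$. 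Hence $[g_1,\tilde c(g_0)]=0$ for all $g_1\in\mathfrak g_1$, that is, $\tilde c(g_0)=c(g_0)-[g_0,\psi]\in\operatorname{End}_{\mathfrak g_1}(M)$. Together with the first relation this is exactly the assertion of the lemma (possibly after adjusting a sign convention for the coboundary map).

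I do not expect any serious obstacle here: all the real content sits in the vanishing $\operatorname{Ext}^1_{\mathfrak s,\mathfrak h}(M,M)=0$ from Lemma~\ref{lm4}, and everything afterwards is a purely formal manipulation of the cocycle identity using $[\mathfrak g_0,\mathfrak g_1]\subseteq\mathfrak g_1$. The one point worth a moment's care is the routine check that restricting a $(\mathfrak g,\mathfrak h)$-cocycle to $(\mathfrak s,\mathfrak h)$ again yields a cocycle, so that Lemma~\ref{lm4} is legitimately applicable.
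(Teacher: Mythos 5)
Your proof is correct and follows essentially the same route as the paper: restrict to $\mathfrak s=\mathfrak h\oplus\mathfrak g_1$, invoke Lemma~\ref{lm4} to obtain $\psi$ with $c|_{\mathfrak s}=[\,\cdot\,,\psi]$, and then combine the cocycle identity with the grading relation $[\mathfrak g_0,\mathfrak g_1]\subseteq\mathfrak g_1$; passing to $\tilde c=c-\partial\psi$ is just a tidy repackaging of the paper's direct Jacobi manipulation. You were right to flag the sign: carrying the paper's own displayed computation through Jacobi gives $[g_1,[g_0,\psi]-c(g_0)]=0$, matching your $c(g_0)-[g_0,\psi]\in\operatorname{End}_{\mathfrak g_1}(M)$, so the $+$ in the printed statement is a harmless slip (one may replace $\psi$ by $-\psi$ at the cost of flipping the sign in the first identity; the downstream use in Lemma~\ref{lm7} is unaffected).
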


\begin{proof} The first identity follows directly  from the second
statement of Lemma~\ref{lm4} applied to $ {\mathfrak s}={\mathfrak
h}\oplus{\mathfrak g}_{1} $, since the restriction of $ c $ on $
{\mathfrak s} $ is a coboundary. To obtain the second statement,
use the identity
\begin{equation}
c\left(\left[g_{0},g_{1}\right]\right)=\left[g_{0},c\left(g_{1}\right)\right]-\left[g_{1},c\left(g_{0}\right)\right].
\notag\end{equation} Then
\begin{equation}
\left[\left[g_{0},g_{1}\right],\psi\right]=\left[g_{0},\left[g_{1},\psi\right]\right]-\left[g_{1},c\left(g_{0}\right)\right]
\notag\end{equation}
implies
\begin{equation}
\left[g_{1},\left[g_{0},\psi\right]+c\left(g_{0}\right)\right]=0
\notag\end{equation}
for any $ g_{1}\in{\mathfrak g}_{1} $. Hence $ \left[g_{0},\psi\right]+c\left(g_{0}\right)\in\operatorname{End}_{{\mathfrak g}_{1}}\left(M\right) $.\end{proof}

\begin{example} \label{ex1}\myLabel{ex1}\relax  Let $ {\mathfrak g}=\mathfrak{sl}\left(2\right) $ and $ M $ and $ N $ be two simple cuspidal $ {\mathfrak g} $-modules.
If $ \operatorname{Ext}_{{\mathfrak g}}^{1}\left(M,N\right)\not=0
$, then $ M $ and $ N $ are in the same block of $\mathcal C$.
Therefore by Corollary~\ref{cor2} and Theorem~\ref{th2}, $ M \cong
N $. Moreover, $ M $ is isomorphic to $ {\mathcal F}_{\mu} $ for
some $ \mu\in{\mathbb C}^{2} $. In particular, the weight
multiplicities of $ M $ are equal to 1. Let $ \{ X,H,Y \}$ be the
standard $ \mathfrak{sl}\left(2\right)$-basis, and $
c\in\operatorname{Hom}_{{\mathfrak h}}\left({\mathfrak
g},\operatorname{End}\left(M\right)\right) $ be a $ 1 $-cocycle.
Then $ c\left(H\right)=0 $, and, by Lemma~\ref{lm4} one may assume
without loss of generality that $ c\left(X\right)=0$, since one
can add a coboundary $d (m)$ such that $c(X)=[X,m]$. Then
$\left[X,c\left(Y\right)\right]=0 $ and therefore $c(Y)\in
 \operatorname{End}_{\mathfrak{s}}(M)$, where $\mathfrak{s}= {\mathbb C}H \oplus {\mathbb C} X$. Then, again by
 Lemma~\ref{lm4},
 $c\left(Y\right)=bX^{-1} $ for some $ b\in{\mathbb C} $. It is
straightforward to check that $ c\left(H\right)=c\left(X\right)=0
$ and $ c\left(Y\right)=bX^{-1} $ imply~\eqref{equ3}. Now let us
check that $ c $ is not trivial if $ b\not=0 $. Indeed, assume the
contrary and let $ c\left(g\right)=\left[g,\varphi\right] $ for
some $ \varphi\in\operatorname{End}_{{\mathfrak h}}\left(M\right)
$. But then $ \left[X,\varphi\right]=0 $, and again by
Lemma~\ref{lm4}, $ \varphi $ is a scalar map. Hence $ c=0 $.

Thus, $ \operatorname{Ext}_{{\mathfrak g},\mathfrak h}^{1}\left(M,N\right)=0 $
if $ M $ and $ N $ are not isomorphic, and $
\operatorname{Ext}_{{\mathfrak g},\mathfrak h}^{1}\left(M,N\right)={\mathbb C}
$ if $ M $ and $ N $ are isomorphic.

\end{example}

\section{The case of singular or non-integral central character }

In this section we compute $ \operatorname{Ext}_{\mathfrak g,\mathfrak
  h}^{1}\left(M,N\right)$  for $ n\geq 2 $ and irreducible
cuspidal modules $M$ and $N$ admitting a singular or a
non-integral central character $ \chi $. If $
\operatorname{Ext}_{{\mathfrak g,\mathfrak h}}^{1}\left(M,N\right)\not=0 $
then $M$ and $N$ must belong to the same block of $\mathcal C$,
therefore by Corollary~\ref{cor2}, $ M $ is isomorphic to $ N $.
Lemma~\ref{lm2} implies that it suffices to calculate
$\operatorname{Ext}^{1}_{\mathfrak g,\mathfrak h}\left(M,M\right) $ for the case $ M =
{\mathcal F}_{\mu} $ since any block $ {\mathcal
C}^{\lambda}_{\bar{\nu}}$ is equivalent to the one with
irreducible object ${\mathcal F}_{\mu} $. The main result of this
section is that for any cuspidal module ${\mathcal F}_{\mu}$,
$H^{1}({\mathfrak s},{\mathfrak h};\operatorname{End}_{\mathbb
C}\left({\mathcal F}_{\mu})\right)=\mathbb C$. Note that, the latter
  cohomology group describes  the space of infinitesimal
  deformations of ${\mathcal F}_{\mu}$ in $\mathcal C$ with the same
  support. On the other hand, the family
${\mathcal F}_{\mu+s(\varepsilon_0+\dots+\varepsilon_n)}$
provides a one-parameter deformation with desired properties,
hence the dimension of
$H^{1}({\mathfrak s},{\mathfrak h};\operatorname{End}_{\mathbb
C}\left({\mathcal F}_{\mu})\right)$ is at least one.
The difficult part is to show that that the dimension is not bigger.

\begin{lemma} \label{lm9}\myLabel{lm9}\relax  Let
\begin{equation}
z=t_{1}\frac{\partial}{\partial t_{1}}+\dots
+t_{n}\frac{\partial}{\partial
t_{n}}-nt_{0}\frac{\partial}{\partial t_{0}}, \notag\end{equation}
and in particular, $ {\mathfrak g}_{0}\cong
\mathfrak{sl}\left(n\right)\oplus{\mathbb C}z $. Then
\begin{equation}
{\mathcal F}_{\mu}=\bigoplus_{k \in {\mathbb Z}}{\mathcal
F}_{\mu}^{k}, \label{equ5}\end{equation}\myLabel{equ5,}\relax
where $ {\mathcal F}_{\mu}^{k} $ is the $ z $-eigenspace
corresponding to the eigenvalue $ |\mu|+\left(n+1\right)(k-\mu_{0})
$. Moreover, each $ {\mathcal F}_{\mu}^{k} $ is a simple cuspidal
$ \mathfrak{sl}\left(n\right) $-module isomorphic to $ {\mathcal
F}_{\left(\mu_{1}+k,\mu_{2},\dots ,\mu_{n}\right)} $.

\end{lemma}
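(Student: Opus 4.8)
The plan is to work throughout with the monomial basis of $\mathcal F_\mu$. Recall that $\mathcal F_\mu$ has as a basis the monomials $t^\nu=t_0^{\nu_0}\cdots t_n^{\nu_n}$ with $\nu-\mu\in Q$; in the present coordinates $Q=\{a\in\mathbb Z^{n+1}:a_0+\dots+a_n=0\}$, so this is the same as asking $\nu_i-\mu_i\in\mathbb Z$ for every $i$ together with $|\nu|=|\mu|$, the latter being exactly the Euler condition $Et^\nu=|\mu|t^\nu$ defining $\mathcal F_\mu$. First I would compute $z\,t^\nu=(\nu_1+\dots+\nu_n-n\nu_0)\,t^\nu$; substituting $\nu_1+\dots+\nu_n=|\mu|-\nu_0$ this becomes $\bigl(|\mu|-(n+1)\nu_0\bigr)t^\nu$, and writing $\nu_0=\mu_0-k$ with $k\in\mathbb Z$ (legitimate since $\nu_0-\mu_0\in\mathbb Z$) turns the eigenvalue into $|\mu|+(n+1)(k-\mu_0)$. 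Since $k\mapsto|\mu|+(n+1)(k-\mu_0)$ is injective on $\mathbb Z$ and every $k$ occurs (e.g.\ the monomial with $\nu=\mu+k\alpha_1$), the monomials with a fixed value of $\nu_0$ span the corresponding $z$-eigenspace, and this gives the decomposition \eqref{equ5} with $\mathcal F_\mu^k=\operatorname{span}\{t^\nu:\nu_0=\mu_0-k,\ \nu-\mu\in Q\}$. I would also note in passing that $z$ does lie in $\mathfrak g$ and that $\mathfrak g_0=[\mathfrak g_0,\mathfrak g_0]\oplus\mathbb Cz\cong\mathfrak{sl}(n)\oplus\mathbb Cz$: under $E_{ij}\mapsto t_i\partial/\partial t_j$ the element $z$ is the image of $\sum_{i>0}E_{ii}-nE_{00}$, which is traceless, commutes with every $E_{ij}$ with $i,j>0$, and acts as a nonzero multiple of the identity on the lower block, hence spans the centre of $\mathfrak g_0\cong\mathfrak{gl}(n)$.

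Next I would identify $\mathcal F_\mu^k$ as an $\mathfrak{sl}(n)$-module, where $\mathfrak{sl}(n)=[\mathfrak g_0,\mathfrak g_0]$ is spanned by the $E_{ij}$ with $1\le i\neq j\le n$ and the $E_{ii}-E_{jj}$ with $1\le i,j\le n$. Under the vector-field realisation all of these act through $t_i\partial/\partial t_j$ and $t_i\partial/\partial t_i-t_j\partial/\partial t_j$, which involve only $t_1,\dots,t_n$; in particular they leave $\nu_0$ unchanged and so preserve each $\mathcal F_\mu^k$. Introducing independent variables $s_1,\dots,s_n$, I would consider the linear map $t^\nu\mapsto s_1^{\nu_1}\cdots s_n^{\nu_n}$ on $\mathcal F_\mu^k$. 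The monomials of $\mathcal F_\mu^k$ are precisely those $t^\nu$ with $\nu_i-\mu_i\in\mathbb Z$ for $i\ge1$ and $\nu_1+\dots+\nu_n=(\mu_1+\dots+\mu_n)+k$ (the last relation from $|\nu|=|\mu|$ and $\nu_0=\mu_0-k$), so the map is a bijection onto the monomial basis $\{s^\gamma:\gamma_i-\mu_i\in\mathbb Z\ (i\ge1),\ \gamma_1+\dots+\gamma_n=(\mu_1+\dots+\mu_n)+k\}$ of the $\mathfrak{sl}(n)$-module $\mathcal F_{(\mu_1+k,\mu_2,\dots,\mu_n)}$. Comparing $E_{ij}\mapsto t_i\partial/\partial t_j$ with the same realisation of $\mathfrak{sl}(n)$ on the $s$-variables shows this bijection is $\mathfrak{sl}(n)$-equivariant, hence $\mathcal F_\mu^k\cong\mathcal F_{(\mu_1+k,\mu_2,\dots,\mu_n)}$.

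Finally, for simplicity and cuspidality I would invoke the criterion already recorded in the text, applied to $\mathfrak{sl}(n)$ in place of $\mathfrak{sl}(n+1)$: $\mathcal F_\beta$ is an irreducible cuspidal $\mathfrak{sl}(n)$-module exactly when $\beta_i\notin\mathbb Z$ for all $i$. Since $\mathcal F_\mu$ is cuspidal we have $\mu_i\notin\mathbb Z$ for every $i$, so the twisted tuple $(\mu_1+k,\mu_2,\dots,\mu_n)$ again has all coordinates outside $\mathbb Z$, and therefore $\mathcal F_\mu^k$ is a simple cuspidal $\mathfrak{sl}(n)$-module, completing the proof.

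No genuine difficulty arises here; the whole argument is bookkeeping with the monomial basis. The two points that require care are (i) the normalisation of the $z$-eigenvalue — in particular the substitution $\nu_0=\mu_0-k$ rather than $\mu_0+k$, which is exactly what produces the shift $\mu_1\mapsto\mu_1+k$ in the first coordinate of the twist — and (ii) verifying that the copy of $\mathfrak{sl}(n)$ inside $\mathfrak g$, acting on $\mathcal F_\mu$ via vector fields, really does become the standard action on $\mathcal F_{(\mu_1+k,\mu_2,\dots,\mu_n)}$ once the fixed factor $t_0^{\mu_0-k}$ is dropped; this is immediate from matching $E_{ij}\mapsto t_i\partial/\partial t_j$ with $E_{ij}\mapsto s_i\partial/\partial s_j$.
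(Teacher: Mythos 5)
Your proof is correct and takes essentially the same approach as the paper's: the paper also identifies $\mathcal F_\mu^k$ as the span of monomials with $t_0$-exponent $\mu_0-k$ and then obtains the $\mathfrak{sl}(n)$-isomorphism by matching the vector-field realisation on the remaining variables, merely stating "a straightforward calculation shows" what you spell out in full. The only nit is cosmetic: your illustrative monomial $\nu=\mu+k\alpha_1$ has $\nu_0=\mu_0+k$, so it lands in $\mathcal F_\mu^{-k}$ rather than $\mathcal F_\mu^{k}$; this does not affect the conclusion that every integer occurs.
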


\begin{proof} A straightforward calculation shows that
\begin{equation}
{\mathcal F}_{\mu}^{k}=t_{0}^{\mu_{0}-k}t_{1}^{\mu_{1}+k}\dots
t_{n}^{\mu_{n}}{\mathbb
C}\left[\left(\frac{t_{2}}{t_{1}}\right)^{\pm1},\dots
,\left(\frac{t_{n}}{t_{1}}\right)^{\pm1}\right].
\notag\end{equation} The $ \mathfrak{sl}\left(n\right) $-module
isomorphism ${\mathcal F}_{\mu}^{k}\cong {\mathcal
F}_{\left(\mu_{1}+k,\mu_{2},\dots ,\mu_{n}\right)} $ follows
directly from ~\eqref{equ1}. {}\end{proof}

\begin{lemma} \label{lm6}\myLabel{lm6}\relax  For $ n\geq2 $ and
$ u=\log \left(t_{0}t_{1}\dots t_{n}\right) $ let $ N:={\mathcal
F}_{\mu}\oplus u{\mathcal F}_{\mu} $ be the $\mathfrak g$-module
with action induced by the correspondence ~\eqref{equ1}. Then $ N
$ is a non-trivial self-extension of ${\mathcal F}_{\mu} $. The
cocycle defining this extension is given by the formulae $
c\left(E_{ij}\right)=\frac{t_{i}}{t_{j}} $, $ 0\leq i \neq j\leq n
$.

\end{lemma}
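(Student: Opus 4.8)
The plan is to verify directly that the formulae $c(E_{ij}) = t_i/t_j$ define a $1$-cocycle, and then to show this cocycle is not a coboundary. First I would check that the assignment $g \mapsto c(g)$ lands in $\operatorname{Hom}_{\mathfrak h}(\mathcal F_\mu, \mathcal F_\mu)$: the operator of multiplication by $t_i/t_j$ raises the $\mathfrak h$-weight by $\varepsilon_i - \varepsilon_j$, exactly matching the $\mathfrak h$-weight of $E_{ij}$, so $c$ is indeed an $\mathfrak h$-module map. Next, the cocycle condition~\eqref{equ3}: from the explicit action~\eqref{equ1}, $E_{ij}$ acts as $t_i \partial/\partial t_j$, and a short computation gives $[E_{ij}, m_{k\ell}] - [E_{k\ell}, m_{ij}] = m_{[E_{ij}, E_{k\ell}]}$, where $m_{ij}$ denotes multiplication by $t_i/t_j$ and the bracket on the left is the commutator inside the associative algebra of operators on $\mathcal F_\mu \oplus u\mathcal F_\mu$. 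The key algebraic fact is that $(t_i\partial_j)(t_k/t_\ell) - (t_k\partial_\ell)(t_i/t_j)$, viewed as a multiplication operator on $\mathcal F_\mu$, equals multiplication by the function $t_a/t_b$ attached to $[E_{ij},E_{k\ell}]$ (with the convention that it is $0$ when the bracket vanishes, and that diagonal brackets $E_{ii}-E_{jj}$ correspond to $t_i/t_i - t_j/t_j = 0$, consistently with $c$ vanishing on $\mathfrak h$). This is the computation I would not grind through, but it is the standard check that the "logarithmic" deformation $t^\alpha \mapsto t^\alpha(1 + s\log(t_0\cdots t_n))$ is first-order flat.

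To see the extension is nontrivial, suppose for contradiction that $c$ is a coboundary, say $c(g) = [g, \varphi]$ for some $\varphi \in \operatorname{End}_{\mathfrak h}(\mathcal F_\mu)$. Since $n \geq 2$, I would apply Lemma~\ref{lm4} with $\mathfrak s = \mathfrak h \oplus \mathfrak g_1$: the restriction of $c$ to $\mathfrak g_1$ is then a coboundary there too, and by Lemma~\ref{lm4} (second statement) $\operatorname{Ext}^1_{\mathfrak s, \mathfrak h}(\mathcal F_\mu, \mathcal F_\mu) = 0$, while the first statement identifies $\operatorname{End}_{\mathfrak s}(\mathcal F_\mu) \cong \operatorname{End}_{\mathbb C} \mathcal F_\mu^{\bar\mu}$, which is one-dimensional since the weight spaces of $\mathcal F_\mu$ are one-dimensional. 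Alternatively, and more concretely, from $c(E_{0i}) = [E_{0i}, \varphi]$ and $c(E_{0i}) = t_0/t_i$, writing $\varphi$ in the monomial basis of $\mathcal F_\mu$ one gets recursions forcing $\varphi$ to act as a fixed scalar plus something that cannot be $\mathfrak h$-finite; in particular comparing coefficients shows that no $\mathfrak h$-equivariant $\varphi$ can satisfy all these relations simultaneously. Either way one concludes $c \neq 0$ in $H^1(\mathfrak g, \mathfrak h; \operatorname{End}_{\mathbb C}(\mathcal F_\mu))$, so $N$ is a genuine self-extension.

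The main obstacle I anticipate is the cocycle verification rather than the nontriviality: one must organize the case analysis over the four index patterns (all distinct; one coincidence of the form $j = k$ or $i = \ell$; brackets producing a Cartan element; brackets vanishing) and confirm that multiplication by $t_i/t_j$ genuinely commutes past the extra $u$-component of $N$ in the way~\eqref{equ3} requires. It is worth noting that the hypothesis $n \geq 2$ enters precisely because for $\mathfrak{sl}(2)$ the element $u = \log(t_0 t_1)$ would not produce a new function independent of the ones already accounted for by the central direction; for $n \geq 2$ the operators $t_i/t_j$ with $i \neq j$ involve at least two of the coordinates nontrivially and the logarithmic term is a true infinitesimal deformation transverse to the trivial ones. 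I would close by remarking that this $c$ is, up to scalar, the unique nonzero class—but that uniqueness is the content of the computation carried out in the remainder of the section, not of this lemma.
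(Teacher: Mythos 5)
Your plan inverts the difficulty of the two halves. The statement that $c(E_{ij})=t_i/t_j$ is the cocycle of the extension $N$ is immediate: the $\mathfrak g$-action on $N$ is already defined by the vector-field formula, and $E_{ij}(uf)=u\,E_{ij}(f)+(t_i/t_j)f$ just reads off the extension class. No case analysis over index patterns is needed, and the paper does not do one. The real content of the lemma is the nontriviality, and that is where your proposal has a genuine gap.

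Your Lemma~\ref{lm4}-based argument does not reach a contradiction. Lemma~\ref{lm4} says that \emph{every} cocycle on $\mathfrak s=\mathfrak h\oplus\mathfrak g_1$ is a coboundary (that is, $\operatorname{Ext}^1_{\mathfrak s,\mathfrak h}(\mathcal F_\mu,\mathcal F_\mu)=0$), and that $\operatorname{End}_{\mathfrak s}(\mathcal F_\mu)\cong\mathbb C$. Both facts hold whether or not $c$ is a coboundary on all of $\mathfrak g$, so restating them cannot distinguish the two cases; you observe the premises but draw no conclusion. The ``more concrete'' recursion argument via $c(E_{0i})=[E_{0i},\varphi]$ is asserted to ``force $\varphi$ to be $\mathfrak h$-infinite'' without any actual computation, and as written it is not even clear what quantity is being bounded. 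One really does have to produce an obstruction, and neither route you sketch produces one.

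The paper's proof is structurally different and worth internalizing. It argues by induction on $n$. The base case $\mathfrak{sl}(2)$ is handled by a one-line Casimir computation: $\Omega(uf)=u\,\Omega(f)+2(1+|\mu|)f$, so for $|\mu|\neq -1$ the Casimir is not scalar on $N$ and the extension cannot split. The inductive step uses the $\mathfrak g_0$-decomposition $\mathcal F_\mu=\bigoplus_k\mathcal F_\mu^k$ from Lemma~\ref{lm9}, with $\mathcal F_\mu^k\cong\mathcal F_{(\mu_1+k,\mu_2,\dots,\mu_n)}$ as an $\mathfrak{sl}(n)$-module; the restriction of $c$ to $\mathfrak{sl}(n)\subset\mathfrak g_0$ is nontrivial because it is nontrivial on $\mathcal F_\mu^k$ for almost all $k$, hence $c$ is nontrivial on $\mathfrak g$. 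This also explains the role of the hypothesis $n\geq 2$ correctly: it is there to dodge the single exceptional case $n=1$, $|\mu|=-1$, where the Casimir computation fails and the cocycle is in fact a coboundary (Remark~\ref{rem44}), not for the dimension-counting reason you suggest. You should replace your nontriviality paragraph with the Casimir-plus-induction argument, or actually carry out a computation that yields a contradiction.
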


\begin{proof} It is obvious that $ N $ contains a submodule $ {\mathcal F}_{\mu} $
and $ N/{\mathcal F}_{\mu}\cong {\mathcal F}_{\mu} $. It remains
to check that this extension does not split. We will prove the
statement by induction on $n$.

Let us
start with $ {\mathfrak g}=\mathfrak{sl}\left(2\right) $ and show
that this self-extension is non-trivial for almost all $ \mu $.
Indeed, it is sufficient to show that the Casimir operator $
\Omega $ does not act as a scalar on $ M $. The Casimir operator
of $ \mathfrak{sl}\left(2\right) $ can be written in the following
form
\begin{equation}
\Omega=E_{01}E_{10}+E_{10}E_{01}+\frac{\left(E_{00}-E_{11}\right)^{2}}{2}.
\notag\end{equation} Then for $ f\in{\mathcal F}_{\mu} $,
\begin{equation}
\Omega\left(uf\right)=u\Omega\left(f\right)+\left(E_{01}\frac{t_{1}}{t_{0}}+\frac{t_{0}}{t_{1}}E_{10}+E_{10}\frac{t_{0}}{t_{1}}+\frac{t_{1}}{t_{0}}E_{01}\right)f.
\notag\end{equation}
But
\begin{equation}
E_{01}\frac{t_{1}}{t_{0}}+\frac{t_{0}}{t_{1}}E_{10}+E_{10}\frac{t_{0}}{t_{1}}+\frac{t_{1}}{t_{0}}E_{01}=2+2E=2(1+|\mu|).
\notag\end{equation}
Hence this self-extension is non-trivial for $ |\mu|\not=-1 $.

We now apply induction on $n$. Since ~\eqref{equ5} is the  $
{\mathfrak g}_{0} $-decomposition of $ {\mathcal F}_{\mu} $, the
restriction of $ c $ on
$\mathfrak{sl}\left(n\right)\subset{\mathfrak g}_{0}\subset
\mathfrak{sl} \left(n+1\right) $ is non-trivial because it is not
trivial on the component $ {\mathcal F}_{\mu}^{k} $ for almost all
$ k $. Hence $ c $ is not a trivial cocycle.\end{proof}

\begin{remark} \label{generalized}
One can easily generalize the construction in Lemma \ref{lm6} and obtain a family of non-trivial self-extensions of ${\mathcal F}_{\mu} $
in the category of generalized weight modules. Indeed, for
$(u_0,...,u_n) \in {\mathbb C}^{n+1}$ we define $
N(u_0,...,u_n):={\mathcal F}_{\mu}\oplus u{\mathcal F}_{\mu} $ for $u
= \sum_{i=0}^n u_i \log t_i$. Then it is easy to check that $
N(u_0,...,u_n)$ is a non-trivial self extension of ${\mathcal F}_{\mu} $ and is a generalized weight module but not a weight module unless $u_0=...=u_n$.

\end{remark}

\begin{lemma} \label{lm12}\myLabel{lm12}\relax  Let $V_0$ be a simple
  finite-dimensional $\mathfrak g_0$-module, and $n\geq 2$. A simple
cuspidal module $ {\mathcal F}_{\mu}\left(V_{0}\right) $ has a non-trivial
self-extension defined by the cocycle
$ c\left(E_{ij}\right)=\frac{t_{i}}{t_{j}} $ for all
$ i\not=j $, $ 0\leq i,j\leq n $.

\end{lemma}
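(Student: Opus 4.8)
The plan is to bootstrap Lemma~\ref{lm12} from Lemma~\ref{lm6} by tensoring the self-extension $N$ of $\mathcal F_\mu$ by the $(\mathfrak g,\mathcal O)$-module $\Gamma(\mathcal U,\mathcal V_0)$ and using the identification $\mathcal F_\mu(V_0)=\mathcal F_\mu\otimes_{\mathcal O}\Gamma(\mathcal U,\mathcal V_0)$. Concretely, set $\widetilde N:=N\otimes_{\mathcal O}\Gamma(\mathcal U,\mathcal V_0)=(\mathcal F_\mu\oplus u\mathcal F_\mu)\otimes_{\mathcal O}\Gamma(\mathcal U,\mathcal V_0)$; since tensoring over $\mathcal O$ with the free (hence flat) $\mathcal O$-module $\Gamma(\mathcal U,\mathcal V_0)\cong\mathcal O\otimes V_0$ is exact, applying it to the short exact sequence $0\to\mathcal F_\mu\to N\to\mathcal F_\mu\to 0$ gives a short exact sequence $0\to\mathcal F_\mu(V_0)\to\widetilde N\to\mathcal F_\mu(V_0)\to 0$ of $\mathfrak g$-modules. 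One then checks that the cocycle representing $\widetilde N$ is again $c(E_{ij})=t_i/t_j$: indeed the $u$-component transforms under $E_{ij}$ exactly as in Lemma~\ref{lm6} because $u=\log(t_0t_1\cdots t_n)$ is a global object independent of $V_0$, and $E_{ij}(u\cdot(f\otimes s))=u\cdot E_{ij}(f\otimes s)+\tfrac{t_i}{t_j}(f\otimes s)$ by the Leibniz rule for the $(\mathfrak g,\mathcal O)$-structure together with $E_{ij}(u)=t_i/t_j$.

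First I would make the computation $E_{ij}(u)=t_i/t_j$ explicit using the vector-field realization~\eqref{equ1}: $E_{ij}=t_i\,\partial/\partial t_j$ acts on $u=\log(t_0\cdots t_n)=\sum_k\log t_k$ by $t_i\,\partial/\partial t_j\bigl(\sum_k\log t_k\bigr)=t_i/t_j$. This confirms that the formula for $c$ in the statement is forced and is the same cocycle as in Lemma~\ref{lm6}. Then I would verify that $\widetilde N$ is a genuine (non-generalized) weight module: $\mathcal F_\mu(V_0)$ is a weight module, and $u\mathcal F_\mu(V_0)$ is too because multiplication by $u$ does not change the $\mathfrak h$-eigenvalue (each $t_k\,\partial/\partial t_k$ kills $u$), so each graded piece of $\widetilde N$ is an honest weight space of dimension $2\dim(\mathcal F_\mu(V_0))^\nu$; in particular $\widetilde N$ lies in $\mathcal C$ once we know it is finitely generated and cuspidal, which follows because its subquotient $\mathcal F_\mu(V_0)$ is cuspidal and the $X$-action on $\widetilde N$ is invertible for every root vector $X$ (as the extension of two invertible maps with the same source and target is invertible on the two-step filtration).

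The remaining point is non-triviality of the extension class, and this is where I expect the only real work. One clean route: restrict the cocycle $c$ to $\mathfrak{sl}(n)\subset\mathfrak g_0\subset\mathfrak{sl}(n+1)$ and use the $\mathfrak g_0$-decomposition. By Lemma~\ref{lm9} (applied with $V_0$ trivial) combined with Remark~\ref{rem99}, $\mathcal F_\mu(V_0)$ restricted to $\mathfrak{sl}(n)$ breaks up, over $z$-eigenspaces, into pieces of the form $\mathcal F_{(\mu_1+k,\mu_2,\dots,\mu_n)}\otimes V_0$; on each such piece the restricted cocycle is (via Remark~\ref{rem99} again) exactly the Lemma~\ref{lm6}-type cocycle for $\mathfrak{sl}(n)$ tensored with $V_0$, which is non-trivial for almost all $k$ provided the relevant $|\mu|$-type parameter avoids the single bad value $-1$ — and by shifting $\mu$ within its $Q$-coset (which does not change $\mathcal F_\mu(V_0)$ up to isomorphism, nor the cohomology group) we may arrange this. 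Hence $c$ is non-trivial already on $\mathfrak{sl}(n)$, so a fortiori non-trivial on $\mathfrak g$. The main obstacle is bookkeeping the $V_0$-factor through the restriction-and-decomposition step so that the base case of the induction ($\mathfrak{sl}(2)$-with-coefficients, i.e. the non-scalar action of the Casimir on $\mathcal F_\mu\otimes V_0$) genuinely applies; once the base case is in hand the induction on $n$ is identical in structure to the proof of Lemma~\ref{lm6}.
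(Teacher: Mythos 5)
Your setup matches the paper's: identify the cocycle as $c(E_{ij})=t_i/t_j$ via $E_{ij}(u)=t_i/t_j$, use the $\mathfrak g_0$-decomposition $\mathcal F_\mu(V_0)=\bigoplus_k(\mathcal F_\mu^k\otimes V_0)$ coming from Lemma~\ref{lm9}, and reduce to showing that each short exact sequence
$0\to\mathcal F_\mu^k\otimes V_0\to(\mathcal F_\mu^k\oplus u\mathcal F_\mu^k)\otimes V_0\to\mathcal F_\mu^k\otimes V_0\to 0$
is non-split. The gap is exactly where you flag it: you observe that the restricted cocycle is ``the Lemma~\ref{lm6} cocycle tensored with $V_0$'' but you never actually close the loop from ``$0\to\mathcal F_\mu^k\to\cdot\to\mathcal F_\mu^k\to 0$ is non-split'' to ``its tensor with $V_0$ is non-split.'' The route you propose for doing this --- rerunning the Lemma~\ref{lm6} Casimir computation with a $V_0$-coefficient --- is not as innocuous as it sounds: the Casimir of $\mathfrak{sl}(2)$ on $\mathcal F_{\mu'}\otimes V_0$ does not act as a scalar even on the unextended module (the cross terms $\sum X_a\otimes X^a$ contribute, and $V_0$ is not a single $\mathfrak{sl}(2)$-isotype), so ``$\Omega$ is non-scalar on the extension'' no longer detects non-triviality, and the clean dichotomy that made the base case of Lemma~\ref{lm6} work evaporates.

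What the paper uses instead, and what you are missing, is a small but essential general lemma (Lemma~\ref{aux}): if a short exact sequence $0\to M\to N\to L\to 0$ of modules over a Lie algebra does not split, then neither does $0\to M\otimes V\to N\otimes V\to L\otimes V\to 0$ for any finite-dimensional $V$. This is proved without any computation by the $V\otimes V^{*}$ trick: a splitting $p:L\otimes V\otimes V^{*}\to N\otimes V\otimes V^{*}$, precomposed with the coevaluation $L\hookrightarrow L\otimes V\otimes V^{*}$ and postcomposed with the evaluation $N\otimes V\otimes V^{*}\twoheadrightarrow N$, would split the original sequence. Inserting this fact at the step you call ``bookkeeping the $V_0$-factor'' makes your argument complete, with no need to redo any Casimir computation and no dependence on the structure of $V_0$ as an $\mathfrak{sl}(2)$-module. (Your flat base change over $\mathcal O$ and your computation $E_{ij}(u)=t_i/t_j$ are both fine and agree with the paper.)
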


\begin{proof} By using Lemma~\ref{lm9} and ~\ref{restr}
we obtain the following decomposition of $ {\mathcal
F}_{\mu}\left(V_{0}\right) $ as a $ {\mathfrak g}_{0} $-module
\begin{equation}
{\mathcal F}_{\mu}\left(V_{0}\right)=\bigoplus_{k\in{\mathbb
Z}}\left( {\mathcal F}_{\mu}^{k}\otimes V_{0}\right).
\notag\end{equation} As in the proof of Lemma~\ref{lm6} it
suffices to check that the following sequence of $\mathfrak
g_0$-modules
\begin{equation}
0 \to {\mathcal F}_{\mu}^{k}\otimes V_{0} \to \left({\mathcal
F}_{\mu}^{k}\oplus u{\mathcal F}_{\mu}^{k}\right)\otimes V_{0} \to
{\mathcal F}_{\mu}^{k}\otimes V_{0} \to 0 \notag\end{equation}
does not split. That is a consequence of the following general fact.
\end{proof}

\begin{lemma}\label{aux}Let $M$, $N$ and $L$ be modules over a Lie
  algebra $\mathfrak a$. If
\begin{equation}
0 \to M \to N \to L \to 0
\label{equ9}\end{equation}\myLabel{equ9,}\relax does not split,
then for any finite-dimensional $\mathfrak a$-module $ V $ the
sequence
\begin{equation}
0 \to M\otimes V \to N\otimes V \to L\otimes V \to 0
\notag\end{equation} does not split either.
\end{lemma}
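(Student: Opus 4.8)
The plan is to prove the contrapositive: if $0 \to M\otimes V \to N\otimes V \to L\otimes V \to 0$ splits, then so does $0 \to M \to N \to L \to 0$. The key idea is that tensoring with a finite-dimensional module $V$ always admits a left (and right) adjoint given by tensoring with the dual $V^*$, and the unit/counit of this adjunction let us transfer a splitting back. Concretely, I would first recall that for any $\mathfrak a$-module $W$, there is a canonical $\mathfrak a$-module map $W \to W\otimes V\otimes V^*$ coming from the coevaluation $\mathbb C \to V\otimes V^*$, and a canonical $\mathfrak a$-module map $W\otimes V^*\otimes V \to W$ coming from the evaluation $V^*\otimes V \to \mathbb C$; their composite (suitably arranged) is the identity on $W$ since $(\mathrm{id}_V \otimes \mathrm{ev})\circ(\mathrm{coev}\otimes\mathrm{id}_V) = \mathrm{id}_V$ (the zigzag/triangle identity).

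Next I would set up the transfer. Suppose $r : N\otimes V \to M\otimes V$ is an $\mathfrak a$-module retraction, i.e. $r$ restricted to $M\otimes V$ is the identity. Tensoring $r$ with $\mathrm{id}_{V^*}$ on the right gives an $\mathfrak a$-module map $N\otimes V\otimes V^* \to M\otimes V\otimes V^*$. Precomposing with the coevaluation map $N \to N\otimes V\otimes V^*$ and postcomposing with the evaluation-induced map $M\otimes V\otimes V^* \to M$ yields an $\mathfrak a$-module map $\tilde r : N \to M$. The triangle identity ensures that the composite $M \to N \xrightarrow{\tilde r} M$ (where the first arrow is the inclusion from \eqref{equ9}) equals the identity on $M$, because on the $M$-summand the construction reduces to the zigzag composite for $V$, which is $\mathrm{id}$. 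Hence $\tilde r$ is a retraction of $M \to N$, so \eqref{equ9} splits, a contradiction.

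The main point requiring care is verifying that all the maps in sight ($\mathrm{coev}$, $\mathrm{ev}$, and $r\otimes\mathrm{id}$) are genuinely $\mathfrak a$-module homomorphisms and that the diagram commutes, i.e. that the naturality square relating the inclusion $M\otimes V \hookrightarrow N\otimes V$ to $\tilde r$ actually commutes — this is where finite-dimensionality of $V$ is used, to guarantee $V^*$ behaves well and the coevaluation lands in the algebraic tensor product. I expect this bookkeeping to be the only obstacle, and it is routine; no deeper input is needed. An alternative, equally clean phrasing avoids string diagrams: observe that $\mathrm{Hom}_{\mathfrak a}(L\otimes V, M\otimes V) \cong \mathrm{Hom}_{\mathfrak a}(L\otimes V\otimes V^*, M)$, and the class of $0\to M\to N\to L\to 0$ in $\mathrm{Ext}^1_{\mathfrak a}(L,M)$ maps to the class of the tensored sequence in $\mathrm{Ext}^1_{\mathfrak a}(L\otimes V, M\otimes V)$; since $L$ is a direct summand of $L\otimes V\otimes V^*$ (again by the triangle identity applied to $L$), the map $\mathrm{Ext}^1_{\mathfrak a}(L,M) \to \mathrm{Ext}^1_{\mathfrak a}(L\otimes V, M\otimes V)$ is injective, so vanishing of the image forces vanishing of the original class. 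I would present whichever version is shortest, likely the explicit retraction argument.
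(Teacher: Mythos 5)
Your proposal takes essentially the same route as the paper: both tensor the split sequence with $V^*$ and transfer the splitting back via the natural embedding $W \to W\otimes V\otimes V^*$ (coevaluation) and the natural projection $W\otimes V\otimes V^* \to W$ (evaluation/trace). The paper works with a section $L\otimes V\otimes V^* \to N\otimes V\otimes V^*$ and conjugates by $i,j$ to get a section of $N\twoheadrightarrow L$; you work with a retraction $N\otimes V\to M\otimes V$ and conjugate to get a retraction of $M\hookrightarrow N$; these are dual but equivalent formulations of the same argument. One small inaccuracy: the composite $W\to W\otimes V\otimes V^*\to W$ is not the zigzag (triangle) identity — that identity applies to $V\to V\otimes V^*\otimes V\to V$ — but rather the ``loop,'' which equals $\dim V$ times the identity. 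So your $\tilde r$ restricted to $M$ is $(\dim V)\,\mathrm{id}_M$, not $\mathrm{id}_M$; since $\dim V\neq 0$ (the $V=0$ case being trivial), one rescales and the proof goes through. The paper's phrasing has the same normalization implicit, so this does not constitute a real gap — only the appeal to the triangle identity should be replaced by the trace computation.
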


\begin{proof}
Assume that the latter sequence splits. Then
\begin{equation}
0 \to M\otimes V\otimes V^{*} \to N\otimes V\otimes V^{*} \to
L\otimes V\otimes V^{*} \to 0 \notag\end{equation} also splits.
Suppose that $ p:L\otimes V\otimes V^{*} \to N\otimes V\otimes
V^{*} $ is a splitting map. Denote by $ i $ the natural embedding
$ L \to L\otimes V\otimes V^{*} $ and by $ j $ the natural
projection $ N\otimes V\otimes V^{*} \to N $. Then $ j\circ p\circ
i $ is a splitting map for~\eqref{equ9}.\end{proof}

\begin{remark}\label{rem44} Lemma \ref{lm12} still holds for
$n=1$ and $|\mu| \neq -1$ (see the proof of Lemma \ref{lm6}). In
the special case of $n=1$ and $|\mu| = -1$ one can easily check
that the cocycle $c$ is trivial. Indeed,  $c(g) = [g, \varphi]$,
for $\varphi \in \operatorname{End}_{\mathfrak h}(M)$ defined by
$\varphi(t^{\mu}) = \varphi(\mu)t^{\mu}$, where the function
$\varphi(\mu)$ can be found inductively using
 $\varphi(\mu_0, \mu_1) - \varphi(\mu_0 + 1, \mu_1 - 1) =
\frac{1}{\mu_1}$. Nevertheless, in this special case, we still
have a non-trivial cocycle (see Example \ref{ex1}).
\end{remark}

\begin{lemma} \label{lm8}\myLabel{lm8}\relax  Let
$ c\in\operatorname{Hom}_{{\mathfrak h}}\left({\mathfrak
g},\operatorname{End}_{\mathbb C}\left({\mathcal F}_{\mu}\right)\right) $ be
defined by the formulae $c(E_{ii})=0$, $
c\left(E_{ij}\right)=b_{ij}\frac{t_{i}}{t_{j}} $ for some $
b_{ij}\in{\mathbb C} $ if $i\neq j$. Then $ c $ is a $ 1 $-cocycle iff there exists
$ b\in{\mathbb C} $ such that $b_{ij}=b $ for all $ i\not=j $.

\end{lemma}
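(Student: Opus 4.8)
The plan is to verify directly that the cocycle condition \eqref{equ3} for $c$, which since $c(\mathfrak h)=0$ reduces to the single identity $c([g_1,g_2])=[g_1,c(g_2)]-[g_2,c(g_1)]$ applied to the root vectors $E_{ij}$, forces the scalars $b_{ij}$ to coincide; the converse (that $b_{ij}=b$ for all $i\neq j$ gives a genuine cocycle) is already essentially Lemma~\ref{lm6} — there $c(E_{ij})=\frac{t_i}{t_j}$ is shown to be the cocycle of the self-extension $N={\mathcal F}_\mu\oplus u{\mathcal F}_\mu$ — so I would just remark that scaling $u$ by $b$ rescales all $b_{ij}$ simultaneously.

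For the forward direction, first I would write $c(E_{ij})=b_{ij}\frac{t_i}{t_j}$ as a differential operator and record that, as operators on ${\mathcal F}_\mu$, one has the bracket relations $[E_{ij},\frac{t_k}{t_l}]$ computed from the vector-field realization \eqref{equ1}: multiplication by $\frac{t_k}{t_l}$ commutes with $E_{ij}=t_i\frac{\partial}{\partial t_j}$ up to the obvious first-order terms, namely $[t_i\frac{\partial}{\partial t_j},\frac{t_k}{t_l}]=\delta_{jk}\frac{t_i}{t_l}-\delta_{il}\frac{t_k t_i}{t_l^2}$ (with the usual care that the $E_{ij}$ only span $\mathfrak{sl}$, so $E_{ii}-E_{jj}$ rather than $E_{ii}$ is what acts). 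Then I would pick, for a triple of distinct indices $i,j,k$, the relation $[E_{ij},E_{jk}]=E_{ik}$ and plug into the cocycle identity:
\begin{equation}
c(E_{ik})=[E_{ij},c(E_{jk})]-[E_{jk},c(E_{ij})]=b_{jk}[E_{ij},\tfrac{t_j}{t_k}]-b_{ij}[E_{jk},\tfrac{t_i}{t_j}].
\notag
\end{equation}
Computing the two brackets gives $b_{jk}\frac{t_i}{t_k}$ from the first term and $-b_{ij}\cdot(-\frac{t_i}{t_k})=b_{ij}\frac{t_i}{t_k}$... — more precisely one gets $c(E_{ik})=(b_{jk})\frac{t_i}{t_k}$ after the cross-terms in $t_i/t_j\cdot t_j/t_k$ cancel, which upon comparison with $c(E_{ik})=b_{ik}\frac{t_i}{t_k}$ yields a relation of the shape $b_{ik}=b_{jk}$ (and by symmetry $b_{ik}=b_{ij}$). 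Since $n\geq 2$... wait, the lemma is stated for general $n$; for $n\geq 2$ there are at least three indices $0,1,2$, and the connectedness of the "distinct-triple" graph on $\{0,\dots,n\}$ propagates $b_{ij}=b_{kl}$ to all pairs. (For $n=1$ there is only one pair and the statement is vacuous.)

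The main obstacle I expect is bookkeeping: getting the bracket $[E_{ij},\frac{t_k}{t_l}]$ exactly right as an operator identity on ${\mathcal F}_\mu$ — in particular tracking the quadratic term $\frac{t_it_k}{t_l^2}$ that appears when $i=l$, confirming it cancels against the analogous term coming from the other summand, and handling the Cartan directions correctly so that the image really lands in $\operatorname{End}_{\mathbb C}({\mathcal F}_\mu)$ with the claimed form (no $E_{ii}$ terms surviving). I should also double-check the degenerate brackets, e.g. $[E_{ij},E_{ji}]=E_{ii}-E_{jj}$ and $[E_{ij},E_{kl}]=0$ when $\{i,j\}\cap\{k,l\}=\varnothing$: the latter must be consistent with $c$, i.e. $[E_{ij},c(E_{kl})]=[E_{kl},c(E_{ij})]$, which is automatic since $\frac{t_i}{t_j}$ and $\frac{t_k}{t_l}$ are both multiplication operators (hence commute) and $E_{ij}$, $E_{kl}$ commute as vector fields — so this case gives no constraint, consistent with the claim. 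Once these elementary operator computations are pinned down, the proof is a short chase through finitely many relations.
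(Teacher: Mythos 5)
Your direct‐verification plan is the same in spirit as the paper's (which normalizes by subtracting the known cocycle $b\,t_i/t_j$ with $b=b_{0n}$ and then shows the remainder vanishes), but there is a genuine gap in the step where you pass from $b_{ik}=b_{jk}$ to all $b$'s equal.

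The triple relation $[E_{ij},E_{jk}]=E_{ik}$ gives, exactly as you compute, $c(E_{ik})=b_{jk}\frac{t_i}{t_k}$ — the second term is not a cancelling cross‑term, it is simply zero, since $[E_{jk},\frac{t_i}{t_j}]=t_j\partial_{t_k}\bigl(\frac{t_i}{t_j}\bigr)=0$ for $k\neq i,j$ — so the only constraint it produces is $b_{ik}=b_{jk}$. Running over all triples (in every cyclic and transposed order), this says $b_{ij}$ \emph{depends only on its second index}; there is no "symmetric" instance of the triple identity that relates $b_{ij}$ to $b_{ik}$. Concretely, for $n=2$ the six off-diagonal entries split into the classes $\{b_{01},b_{21}\}$, $\{b_{02},b_{12}\}$, $\{b_{10},b_{20}\}$, and the triple relations impose no link between the classes; e.g.\ $b_{01}=b_{21}=1$, all others $0$, satisfies every triple relation but is not of the claimed form. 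The bracket you set aside as a routine "double-check", $[E_{ij},E_{ji}]=E_{ii}-E_{jj}\in\mathfrak h$, is precisely what fills the gap: since $c$ vanishes on $\mathfrak h$,
$0=c\bigl([E_{ij},E_{ji}]\bigr)=[E_{ij},c(E_{ji})]-[E_{ji},c(E_{ij})]=b_{ji}-b_{ij}$,
using $[E_{ij},\frac{t_j}{t_i}]=1=[E_{ji},\frac{t_i}{t_j}]$, and combining $b_{ij}=b_{ji}$ with the first-index constancy does give a single constant $b$. (Your remark that $n=1$ is vacuous is a symptom of the same oversight: for $\mathfrak{sl}(2)$ there are no triples at all, yet the lemma still asserts $b_{01}=b_{10}$, and the Cartan bracket is the \emph{only} thing that yields it.) You actually verify only the disjoint-support case $[E_{ij},E_{kl}]=0$ and skip the $[E_{ij},E_{ji}]$ computation entirely. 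Note this is exactly the bracket the paper leans on at the pivotal step: after propagating $c'(E_{kn})=0$ via $[E_{k0},E_{0n}]$, the equality $b_{nk}=b$ is extracted from $c'([E_{kn},E_{nk}])=0$.

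Once you add that one computation, the rest of your argument closes up and is essentially equivalent to the paper's normalization argument. Also fix the small operator typo: $[t_i\partial_{t_j},\frac{t_k}{t_l}]=\delta_{jk}\frac{t_i}{t_l}-\delta_{jl}\frac{t_it_k}{t_l^2}$, with $\delta_{jl}$ rather than $\delta_{il}$ in the second term.
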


\begin{proof} Let $ b=b_{0 n} $ and $ c'\left(E_{ij}\right)=c\left(E_{ij}\right)-b\frac{t_{i}}{t_{j}} $. Then $ c' $ is a
cocycle. On the other hand, for any $k\neq 0$ and $k \neq n$,
\begin{equation}
c'\left(E_{kn}\right)=c'\left(\left[E_{k0},E_{0n}\right]\right)=-\left[E_{0n},\left(b_{k0}-b\right)\frac{t_{k}}{t_{0}}\right]=0
\notag\end{equation}
and for any $k\neq n$
\begin{equation}
0=c'\left(\left[E_{kn},E_{nk}\right]\right)=\left[E_{kn},c'\left(E_{nk}\right)\right]=\left[E_{kn},\left(b_{nk}-b\right)\frac{t_{n}}{t_{k}}\right]=b_{nk}-b.
\notag\end{equation} Hence $
c'\left(E_{nk}\right)=c'\left(E_{jn}\right)=0 $ for all $
j,k\not=n $. Then
\begin{equation}
c'\left(E_{jk}\right)=c'\left(\left[E_{jn},E_{nk}\right]\right)=0.
\notag\end{equation}
\end{proof}

\begin{lemma} \label{lm10}\myLabel{lm10}\relax  Let $ n\geq2 $, $ {\mathfrak p}={\mathfrak g}_{0}\oplus{\mathfrak g}_{1} $, and $ {\mathcal F}_{\mu} $ be cuspidal. The restriction map
\begin{equation}
r:H^{1}\left({\mathfrak g},{\mathfrak h};\operatorname{End}_{\mathbb
    C}\left({\mathcal F}_{\mu}\right)\right) \to H^{1}\left({\mathfrak
    p},{\mathfrak h};\operatorname{End}_{\mathbb C}\left({\mathcal F}_{\mu}\right)\right)
\notag\end{equation}
is injective.

\end{lemma}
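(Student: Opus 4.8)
The plan is to show that $\ker r=0$. So I would start with a $1$-cocycle $c\in C^1(\mathfrak g,\mathfrak h;\operatorname{End}_{\mathbb C}(\mathcal F_\mu))$ whose restriction to $\mathfrak p$ is a coboundary, say $c(g)=[g,\varphi]$ for all $g\in\mathfrak p$ with $\varphi\in\operatorname{Hom}_{\mathfrak h}(\mathcal F_\mu,\mathcal F_\mu)$. Replacing $c$ by $c-[\,\cdot\,,\varphi]$, which is again a $1$-cocycle on $\mathfrak g$ cohomologous to the original one, I may assume that $c$ vanishes identically on $\mathfrak p=\mathfrak g_0\oplus\mathfrak g_1$. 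Since $\mathfrak g=\mathfrak g_{-1}\oplus\mathfrak p$ as vector spaces, it then suffices to prove that $c|_{\mathfrak g_{-1}}=0$, for then $c=0$ and in particular $[c]=0$ in $H^1(\mathfrak g,\mathfrak h;\operatorname{End}_{\mathbb C}(\mathcal F_\mu))$.

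Next I would extract the constraints on $c|_{\mathfrak g_{-1}}$ from the cocycle identity~\eqref{equ3}. Taking $g_1\in\mathfrak g_1$ and $g_{-1}\in\mathfrak g_{-1}$, and using that $[g_1,g_{-1}]\in\mathfrak g_0\subset\mathfrak p$ while $c(g_1)=0$, gives $[g_1,c(g_{-1})]=0$; hence $c(g_{-1})$ lies in $\operatorname{End}_{\mathfrak g_1}(\mathcal F_\mu)$. Taking $g_0\in\mathfrak g_0$ instead gives $c([g_0,g_{-1}])=[g_0,c(g_{-1})]$, so $c|_{\mathfrak g_{-1}}$ is a $\mathfrak g_0$-module map $\mathfrak g_{-1}\to\operatorname{End}_{\mathfrak g_1}(\mathcal F_\mu)$. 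Now I would identify this target: fixing an $\operatorname{ad}_{\mathfrak h}$-eigenbasis $Y_1,\dots,Y_n$ of $\mathfrak g_1$, cuspidality makes each $Y_i$ invertible on $\mathcal F_\mu$, and Lemma~\ref{lm4} applied to $\mathfrak s=\mathfrak h\oplus\mathfrak g_1$, together with the one-dimensionality of the weight spaces of $\mathcal F_\mu$, exhibits $\mathcal F_\mu$ as a free module of rank one over $\mathbb C[Y_1^{\pm1},\dots,Y_n^{\pm1}]$; consequently $\operatorname{End}_{\mathfrak g_1}(\mathcal F_\mu)=\mathbb C[Y_1^{\pm1},\dots,Y_n^{\pm1}]$. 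The $\mathfrak h$-weight of the monomial $Y_1^{a_1}\cdots Y_n^{a_n}$ is $\sum_i a_i(\varepsilon_i-\varepsilon_0)$, and since $\varepsilon_1-\varepsilon_0,\dots,\varepsilon_n-\varepsilon_0$ are linearly independent in $\mathfrak h^*$, distinct monomials have distinct weights. As $c$ is $\mathfrak h$-equivariant, $c(E_{0j})$ must be a scalar multiple of the unique monomial of weight $\varepsilon_0-\varepsilon_j$, namely $Y_j^{-1}$, so $c(E_{0j})=b_jY_j^{-1}$ for scalars $b_j\in\mathbb C$.

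The last step, which is where the hypothesis $n\ge 2$ is used, is to feed these values back into the $\mathfrak g_0$-equivariance. For $k,l,j>0$ one has $[E_{kl},E_{0j}]=-\delta_{kj}E_{0l}$ and, inside the commutative algebra $\mathbb C[Y_1^{\pm1},\dots,Y_n^{\pm1}]$, $[E_{kl},Y_j^{-1}]=-\delta_{lj}Y_kY_j^{-2}$; comparing $c([E_{kl},E_{0j}])$ with $[E_{kl},c(E_{0j})]$ and specializing to $k=j$, $l\ne j$ forces $b_l=0$. Letting $j$ run over $\{1,\dots,n\}$, which has at least two elements, kills every $b_j$, so $c|_{\mathfrak g_{-1}}=0$ and hence $c=0$; thus $r$ is injective. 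The only genuinely delicate point is the identification $\operatorname{End}_{\mathfrak g_1}(\mathcal F_\mu)\cong\mathbb C[Y_1^{\pm1},\dots,Y_n^{\pm1}]$ with its weight grading — once that is in place, both the determination of $c(E_{0j})$ and the vanishing of the $b_j$ are formal manipulations with the cocycle identity.
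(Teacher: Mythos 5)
Your proof is correct and follows essentially the same route as the paper: reduce to $c|_{\mathfrak p}=0$, deduce from the cocycle identity that $c(\mathfrak g_{-1})$ commutes with $\mathfrak g_1$, invoke Lemma~\ref{lm4} (via $\operatorname{End}_{\mathfrak h\oplus\mathfrak g_1}(\mathcal F_\mu)\cong\mathbb C$) to pin down $c(E_{0j})=b_j E_{j0}^{-1}$, and then kill the scalars $b_j$ using $\mathfrak g_0$-equivariance applied to a bracket $[E_{kl},E_{0k}]$ with $l\neq k$, which is exactly where $n\geq 2$ enters. The only cosmetic difference is that you identify $\operatorname{End}_{\mathfrak g_1}(\mathcal F_\mu)$ as a Laurent polynomial ring and use the weight grading, whereas the paper observes directly that $E_{k0}c(E_{0k})$ lies in $\operatorname{End}_{\mathfrak s}(\mathcal F_\mu)=\mathbb C$; these are interchangeable.
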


\begin{proof} Let $ c\in C^{1}\left({\mathfrak g},{\mathfrak
      h};\operatorname{End}_{\mathbb C}\left({\mathcal F}_{\mu}\right)\right) $ be
  such that $ c\left({\mathfrak p}\right)$ is a coboundary.
 Without loss of generality we can choose $c$ so that
$ c\left({\mathfrak p}\right)=0$.
Then
\begin{equation}
[\mathfrak g_1,c(\mathfrak g_{-1})]\subset[\mathfrak
g_{-1},c(\mathfrak g_1)]+c(\mathfrak g_0)=0
\notag\end{equation}
implies
\begin{equation}
\left[{\mathfrak g}_{1},c\left(E_{0k}\right)\right]=0
\notag\end{equation} for all $k$. Since $E_{k0}\in \mathfrak g_1$,
$E_{k0}c(E_{0k})$ commutes with the action of $\mathfrak g_{1}$.
Moreover it commutes with the action $\mathfrak h$ since it maps
every weight space to itself. Therefore $E_{k0}c(E_{0k})\in
\operatorname {End}_{\mathfrak s}(\mathcal F_{\mu})$ for
$\mathfrak s=\mathfrak h \oplus {\mathfrak g}_1$, and the first
statement of Lemma~\ref{lm4} implies
\begin{equation}
c\left(E_{0k}\right)=b_{k}E_{k0}^{-1} \notag\end{equation} for
some constant $ b_{k}\in{\mathbb C} $. Choose $ i\geq1 $ and $
i\not=k $. Then
\begin{equation}
c\left(E_{0i}\right)=-c\left(\left[E_{ki},E_{0k}\right]\right)=-\left[E_{ki},b_{k}E_{k0}^{-1}\right]=0.
\notag\end{equation} That proves $ c\left(E_{0i}\right)=0 $ for
all $ 1\leq i\leq n $. Hence $ c=0 $.\end{proof}

Let $ F\left(\mu\right) $ be the set of functions $ \varphi:\mu+Q \to {\mathbb C} $. Then one can identify
$ F\left(\mu\right) $ with the space $ \operatorname{End}_{{\mathfrak h}}\left({\mathcal F}_{\mu}\right) $ by the formula
\begin{equation}
\varphi\left(t^{\lambda}\right)=\varphi\left(\lambda\right)t^{\lambda}.
\notag\end{equation} A function $\varphi(\lambda)$ that depends
only on its $i$-th coordinate $\lambda_i$ will be often written as
$\varphi(\lambda_i)$.

\begin{lemma} \label{lm11}\myLabel{lm11}\relax  Let $ {\mathcal
    F}_{\mu} $ be cuspidal and $ c\in\operatorname{Hom}_{{\mathfrak
      g}_{0}}\left({\mathfrak g}_{1},\operatorname{End}_{\mathbb C}\left({\mathcal F}_{\mu}\right)\right) $. Then
\begin{equation}
c\left(E_{i0}\right)=E_{i0}\phi \notag\end{equation} for some $
\phi\in F\left(\mu\right) $ such that $
\phi\left(\lambda\right)=\phi\left(\lambda_{0}\right) $ (i.e. $
\phi $ depends only on the first coordinate $ \lambda_{0} $ of $
\lambda\in\mu+Q $). Moreover, there exists some $
\zeta\left(\lambda\right)=\zeta\left(\lambda_{0}\right) $ such
that $ c\left(E_{i0}\right)=\left[E_{i0},\zeta\right] $.

\end{lemma}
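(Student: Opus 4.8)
The statement asks us to analyze a $\mathfrak g_0$-equivariant map $c:\mathfrak g_1 \to \operatorname{End}_{\mathbb C}(\mathcal F_\mu)$ and show first that $c(E_{i0}) = E_{i0}\phi$ for a function $\phi$ depending only on the zeroth coordinate $\lambda_0$, and then that in fact $\phi$ can be replaced (up to the inner cocycle structure) by a coboundary, i.e.\ $c(E_{i0}) = [E_{i0},\zeta]$ with $\zeta$ also depending only on $\lambda_0$. The first part is where the $\mathfrak g_0$-equivariance does all the work: since $\mathfrak g_1$ is an irreducible $\mathfrak g_0$-module (the standard representation of $\mathfrak{gl}(n)$, up to twist), and since $E_{i0}\in\mathfrak g_1$ acts injectively on $\mathcal F_\mu$ by cuspidality, composing $c$ with the $\mathfrak g_0$-module isomorphism $\mathfrak g_1 \to \mathcal F_\mu^{(\cdot)}/\mathcal F_\mu^{(\cdot-\alpha)}$-type map lets us write $c(E_{i0}) = E_{i0}\phi$ where $\phi\in\operatorname{End}_{\mathbb C}(\mathcal F_\mu)$ is forced by $\mathfrak g_0$-invariance of $c$ to commute with $\mathfrak g_0$ modulo the relevant twist — and an $\mathfrak h$-invariant endomorphism commuting with $\mathfrak{sl}(n)\subset\mathfrak g_0$ must, by the $\mathfrak{sl}(n)$-module structure (each isotypic component is multiplicity-$1$ as an $\mathfrak{sl}(n)$-module in a fixed $z$-eigenspace via Lemma~\ref{lm9}), act by a scalar on each $z$-eigenspace $\mathcal F_\mu^k$; since $z$ detects exactly the coordinate combination $\mu_0 - k$, this scalar depends only on $\lambda_0$. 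This is the step I expect to be routine once Lemma~\ref{lm9} is invoked.

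\textbf{Reducing to a coboundary.} For the second assertion I would look for $\zeta\in F(\mu)$ with $[E_{i0},\zeta] = E_{i0}\phi$. Writing everything in the monomial basis, $[E_{i0},\zeta]$ applied to $t^\lambda$ produces $E_{i0}(t^\lambda)$ times the difference $\zeta(\lambda) - \zeta(\lambda + \beta)$ where $\beta$ is the weight by which $E_{i0}$ shifts (namely $\varepsilon_i - \varepsilon_0$, so the zeroth coordinate increases by $1$). Thus the equation $[E_{i0},\zeta] = E_{i0}\phi$ becomes the scalar recursion
\begin{equation}
\zeta(\lambda_0) - \zeta(\lambda_0 + 1) = \phi(\lambda_0), \notag
\end{equation}
which always admits a solution $\zeta(\lambda_0)$ defined inductively along the $\mathbb Z$-coset of the zeroth coordinate — exactly the kind of telescoping argument already used in Remark~\ref{rem44}. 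One must only check that the ambiguity ($\zeta$ is determined only up to an additive constant on each coset, and there may be several cosets) does not obstruct anything, but it does not: any choice works, and $\zeta$ inherits dependence on $\lambda_0$ alone because $\phi$ does. I would also verify the $E_{i0}$-action is ``uniform enough'' — i.e.\ that $E_{i0}(t^\lambda)$ is a single monomial (up to a nonzero scalar that may depend on $\lambda$), which is immediate from \eqref{equ1}, so that the operator identity reduces cleanly to the scalar recursion.

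\textbf{Main obstacle.} The genuinely delicate point is the first part: justifying that a $\mathfrak g_0$-equivariant $c$ forces $c(E_{i0})$ to be of the form $E_{i0}\phi$ with $\phi$ a \emph{scalar} on each $z$-eigenspace, rather than merely an $\mathfrak{sl}(n)$-endomorphism. This needs the multiplicity-one statement for $\mathcal F_\mu^k$ as an $\mathfrak{sl}(n)$-module (Lemma~\ref{lm9} gives $\mathcal F_\mu^k \cong \mathcal F_{(\mu_1+k,\mu_2,\dots,\mu_n)}$, which is simple, so $\operatorname{End}_{\mathfrak{sl}(n)}(\mathcal F_\mu^k) = \mathbb C$), together with the observation that $c$ being a $\mathfrak g_1\to\operatorname{End}$ intertwiner means the composite $\phi$ must intertwine the $\mathfrak g_0$-action up to the difference of the $\mathfrak g_0$-structures on $\mathfrak g_1$ and on $\mathcal F_\mu$, and these differ only by the action of $z$ — hence $\phi$ commutes with $\mathfrak{sl}(n)$ and is therefore scalar per $z$-eigenspace. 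Once this is set up, the rest is the telescoping recursion, which is elementary. So I would spend most of the write-up carefully identifying the $\mathfrak g_0$-module maps and invoking Schur's lemma on each $\mathcal F_\mu^k$.
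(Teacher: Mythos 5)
The second half of your write-up is essentially sound: the equation $c(E_{i0})=[E_{i0},\zeta]$ does reduce to a one-variable first-order recursion in $\lambda_0$, and any such recursion along the coset $\mu_0+\mathbb{Z}$ is solvable. You have a small sign slip ($E_{i0}=t_i\partial/\partial t_0$ \emph{decreases} the zeroth coordinate by $1$, so the shift in the recursion goes the other way, and the paper additionally keeps coefficients $\lambda_0$, $\lambda_0-1$ which is why it mentions $\mu_0\notin\mathbb{Z}$), but this does not affect the validity of the argument.

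The first half, however, has a genuine gap, and you half-acknowledge it yourself. You want to conclude that $\phi:=E_{i0}^{-1}c(E_{i0})$ commutes with $\mathfrak{sl}(n)\subset\mathfrak g_0$, after which Schur's lemma applied to the simple $\mathfrak{sl}(n)$-modules $\mathcal F_\mu^k$ (Lemma \ref{lm9}) would indeed force $\phi$ to be scalar on each $z$-eigenspace. But $\phi$ commuting with $\mathfrak{sl}(n)$ is precisely what needs to be proved and it is \emph{not} a formal consequence of $\mathfrak g_0$-equivariance of $c$. Setting $\phi_i:=E_{i0}^{-1}c(E_{i0})$ and unwinding $c([E_{jk},E_{i0}])=[E_{jk},c(E_{i0})]$, one only obtains $[E_{jk},\phi_i]=0$ for $k\neq i$, together with the unresolved relation $E_{j0}(\phi_j-\phi_i)=E_{i0}[E_{ji},\phi_i]$; nothing here shows that the $\phi_i$ agree, nor that $[E_{ji},\phi_i]=0$, and already for $\mathfrak{sl}(2)$ (i.e.\ $n=2$) the relations you can extract do not close up. The vague ``differ only by the action of $z$'' remark and the reference to a $\mathfrak g_0$-isomorphism $\mathfrak g_1\to\mathcal F_\mu^{(\cdot)}/\mathcal F_\mu^{(\cdot-\alpha)}$ do not supply the missing step: the $\mathcal F_\mu^k$ are direct summands, not a filtration, so this quotient map is undefined.

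What actually closes the gap (and this is what the paper does) is the \emph{multiplicity-one} statement
$\dim\operatorname{Hom}_{\mathfrak g_0}(\mathfrak g_1\otimes\mathcal F_\mu^k,\mathcal F_\mu^{k+1})=1$, obtained after passing through tensor-hom adjunction and comparing $z$-eigenvalues. Schur's lemma alone gives $\operatorname{End}_{\mathfrak{sl}(n)}(\mathcal F_\mu^k)=\mathbb C$, but that is a statement about endomorphisms, not about $\operatorname{Hom}$ out of the (generally non-semisimple) tensor product $\mathfrak g_1\otimes\mathcal F_\mu^k$. The paper's proof uses the short exact sequence of $\mathfrak p'$-modules $0\to V(\varepsilon_2)\to\mathfrak g_1\to V(\varepsilon_1)\to 0$ together with Remarks \ref{rem99} and \ref{rem100} to realize $\mathcal F_\mu^k\otimes\mathfrak g_1$ as an extension $0\to\mathcal F_\mu^k(V(\varepsilon_2))\to\mathcal F_\mu^k\otimes\mathfrak g_1\to\mathcal F_\mu^{k+1}\to 0$, then checks that the subobject cannot map onto $\mathcal F_\mu^{k+1}$. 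Since $E_{i0}\otimes v\mapsto E_{i0}v$ already spans this one-dimensional Hom space, every $c$ must be a scalar multiple on each $\mathcal F_\mu^k$, which is exactly the claim $c(E_{i0})=E_{i0}\phi$ with $\phi$ depending only on $\lambda_0$. Your write-up would need to supply this exact-sequence argument (or an equivalent multiplicity count) to be complete.
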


\begin{proof} Lemma~\ref{lm9} implies
\begin{equation}
\operatorname{Hom}_{{\mathfrak g}_{0}}\left({\mathfrak
g}_{1},\operatorname{End}_{\mathbb C}\left({\mathcal F}_{\mu}\right)\right)
\cong \bigoplus_{k,l\in{\mathbb Z}}\operatorname{Hom}_{{\mathfrak
g}_{0}}\left({\mathfrak g}_{1}\otimes{\mathcal
F}_{\mu}^{k},{\mathcal F}_{\mu}^{l}\right). \notag\end{equation}
By comparing the eigenvalues of $ z $ one verifies that
\begin{equation}
\operatorname{Hom}_{{\mathfrak g}_{0}}\left({\mathfrak g}_{1}\otimes{\mathcal F}_{\mu}^{k},{\mathcal F}_{\mu}^{l}\right)=0
\notag\end{equation}
if $ l\not=k+1 $. We claim that
\begin{equation}
\operatorname{Hom}_{{\mathfrak g}_{0}}\left({\mathfrak
g}_{1}\otimes{\mathcal F}_{\mu}^{k},{\mathcal
F}_{\mu}^{k+1}\right)={\mathbb C}.
\label{equ99}\end{equation}\myLabel{equ99,}\relax This follows
from Remarks~\ref{rem99} and~\ref{rem100} applied to $ {\mathfrak
g}'=\mathfrak{sl}\left(n\right)\subset{\mathfrak g}_{0} $ and the
corresponding parabolic subalgebra $ {\mathfrak p}' $ of $
{\mathfrak g}' $. Denote by $V(\eta)$ the simple highest weight
${\mathfrak p}'$-module with highest weight $\eta$. Using the
isomorphism of $ {\mathfrak g}' $-modules
\begin{equation}
{\mathfrak g}_{1}\otimes{\mathcal F}_{\mu}^{k}\cong {\mathcal
F}_{\mu}^{k}\left({\mathfrak g}_{1}\right), \notag\end{equation}
and the exact sequence of $ {\mathfrak p}' $-modules
\begin{equation}
0 \to V({\varepsilon_{2}}) \to {\mathfrak g}_{1} \to
V({\varepsilon_{1}}) \to\text{ 0,} \notag\end{equation} we obtain
the following exact sequence
\begin{equation}
0 \to {\mathcal F}_{\mu}^{k}\left(V({\varepsilon_{2}})\right) \to
{\mathcal F}_{\mu}^{k}\otimes{\mathfrak g}_{1} \to {\mathcal
F}_{\mu}^{k}\left(V({\varepsilon_{1}})\right)={\mathcal
F}_{\mu}^{k+1} \to\text{ 0.} \notag\end{equation} Since $
\operatorname{Hom}_{{\mathfrak g}_{0}}\left({\mathcal
F}_{\mu}^{k}\left(V({\varepsilon_{2}})\right),{\mathcal
F}_{\mu}^{k+1}\right)=0 $, the last exact sequence
implies~\eqref{equ99}.

Obviously the map $E_{i0}\otimes v \to E_{i0}v$ defines a
$\mathfrak g_0$-module homomorphism ${\mathfrak
g}_{1}\otimes{\mathcal F}_{\mu}\to{\mathcal F}_{\mu}$ . Therefore
any non-zero homomorphism $ c\in \operatorname{Hom}_{{\mathfrak
g}_{0}}\left({\mathfrak g}_{1}\otimes{\mathcal F}_{\mu},{\mathcal
F}_{\mu}\right) $ can be written in the form
\begin{equation}
c\left(E_{i0}\otimes v\right)=\phi\left(\mu_{0}-k\right)E_{i0}v
\notag\end{equation} where $ v\in{\mathcal F}_{\mu}^{k} $. That
implies the first statement of the lemma.

To prove the second statement note that the equation $ c\left(E_{i0}\right)=\left[E_{i0},\zeta\right] $
is equivalent to the following functional equation
\begin{equation}
-\left(\lambda_{0}-1\right)\zeta\left(\lambda_{0}-1\right)+\lambda_{0}\zeta\left(\lambda_{0}\right)=\phi\left(\lambda_{0}\right).
\notag\end{equation}
Such $ \zeta $ can be easily found inductively since $ \lambda_{0} $ is never 0, as $ \mu_{0}\notin{\mathbb Z} $.\end{proof}

\begin{lemma} \label{lm7}\myLabel{lm7}\relax  If $ {\mathcal F}_{\mu}
  $ is cuspidal, then $ \operatorname{Ext}_{{\mathfrak g},\mathfrak h}^{1}\left({\mathcal F}_{\mu},{\mathcal F}_{\mu}\right)={\mathbb C} $.

\end{lemma}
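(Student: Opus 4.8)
The bound $\dim\operatorname{Ext}^1_{\mathfrak g,\mathfrak h}(\mathcal F_\mu,\mathcal F_\mu)\ge 1$ is already in hand: it is Lemma~\ref{lm6} when $n\ge 2$ and Example~\ref{ex1} when $n=1$. So the real task is the reverse inequality, and I assume $n\ge 2$. Let $c_0$ denote the cocycle of Lemma~\ref{lm6}, $c_0(E_{ij})=t_i/t_j$ for $i\ne j$ and $c_0=0$ on $\mathfrak h$, and let $c\in C^1(\mathfrak g,\mathfrak h;\operatorname{End}_{\mathbb C}(\mathcal F_\mu))$ be an arbitrary cocycle. By Lemma~\ref{lm10} the restriction $r\colon H^1(\mathfrak g,\mathfrak h;\operatorname{End}_{\mathbb C}(\mathcal F_\mu))\to H^1(\mathfrak p,\mathfrak h;\operatorname{End}_{\mathbb C}(\mathcal F_\mu))$ is injective, so it is enough to prove that $[c|_{\mathfrak p}]$ is a scalar multiple of $[c_0|_{\mathfrak p}]$; injectivity of $r$ together with non-triviality of $c_0$ then give $[c]\in\mathbb C[c_0]$, whence $\operatorname{Ext}^1_{\mathfrak g,\mathfrak h}(\mathcal F_\mu,\mathcal F_\mu)=\mathbb C$.

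I would bring $c$ and $c_0$ to a common normal form on $\mathfrak p=\mathfrak g_0\oplus\mathfrak g_1$. Since $\operatorname{Ext}^1_{\mathfrak h\oplus\mathfrak g_1,\mathfrak h}(\mathcal F_\mu,\mathcal F_\mu)=0$ by Lemma~\ref{lm4} (the mechanism of Lemma~\ref{lm5}), after subtracting a coboundary I may assume $c(\mathfrak g_1)=0$; then the cocycle identity applied to $[g_0,g_1]\in\mathfrak g_1$ with $g_0\in\mathfrak g_0$, $g_1\in\mathfrak g_1$ yields $[g_1,c(g_0)]=0$, so $c(\mathfrak g_0)\subseteq\operatorname{End}_{\mathfrak g_1}(\mathcal F_\mu)$. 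For $i,j>0$ the operator $c(E_{ij})$ has $\mathfrak h$-weight $\varepsilon_i-\varepsilon_j$; since $\mathcal F_\mu$ has one-dimensional weight spaces and $\mathfrak g_1$ acts invertibly with weights $\varepsilon_k-\varepsilon_0$ forming a $\mathbb Z$-basis of $Q$ (see the proof of Lemma~\ref{lm4}), the weight-$(\varepsilon_i-\varepsilon_j)$ part of $\operatorname{End}_{\mathfrak g_1}(\mathcal F_\mu)$ is one-dimensional, spanned by multiplication by $t_i/t_j$. Hence $c(E_{ij})=b_{ij}\,t_i/t_j$ for scalars $b_{ij}$, while $c$ vanishes on the Cartan subalgebra. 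Restricting the cocycle identity to $[\mathfrak g_0,\mathfrak g_0]\cong\mathfrak{sl}(n)$ and arguing as in the proof of Lemma~\ref{lm8}, now inside $\mathfrak{sl}(n)$ (using $[E_{ij},t_j/t_k]=t_i/t_k$, and for $n=2$ that $[E_{12},t_2/t_1]=[E_{21},t_1/t_2]=1$ while $c$ kills $\mathfrak h$), forces all $b_{ij}$ to coincide, say $b_{ij}=\lambda$. So the normalized $c|_{\mathfrak p}$ sends $E_{ij}\mapsto\lambda\,t_i/t_j$ for $i,j>0$ and kills everything else in $\mathfrak p$.

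For $c_0$ one can do the same without invoking Lemma~\ref{lm5}: $c_0(\mathfrak g_0)\subseteq\operatorname{End}_{\mathfrak g_1}(\mathcal F_\mu)$ already, so the cocycle identity makes $c_0|_{\mathfrak g_1}$ an element of $\operatorname{Hom}_{\mathfrak g_0}(\mathfrak g_1,\operatorname{End}_{\mathbb C}(\mathcal F_\mu))$, and Lemma~\ref{lm11} provides $\zeta=\zeta(\lambda_0)\in\operatorname{End}_{\mathfrak h}(\mathcal F_\mu)$ with $c_0(E_{i0})=[E_{i0},\zeta]$. Subtracting the coboundary $g\mapsto[g,\zeta]$, which vanishes on all of $\mathfrak g_0$ because $\zeta$ depends only on the $\varepsilon_0$-coordinate of the weight, brings $c_0|_{\mathfrak p}$ to the form $E_{ij}\mapsto t_i/t_j$ $(i,j>0)$ and $0$ elsewhere in $\mathfrak p$ — i.e. the normalized $c|_{\mathfrak p}$ is exactly $\lambda$ times the normalized $c_0|_{\mathfrak p}$. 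Thus $[c|_{\mathfrak p}]=\lambda[c_0|_{\mathfrak p}]$, and by the first paragraph $\operatorname{Ext}^1_{\mathfrak g,\mathfrak h}(\mathcal F_\mu,\mathcal F_\mu)=\mathbb C$.

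The part that carries the weight is the middle one: proving that the weight-$(\varepsilon_i-\varepsilon_j)$ part of $\operatorname{End}_{\mathfrak g_1}(\mathcal F_\mu)$ is one-dimensional — so that $c(E_{ij})$ for $i,j>0$ is automatically a multiple of $t_i/t_j$ — and that the $\mathfrak{sl}(n)$-cocycle relations then pin the scalars $b_{ij}$ together exactly as in Lemma~\ref{lm8}, leaving no extra cohomology hidden in $\operatorname{End}_{\mathfrak g_1}(\mathcal F_\mu)$ viewed as an $\mathfrak{sl}(n)$-module. The remainder is routine coboundary bookkeeping resting on Lemmas~\ref{lm4}, \ref{lm8}, \ref{lm10}, and~\ref{lm11}.
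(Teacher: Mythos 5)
Your proof is correct and follows essentially the same route as the paper: use Lemma~\ref{lm4}/\ref{lm5} to normalize the cocycle so its values on $\mathfrak g_0$ land in $\operatorname{End}_{\mathfrak g_1}(\mathcal F_\mu)$, pin down those values to multiples of $t_i/t_j$ via the one-dimensionality coming from Lemma~\ref{lm4}, run the Lemma~\ref{lm8}-type relations to force a single scalar, invoke Lemma~\ref{lm11} to kill the $\mathfrak g_1$-part, and finish with the injectivity of restriction to $\mathfrak p$ from Lemma~\ref{lm10}. The only cosmetic difference is that you compare the normal forms of $c$ and $c_0$ on $\mathfrak p$ directly rather than computing $\dim H^1(\mathfrak p,\mathfrak h)=1$ outright, and you apply Lemma~\ref{lm11} to $c_0$ instead of to $c$; the underlying lemmas and steps are the same.
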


\begin{proof} By Lemma~\ref{lm6} it suffices to prove that $ \dim
\operatorname{Ext}_{{\mathfrak g,\mathfrak h}}^{1}\left({\mathcal
F}_{\mu},{\mathcal F}_{\mu}\right)\leq1 $.

Let
$ c\in C^{1}\left({\mathfrak g},{\mathfrak h},
  \operatorname{End}_{\mathbb C}
\left({\mathcal F}_{\mu}\right)\right) $. As follows from
Lemma~\ref{lm5}, we may assume that there is $ \psi\in
F\left(\mu\right) $ such that for all $ g_{0}\in{\mathfrak
g}_{0},$ and $g_{1}\in{\mathfrak g}_{1} $,
\begin{equation}
c\left(g_{1}\right)=\left[g_{1},\psi\right]\text{,
}\left[g_{0},\psi\right]+c\left(g_{0}\right)\in\operatorname{End}_{{\mathfrak
g}_{1}}\left({\mathcal F}_{\mu}\right). \notag\end{equation}
Let $\mathfrak s=\mathfrak h\oplus {\mathfrak g}_1$. Note that
$\frac{t_{i}}{t_{j}}\in \operatorname{End}_{{\mathfrak
g}_{1}}\left({\mathcal F}_{\mu}\right)$ if $i,j>0$ and
$$\frac{t_j}{t_i}([E_{ij},\psi]+c(E_{ij}))\in\operatorname{End}_{{\mathfrak
s}}\left({\mathcal F}_{\mu}\right).$$ Therefore,  by the first
statement of Lemma~\ref{lm4}, for any $ i\not=j $, $ 1\leq i,j\leq
n $, there is a constant $ b_{ij}\in{\mathbb C} $ such that
\begin{equation}
\left[E_{ij},\psi\right]+c\left(E_{ij}\right)=b_{ij}\frac{t_{i}}{t_{j}}.
\notag\end{equation} But then $c'\left(E_{ij}\right):=c(E_{ij})+[E_{ij},\psi]=b_{ij}\frac{t_{i}}{t_{j}} $ is a cocycle on
$ {\mathfrak g}_{0} $. Lemmas ~\ref{lm9} and \ref{lm8} imply that
$ b=b_{ij} $ for some constant $ b $, and thus $ c_{|{\mathfrak
g}_{0}} $ is equivalent to the cocycle $ b\frac{t_{i}}{t_{j}} $
modulo some coboundary. Therefore, we may assume that
\begin{equation}
c\left(E_{ij}\right)=b\frac{t_{i}}{t_{j}}, \notag\end{equation}
for $ 1\leq i \neq j\leq n $. Then $
\left[c\left(E_{ij}\right),{\mathfrak g}_{1}\right]=0 $, and one has
\begin{equation}
\left[g_{0},c\left(g_{1}\right)\right]=c\left(\left[g_{0},g_{1}\right]\right)
\notag\end{equation} for all $ g_{0}\in{\mathfrak g}_{0}$ and
$g_{1}\in{\mathfrak g}_{1} $. By Lemma~\ref{lm11}, this implies
\begin{equation}
c\left(g_{1}\right)=\left[\zeta,g_{1}\right]
\notag\end{equation}
for some $ \zeta=\zeta\left(\lambda_{0}\right) $. But $ \left[{\mathfrak g}_{0},\zeta\right]=0 $. Therefore, the cocycle
\begin{equation}
c'\left(g\right)=c\left(g\right)+\left[g,\zeta\right]
\notag\end{equation}
defines the same cohomology class as $ c $ and
\begin{equation}
c'\left(E_{ij}\right)=b\frac{t_{i}}{t_{j}}\text{, }c'\left(E_{i0}\right)=0
\notag\end{equation}
for $ i\not=j $, $ 1\leq i,j\leq n $. Therefore, $ \dim
H^{1}\left({\mathfrak p},{\mathfrak h};\operatorname{End}_{\mathbb C}\left({\mathcal F}_{\mu}\right)\right)=1 $ and, by Lemma~\ref{lm10}, $
\dim  H^{1}\left({\mathfrak g},{\mathfrak
    h};\operatorname{End}_{\mathbb C}\left({\mathcal F}_{\mu}\right)\right)\leq1 $.\end{proof}

\begin{theorem} \label{th3}\myLabel{th3}\relax  Let $ M $ be a simple cuspidal $ {\mathfrak g} $-module with singular or
non-integral central character $ \chi $ and let $ N $ be any
simple cuspidal module. Then $ \operatorname{Ext}_{{\mathfrak
g},\mathfrak h}^{1}\left(M,N\right)=0 $ if $ N $ is not isomorphic to $ M $
and $ \operatorname{Ext}_{{\mathfrak
g},\mathfrak h}^{1}\left(M,M\right)={\mathbb C} $.

\end{theorem}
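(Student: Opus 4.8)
The plan is to reduce Theorem~\ref{th3} to the computation already carried out in Lemma~\ref{lm7}. First I would observe that if $\operatorname{Ext}_{\mathfrak g,\mathfrak h}^1(M,N)\neq 0$, then any non-split extension $0\to N\to E\to M\to 0$ is again a cuspidal module (submodules and quotients of cuspidal modules are cuspidal, and the property of weight spaces having equal finite dimension is preserved in a short exact sequence), so $M$ and $N$ lie in the same block $\mathcal C^{\chi}_{\bar\nu}$. Since $\chi$ is singular or non-integral, Corollary~\ref{cor2} tells us this block has exactly one simple object up to isomorphism, forcing $N\cong M$. This disposes of the first assertion and reduces everything to computing $\operatorname{Ext}_{\mathfrak g,\mathfrak h}^1(M,M)$.

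Next I would use Lemma~\ref{lm2} to replace the block containing $M$ by an equivalent one whose central character is of the form $\gamma(t\varepsilon_0)$ with $t\in(\mathbb C\setminus\mathbb Z)\cup\{0,-1,\dots,-n\}$; equivalence of categories preserves $\operatorname{Ext}^1$. By Theorem~\ref{th1}, in the singular or non-integral case the simple object of such a block is of the form $\mathcal F_\mu(V_0)$ for a suitable irreducible $P$-module $V_0$; in fact, after the reduction of Lemma~\ref{lm2} together with Corollary~\ref{cor1}, one may even arrange $M=\mathcal F_\mu$ with $\mu_i\notin\mathbb Z$ for all $i$ (choosing $V_0$ trivial, i.e.\ $\tau$ with $(\tau,\alpha_i)=0$ for all $i$). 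Then Lemma~\ref{lm7} gives exactly $\operatorname{Ext}_{\mathfrak g,\mathfrak h}^1(\mathcal F_\mu,\mathcal F_\mu)=\mathbb C$, and transporting back along the equivalences of Lemma~\ref{lm2} yields $\operatorname{Ext}_{\mathfrak g,\mathfrak h}^1(M,M)=\mathbb C$.

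One subtlety I would need to address carefully: for $n=1$, i.e.\ $\mathfrak g=\mathfrak{sl}(2)$, the computation of $\operatorname{Ext}^1(M,M)$ is not covered by Lemma~\ref{lm7} (which assumes $n\geq 2$), but it is handled directly in Example~\ref{ex1}, which shows $\operatorname{Ext}_{\mathfrak g,\mathfrak h}^1(M,N)$ is $\mathbb C$ when $M\cong N$ and $0$ otherwise. So the $n=1$ case of the theorem follows from that example together with the same block argument via Corollary~\ref{cor2}. For $n\geq 2$ one should also make sure that when the chosen simple object of the reduced block is $\mathcal F_\mu(V_0)$ with $V_0$ nontrivial rather than $\mathcal F_\mu$ itself, one can still invoke Corollary~\ref{cor1} to pass to a genuine $\mathcal F_{\mu'}$; this is exactly what was done in the proof of Corollary~\ref{cor2}, so no new work is needed.

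The main obstacle is essentially bookkeeping rather than mathematics: one must be careful that the equivalences of categories used (the translation-functor equivalences of Lemma~\ref{lm2} and the isomorphisms of Corollary~\ref{cor1}) genuinely land one at a block whose simple object is some $\mathcal F_{\mu}$ with all $\mu_i\notin\mathbb Z$, so that Lemma~\ref{lm7} literally applies; and one must remember to separate out the $n=1$ case and invoke Example~\ref{ex1} there. Once those points are checked, the proof is short: block-triviality from Corollary~\ref{cor2} kills $\operatorname{Ext}^1(M,N)$ for $M\not\cong N$, and Lemma~\ref{lm7} (or Example~\ref{ex1} when $n=1$) computes $\operatorname{Ext}^1(M,M)=\mathbb C$.
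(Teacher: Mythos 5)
Your proof is correct and follows essentially the same route as the paper's: nonvanishing of $\operatorname{Ext}^1(M,N)$ forces $M$ and $N$ into a common block, which by Corollary~\ref{cor2} has a unique simple object, and that block is reduced via the translation-functor equivalences of Lemma~\ref{lm2} (together with Corollary~\ref{cor1}) to one whose simple object is a genuine $\mathcal F_\mu$, where Lemma~\ref{lm7} gives the computation. Your extra care in separating out the $n=1$ case via Example~\ref{ex1} is a sensible clarification; the paper handles this by simply declaring $n\geq 2$ at the start of Section~4.
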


\begin{proof} The theorem follows from Lemmas ~\ref{lm2} and~\ref{lm7}
  since any block of ${\mathcal C}$ with non-integral or singular central character is
  isomorphic to a block with unique simple module $\mathcal F_{\mu}$
  for some $\mu$. {}\end{proof}

\begin{theorem}\label{genthm} If $ {\mathcal F}_{\mu}$ is cuspidal,
then $ \operatorname{Ext}_{{\mathfrak g}}^{1}\left({\mathcal F}_{\mu},{\mathcal F}_{\mu}\right)={\mathbb C}^{n+1}$.
\end{theorem}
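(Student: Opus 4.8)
The plan is to compute $\operatorname{Ext}^1_{\mathfrak g}({\mathcal F}_\mu,{\mathcal F}_\mu)$ (now \emph{without} the relative condition at $\mathfrak h$) by isolating the extra directions that appear when the cocycle is not forced to kill $\mathfrak h$. Recall that for a $1$-cocycle $c\in C^1(\mathfrak g;\operatorname{End}_{\mathbb C}({\mathcal F}_\mu))$ we only require $c([g_1,g_2])=[g_1,c(g_2)]-[g_2,c(g_1)]$, but $c(\mathfrak h)$ need no longer vanish. First I would observe that since ${\mathcal F}_\mu$ is a weight module, $\operatorname{Hom}_{\mathbb C}({\mathcal F}_\mu,{\mathcal F}_\mu)=\bigoplus_{\beta\in Q}\operatorname{Hom}({\mathcal F}_\mu,{\mathcal F}_\mu)_\beta$ decomposes into $\operatorname{ad}\mathfrak h$-weight spaces, and the cocycle condition is $\mathfrak h$-equivariant, so one may analyse $c$ weight-component by weight-component. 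The diagonal piece $c|_{\mathfrak h}\colon\mathfrak h\to\operatorname{End}_{\mathfrak h}({\mathcal F}_\mu)=F(\mu)$ is a linear map (the bracket relation forces $c([h,h'])=0$ trivially and $c(\mathfrak h)\subset\operatorname{End}_{\mathfrak h}$ because $\mathfrak h$ acts by weights); the obstruction $c([h,g_\alpha])=[h,c(g_\alpha)]-\alpha(h)c(g_\alpha)\cdots$ relates $c|_{\mathfrak h}$ to the off-diagonal values.

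The key step is a short-exact-sequence argument at the level of cochain complexes. Consider the restriction map $C^\bullet(\mathfrak g;\operatorname{End}_{\mathbb C}({\mathcal F}_\mu))\to C^\bullet(\mathfrak h;\operatorname{End}_{\mathbb C}({\mathcal F}_\mu))$ and, on the other side, the relative complex $C^\bullet(\mathfrak g,\mathfrak h;\cdots)$ which computes $\operatorname{Ext}^1_{\mathfrak g,\mathfrak h}$. There is a standard Hochschild–Serre / relative exact sequence
\begin{equation}
0\to H^1(\mathfrak g,\mathfrak h;\operatorname{End}_{\mathbb C}({\mathcal F}_\mu))\to H^1(\mathfrak g;\operatorname{End}_{\mathbb C}({\mathcal F}_\mu))\to \operatorname{Hom}_{\mathfrak h}(\mathfrak h,\operatorname{End}_{\mathbb C}({\mathcal F}_\mu))^{\mathfrak g}
\notag
\end{equation}
whose last term records the "extra" cocycles coming from nonzero $c|_{\mathfrak h}$. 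Since $\mathfrak h$ acts trivially on itself by the adjoint action, $\operatorname{Hom}_{\mathfrak h}(\mathfrak h,\operatorname{End}_{\mathbb C}({\mathcal F}_\mu))=\mathfrak h^*\otimes\operatorname{End}_{\mathfrak h}({\mathcal F}_\mu)=\mathfrak h^*\otimes F(\mu)$, and imposing $\mathfrak g$-invariance (equivalently, that such a $c|_{\mathfrak h}$ extends to an honest cocycle on $\mathfrak g$ and is not a coboundary) cuts this down. By Theorem~\ref{th3} the first term is $\mathbb C$, so it remains to show the image of the last arrow is exactly $n$-dimensional; together this gives $n+1$.

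To pin down that $n$-dimensional contribution I would use the explicit model: a diagonal perturbation $c(h)=\langle h,?\rangle\cdot(\text{identity-type operator})$ corresponds to tensoring ${\mathcal F}_\mu$ with a one-dimensional (non-weight, but generalized-weight) character, or more concretely to the deformations ${\mathcal F}_{\mu+v}$ as $v$ ranges over $\mathbb C^{n+1}$; the subspace $v\in\mathbb C(\varepsilon_0+\dots+\varepsilon_n)$ already accounts for the weight-module self-extension of Lemma~\ref{lm6} (living in $\operatorname{Ext}^1_{\mathfrak g,\mathfrak h}$), while the remaining $n$ independent directions modulo $Q$ — indexed, say, by $\mu_0,\dots,\mu_{n}$ subject to one relation from $Q$-periodicity — give genuinely new cocycles with $c(\mathfrak h)\neq0$. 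Concretely: for $v\in\mathbb C^{n+1}$ define $c_v(E_{ij})=(v_i-v_j)\frac{t_i}{t_j}$ and $c_v(h)=v(h)\cdot\mathrm{Id}$ (interpreting $v\in\mathbb C^{n+1}$ via the pairing with diagonal matrices); one checks these satisfy~\eqref{equ3}, that $c_v$ is a coboundary iff the corresponding $\varphi$ exists iff $v\in Q$ (a functional-equation argument exactly as in Remark~\ref{rem44} and Lemma~\ref{lm11}, solvable precisely when $v\in Q$), and that the map $v\mapsto[c_v]$ together with the Lemma~\ref{lm6} class spans everything — this uses Lemma~\ref{lm10} (restriction to $\mathfrak p$ is injective, and the same proof goes through verbatim since it never used $c(\mathfrak h)=0$, only $c(\mathfrak p)$ coboundary) to bound the total dimension from above by $\dim(\mathbb C^{n+1}/Q)+\dim H^1(\mathfrak g,\mathfrak h;\cdots)$.

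The main obstacle I expect is the \emph{upper} bound: showing $\dim\operatorname{Ext}^1_{\mathfrak g}({\mathcal F}_\mu,{\mathcal F}_\mu)\le n+1$. The lower bound is essentially free from the family ${\mathcal F}_{\mu+v}$, $v\in\mathbb C^{n+1}/Q$, combined with the Lemma~\ref{lm6} self-extension. For the upper bound the delicate point is that once $c(\mathfrak h)\neq0$ one cannot directly invoke Lemma~\ref{lm5} (which assumed the relative normalization); instead I would first use the $\mathfrak h$-weight decomposition to replace $c$ by a cohomologous cocycle whose $\mathfrak h$-value is "constant" (valued in the scalars $\subset\operatorname{End}_{\mathfrak h}$), absorbing the rest into a coboundary by solving the relevant difference equations — legitimate since $\mu_i\notin\mathbb Z$ makes all denominators invertible — and then rerun the Lemma~\ref{lm7} argument on $\mathfrak p$ with this normalized $c$, the extra $n$ parameters surviving exactly as the residual freedom in choosing the scalar $c(\mathfrak h)\in\mathfrak h^*\cong\mathbb C^n$. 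Assembling these gives the claimed $\operatorname{Ext}^1_{\mathfrak g}({\mathcal F}_\mu,{\mathcal F}_\mu)=\mathbb C^{n+1}$.
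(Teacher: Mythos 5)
Your core idea --- split $H^1(\mathfrak g;\operatorname{End}_{\mathbb C}(\mathcal F_\mu))$ into the relative part $H^1(\mathfrak g,\mathfrak h;\cdot)=\mathbb C$ from Theorem~\ref{th3} plus $n$ extra dimensions detected by $c|_{\mathfrak h}$ --- is exactly the paper's strategy, but two concrete details are off. The cocycle you write down is wrong: with $c_v(E_{ij})=(v_i-v_j)\frac{t_i}{t_j}$ and $c_v(h)=v(h)\cdot\operatorname{Id}$, testing~\eqref{equ3} on $[E_{i0},E_{0j}]=E_{ij}$ (with $i,j>0$, $i\neq j$) gives $(v_0-v_j)\frac{t_i}{t_j}$ on the right-hand side but $(v_i-v_j)\frac{t_i}{t_j}$ on the left, so the identity fails unless $v_i=v_0$ for all $i$. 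The correct first-order deformation of the family $\mathcal F_{\mu+sv}$, equivalently $c_v(X)=X(u)$ with $u=\sum v_i\log t_i$ as in Remark~\ref{generalized}, is $c_v(E_{ij})=v_j\frac{t_i}{t_j}$ and $c_v(E_{ii}-E_{jj})=(v_i-v_j)\operatorname{Id}$. Also, $\dim(\mathbb C^{n+1}/Q)$ is not a vector-space dimension ($Q$ is a lattice, not a subspace), so the bound you invoke in that form is not well posed.

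The genuine gap is the upper bound, which you rightly flag as the delicate part but then leave unresolved. Your proposed normalization of $c|_{\mathfrak h}$ to scalars ``by solving the relevant difference equations'' is never written down, and the ``Hochschild--Serre relative exact sequence'' with last term $\operatorname{Hom}_{\mathfrak h}(\mathfrak h,\operatorname{End}_{\mathbb C}(\mathcal F_\mu))^{\mathfrak g}$ is not a standard statement: the honest cokernel sits inside $H^1$ of the quotient complex, whose degree-one piece $\mathfrak h^*\otimes\operatorname{End}_{\mathfrak h}(\mathcal F_\mu)$ is infinite-dimensional before taking cohomology, so this formalism does not by itself pin the extra contribution down to $\mathbb C^n$. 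The paper closes the gap quickly and cleanly: normalize $c(X_i)=0$ on an $\operatorname{ad}_{\mathfrak h}$-eigenbasis $X_1,\dots,X_n$ of $\mathfrak g_1$, which is possible because $\mathcal F_\mu$ is free over $\mathbb C[X_1^{\pm1},\dots,X_n^{\pm1}]$; then $0=c([h,X_i])=[c(h),X_i]$ forces $c(h)\in\operatorname{End}_{U_X(\mathfrak s)}(\mathcal F_\mu)=\mathbb C$, so $c(h)$ is automatically scalar; subtracting $c'(X)=X(u)$ with $u=\sum_{i=1}^n c(E_{ii}-E_{00})\log t_i$ then lands the remainder in $C^1(\mathfrak g,\mathfrak h;\cdot)$, and Theorem~\ref{th3} finishes. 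Until you supply an argument of comparable precision for the scalar normalization of $c|_{\mathfrak h}$, the inequality $\dim\operatorname{Ext}^1_{\mathfrak g}(\mathcal F_\mu,\mathcal F_\mu)\le n+1$ is not established in your proposal.
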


\begin{proof} If $M$ is a self-extension of $ {\mathcal F}_{\mu}$,
  then $\mathfrak h$ acts locally finitely on $M$ and therefore $M$ is
  a generalized weight module. On the other hand, we have an
  isomorphism
$$\mbox{Ext}^1_{\mathfrak g} (A, B) \simeq H^1({\mathfrak g},
\mbox{Hom}_{\mathbb C} (A,B))$$
for any $\mathfrak g$-modules $A$ and $B$ (see \cite{Fuk}). Since in
our case the extension is a generalized weight module, we can assume
without loss of generality that the cocycle defining it is
$\mathfrak h$-invariant, i.e. $c\in \mbox{Hom}_{\mathfrak h}(\mathfrak
g,\mbox{End}_{\mathbb C}({\mathcal F}_{\mu}))$.

Use the same notations as in the proof of Lemma \ref{lm4} and let
${\mathfrak s}={\mathfrak h}\oplus{\mathfrak g}_{1} $. Since
${\mathcal F}_{\mu} $ is free over $\mathbb C[X_1^{\pm 1},\dots,
X_n^{\pm 1}]$ we may assume that $c(X_i)=0$ for all $i\leq n$. Since
$0=c([h,X_i])=[c(h),X_i]$ we obtain that $c(h)\in
\mbox{End}_{U_X(\mathfrak s)}({\mathcal F}_{\mu})$  for all $h\in
\mathfrak h$. But
$\mbox{End}_{U_X(\mathfrak s)}({\mathcal F}_{\mu})=\mathbb C$, so $c(h)$
is a constant for each $h\in \mathfrak h$. Let $u_i=c(E_{ii}-E_{00})$,
then, as we explained already in Remark \ref{generalized}, the linear
functional $c'(X)=X(u)$ where $u=\sum_{i=1}^n u_i \mbox{log} t_i$
defines a non-trivial cocycle $c'\in \mbox{Hom}_{\mathfrak h}(\mathfrak
g,\mbox{End}_{\mathbb C}({\mathcal F}_{\mu}))$. Moreover, $c'(X_i)=0$
and $c'(E_{ii}-E_{00})=u_i\in \mathbb C$. Let $c''=c-c'$.
Then $c''(h)=0$ for any $h\in \mathfrak h$,
and therefore $c''\in C^1(\mathfrak g,\mathfrak h,
\mbox{End}_{\mathbb C}({\mathcal F}_{\mu}))$. By Theorem \ref{th3} we
have   $H^1(\mathfrak g,\mathfrak h,
\mbox{End}_{\mathbb C}({\mathcal F}_{\mu}))=\mathbb C$, and hence
$H^1(\mathfrak g,
\mbox{End}_{\mathbb C}({\mathcal F}_{\mu}))=\mathbb C^{n+1}$. Theorem \ref{genthm}
is proven.
\end{proof}

\section{An extension $\bar {\mathcal C}$ of $\mathcal C$
and the structure of the category ${\protect \mathcal C}^{\chi}_{\bar{\nu}}$
 for non-integral or singular $ \chi $}

Let $\mathcal A$ be an abelian category and $P$ be a projective
generator in $\mathcal A$. It is a well-known fact (see, for example,
~\cite{GM} exercise 2, section 2.6) that the functor $\mbox{Hom}_{\mathcal A}(P,M)$ provides an
equivalence of $\mathcal A$ and the category of right modules over the
ring $\mbox{Hom}_{\mathcal A}(P,P)$. In case when every object in
$\mathcal A$ has a finite length and each simple object has a projective
cover, one reduces the problem of classifying indecomposable objects
in $\mathcal A$ to the similar problem for modules over a
finite-dimensional algebra (see \cite{Gab2},\cite{Gab3}). In many
cases when $\mathcal A$ does not have projective modules it is
possible to consider a certain completion of $\mathcal A$  and reduce the case to
the category of modules over some pointed algebra. We use this
strategy to study the category $\mathcal C$ of cuspidal
modules. However, we use injective modules instead of
projectives and exploit the existence of the duality functor
${}^\vee$ in  $\mathcal C$. We prefer this consideration since
in this case we avoid taking projective limits and introducing
topology. Another advantage of this approach is that the center of
$U(\mathfrak g)$ acts locally finitely on injective limits of cuspidal
modules.

Let $ \bar{\mathcal C}$ be the full subcategory of all weight modules
consisting of $\mathfrak g$-modules
$ M $ which have countable dimension and whose  finitely generated submodules
 belong to $ {\mathcal C}$. It is not hard to see
that every such $M$ has an exausting filtration
$0\subset M_1\subset M_2\subset\dots$ such that each $M_i\in\mathcal
C$. It implies that the action of the center $Z$ of the universal
enveloping algebra $U$ on $M$ is locally finite and we have a
decomposition
\begin{equation}
\bar{\mathcal C}=\bigoplus_{\chi \in Z',\; \bar{\nu} \in {\mathfrak
h}^*/Q}\bar{\mathcal C}^{\chi}_{\bar{\nu}}, \notag\end{equation}
defined in the same way as for $\mathcal C$.

Before we proceed with studying blocks of $\bar{\mathcal C}$
let us formulate a general result.
Let $R$ be a unital $\mathbb
C$-algebra and let $\bar{\mathcal
 A}$ be an abelian category of $R$-modules
  satisfying the following conditions:

$\bullet$ $\bar{\mathcal A}$ contains finitely many up to isomorphism simple
objects $L_1,...,L_n$ such that $\mbox{End}_R (L_i)=\mathbb C$;

$\bullet$ $\bar{\mathcal A}$ contains indecomposable injective
  modules $I_1,...,I_n$ such that $\operatorname {Hom}_{R}(L_i,I_j)=0$
  if $i\neq j$, and  $\operatorname {Hom}_{R}(L_i,I_i)=\mathbb C$.

$\bullet$ Let
  ${\mathcal A}$ be the subcategory of  $\bar{\mathcal A}$ which
  consists of all objects in $\bar{\mathcal A}$ of finite
  length. Assume that every module $M$ in $\bar{\mathcal A}$ has an
  increasing exausting filtration
\begin{equation}
0=F^0(M)\subset F^1(M)\subset \dots \subset F^k(M)\subset \dots
\end{equation}
 such that $F^k(M)\in \mathcal A$ for all
  $k$.

$\bullet$ Finally, assume that there exists an involutive
contravariant exact faithful functor
  \; ${}^{\vee}: \mathcal A \to \mathcal A$ such that
$$\operatorname{Hom}_{R}(M,N)
 \cong \operatorname{Hom}_{R}(N^{\vee},M^{\vee}).$$

Let $I:=I_1\oplus\dots\oplus I_n$ and $\mathcal E:=\operatorname
  {End}_R(I)$. Define a functor $\Phi$ from $\mathcal A$ to
  $\mathcal E$-mod by
\begin{equation}
\Phi(M):=\operatorname {Hom}_R(M^{\vee},I).
\end{equation}

\begin{theorem} \label{th5}\myLabel{th5}\relax The functor $\Phi$
  establishes an equivalence of the category  $\mathcal A$ and the category of all
  finite-dimensional $\mathcal E$-modules.
\end{theorem}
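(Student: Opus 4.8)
The plan is to show $\Phi$ is fully faithful on $\mathcal A$ and essentially surjective onto finite-dimensional $\mathcal E$-modules, mimicking the classical Morita-type argument but dualized through ${}^\vee$. First I would observe that $I$ is an injective cogenerator for $\mathcal A$: since each simple $L_i$ embeds (uniquely, up to scalar) into $I_i$, every nonzero object of $\mathcal A$ has nonzero image under $\operatorname{Hom}_R(-,I)$. Hence the contravariant functor $M\mapsto\operatorname{Hom}_R(M,I)$ from $\mathcal A$ to $\mathcal E$-mod is exact (injectivity of $I$) and faithful (cogenerator property). Composing with the exact faithful involution ${}^\vee$ gives that $\Phi(M)=\operatorname{Hom}_R(M^\vee,I)$ is covariant, exact and faithful. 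The values land in finite-dimensional $\mathcal E$-modules because an object of finite length has a finite composition series with factors among the $L_i$, and $\dim_{\mathbb C}\operatorname{Hom}_R(L_i,I)<\infty$ follows from $\operatorname{Hom}_R(L_i,I_j)$ being $\mathbb C$ or $0$ together with $I=\bigoplus I_j$; an induction on length via exactness of $\Phi$ bounds $\dim_{\mathbb C}\Phi(M)$ by the composition length of $M$ times a constant.

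Next I would prove full faithfulness. Faithfulness is already in hand. For fullness and injectivity on Hom-spaces, the key computation is $\mathcal E=\operatorname{End}_R(I)$ acting on $\Phi(L_i)$: one checks $\Phi(L_i)$ is the simple $\mathcal E$-module $S_i$ concentrated at the $i$-th idempotent, using $\operatorname{Hom}_R(L_i,I_j)=\delta_{ij}\mathbb C$ and the hypothesis $\operatorname{Hom}_R(M,N)\cong\operatorname{Hom}_R(N^\vee,M^\vee)$ to rewrite $\operatorname{Hom}_{\mathcal E}(\Phi(L_i),\Phi(M))$. More precisely, for $M\in\mathcal A$ I would establish the natural isomorphism
\begin{equation}
\operatorname{Hom}_{\mathcal E}\bigl(\Phi(M),\Phi(N)\bigr)\;\cong\;\operatorname{Hom}_R(M,N)
\end{equation}
by first verifying it when $M=L_i$ is simple (both sides compute to $\operatorname{Hom}_R(L_i,N)$, the left via the description of $\Phi(L_i)$ as $S_i$ and the fact that $\operatorname{Hom}_{\mathcal E}(S_i,-)$ extracts the $i$-th "socle" coefficient, the right directly), and then inducting on the length of $M$ using the long exact sequences obtained from exactness of $\Phi$ and the five-lemma. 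Injectivity of $I$ on the $\bar{\mathcal A}$ side, or rather projectivity of $\Phi(I)=\mathcal E$ on the module side, keeps the relevant $\operatorname{Ext}^1$ terms from obstructing the five-lemma step.

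For essential surjectivity I would argue that every finite-dimensional $\mathcal E$-module has finite length with composition factors among the $S_i=\Phi(L_i)$, and then lift along $\Phi$ by induction on length: given a short exact sequence $0\to S_i\to V\to V'\to 0$ of $\mathcal E$-modules with $V'=\Phi(M')$ already realized, one uses full faithfulness (already proved) to identify $\operatorname{Ext}^1_{\mathcal E}(V',S_i)$ with $\operatorname{Ext}^1_R(M',L_i)$ computed inside $\mathcal A$ — here one must know these $\operatorname{Ext}^1$ groups agree, which again follows from exactness of $\Phi$ plus full faithfulness — and pulls the extension class back to produce $M$ with $\Phi(M)\cong V$. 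The main obstacle, and the place where the hypotheses on $\bar{\mathcal A}$ (in particular the existence of the exhausting filtrations and the involution ${}^\vee$) really get used, is controlling $\operatorname{Ext}^1$ on both sides simultaneously: one needs that $\Phi$ not only preserves Hom but also does not create or destroy extensions, and that the completion $\bar{\mathcal A}$ supplies enough injectives for the dimension-shifting arguments to run without running outside $\mathcal A$. Once the $\operatorname{Ext}^1$-compatibility is secured, the induction closes and $\Phi$ is an equivalence onto finite-dimensional $\mathcal E$-modules.
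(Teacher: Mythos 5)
Your proposal takes a genuinely different route from the paper. The paper does not attempt a direct ``fully faithful $+$ essentially surjective'' argument; instead it first classifies the simple finite-dimensional $\mathcal E$-modules (showing each is $\Phi(L_i)$, via the two-sided ideal $\mathcal E'$ of endomorphisms of $I$ killing the socle, which acts locally nilpotently), and then constructs an explicit right adjoint $\Psi(F):=(\operatorname{Hom}_{\mathcal E}(F,I))^{\vee}$, checking $\Psi\Phi(L_i)\simeq L_i$. The adjoint is the crucial device: it supplies the candidate preimage $\Psi(V)$ for any finite-dimensional $V$, so essential surjectivity never has to be argued by lifting extension classes.

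This matters because the main step of your proposal, essential surjectivity by induction on length, contains a real gap. You reduce to lifting an extension $0\to S_i\to V\to V'\to 0$ to $\mathcal A$ once $V'=\Phi(M')$ is realized, and assert that the required identification of $\operatorname{Ext}^1_{\mathcal E}(V',S_i)$ with $\operatorname{Ext}^1_R(M',L_i)$ ``follows from exactness of $\Phi$ plus full faithfulness.'' Those two properties give you only that $\Phi$ is \emph{injective} on $\operatorname{Ext}^1$ (a split image forces a split preimage). They do not give surjectivity on $\operatorname{Ext}^1$, which is precisely what the lifting step needs, and it is exactly the point where one must invoke the injectivity of $I$ inside the larger category $\bar{\mathcal A}$ --- i.e., exactly what the paper's explicit adjoint $\Psi$ accomplishes. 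You flag this as ``the main obstacle'' but do not close it. Relatedly, the remark that ``$\Phi(I)=\mathcal E$ is projective'' cannot be used as stated: $I$ has infinite length, so it is not an object of $\mathcal A$, and $I^{\vee}$ is not even defined (the duality $^{\vee}$ is only given on $\mathcal A$); the value $\Phi(I)$ therefore does not make sense in this setup. Replacing your $\operatorname{Ext}^1$-lifting step with the construction of $\Psi$ --- for which you must verify, as the paper does, that $\operatorname{Hom}_{\mathcal E}(F,I)$ has finite length for finite-dimensional $F$ so that $^{\vee}$ applies --- would repair the argument.
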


\begin{proof} The functor $\Phi$ is exact as follows from the injectivity of
  $I$. It is  straightforward that $\Phi(L_1),\dots,\Phi(L_n)$ are
  pairwise non-isomorphic one-dimensional $\mathcal E$-modules.
Therefore $\Phi$ maps a simple object to a
  one-dimensional $\mathcal E$-module, hence an object of finite length to
  a finite-dimensional $\mathcal E$-module.

Next we will show that if $V$ is a simple
finite-dimensional $\mathcal
  E$-module, then $V\cong \Phi(L_i)$. The conditions imposed on the category
$\bar{\mathcal A}$ ensure that $I$ has a filtration
$0=F^0(I)\subset F^1(I)\subset \dots \subset F^k(I)\subset \dots$
such
  that $F^1(I)$ is a maximal semisimple submodule in $I$, (in fact,
$F^1(I)\cong L_1\oplus\dots\oplus L_n$) and
  $F^k(I)/F^{k-1}(I)$ is semisimple for all $k>0$. Let
\begin{equation}
{\mathcal E}':=\{\phi \in {\mathcal E} | \phi(F^1(I))=0\}.
\end{equation}
It is easy to check that ${\mathcal E}'$ is a two sided ideal in
$\mathcal E$, and
\begin{equation}\label{qn1}
\mathcal E / \mathcal E' \cong \operatorname
{Hom}_R(L_1\oplus\dots\oplus L_n,I)\simeq\operatorname{End
  }_R(L_1\oplus\dots\oplus L_n).
\end{equation}
Moreover, any $\phi \in \mathcal E'$ is locally nilpotent, because
$\phi^k (F^k(I))=0$. Hence $c+\phi$ is invertible for any non-zero
$c\in \mathbb C$. Therefore the only eigenvalue of $\phi$ in $V$
is zero, and in particular, every $\phi\in {\mathcal E}'$ acts
nilpotently on $V$. That implies $({\mathcal E}')^N(V)=0$. By the
simplicity of $V$, ${\mathcal E}'(V)=0$. Now the statement follows
directly from ~(\ref{qn1}).

Now consider the natural isomorphism
$$\operatorname{Hom}_{R}(M,\operatorname{Hom}_{\mathcal
  E}(F,I))\cong \operatorname{Hom}_{\mathcal
  E}(F,\operatorname{Hom}_{R}(M,I))$$
for any $\mathcal E$-module $F$ and $M\in\bar{\mathcal A}$.
If  $F$ is a finite-dimensional $\mathcal
  E$-module, it is not dificult to see by induction on dim $F$ that
  $\operatorname{Hom}_{\mathcal E}(F,I)$ has a finite length as an
  $R$-module and hence lies in $\mathcal A$. Therefore for any
  $M\in\mathcal A$ we have
$$\operatorname{Hom}_{R}((\operatorname{Hom}_{\mathcal
  E}(F,I))^{\vee},M)\cong\operatorname{Hom}_{R}(M^{\vee},\operatorname{Hom}_{\mathcal
  E}(F,I))\cong \operatorname{Hom}_{\mathcal
  E}(F,\operatorname{Hom}_{R}(M^{\vee},I)).$$

Thus, the functor $\Psi$ from the category of finite-dimensional
$\mathcal E$-modules to ${\mathcal A}$ defined
by
$$\Psi(F)=(\operatorname{Hom}_{\mathcal E}(F,I))^{\vee}$$
is the right adjoint of $\Phi$.
It is obvious that $\Psi(\Phi(L_i))\simeq L_i$ for all $i$.
That implies Theorem \ref{th5}.
\end{proof}

Let $ \chi $ be a non-integral or singular central character and
$ \bar{\nu}\in{\mathfrak h}^{*}/Q $ be such that $ {\mathcal
C}^{\chi}_{\bar{\nu}} $ is not empty.
By Corollary~\ref{cor2}, there is exactly one up to
isomorphism simple object in $ {\mathcal C}^{\chi}_{\bar{\nu}} $,
which is isomorphic to $ {\mathcal F}_{\mu}\left(V_{0}\right) $
for suitable $ \mu $ and $ V_{0} $. Define the $\mathfrak
g$-modules (see Lemma ~\ref{lm12})
\begin{equation}
{\mathcal F}_{\mu}^{\left(m\right)}:={\mathcal F}_{\mu}\oplus
u{\mathcal F}_{\mu}\oplus\dots \oplus u^{m}{\mathcal
F}_{\mu}\text{, }{\mathcal
F}_{\mu}^{\left(m\right)}\left(V_{0}\right):={\mathcal
F}_{\mu}^{\left(m\right)}\otimes_{{\mathcal
O}}\Gamma\left(U,{\mathcal V}_{0}\right). \notag\end{equation} For
$n\geq 2$ or $|\mu|\neq -1$ the action of $\mathfrak g$ on
${\mathcal F}_{\mu}^{\left(m\right)}$ is the standard one. For
$n=1$ and $|\mu| = -1$ we set $X(u^m \otimes f):=u^m \otimes Xf$,
$H(u^m \otimes f):=u^m \otimes Hf$, and $Y(u^m \otimes f):=u^m
\otimes Yf + u^{m-1} \otimes X^{-1}f$, where $f \in {\mathcal
F}_{\mu}$. Note that the standard action of $\mathfrak g$ in the
latter case would lead to semisimple modules ${\mathcal
F}_{\mu}^{\left(m\right)}$ (see Remark \ref{rem44}). In the proofs
of the results in this section we assume that the action is
standard, i.e. $n\geq 2$ or $|\mu|\neq -1$. However, it is not hard
to check that all results remain valid in the exceptional case as
well. The details are left to the reader.

\begin{lemma} \label{lm24}\myLabel{lm24}\relax  $ {\mathcal F}_{\mu}^{\left(m\right)}\left(V_{0}\right) $ is an indecomposable module.

\end{lemma}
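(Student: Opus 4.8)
The plan is to produce an explicit nilpotent $\mathfrak g$-module endomorphism $N$ of $M:=\mathcal F_\mu^{(m)}(V_0)$ with $N^m\neq 0$, and then deduce indecomposability from a length count. The underlying idea is that $\mathcal F_\mu^{(m)}(V_0)$ is an iterated self-extension of the simple module $\mathcal F_\mu(V_0)$ shaped like a single Jordan block, and such a module cannot decompose, since a decomposition would split the ``Jordan block''.

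First I would record the module structure in terms of the variable $u$. On $\mathcal F_\mu^{(m)}=\bigoplus_{k=0}^m u^k\mathcal F_\mu$ the correspondence \eqref{equ1} gives, by the Leibniz rule, $g\cdot(u^k f)=u^k(gf)+k\,u^{k-1}\,g(u)\,f$ for $g\in\mathfrak g$, with no term of $u$-degree below $k-1$, because $g(u)$ does not itself involve $u$ (it is a combination of the functions $t_i/t_j$). From this formula one checks that the formal derivative $\partial_u\colon u^k\mapsto k\,u^{k-1}$ commutes with the actions of $\mathfrak g$ and of $\mathcal O$, so it is an endomorphism of the $(\mathfrak g,\mathcal O)$-module $\mathcal F_\mu^{(m)}$; tensoring over $\mathcal O$ with $\Gamma(U,\mathcal V_0)$ yields a $\mathfrak g$-endomorphism $N:=\partial_u\otimes_{\mathcal O}\mathrm{id}$ of $M=\bigoplus_{k=0}^m u^k\mathcal F_\mu(V_0)$. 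By construction $N$ sends $u^k\mathcal F_\mu(V_0)$ onto $u^{k-1}\mathcal F_\mu(V_0)$ ($k$ times an isomorphism) and annihilates $u^0\mathcal F_\mu(V_0)$, so $N^{m+1}=0$ but $N^m\neq 0$. This verification that $\partial_u$ is $\mathfrak g$-equivariant is the only genuinely delicate step; in the exceptional case $n=1$, $|\mu|=-1$ the same $\partial_u$ still commutes with the modified action.

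Next, the subspaces $W_k:=\bigoplus_{j\le k}u^j\mathcal F_\mu(V_0)$ form, by the same formula, a chain of submodules with each quotient $W_k/W_{k-1}$ isomorphic to the simple module $\mathcal F_\mu(V_0)$, so $M$ has composition length exactly $m+1$. Now suppose $M=A\oplus B$ with $A,B$ submodules. Then $N=N_A\oplus N_B$ with $N_A,N_B$ nilpotent, and $N^m\neq 0$ forces, say, $N_A^m\neq 0$. The descending chain $A\supseteq N_AA\supseteq N_A^2A\supseteq\cdots$ is strictly decreasing as long as its terms are nonzero, for if $N_A^jA=N_A^{j+1}A\neq 0$ then $N_A$ would restrict to a surjective, hence (finite length) bijective, hence (being nilpotent) zero endomorphism of a nonzero module. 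Thus $A\supsetneq N_AA\supsetneq\cdots\supsetneq N_A^mA\neq 0$, so the length of $A$ is at least $m+1$, i.e. equals the length of $M$; hence $B=0$, a contradiction, and $M$ is indecomposable.
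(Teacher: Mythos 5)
Your construction of the $\mathfrak g$-equivariant endomorphism $N=\partial_u\otimes_{\mathcal O}\mathrm{id}$ is correct (in the generic case) and genuinely useful — it is implicit in Lemma~\ref{lm26} of the paper — and the filtration argument giving $\operatorname{length}(M)=m+1$ is fine. However, the final step has a real gap: you assert without justification that a decomposition $M=A\oplus B$ into $\mathfrak g$-submodules would force $N=N_A\oplus N_B$. A $\mathfrak g$-module endomorphism need \emph{not} preserve an arbitrary direct summand; the projections onto $A$ and $B$ are elements of $\operatorname{End}_{\mathfrak g}(M)$, and there is no reason they commute with $N$. The claim is false in general: take $M=L\oplus L$ for a simple module $L$, with $N(a,b)=(b,0)$. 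Then $N$ is a nilpotent $\mathfrak g$-endomorphism with $N\neq 0=N^2$ and $M$ has length $2$, yet $M$ is decomposable — so the statement ``a length-$(m+1)$ module carrying a nilpotent endomorphism of index $m+1$ is indecomposable,'' which your argument implicitly relies on, is simply untrue. (A small secondary issue: in the exceptional case $n=1$, $|\mu|=-1$, the modified action has the cocycle term appearing with coefficient $1$ rather than $k$, so $\partial_u\colon u^k\mapsto k\,u^{k-1}$ does \emph{not} commute with it; one must instead use the unweighted shift $u^k\mapsto u^{k-1}$.)

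The endomorphism $N$ can still be used to give a correct proof, but along a different route. Observe that $\ker N = W_0 = \mathcal F_\mu(V_0)$ is simple, and that $N$ induces a $\mathfrak g$-isomorphism $M/W_0\xrightarrow{\sim}W_{m-1}=\mathcal F_\mu^{(m-1)}(V_0)$. One then shows by induction on $m$ that $W_0$ is the \emph{unique} simple submodule of $M$ (the base case $m=1$ being Lemma~\ref{lm12}): if $L$ were a simple submodule with $L\neq W_0$, then $L\cap W_0=0$, so $L$ embeds into $M/W_0\cong\mathcal F_\mu^{(m-1)}(V_0)$; by the inductive hypothesis this forces $L\subseteq W_1\cong\mathcal F_\mu^{(1)}(V_0)$, which is indecomposable of length two and hence has a unique simple submodule, namely $W_0$ — contradiction. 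Uniqueness of the simple submodule then immediately gives indecomposability. This is essentially the paper's proof, with $N$ playing the role of the isomorphism $M/\mathcal F_\mu^{(m-2)}(V_0)\cong \mathcal F_\mu^{(1)}(V_0)$ that the paper uses.
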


\begin{proof} First, note that $ {\mathcal F}_{\mu}^{\left(m\right)}\left(V_{0}\right) $ has a filtration
\begin{equation}
0\subset{\mathcal F}_{\mu}\left(V_{0}\right)\subset{\mathcal
F}_{\mu}^{\left(1\right)}\left(V_{0}\right)\subset\dots
\subset{\mathcal F}_{\mu}^{\left(m\right)}\left(V_{0}\right).
\notag\end{equation} To check that $ {\mathcal
F}_{\mu}^{\left(m\right)}\left(V_{0}\right) $ is indecomposable it
is sufficient to check that $ {\mathcal
F}_{\mu}^{\left(m\right)}\left(V_{0}\right) $ has a unique
irreducible submodule. We prove this by induction on $m$. Assume
that $ {\mathcal F}_{\mu}^{\left(m-1\right)}\left(V_{0}\right) $
has a unique irreducible submodule, and let $ {\mathcal
F}_{\mu}^{\left(m\right)}\left(V_{0}\right) $ have an irreducible
submodule $ L\not={\mathcal F}_{\mu}\left(V_{0}\right) $. Then $
L\cap{\mathcal F}_{\mu}^{\left(m-1\right)}\left(V_{0}\right)=0 $
and thus
\begin{equation}
{\mathcal F}_{\mu}^{\left(m\right)}\left(V_{0}\right)\cong
L\oplus{\mathcal F}_{\mu}^{\left(m-1\right)}\left(V_{0}\right).
\notag\end{equation} But in this case
\begin{equation}
{\mathcal F}_{\mu}^{\left(1\right)}\left(V_{0}\right)\cong
{\mathcal F}_{\mu}^{\left(m\right)}\left(V_{0}\right)/{\mathcal
F}_{\mu}^{\left(m-2\right)}\left(V_{0}\right)\cong {\mathcal
F}_{\mu}\left(V_{0}\right)\oplus L. \notag\end{equation} However,
by Lemma~\ref{lm12}, $ {\mathcal
F}_{\mu}^{\left(1\right)}\left(V_{0}\right) $ is indecomposable.
Contradiction.\end{proof}

\begin{lemma} \label{lm25}\myLabel{lm25}\relax  $
  \operatorname{Ext}^{1}_{\mathfrak g,\mathfrak h}\left({\mathcal F}_{\mu}\left(V_{0}\right),{\mathcal F}_{\mu}^{\left(m\right)}\left(V_{0}\right)\right)={\mathbb C} $.

\end{lemma}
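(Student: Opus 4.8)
The plan is to induct on $m$. The base case $m=0$ is precisely the statement $\operatorname{Ext}^{1}_{\mathfrak g,\mathfrak h}(\mathcal F_{\mu}(V_{0}),\mathcal F_{\mu}(V_{0}))=\mathbb C$ of Theorem~\ref{th3}, which applies because $\mathcal F_{\mu}(V_{0})$ is the unique simple object of a block with singular or non-integral central character. For the inductive step I would take the short exact sequence of $\mathfrak g$-modules
\[
0\to \mathcal F_{\mu}^{(m-1)}(V_{0})\to \mathcal F_{\mu}^{(m)}(V_{0})\to \mathcal F_{\mu}(V_{0})\to 0
\]
coming from the filtration $0\subset\mathcal F_{\mu}(V_{0})\subset\cdots\subset\mathcal F_{\mu}^{(m)}(V_{0})$ exhibited in Lemma~\ref{lm24}, whose successive subquotients are all isomorphic to $\mathcal F_{\mu}(V_{0})$; write $\pi$ for the surjection in it. Applying the functor $\operatorname{Hom}_{\mathfrak g,\mathfrak h}(\mathcal F_{\mu}(V_{0}),-)$ and using that relative $(\mathfrak g,\mathfrak h)$-cohomology is long exact in the coefficient module, I get the long exact sequence relating the $\operatorname{Hom}$'s and $\operatorname{Ext}^{1}$'s of $\mathcal F_{\mu}(V_{0})$ against the three terms of the sequence.

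The heart of the argument is to pin down the $\operatorname{Hom}$ spaces. Since $\mathcal F_{\mu}(V_{0})$ is simple of countable dimension over $\mathbb C$, Schur's lemma gives $\operatorname{End}_{\mathfrak g}(\mathcal F_{\mu}(V_{0}))=\mathbb C$; and by the argument in the proof of Lemma~\ref{lm24}, $\mathcal F_{\mu}^{(m)}(V_{0})$ has a unique irreducible submodule, necessarily the bottom piece $\mathcal F_{\mu}(V_{0})\subset\mathcal F_{\mu}^{(m-1)}(V_{0})$. Hence every nonzero homomorphism $\mathcal F_{\mu}(V_{0})\to\mathcal F_{\mu}^{(m)}(V_{0})$ has image inside $\mathcal F_{\mu}^{(m-1)}(V_{0})$, so $\operatorname{Hom}_{\mathfrak g,\mathfrak h}(\mathcal F_{\mu}(V_{0}),\mathcal F_{\mu}^{(m)}(V_{0}))\cong\mathbb C$ and the map $\pi_{*}$ it induces to $\operatorname{Hom}_{\mathfrak g,\mathfrak h}(\mathcal F_{\mu}(V_{0}),\mathcal F_{\mu}(V_{0}))$ vanishes. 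By exactness the connecting map $\delta\colon\operatorname{Hom}_{\mathfrak g,\mathfrak h}(\mathcal F_{\mu}(V_{0}),\mathcal F_{\mu}(V_{0}))\to\operatorname{Ext}^{1}_{\mathfrak g,\mathfrak h}(\mathcal F_{\mu}(V_{0}),\mathcal F_{\mu}^{(m-1)}(V_{0}))$ is injective, hence --- both sides being one-dimensional, by Schur and by the induction hypothesis --- an isomorphism. It follows that the map $\operatorname{Ext}^{1}_{\mathfrak g,\mathfrak h}(\mathcal F_{\mu}(V_{0}),\mathcal F_{\mu}^{(m-1)}(V_{0}))\to\operatorname{Ext}^{1}_{\mathfrak g,\mathfrak h}(\mathcal F_{\mu}(V_{0}),\mathcal F_{\mu}^{(m)}(V_{0}))$ is zero, so $\operatorname{Ext}^{1}_{\mathfrak g,\mathfrak h}(\mathcal F_{\mu}(V_{0}),\mathcal F_{\mu}^{(m)}(V_{0}))$ embeds (via the map induced by $\pi$) into $\operatorname{Ext}^{1}_{\mathfrak g,\mathfrak h}(\mathcal F_{\mu}(V_{0}),\mathcal F_{\mu}(V_{0}))=\mathbb C$. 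This gives the upper bound $\dim\le 1$.

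For the matching lower bound, the short exact sequence $0\to\mathcal F_{\mu}^{(m)}(V_{0})\to\mathcal F_{\mu}^{(m+1)}(V_{0})\to\mathcal F_{\mu}(V_{0})\to 0$ does not split, for a splitting would give $\mathcal F_{\mu}^{(m+1)}(V_{0})\cong\mathcal F_{\mu}(V_{0})\oplus\mathcal F_{\mu}^{(m)}(V_{0})$, contradicting the indecomposability proved in Lemma~\ref{lm24}. Its class is therefore a nonzero element of $\operatorname{Ext}^{1}_{\mathfrak g,\mathfrak h}(\mathcal F_{\mu}(V_{0}),\mathcal F_{\mu}^{(m)}(V_{0}))$, and combining with the upper bound yields $\operatorname{Ext}^{1}_{\mathfrak g,\mathfrak h}(\mathcal F_{\mu}(V_{0}),\mathcal F_{\mu}^{(m)}(V_{0}))=\mathbb C$. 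One works throughout with the standard $\mathfrak g$-action, the exceptional case $n=1$, $|\mu|=-1$ being handled as indicated after Lemma~\ref{lm24}. The argument is largely formal given Theorem~\ref{th3} and Lemma~\ref{lm24}; the one place to be careful is the identification of all the $\operatorname{Hom}$ spaces --- in particular the vanishing of $\pi_{*}$ on $\operatorname{Hom}$, i.e. that the unique simple submodule of $\mathcal F_{\mu}^{(m)}(V_{0})$ lies in $\mathcal F_{\mu}^{(m-1)}(V_{0})$ --- so that the connecting map is forced to be an isomorphism and the induction closes.
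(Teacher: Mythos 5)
Your proof is correct and follows the same route as the paper's: induct on $m$ using the short exact sequence $0\to\mathcal F_{\mu}^{(m-1)}(V_0)\to\mathcal F_{\mu}^{(m)}(V_0)\to\mathcal F_{\mu}(V_0)\to 0$, apply the long exact sequence for relative Ext to bound $\dim\operatorname{Ext}^1$ by $1$, and then use the nonsplitting of the sequence for $m+1$ (guaranteed by the indecomposability in Lemma~\ref{lm24}) to get the lower bound. The only difference is presentational: the paper abbreviates the long exact sequence as a chain of $\mathbb C$'s, whereas you explicitly track the maps, identify $\pi_{*}=0$ via the uniqueness of the simple submodule, and observe that the connecting map is thereby forced to be an isomorphism.
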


\begin{proof} We again apply induction on $ m $. For $ m=0 $ the statement follows from
Theorem \ref{th3}. We use now the exact sequence
\begin{equation}
0 \to {\mathcal F}_{\mu}^{\left(m-1\right)}\left(V_{0}\right) \to
{\mathcal F}_{\mu}^{\left(m\right)}\left(V_{0}\right) \to
{\mathcal F}_{\mu}\left(V_{0}\right) \to\text{ 0.}
\notag\end{equation} Since $ \operatorname{Hom}_{\mathfrak g}\left({\mathcal
F}_{\mu}\left(V_{0}\right),{\mathcal
F}_{\mu}^{\left(k\right)}\left(V_{0}\right)\right)={\mathbb C} $
for all $ k $, and $ \operatorname{Ext}^{1}_{\mathfrak g,\mathfrak h}\left({\mathcal
F}_{\mu}\left(V_{0}\right),{\mathcal
F}_{\mu}^{\left(m-1\right)}\left(V_{0}\right)\right)={\mathbb C}
$, by the inductive assumption, the corresponding long exact sequence
of Ext starts with
\begin{equation}
0 \to {\mathbb C} \to {\mathbb C} \to {\mathbb C} \to {\mathbb C}
\to \operatorname{Ext}^{1}_{\mathfrak g,\mathfrak h}\left({\mathcal
F}_{\mu}\left(V_{0}\right),{\mathcal
F}_{\mu}^{\left(m\right)}\left(V_{0}\right)\right) \to {\mathbb C}
\to \dots \notag\end{equation} Therefore, $ \dim
\operatorname{Ext}^{1}_{\mathfrak g,\mathfrak h}\left({\mathcal
F}_{\mu}\left(V_{0}\right),{\mathcal
F}_{\mu}^{\left(m\right)}\left(V_{0}\right)\right)\leq1 $. On the
other hand, Lemma \ref{lm24} implies that $ {\mathcal
F}_{\mu}^{\left(m+1\right)}\left(V_{0}\right) $ is a non-trivial
extension of $ {\mathcal F}_{\mu}\left(V_{0}\right) $ by $
{\mathcal F}_{\mu}^{\left(m\right)}\left(V_{0}\right) $. Hence $
\operatorname{Ext}^{1}_{\mathfrak g,\mathfrak h}\left({\mathcal
F}_{\mu}\left(V_{0}\right),{\mathcal
F}_{\mu}^{\left(m\right)}\left(V_{0}\right)\right)={\mathbb C} $.
\end{proof}

A natural example of a module
in $ \bar{\mathcal C}^{\chi}_{\bar{\nu}} $  is
\begin{equation}
\bar{{\mathcal F}}_{\mu}\left(V_{0}\right):=\lim
_{\longrightarrow}{\mathcal
F}_{\mu}^{\left(m\right)}\left(V_{0}\right)=\bigoplus_{m\geq0}u^{m}{\mathcal
F}_{\mu}\left(V_{0}\right). \notag\end{equation}

\begin{lemma} \label{lm26}\myLabel{lm26}\relax  $ \bar{{\mathcal F}}_{\mu}\left(V_{0}\right) $ is an indecomposable
 injective object in  $ \bar{{\mathcal C}}^{\chi}_{\bar{\nu}}$,
 and\\
$\operatorname{End}_{{\mathfrak g}}\left(\bar{{\mathcal
F}}_{\mu}\left(V_{0}\right)\right)= {\mathbb
C}\left[\left[\frac{\partial}{\partial u}\right]\right] $.

\end{lemma}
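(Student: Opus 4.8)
The plan is to extract all three assertions from Lemmas~\ref{lm24} and~\ref{lm25} together with one explicit operator, the derivative in $u$. Put $L:={\mathcal F}_\mu(V_0)$. For indecomposability, note that since $\bar{\mathcal F}_\mu(V_0)=\bigcup_m{\mathcal F}_\mu^{(m)}(V_0)$ any simple submodule is finitely generated, so lies in some ${\mathcal F}_\mu^{(m)}(V_0)$, hence equals $L$ by Lemma~\ref{lm24}; thus $\bar{\mathcal F}_\mu(V_0)$ has a unique simple submodule, which forces indecomposability (a nontrivial direct sum decomposition would produce two simple submodules meeting trivially). The same remark shows $L$ is essential, so once injectivity is established $\bar{\mathcal F}_\mu(V_0)$ is the injective hull of $L$ in $\bar{\mathcal C}^{\chi}_{\bar\nu}$.

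\emph{Injectivity.} By a Baer-type argument---extend partial lifts along a chain and invoke Zorn, the obstruction at each step being an extension by a finitely generated, hence finite length, module of the block, which is a successive extension of copies of $L$, so induction on the length reduces everything to a single $\operatorname{Ext}^1$ group---it suffices to prove $\operatorname{Ext}^1_{{\mathfrak g},{\mathfrak h}}(L,\bar{\mathcal F}_\mu(V_0))=0$. The crucial point is that every transition map
\[
\operatorname{Ext}^1_{{\mathfrak g},{\mathfrak h}}\bigl(L,{\mathcal F}_\mu^{(m)}(V_0)\bigr)\longrightarrow\operatorname{Ext}^1_{{\mathfrak g},{\mathfrak h}}\bigl(L,{\mathcal F}_\mu^{(m+1)}(V_0)\bigr)
\]
vanishes: applying $\operatorname{Hom}_{\mathfrak g}(L,-)$ to $0\to{\mathcal F}_\mu^{(m)}(V_0)\to{\mathcal F}_\mu^{(m+1)}(V_0)\to L\to0$, the connecting homomorphism sends $\operatorname{id}_L$ to the class of this extension, which is nonzero because ${\mathcal F}_\mu^{(m+1)}(V_0)$ is indecomposable (Lemma~\ref{lm24}); since the target is one-dimensional (Lemma~\ref{lm25}), the connecting map is onto, and exactness forces the displayed arrow to be zero. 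Finally, since $L$ is finitely generated over the Noetherian ring $U({\mathfrak g})$, any extension $0\to\bar{\mathcal F}_\mu(V_0)\to E\to L\to0$ is, up to pushout along an inclusion ${\mathcal F}_\mu^{(N)}(V_0)\hookrightarrow\bar{\mathcal F}_\mu(V_0)$, induced from an extension $0\to{\mathcal F}_\mu^{(N)}(V_0)\to E''\to L\to0$ (lift a finite generating set of $L$ into $E$, take the submodule they generate, note that its intersection with $\bar{\mathcal F}_\mu(V_0)$ is finitely generated, hence contained in some ${\mathcal F}_\mu^{(N)}(V_0)$, and enlarge); pushing $E''$ one step further to ${\mathcal F}_\mu^{(N+1)}(V_0)$ splits it by the vanishing above, hence so does $E$, and $\operatorname{Ext}^1_{{\mathfrak g},{\mathfrak h}}(L,\bar{\mathcal F}_\mu(V_0))=0$.

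\emph{The endomorphism ring.} Because $E_{ij}$ acts through the derivation $t_i\,\partial/\partial t_j$ of~\eqref{equ1} and $E_{ij}(u)=t_i/t_j\in{\mathcal O}$, the operator $D:=\partial/\partial u$, defined by $u^mf\mapsto mu^{m-1}f$, commutes with the ${\mathfrak g}$-action and the ${\mathcal O}$-action, so $D\in\operatorname{End}_{\mathfrak g}(\bar{\mathcal F}_\mu(V_0))$; it is surjective with kernel exactly $L$, giving a short exact sequence $0\to L\to\bar{\mathcal F}_\mu(V_0)\xrightarrow{\,D\,}\bar{\mathcal F}_\mu(V_0)\to0$, and it is locally nilpotent. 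Hence ${\mathbb C}[[D]]$ acts---the sums converging pointwise---and embeds into $\operatorname{End}_{\mathfrak g}(\bar{\mathcal F}_\mu(V_0))$ (the embedding is injective since $D^k(u^kf)=k!\,f\ne 0$). Conversely, let $\varphi\in\operatorname{End}_{\mathfrak g}(\bar{\mathcal F}_\mu(V_0))$; its restriction to the socle $L$ is a scalar $c_0$, as $\operatorname{End}_{\mathfrak g}(L)={\mathbb C}$, so $\varphi-c_0$ annihilates $L=\ker D$ and therefore, by applying $\operatorname{Hom}_{\mathfrak g}(-,\bar{\mathcal F}_\mu(V_0))$ to the sequence above, factors uniquely as $\varphi-c_0=\varphi_1\circ D$ with $\varphi_1\in\operatorname{End}_{\mathfrak g}(\bar{\mathcal F}_\mu(V_0))$. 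Iterating gives $\varphi=\sum_{k=0}^{N-1}c_kD^k+\varphi_N\circ D^N$ for every $N$; since $D^N$ annihilates ${\mathcal F}_\mu^{(N-1)}(V_0)$, the remainder dies on any fixed vector once $N$ is large, so $\varphi=\sum_{k\ge0}c_kD^k\in{\mathbb C}[[D]]$. Therefore $\operatorname{End}_{\mathfrak g}(\bar{\mathcal F}_\mu(V_0))={\mathbb C}[[\partial/\partial u]]$.

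I expect the injectivity to be the main obstacle: beyond noticing that the $\operatorname{Ext}^1$ transition maps vanish, one must perform the finite-generation reduction that funnels an arbitrary extension or lifting problem into a single stage ${\mathcal F}_\mu^{(N)}(V_0)$ where that vanishing can be invoked. Indecomposability and the endomorphism ring computation are then formal consequences of the structure of the tower $\bigl({\mathcal F}_\mu^{(m)}(V_0)\bigr)_{m\ge0}$.
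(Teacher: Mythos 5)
Your proof is correct and follows essentially the same route as the paper's: reduce injectivity to $\operatorname{Ext}^1_{\mathfrak g,\mathfrak h}\bigl({\mathcal F}_\mu(V_0),\bar{\mathcal F}_\mu(V_0)\bigr)=0$, lift to a finitely generated stage ${\mathcal F}_\mu^{(N)}(V_0)$, and kill the class there using the one-dimensionality from Lemma~\ref{lm25} together with the indecomposability from Lemma~\ref{lm24}. The only variation is the finishing move: the paper exhibits the surjection $\varphi=\partial^k/\partial u^k$ on $\bar{\mathcal F}_\mu(V_0)$ and reads off from its long exact sequence that $\varphi_*$ is injective on $\operatorname{Ext}^1$ while annihilating the class, whereas you observe that the transition maps $\operatorname{Ext}^1\bigl(L,{\mathcal F}_\mu^{(m)}\bigr)\to\operatorname{Ext}^1\bigl(L,{\mathcal F}_\mu^{(m+1)}\bigr)$ all vanish; these are two faces of the same fact, and both hinge on the same two lemmas. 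Your endomorphism-ring computation likewise spells out, via $D=\partial/\partial u$ and iterated factorization through $D$, what the paper states more briefly as the inverse limit $\varprojlim\operatorname{End}_{\mathfrak g}\bigl({\mathcal F}_\mu^{(k)}(V_0)\bigr)\cong\varprojlim{\mathbb C}[\partial/\partial u]/\bigl(\partial^k/\partial u^k\bigr)$.
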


\begin{proof} $ \bar{{\mathcal F}}_{\mu}\left(V_{0}\right) $ is indecomposable since it contains a unique
simple submodule. The latter follows from Lemma~\ref{lm24}. To
verify the endomorphism identity note that
$$\operatorname{End}_{{\mathfrak g}}\left(\bar{{\mathcal
F}}_{\mu}\left(V_{0}\right)\right) = \lim _{\longleftarrow}
\operatorname{End}_{{\mathfrak g}}\left({\mathcal
F}_{\mu}^{(k)}\left(V_{0}\right)\right)\mbox{ and
}\operatorname{End}_{{\mathfrak g}}\left({\mathcal
F}_{\mu}^{\left(k\right)}\left(V_{0}\right)\right)\cong {\mathbb
C}\left[\frac{\partial}{\partial
u}\right]/\left(\frac{\partial^{k}}{\partial u^{k}}\right). $$ To
prove the injectivity it suffices to show that $
\operatorname{Ext}^{1}_{\mathfrak g,\mathfrak h}\left({\mathcal
F}_{\mu}\left(V_{0}\right),\bar{{\mathcal
F}}_{\mu}\left(V_{0}\right)\right)=0 $. \footnote{Indeed, if
$
\operatorname{Ext}^{1}_{\mathfrak g,\mathfrak h}\left({\mathcal
F}_{\mu}\left(V_{0}\right),\bar{{\mathcal
F}}_{\mu}\left(V_{0}\right)\right)=0 $, then $
\operatorname{Ext}^{1}_{\mathfrak g,\mathfrak h}\left(M,\bar{{\mathcal
F}}_{\mu}\left(V_{0}\right)\right)=0 $ for any cuspidal module
$M$. Since any module in $ \bar{{\mathcal C}}^{\chi}_{\bar{\nu}}$ is
an injective limit of cuspidal ones,  $
\operatorname{Ext}^{1}_{\mathfrak g,\mathfrak h}\left(M,\bar{{\mathcal
F}}_{\mu}\left(V_{0}\right)\right)=0 $ for any module
$M$ in $ \bar{{\mathcal C}}^{\chi}_{\bar{\nu}}$.} Assume the opposite.
Let $
c\in\operatorname{Hom}_{{\mathfrak h}}\left({\mathfrak
g}\otimes{\mathcal F}_{\mu}\left(V_{0}\right),\bar{{\mathcal
F}}_{\mu}\left(V_{0}\right)\right) $ be a non-trivial cocycle that induces an
exact sequence
\begin{equation}
0 \to \bar{{\mathcal F}}_{\mu}\left(V_{0}\right) \to M \to
{\mathcal F}_{\mu}\left(V_{0}\right) \to\text{ 0.}
\notag\end{equation} Pick $ m\in M $ such that $
m\notin\bar{{\mathcal F}}_{\mu}\left(V_{0}\right) $ and let $
M':=U\left({\mathfrak g}\right)m $. Since $M'$ is finitely generated,
 $ M'\cap\bar{{\mathcal F}}_{\mu}\left(V_{0}\right)={\mathcal
F}_{\mu}^{\left(k\right)}\left(V_{0}\right) $ for some $ k $.
Since ${\mathcal F}_{\mu}\left(V_{0}\right)$ is simple, we have
the following exact sequence
\begin{equation}
0 \to {\mathcal F}_{\mu}^{\left(k\right)}\left(V_{0}\right)
\to M' \to {\mathcal F}_{\mu}\left(V_{0}\right)
\to\text{ 0.} \notag\end{equation}
If we identify $M$ with $\mathcal F_{\mu}(V_0)\oplus \bar{\mathcal
  F}_{\mu}(V_0)$ as a vector space, then the action of $g\in \mathfrak
g$ on $M$ is given by $g(m_1,m_2)=(g m_1,c(g) m_1+g m_2)$. Since
$M'=\mathcal F_{\mu}(V_0)\oplus\mathcal F_{\mu}^{(k)}(V_0)$ is
$\mathfrak g$-invariant,
$c\in\operatorname{Hom}_{{\mathfrak h}}\left({\mathfrak
g}\otimes{\mathcal F}_{\mu}\left(V_{0}\right),{\mathcal
F}_{\mu}^{\left(k\right)}\left(V_{0}\right)\right) $.
Now consider the exact sequence
\begin{equation}
0 \to {\mathcal F}_{\mu}^{\left(k\right)}\left(V_{0}\right) \to
\bar{{\mathcal F}}_{\mu}\left(V_{0}\right) \xrightarrow[]{\varphi}
\bar{{\mathcal F}}_{\mu}\left(V_{0}\right) \to\text{ 0,}
\notag\end{equation} where $ \varphi=\frac{\partial^{k}}{\partial
u^{k}} $. This sequence leads to the long exact sequence
\begin{equation}
0 \to \operatorname{Hom}_{{\mathfrak g}}\left({\mathcal F}_{\mu}\left(V_{0}\right),{\mathcal F}_{\mu}^{\left(k\right)}\left(V_{0}\right)\right)={\mathbb C} \to \operatorname{Hom}_{{\mathfrak g}}\left({\mathcal F}_{\mu}\left(V_{0}\right),\bar{{\mathcal F}}_{\mu}\left(V_{0}\right)\right)={\mathbb C} \to
\notag\end{equation}
\begin{equation}
\operatorname{Hom}_{{\mathfrak g}}\left({\mathcal
    F}_{\mu}\left(V_{0}\right),\bar{{\mathcal
      F}}_{\mu}\left(V_{0}\right)\right)={\mathbb C} \to
\operatorname{Ext}_{\mathfrak g,\mathfrak h}^{1}\left({\mathcal F}_{\mu}\left(V_{0}\right),{\mathcal F}_{\mu}^{\left(k\right)}\left(V_{0}\right)\right)={\mathbb C} \to
\notag\end{equation}
\begin{equation}
\operatorname{Ext}_{\mathfrak g,\mathfrak h}^{1}\left({\mathcal
F}_{\mu}\left(V_{0}\right),\bar{{\mathcal
F}}_{\mu}\left(V_{0}\right)\right) \to
\operatorname{Ext}_{\mathfrak g,\mathfrak h}^{1}\left({\mathcal
F}_{\mu}\left(V_{0}\right),\bar{{\mathcal
F}}_{\mu}\left(V_{0}\right)\right) \to \dots \notag\end{equation}
and therefore the map
\begin{equation}
\varphi:\operatorname{Ext}_{\mathfrak g,\mathfrak h}^{1}\left({\mathcal
    F}_{\mu}\left(V_{0}\right),\bar{{\mathcal
      F}}_{\mu}\left(V_{0}\right)\right) \to
\operatorname{Ext}_{\mathfrak g,\mathfrak h}^{1}\left({\mathcal F}_{\mu}\left(V_{0}\right),\bar{{\mathcal F}}_{\mu}\left(V_{0}\right)\right)
\notag\end{equation}
is injective. But by our construction $ \varphi\left(c\right)=0$.
Thus, we obtain contradiction with our assumption that $c$ is
    non-trivial.
\end{proof}

Lemma ~\ref{lm26} and Theorem ~\ref{th5} imply the following

\begin{theorem} \label{th6}\myLabel{th6}\relax
Let $ \chi $ be a non-integral or singular central character and
$ \bar{\nu}\in{\mathfrak h}^{*}/Q $ be such that $ {\mathcal
C}^{\chi}_{\bar{\nu}} $ is not empty. Then
$ {\mathcal C}^{\chi}_{\bar{\nu}}  $ is equivalent to the category of
finite-dimensional modules over the algebra of power series in one variable.
\end{theorem}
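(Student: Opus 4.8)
The plan is to deduce Theorem~\ref{th6} as a direct application of the abstract equivalence established in Theorem~\ref{th5}, taking for $\bar{\mathcal A}$ the block $\bar{\mathcal C}^{\chi}_{\bar\nu}$ (with $R=U(\mathfrak g)$) and for $\mathcal A$ its subcategory $\mathcal C^{\chi}_{\bar\nu}$ of finite-length objects. Thus almost all of the work is already done in the preceding lemmas, and what remains is to check that the four bulleted hypotheses of Theorem~\ref{th5} hold in this concrete situation, after which the algebra $\mathcal E$ appearing there will be identified with $\mathbb C[[t]]$.

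First I would note that by Corollary~\ref{cor2} the category $\mathcal C^{\chi}_{\bar\nu}$ has, up to isomorphism, a single simple object $L=\mathcal F_{\mu}(V_0)$, so $n=1$ in the notation of Theorem~\ref{th5}; moreover $\operatorname{End}_{\mathfrak g}(L)=\mathbb C$ by Schur's lemma, since $L$ has countable dimension over $\mathbb C$. Lemma~\ref{lm26} then provides an indecomposable injective object $I:=\bar{\mathcal F}_{\mu}(V_0)$ of $\bar{\mathcal C}^{\chi}_{\bar\nu}$, and since $I$ has a unique simple submodule, isomorphic to $L$ (Lemmas~\ref{lm24} and~\ref{lm26}), we get $\operatorname{Hom}_{\mathfrak g}(L,I)=\mathbb C$; this is the first two bullets. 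The third bullet — that every object of $\bar{\mathcal C}^{\chi}_{\bar\nu}$ carries an exhausting increasing filtration whose terms lie in $\mathcal C$ — is built into the definition of $\bar{\mathcal C}$ recalled at the beginning of Section~5. Finally, the duality functor $\,{}^{\vee}\colon\mathcal C\to\mathcal C$ restricts to $\mathcal C^{\chi}_{\bar\nu}$ and is an involutive, contravariant, exact, faithful functor with $\operatorname{Hom}_{\mathfrak g}(M,N)\cong\operatorname{Hom}_{\mathfrak g}(N^{\vee},M^{\vee})$; in addition $L^{\vee}\cong L$, as was observed for singular and non-integral $\chi$ in Section~2. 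This gives the fourth bullet, so Theorem~\ref{th5} applies.

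Theorem~\ref{th5} now yields an equivalence between $\mathcal C^{\chi}_{\bar\nu}$ and the category of finite-dimensional modules over $\mathcal E:=\operatorname{End}_{\mathfrak g}(I)$. By Lemma~\ref{lm26}, $\mathcal E\cong\mathbb C[[\partial/\partial u]]$, which is isomorphic as a $\mathbb C$-algebra to the ring $\mathbb C[[t]]$ of formal power series in one variable; hence $\mathcal C^{\chi}_{\bar\nu}$ is equivalent to the category of finite-dimensional modules over the algebra of power series in one variable, as claimed. The only point here that demands genuine attention rather than bookkeeping is that the abstract hypotheses of Theorem~\ref{th5} really are met by this block — in particular the injectivity of $I$ in all of $\bar{\mathcal C}^{\chi}_{\bar\nu}$ (not just the vanishing of $\operatorname{Ext}^1$ from cuspidal modules) and the filtration condition for arbitrary, possibly infinitely generated, objects. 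Both are precisely what Lemma~\ref{lm26} and the construction of $\bar{\mathcal C}$ supply, so once these are invoked the argument is purely formal.
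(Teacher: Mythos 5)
Your proof is correct and follows exactly the route the paper takes: the paper's own proof of Theorem~\ref{th6} is simply the citation ``Lemma~\ref{lm26} and Theorem~\ref{th5} imply the following,'' and you have filled in precisely the verification of Theorem~\ref{th5}'s four hypotheses for the block $\bar{\mathcal C}^{\chi}_{\bar\nu}$ (single simple $L$ via Corollary~\ref{cor2}, the injective $\bar{\mathcal F}_\mu(V_0)$ with unique socle via Lemma~\ref{lm26}, the exhausting filtration from the construction of $\bar{\mathcal C}$, and the duality $^{\vee}$ with $L^{\vee}\cong L$ from Section~2), concluding with $\mathcal E\cong\mathbb C[[\partial/\partial u]]$.
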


\begin{corollary} \label{sing}\myLabel{sing}\relax  Let $ \chi $ be a non-integral or singular central character and
$ \bar{\nu}\in{\mathfrak h}^{*}/Q $ be such that $ {\mathcal
C}^{\chi}_{\bar{\nu}} $ is not empty. Every indecomposable module
in $ {\mathcal C}^{\chi}_{\bar{\nu}}  $ is isomorphic to $
{\mathcal F}_{\mu}^{\left(k\right)}\left(V_{0}\right) $ for some
nonnegative integer $k$.

\end{corollary}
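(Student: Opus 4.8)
The plan is to read off the indecomposable objects directly from the equivalence of Theorem~\ref{th6}. By that theorem the block $\mathcal C^{\chi}_{\bar\nu}$ is equivalent, as an abelian category, to the category of finite-dimensional modules over $R:=\mathbb C[[t]]$. Since $R$ is a discrete valuation ring, the classical structure theorem over a principal ideal domain says that every finite-dimensional $R$-module is a direct sum of modules $R/(t^{\ell})$, $\ell\geq 1$, that these summands are indecomposable and pairwise non-isomorphic, and hence that for each $\ell\geq 1$ there is, up to isomorphism, exactly one indecomposable $R$-module of length $\ell$. An exact equivalence of abelian categories preserves simple objects and short exact sequences, hence length; transporting the above statement back through the equivalence, I conclude that for each $\ell\geq 1$ the category $\mathcal C^{\chi}_{\bar\nu}$ contains a unique indecomposable object of length $\ell$, and that these exhaust the indecomposables, since every object of $\mathcal C$ has a finite Jordan--H\"older series and hence every indecomposable has a well-defined finite length.

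It remains to match these unique objects with the modules $\mathcal F_{\mu}^{(k)}(V_0)$. The filtration $0\subset\mathcal F_{\mu}(V_0)\subset\mathcal F_{\mu}^{(1)}(V_0)\subset\dots\subset\mathcal F_{\mu}^{(k)}(V_0)$ used in the proof of Lemma~\ref{lm24} has all of its $k+1$ successive quotients isomorphic to the simple module $\mathcal F_{\mu}(V_0)$, so $\mathcal F_{\mu}^{(k)}(V_0)$ has length $k+1$; and Lemma~\ref{lm24} says it is indecomposable. Therefore $\mathcal F_{\mu}^{(k)}(V_0)$ is \emph{the} indecomposable object of $\mathcal C^{\chi}_{\bar\nu}$ of length $k+1$. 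Conversely, an arbitrary indecomposable $M\in\mathcal C^{\chi}_{\bar\nu}$ has some finite length $\ell$; writing $\ell=k+1$ with $k\geq 0$, the uniqueness obtained above forces $M\cong\mathcal F_{\mu}^{(k)}(V_0)$, which is exactly the assertion of the corollary.

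There is no real obstacle here: the only inputs beyond the already-established Theorem~\ref{th6} and Lemma~\ref{lm24} are the elementary facts that an equivalence of abelian categories preserves length and that the indecomposable finite-dimensional $\mathbb C[[t]]$-modules are precisely the $\mathbb C[[t]]/(t^{\ell})$. Alternatively, one could bypass Theorem~\ref{th6} and argue inside $\bar{\mathcal C}^{\chi}_{\bar\nu}$: by Lemma~\ref{lm26} the injective hull of $\mathcal F_{\mu}(V_0)$ is $\bar{\mathcal F}_{\mu}(V_0)$ with endomorphism ring $\mathbb C[[\partial/\partial u]]$, and $\mathcal F_{\mu}^{(k)}(V_0)=\ker\!\left(\partial^{k+1}/\partial u^{k+1}\right)$ is visibly the unique subobject of $\bar{\mathcal F}_{\mu}(V_0)$ of length $k+1$, so any indecomposable, being embeddable in its injective hull $\bar{\mathcal F}_{\mu}(V_0)$, is of this form. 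In the exceptional case $n=1$, $|\mu|=-1$ one uses the twisted $\mathfrak g$-action on $\mathcal F_{\mu}^{(k)}$ introduced before Lemma~\ref{lm24}, for which the same filtration and the same indecomposability statement still hold.
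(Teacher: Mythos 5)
Your proof is correct and follows essentially the same route as the paper: invoke Theorem~\ref{th6} to pass to finite-dimensional $\mathbb{C}[[t]]$-modules, classify their indecomposables as $\mathbb{C}[[t]]/(t^{\ell})$, and transport back through the equivalence. You are somewhat more explicit than the paper's one-line argument about matching the unique indecomposable of each length with $\mathcal F_{\mu}^{(k)}(V_0)$ via Lemma~\ref{lm24}, and the alternative via the injective hull $\bar{\mathcal F}_{\mu}(V_0)$ and Lemma~\ref{lm26} is a pleasant bonus, but neither changes the underlying strategy.
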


\begin{proof} Every finite-dimensional $ {\mathbb
    C}\left[\left[\frac{\partial}{\partial u}\right]\right] $-module
  has trivial action of the maximal ideal of $ {\mathbb
    C}\left[\left[\frac{\partial}{\partial u}\right]\right]$. By
the  Jordan decomposition theorem
every finite-dimensional indecomposable $ {\mathbb
    C}\left[\left[\frac{\partial}{\partial u}\right]\right] $-module
is isomorphic to $ {\mathbb C}\left[\frac{\partial}{\partial u}\right]/\left(\frac{\partial^{k}}{\partial u^{k}}\right) $.\end{proof}

\section{The structure of the category
$ {\protect \mathcal C}^{\chi}_{\bar{\nu}}  $ for regular integral
$ \chi $ } \label{regint}

The goal of this section is to prove the following

\begin{theorem}\label{reg} Let $\mathfrak{g}=\mathfrak{sl}(n+1)$.
Every regular integral block of ${\mathcal C}$  is equivalent to the category
of locally nilpotent modules over the quiver $\mathcal Q_n$ (where $n$ is
the number of vertices)
$$
\xymatrix{\bullet \ar@(ul,ur)[]|{y} \ar@<0.5ex>[r]^x & \bullet
\ar@<0.5ex>[l]^x \ar@<0.5ex>[r]^y & \bullet \ar@<0.5ex>[l]^y
\ar@<0.5ex>[r]^x & \ar@<0.5ex>[l]^x... \ar@<0.5ex>[r] & \bullet
\ar@<0.5ex>[l] \ar@(ul,ur)[]|{}}
$$
with relations $xy=yx=0$.
\end{theorem}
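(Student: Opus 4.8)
The plan is to reduce Theorem~\ref{reg} to the abstract equivalence of Theorem~\ref{th5}, applied to a fixed regular integral block $\bar{\mathcal C}^{\chi}_{\bar\nu}$, and then to identify the resulting endomorphism algebra with the completed path algebra of $\mathcal Q_n$ modulo the relations $xy=yx=0$. By Lemma~\ref{lm2} we may assume $\chi=\chi_{0}$; fix a dominant regular weight $\lambda$ with $\chi_{\lambda}=\chi$. Mathieu's classification (recalled at the start of this section) provides exactly $n$ pairwise non-isomorphic simple objects $L_1,\dots,L_n$ in $\mathcal C^{\chi}_{\bar\nu}$, each with $\operatorname{End}_{\mathfrak g}(L_i)=\mathbb C$, and the duality functor ${}^{\vee}$ on $\mathcal C$ fixes every simple object (Section~2). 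Since every object of $\bar{\mathcal C}^{\chi}_{\bar\nu}$ carries an exhausting filtration by cuspidal, hence finite length, submodules, three of the four hypotheses of Theorem~\ref{th5} already hold; it remains to produce indecomposable injective objects $I_1,\dots,I_n$ of $\bar{\mathcal C}^{\chi}_{\bar\nu}$ with $\operatorname{Hom}_{\mathfrak g}(L_i,I_j)=\delta_{ij}\mathbb C$, and then to compute $\mathcal E:=\operatorname{End}_{\mathfrak g}(I_1\oplus\dots\oplus I_n)$.

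For the injectives I would use translation functors out of singular blocks. By Lemma~\ref{lm32} each $L_i$ occurs simultaneously as the unique simple submodule and the unique simple quotient of $T^{\lambda}_{\eta_i}(L_i')$ for a suitable singular weight $\eta_i$ and a simple module $L_i'\in\mathcal C^{\eta_i}$; let $\bar{\mathcal F}^{(i)}$ be the indecomposable injective hull of $L_i'$ in $\bar{\mathcal C}^{\eta_i}$, furnished by Lemma~\ref{lm26}. Since tensoring with a finite-dimensional module preserves the defining properties of $\bar{\mathcal C}$ and since $T^{\lambda}_{\eta_i}$ is exact with an exact adjoint (translation back, $T^{\eta_i}_{\lambda}$; see~\cite{BG}), the object $T^{\lambda}_{\eta_i}(\bar{\mathcal F}^{(i)})$ is injective in $\bar{\mathcal C}^{\chi}$; take $I_i$ to be its indecomposable summand containing $L_i$. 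Using Lemma~\ref{lm32} and exactness of translation one checks that $I_i$ has simple socle $L_i$, whence $\operatorname{Hom}_{\mathfrak g}(L_j,I_i)=\delta_{ij}\mathbb C$ and $I_i$ is indecomposable. Theorem~\ref{th5} then yields an equivalence between $\mathcal C^{\chi}_{\bar\nu}$ and the category of finite-dimensional $\mathcal E$-modules.

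The core of the argument is the computation of $\mathcal E$, which reduces to determining the $\operatorname{Ext}^1_{\mathfrak g,\mathfrak h}$-groups between the $L_i$ together with the Yoneda products $\operatorname{Ext}^1(L_i,L_j)\otimes\operatorname{Ext}^1(L_j,L_k)\to\operatorname{Ext}^2(L_i,L_k)$ on which the relations depend. I would obtain these either by transporting the singular computations of Section~4 --- Lemmas~\ref{lm6}, \ref{lm12}, \ref{lm7} and Theorem~\ref{th3} --- through the translation functors, or by reading them off the vector-bundle realizations on $\mathbb P^n$ of Section~7. The expected outcome is: each extreme vertex carries a one-dimensional self-extension (the loops of $\mathcal Q_n$, arising from the $\log$-type self-extension of $\mathcal F_\mu(V_0)$ as in Lemma~\ref{lm12}); $\operatorname{Ext}^1_{\mathfrak g,\mathfrak h}(L_i,L_{i+1})$ and $\operatorname{Ext}^1_{\mathfrak g,\mathfrak h}(L_{i+1},L_i)$ are one-dimensional for $1\le i\le n-1$ (the pairs of arrows $x,y$ along the chain); and all remaining $\operatorname{Ext}^1$ between simples vanish. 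This fixes the arrows of $\mathcal Q_n$, and the relations $xy=yx=0$ are precisely the vanishing of the corresponding Yoneda composites, equivalently the statement that the relevant second socle layers of the $I_i$ contain no factor forcing such a composite. Assembling the spaces $\operatorname{Hom}_{\mathfrak g}(I_i,I_j)$, one identifies $\mathcal E$ with the completed path algebra of $\mathcal Q_n$ modulo $(xy,yx)$; its finite-dimensional modules are exactly the finite-dimensional --- hence locally nilpotent --- representations of $\mathcal Q_n$ satisfying $xy=yx=0$, which combined with the previous paragraph is the asserted equivalence.

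The main obstacle is this last step: one must control the Loewy structure of the translated injectives $I_i$ precisely enough both to exclude spurious arrows and to upgrade the evident vanishing of some compositions to the full relations $xy=yx=0$. I expect the cleanest route is to compute the Hom-spaces $\operatorname{Hom}_{\mathfrak g}(I_i,I_j)$ directly via the adjunction $\operatorname{Hom}_{\mathfrak g}(T^{\lambda}_{\eta}A,B)\cong\operatorname{Hom}_{\mathfrak g}(A,T^{\eta}_{\lambda}B)$, which reduces everything to the singular blocks already analyzed in Sections~4 and~5, using the $\mathbb P^n$-realizations of Section~7 only as a cross-check --- rather than trying to build projective covers inside $\mathcal C$ by hand, since in general they do not exist there, which was the very reason for passing to $\bar{\mathcal C}$ and Theorem~\ref{th5}.
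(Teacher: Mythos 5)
Your overall architecture matches the paper's: reduce to $\chi=0$ via Lemma~\ref{lm2}, take the $n$ simples $L_1,\dots,L_n$ from Mathieu's classification, build injective hulls in $\bar{\mathcal C}^0$ by translating the injectives $\bar{\mathcal F}^{(i)}$ (in the paper, $\bar\Omega^{k-1}(\mu-\varepsilon_0)$) out of the singular blocks, and invoke Theorem~\ref{th5}. That much is right and is exactly the paper's plan; the observation that $I_i$ need not be split off as a summand (the full translated injective already has simple socle $L_i$, by Lemma~\ref{lm33}) is a minor point.

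The genuine gap is in the step you yourself flag as ``the main obstacle'': the identification of $\mathcal E=\operatorname{End}_{\mathfrak g}(I_1\oplus\cdots\oplus I_n)$. You propose to compute $\operatorname{Ext}^1$'s and Yoneda products, or $\operatorname{Hom}(I_i,I_j)$ by the adjunction $\operatorname{Hom}(T^\lambda_\eta A,B)\cong\operatorname{Hom}(A,T^\eta_\lambda B)$, ``reducing everything to the singular blocks.'' But $I^k$ is built from a translation out of the block $\chi_{-k\varepsilon_0}$, and the blocks depend on $k$; to compute $\operatorname{Hom}(I^k,I^l)$ by adjunction you would need the structure of $T^{\chi_{-l\varepsilon_0}}_{\chi_0}(I^k)$, i.e.\ a composite of two translations through \emph{different} walls, which is not controlled by the singular-block analysis alone. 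The paper sidesteps this entirely: the crucial input is the de Rham differential on $\Omega^\bullet(\mu)$, which produces the short exact sequence $0\to\bar\Omega^k(\mu)\to I^k\to\bar\Omega^{k-1}(\mu)\to 0$ (\eqref{equ34}). This sequence, together with $\operatorname{Hom}(\bar\Omega^k,\bar\Omega^l)=0$ for $k\ne l$ (Lemma~\ref{lm34}) and the Loewy-layer computation $\mathcal S^m(I)\cong L_1^{\oplus 2}\oplus\cdots\oplus L_n^{\oplus 2}$ (Lemma~\ref{lm100}, Corollary~\ref{cor101}), is what lets one write down explicit morphisms $\varphi_k=s_{k+1}\circ t_k$, $\psi_k=i_k\circ p_{k+1}$, $\xi$, $\eta$, check $\varphi_{k+1}\varphi_k=\psi_k\psi_{k+1}=0$ by composition, and prove in Theorem~\ref{th102} that these \emph{exhaust} $\mathcal R/\mathcal R^2$ and generate. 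Without some concrete realization giving this exact sequence, the Loewy structure of the $I_i$ remains opaque, and both ``no spurious arrows'' and the exactness of the relations $xy=yx=0$ are left unverified. Your proposal correctly names the obstacle but does not supply the tool (the $\Omega^k(\mu)$ realization and the exact sequence of injectives) that actually removes it.
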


Lemma~\ref{lm2} implies that if $ \chi $ is regular integral then
$ {\mathcal C}^{\chi}_{\bar{\nu}}  $ is equivalent to $ {\mathcal
C}^{0}_{\bar{\nu}_1}  $ for suitable $ \bar{\nu}_1 $. Thus we may
assume that $\chi = 0$. First we describe the simple objects in $
{\mathcal C}^{0}_{\bar{\nu}} $ following \S 11 in \cite{M}. For
our convenience we slightly change the description provided in
\cite{M} by using homogeneous coordinates instead of local
coordinates on ${\mathbb P}^n$. Let $ \mu\in{\mathbb C}^{n+1} $, $
\widehat{\Omega}^{k} $ be the space of $ k $-forms on $ {\mathbb
C}^{n+1} $, and
\begin{equation}
\widehat{\Omega}^{k}\left(\mu\right):=t^{\mu}{\mathbb
C}\left[t_{0}^{\pm1},\dots ,t_{n}^{\pm1}\right]\otimes_{{\mathbb
C}\left[t_{0},\dots ,t_{n}\right]}\widehat{\Omega}^{k},
\notag\end{equation}
\begin{equation}
\Omega^{k}\left(\mu\right):=\left\{\omega\in\widehat{\Omega}^{k}\left(\mu\right)
\mid L_{E}\left(\omega\right)=|\mu|\omega,
i_{E}\left(\omega\right)=0\right\}, \notag\end{equation} where $
i_{E} $ denotes the contraction with the Euler vector field $ E $,
and $ L_{E} $ denotes the Lie derivative. The space $
\Omega^{k}\left(\mu\right) $ is a $ {\mathfrak g} $-module with $
{\mathfrak g} $-action defined by the Lie derivative. In this
section we assume that all $ \mu_{i}\notin{\mathbb Z} $. Then $
\Omega^{k}\left(\mu\right) $ is a cuspidal module; and it is
simple if $ |\mu|\not=0 $.

In this section we assume $ |\mu|=0 $. Then the de Rham differential $
d:\Omega^{k}\left(\mu\right) \to \Omega^{k+1}\left(\mu\right) $ is
well defined as $ L_{E}=d\circ i_{E}+i_{E}\circ d=0 $. Furthermore,
it is not difficult to see that  $ \mu_{i}\notin{\mathbb Z} $
imply that the de Rham complex is exact. Let
\begin{equation}
L_{k}:=d\left(\Omega^{k-1}\left(\mu\right)\right) =
\operatorname{Ker} d\cap\Omega^{k}\left(\mu\right).
\notag\end{equation} The following two results are proven in
\cite{M}.

\begin{theorem} \label{th2}\myLabel{th2}\relax  $ L_{1},\dots ,L_{n} $ are all up to isomorphism simple objects in
$ {\mathcal C}^{0}_{\overline{\gamma ( \mu })} $.

\end{theorem}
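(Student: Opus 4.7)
The approach is to first realize the modules $\Omega^k(\mu)$ within the framework of Section~2, then verify simplicity, block-membership, and distinctness of $L_1, \ldots, L_n$ directly, and finally appeal to a translation-functor / coherent-family argument for exhaustiveness.

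First I would identify $\Omega^k(\mu) \cong \mathcal F_\mu(V_k)$, where $V_k$ is the $P$-module fiber of the sheaf $\Omega^k_{\mathbb P^n}$ at the base point $[1:0:\ldots:0]$, concretely $V_k \cong \Lambda^k(\mathbb C^n)^*$ as a $\mathfrak{gl}(n)$-module with $\mathfrak g_1$ acting trivially. (On the affine chart $\{t_0 \neq 0\}$, the condition $i_E\omega=0$ exhibits $\Omega^k(\mu)$ as $\mathcal F_\mu \otimes \Lambda^k\mathrm{span}(dx_1,\ldots,dx_n)$.) Lemma~\ref{lm1} then yields the central character $\chi_\lambda$ with $\lambda = \gamma(|\mu|\varepsilon_0) + \tau_k$; since $|\mu|=0$, this reduces to $\tau_k$, and a direct computation shows $\lambda + \rho$ is $W$-conjugate to $\rho$, giving $\chi_\lambda = \chi_0$. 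The support of $\Omega^k(\mu)$ in $\mathfrak h^*/Q$ is $\overline{\gamma(\mu)}$ independent of $k$, so all the $L_k$ lie in the same block $\mathcal C^0_{\overline{\gamma(\mu)}}$ and, as subquotients of cuspidals, are themselves cuspidal.

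Next I would establish simplicity of each $L_k$ by induction on $k$. The base case $k=1$ is immediate: since $\mu_i \notin \mathbb Z$, the de Rham complex is exact at degree zero, so $L_1 \cong \Omega^0(\mu) = \mathcal F_\mu$, which is simple. For the inductive step, consider the short exact sequence $0 \to L_k \to \Omega^k(\mu) \to L_{k+1} \to 0$; a proper nonzero submodule of $L_{k+1}$ would pull back to a submodule $N$ of $\Omega^k(\mu) = \mathcal F_\mu(V_k)$ strictly between $L_k$ and $\Omega^k(\mu)$. Using the $\mathfrak g_0$-decomposition $\Omega^k(\mu)=\bigoplus_j \mathcal F_\mu^j\otimes V_k$ analogous to Lemma~\ref{lm9}, and the fact that $d$ shifts $\mathfrak g_0$-isotypic components in a controlled way, I would pinpoint exactly which summands lie in $L_k$; then the $\mathfrak g_{\pm 1}$-action, acting bijectively between these summands, forces $N\in\{L_k,\Omega^k(\mu)\}$, a contradiction. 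Pairwise non-isomorphism of the $L_k$ is then automatic since the highest $\mathfrak g_0$-types of $V_k$ and $V_l$ differ for $k\neq l$.

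The main obstacle is exhaustiveness. My plan is to use translation functors in the direction opposite to Lemma~\ref{lm2}: pick a singular integral weight $\eta$ such that $T^0_\eta:\mathcal C^\eta \to \mathcal C^0_{\overline{\gamma(\mu)}}$ is defined. By Theorem~\ref{th1}, simples in $\mathcal C^\eta$ are modules $\mathcal F_{\mu'}(V'_0)$; by Lemma~\ref{lm3}, $T^0_\eta(\mathcal F_{\mu'}(V'_0))$ is cuspidal with composition factors among $\{L_1,\ldots,L_n\}$. The genuinely hard input—due to Mathieu—is that the coherent family through $\mathcal F_\mu$ specializes at the regular integral lattice point to exactly $n$ distinct simples, one for each Weyl chamber adjacent to the singular wall, and that every simple in $\mathcal C^0_{\overline{\gamma(\mu)}}$ arises as a subquotient of such a translated module. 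Matching these $n$ chambers with the explicit modules $L_1,\ldots,L_n$ (via, say, highest $\mathfrak g_0$-type of the associated $V_k$) completes the classification.
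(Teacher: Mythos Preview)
The paper does not supply its own proof of this theorem: immediately before the statement it writes ``The following two results are proven in \cite{M}'' and defers both Theorem~\ref{th2} and Lemma~\ref{lm30} entirely to Mathieu. So there is no argument in the paper to compare against, beyond the citation.

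Your proposal goes further than the paper in that you try to verify several pieces independently. The identification $\Omega^k(\mu)\cong\mathcal F_{\mu}(V_k)$ (up to the usual twist---the paper later uses $\mathcal F_{\mu-k\varepsilon_0}(V(\varepsilon_1+\dots+\varepsilon_k))$), the computation of the central character via Lemma~\ref{lm1}, the support calculation, and the pairwise non-isomorphism via $\mathfrak g_0$-types are all sound and routine. For exhaustiveness you explicitly fall back on Mathieu's coherent-family result, which is exactly what the paper does, so on that point you and the paper coincide.

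The one place where your outline has a real gap is the inductive simplicity argument. You claim that a strictly intermediate submodule $L_k\subsetneq N\subsetneq\Omega^k(\mu)$ is ruled out because ``the $\mathfrak g_{\pm1}$-action, acting bijectively between these summands, forces $N\in\{L_k,\Omega^k(\mu)\}$.'' This is precisely the nontrivial content, and the sentence as written does not prove it: bijectivity of root vectors on a cuspidal module tells you only that weight multiplicities are constant, not that every submodule is determined by its $\mathfrak g_0$-isotypic components in the way you need. In Mathieu's treatment the simplicity of the $L_k$ (equivalently, the length-two structure of $\Omega^k(\mu)$ at $|\mu|=0$) is obtained by quite different means---via the semisimplicity of the coherent family at generic $|\mu|$ and a specialization argument---and is not a simple consequence of the $\mathfrak g_0$-decomposition alone. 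If you want a self-contained proof of simplicity you would need to make this step precise; otherwise you are, like the paper, citing \cite{M} for the essential point.
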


\begin{lemma} \label{lm30}\myLabel{lm30}\relax  Let $ |\mu|=0 $. Then $ \Omega^{0}\left(\mu\right)\cong  L_{1} $, $ \Omega^{n}\left(\mu\right)\cong  L_{n} $. If $ k=1,\dots ,n-1 $, then
$ \Omega^{k}\left(\mu\right) $ is an indecomposable $ {\mathfrak g} $-module, i.e. the following exact sequence
\begin{equation}
0 \to L_{k} \to \Omega^{k}\left(\mu\right) \to L_{k+1} \to 0
\label{equ51}\end{equation}\myLabel{equ51,}\relax
does not split.

\end{lemma}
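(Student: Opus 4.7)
The first two identifications are direct. For $\Omega^0(\mu)\cong L_1$, note that the de Rham differential $d$ applied to an element $f\in\Omega^0(\mu)$ has kernel equal to the constants in $t^\mu\mathbb{C}[t_0^{\pm 1},\dots,t_n^{\pm 1}]$; the hypothesis $\mu_i\notin\mathbb Z$ forces this kernel to be zero, so $d$ is injective on $\Omega^0(\mu)$ and $L_1=d(\Omega^0(\mu))\cong\Omega^0(\mu)$. For $\Omega^n(\mu)\cong L_n$, observe that any nonzero $(n+1)$-form on $\mathbb C^{n+1}$ is a scalar multiple of $dt_0\wedge\dots\wedge dt_n$ and has nonzero contraction with the Euler field $E$; hence $\Omega^{n+1}(\mu)=0$ and $L_n=\ker d\cap\Omega^n(\mu)=\Omega^n(\mu)$.

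For the main claim, that \eqref{equ51} does not split when $1\le k\le n-1$, my plan is to prove the stronger statement $\operatorname{End}_{\mathfrak g}(\Omega^k(\mu))=\mathbb C$. This suffices: if \eqref{equ51} were split, then $\Omega^k(\mu)\cong L_k\oplus L'_{k+1}$ with $L'_{k+1}\cong L_{k+1}$, and since $L_k\not\cong L_{k+1}$ by Theorem~\ref{th2}, the projection onto $L_k$ would be a non-scalar endomorphism.

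To compute the endomorphism ring, I would imitate the strategy used for $\mathcal F_\mu$ in Lemma~\ref{lm9} and decompose $\Omega^k(\mu)$ as a $\mathfrak g_0$-module via the $z$-eigenspace (equivalently, $t_0$-degree) grading. Writing a form $\omega\in\Omega^k(\mu)$ by separating the summands containing $dt_0$ from those that do not, and using the $i_E\omega=0$ constraint (which couples the two types of summands linearly, via an Euler-type exact sequence), each graded piece becomes an irreducible $\mathfrak{gl}(n)$-module of the form $\mathcal F_{\mu'}\otimes\wedge^{k}\left(\mathbb C^n\right)^*$ or $\mathcal F_{\mu'}\otimes\wedge^{k-1}\left(\mathbb C^n\right)^*$ (with a shifted $\mu'$). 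Schur's lemma then forces any $\mathfrak g$-endomorphism to act by a scalar $c_j$ on the $j$-th graded piece, and commutation with the raising operators $E_{i0}\in\mathfrak g_1$—which provide nonzero $\mathfrak{gl}(n)$-intertwiners between adjacent pieces, as one checks directly—forces all $c_j$ to agree.

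The main obstacle will be the bookkeeping in the $\mathfrak g_0$-decomposition: verifying that each graded piece of $\Omega^k(\mu)$ is genuinely an irreducible $\mathfrak{gl}(n)$-module (of the expected $\mathcal F_{\mu'}(\wedge^{\bullet})$ type), and that the action of a generic element of $\mathfrak g_1$ connects adjacent graded pieces by a nonzero $\mathfrak{gl}(n)$-homomorphism. The key inputs are Remark~\ref{rem99}, Remark~\ref{rem100} applied to the Euler sequence $0\to\wedge^k(\mathbb C^n)^*\to\wedge^k(\mathbb C^{n+1})^*\to\wedge^{k-1}(\mathbb C^n)^*\to 0$ of $\mathfrak g_0$-modules, and the fact that $\mu_i\notin\mathbb Z$ ensures all the relevant graded pieces are simple cuspidal $\mathfrak{gl}(n)$-modules to which Schur applies.
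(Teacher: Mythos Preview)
The paper does not give its own proof of this lemma; it is quoted from Mathieu~\cite{M} (see the sentence immediately preceding Theorem~\ref{th2}). So there is nothing to compare against, and your proposal must stand on its own.

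Your arguments for $\Omega^0(\mu)\cong L_1$ and $\Omega^n(\mu)\cong L_n$ are correct.

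For the nonsplitting, your plan to show $\operatorname{End}_{\mathfrak g}(\Omega^k(\mu))=\mathbb C$ is logically sufficient, but the proposed execution has a gap. You claim that each $z$-eigenspace of $\Omega^k(\mu)$ is an irreducible $\mathfrak{gl}(n)$-module, so that Schur forces a single scalar $c_j$ on each piece. This is false. Using $\Omega^k(\mu)\cong\mathcal F_{\mu-k\varepsilon_0}(V_0)$ (stated in the proof of Lemma~\ref{lm32}) and the formulae in the proof of Lemma~\ref{lm1}, the $\mathfrak g_0$-action on $\mathcal F_{\mu'}(V_0)$ is the tensor product action on $\mathcal F_{\mu'}\otimes V_0$, so the $j$-th $z$-eigenspace is $\mathcal F_{\mu'}^j\otimes V_0$. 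Already for $n=2$, $k=1$ this is a simple cuspidal $\mathfrak{sl}(2)$-module tensored with the standard two-dimensional representation, which splits into two nonisomorphic simples with distinct Casimir eigenvalues. Thus a $\mathfrak g_0$-endomorphism of the $j$-th eigenspace is a pair of scalars $(a_j,b_j)$, not a single scalar.

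Worse, the two simple $\mathfrak g_0$-summands of each eigenspace are exactly the $L_k$-part and the $L_{k+1}$-part (since $L_k\subset\Omega^k(\mu)$ is $\mathfrak g_0$-stable and the complementary summand has a different $\mathfrak{sl}(n)$ central character). Cuspidality of $L_k$ and $L_{k+1}$ makes the diagonal blocks of each $E_{i0}$ injective and forces $a_j$ and $b_j$ to be independent of $j$; but to conclude $a=b$ you need some \emph{off-diagonal} block of $E_{i0}$ or $E_{0i}$ (mapping the $L_{k+1}$-summand into the $L_k$-summand) to be nonzero. The vanishing of all such off-diagonal blocks says precisely that the $\mathfrak g_0$-complement of $L_k$ is $\mathfrak g$-stable, i.e.\ that \eqref{equ51} splits. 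So without an independent verification that at least one off-diagonal block is nonzero, the argument is circular. That verification is a genuine (if elementary) computation you have not supplied, and it is exactly where the content of the lemma lives.
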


Our next step is to construct indecomposable injectives in
$\bar{{\mathcal C}}^0$. We will do it by applying translation functors
to injectives in singular blocks.

Following the construction of ${\mathcal F}_{\mu}^{(m)}$ in the
previous section, for an arbitrary $ m>0 $ define
\begin{equation}
\Omega^{k}\left(\mu\right)^{\left(m\right)}:=\Omega^{k}\left(\mu\right)\oplus
u\Omega^{k}\left(\mu\right)\oplus
u^{2}\Omega^{k}\left(\mu\right)\oplus\dots \oplus
u^{m}\Omega^{k}\left(\mu\right), \notag\end{equation} where $
u=\log \left(t_{0}\dots t_{n}\right) $. Then define
a module
\begin{equation}
\bar{\Omega}^{k}\left(\mu\right)={\mathbb
C}\left[u\right]\Omega^{k}\left(\mu\right) \notag\end{equation} in
$ \bar{{\mathcal C}} $. Then $\bar{\Omega}^{k}\left(\mu\right)$
has an obvious filtration
\begin{equation}
0\subset\Omega^{k}\left(\mu\right)\subset\Omega^{k}\left(\mu\right)^{\left(1\right)}\subset\Omega^{k}\left(\mu\right)^{\left(2\right)}\subset\dots
\subset\Omega^{k}\left(\mu\right)^{\left(m\right)}\subset\dots
\label{equ31}\end{equation}\myLabel{equ31,}\relax

For every object $ M $ in $ \bar{{\mathcal C}} $ and   a
finite-dimensional $ {\mathfrak g} $-module $ V $, the module $
M\otimes V $ is in $ \bar{{\mathcal C}}$ as well. Since the center
of $ U\left({\mathfrak g}\right) $ acts locally finitely on $ M $,
one can define $ M^{\chi_\lambda} $ as the subspace of $ M $ on which all
elements of the center lying in $ \operatorname{Ker} \chi_{\lambda} $
act locally nilpotently. The following is a well-known fact (see \cite{BG}).

\begin{lemma} \label{lm99}\myLabel{lm99}\relax  For every injective module $ M $ in $ \bar{{\mathcal C}} $ and a
finite-dimensional $\mathfrak g$-module $V$, the modules $
M\otimes V $, $ M^{\chi_\lambda}$, and  $ \left(M\otimes V\right)^{\chi_\lambda} $ are
injective.
\end{lemma}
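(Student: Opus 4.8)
The plan is to deduce all three statements from two standard facts: tensoring with a finite-dimensional module has an exact adjoint functor, and a direct summand of an injective object is injective. First I would show that $M\otimes V$ is injective in $\bar{\mathcal C}$. Because $V$ is finite-dimensional, $\operatorname{Hom}_{\mathbb C}(V,N)\cong V^{*}\otimes N$ as $\mathfrak g$-modules for every $N$, so the tensor--Hom adjunction gives a natural isomorphism
\begin{equation}
\operatorname{Hom}_{\mathfrak g}\left(N,M\otimes V\right)\cong\operatorname{Hom}_{\mathfrak g}\left(N\otimes V^{*},M\right),\notag
\end{equation}
and $N\otimes V^{*}$ again lies in $\bar{\mathcal C}$ by the observation preceding the lemma. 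The functor $N\mapsto N\otimes V^{*}$ is exact (tensoring over $\mathbb C$ is exact, and short exact sequences in $\bar{\mathcal C}$ are short exact as sequences of $\mathfrak g$-modules), and $\operatorname{Hom}_{\mathfrak g}(-,M)$ is exact because $M$ is injective in $\bar{\mathcal C}$; hence the composite $N\mapsto\operatorname{Hom}_{\mathfrak g}(N,M\otimes V)$ is exact, which is exactly the statement that $M\otimes V$ is injective in $\bar{\mathcal C}$.

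Next I would treat the generalized-central-character components. By the definition of $\bar{\mathcal C}$, and the exhausting filtration by cuspidal modules noted above, the center $Z$ of $U(\mathfrak g)$ acts locally finitely on $M$, hence also on $M\otimes V$ since $V$ is finite-dimensional; consequently both $M$ and $M\otimes V$ split inside $\bar{\mathcal C}$ as the direct sums of their generalized central character components, in accordance with the decomposition $\bar{\mathcal C}=\bigoplus_{\chi,\bar\nu}\bar{\mathcal C}^{\chi}_{\bar\nu}$. Thus $M^{\chi_\lambda}$ is a direct summand of $M$ and $(M\otimes V)^{\chi_\lambda}$ is a direct summand of $M\otimes V$. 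Since a direct summand of an injective object is injective, $M^{\chi_\lambda}$ is injective (as $M$ is), and $(M\otimes V)^{\chi_\lambda}$ is injective by the previous paragraph together with the same remark.

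The argument is entirely formal once one is inside the right category, so the only points requiring care — and where I would concentrate the verification — are that the auxiliary constructions do not leave $\bar{\mathcal C}$: that $N\otimes V^{*}\in\bar{\mathcal C}$ whenever $N\in\bar{\mathcal C}$, that the adjunction isomorphism above is natural in $N$ as a transformation of functors on $\bar{\mathcal C}$ (so that exactness of $\operatorname{Hom}_{\mathfrak g}(N\otimes V^{*},M)$ in $N$ transfers to $\operatorname{Hom}_{\mathfrak g}(N,M\otimes V)$), and that the block decomposition of $M\otimes V$ is a decomposition of $\mathfrak g$-modules in $\bar{\mathcal C}$ rather than merely of the underlying vector space. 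Each of these follows immediately from the definition of $\bar{\mathcal C}$ and the local finiteness of the $Z$-action, but they are what makes the cited, and otherwise routine, argument legitimate in this setting.
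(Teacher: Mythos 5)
Your proof is correct and follows essentially the same line as the paper's: both establish injectivity of $M\otimes V$ via the adjunction $\operatorname{Hom}_{\mathfrak g}(N,M\otimes V)\cong\operatorname{Hom}_{\mathfrak g}(N\otimes V^{*},M)$ together with exactness of $\bullet\otimes V^{*}$ and $\operatorname{Hom}_{\mathfrak g}(\bullet,M)$, and both handle $M^{\chi_\lambda}$ and $(M\otimes V)^{\chi_\lambda}$ as direct summands of injectives. The extra checks you flag (that the auxiliary constructions stay inside $\bar{\mathcal C}$ and that the block decomposition is $\mathfrak g$-linear) are implicit in the paper's brief argument and are correctly justified by the local finiteness of the $Z$-action.
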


\begin{proof} It is enough to show that
$M\otimes V $ and $M^{\chi_\lambda}$ are injective. The injectivity
of $M\otimes V $ follows from the isomorphism
\begin{equation}
\operatorname{Hom}_{{\mathfrak g}}\left(X,M\otimes V\right)\cong
\operatorname{Hom}_{{\mathfrak g}}\left(X\otimes V^{*},M\right) \notag\end{equation}
Since $\bullet \otimes V^{*}$ and $\operatorname{Hom}_{{\mathfrak g}}(\bullet,M)$ are both exact,
$\operatorname{Hom}_{{\mathfrak g}}\left(\bullet,M\otimes V\right)$ is
also exact.

The injectivity of $ M^{\chi_\lambda} $ follows from the fact that $M^{\chi_\lambda}$ is a
direct summand in $M$.
\end{proof}

\begin{lemma} \label{lm31}\myLabel{lm31}\relax  Let $ |\mu|=0 $. Then the modules
$ \Omega^{k}\left(\mu\right)^{\left(m\right)} $ and $
\bar{\Omega}^{k}\left(\mu\right) $ are indecomposable modules with
unique irreducible submodules. The same holds for any nontrivial
quotients of $ \Omega^{k}\left(\mu\right)^{\left(m\right)} $ and $
\bar{\Omega}^{k}\left(\mu\right) $ as well.

\end{lemma}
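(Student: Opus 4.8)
The plan is to mimic the induction used in Lemma~\ref{lm24}, now exploiting the de Rham complex structure that distinguishes the regular integral case from the singular one. First I would treat $\Omega^k(\mu)$ itself: by Lemma~\ref{lm30} it has the unique irreducible submodule $L_k$ (for $k=1,\dots,n$; for $k=0$ the module is simple), so the base case $m=0$ is already known. For the inductive step, suppose $\Omega^k(\mu)^{(m-1)}$ has a unique irreducible submodule, namely $L_k$ sitting inside the bottom copy $\Omega^k(\mu)$. If $\Omega^k(\mu)^{(m)}$ had an irreducible submodule $L$ not contained in $\Omega^k(\mu)^{(m-1)}$, then $L\cap\Omega^k(\mu)^{(m-1)}=0$, forcing $\Omega^k(\mu)^{(m)}\cong L\oplus\Omega^k(\mu)^{(m-1)}$, and passing to the quotient by $\Omega^k(\mu)^{(m-2)}$ would exhibit $\Omega^k(\mu)^{(1)}$ as $\Omega^k(\mu)\oplus L'$ for some submodule $L'$. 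So everything reduces to showing $\Omega^k(\mu)^{(1)}$ is indecomposable, equivalently that the self-extension $0\to\Omega^k(\mu)\to\Omega^k(\mu)^{(1)}\to\Omega^k(\mu)\to 0$ does not split.

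The key point is that this last nonsplitting is essentially Lemma~\ref{lm12}/Lemma~\ref{lm6} in disguise: the cocycle defining $\Omega^k(\mu)^{(1)}$ is again $c(E_{ij})=t_i/t_j$ (acting via the Lie derivative, tensored with the identity on $k$-forms), and the argument of Lemma~\ref{lm6} — restricting to $\mathfrak{sl}(n)\subset\mathfrak g_0$ and noting the restricted cocycle is nontrivial on almost every $z$-eigencomponent — carries over verbatim once one knows each $z$-eigenspace of $\Omega^k(\mu)$ is (up to a finite-dimensional twist) a module of the form $\mathcal F_\nu(W)$ for an appropriate $\mathfrak{sl}(n)$-representation $W$. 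Alternatively, and perhaps more cleanly, I would use that $\Omega^k(\mu)$ has finite length with socle $L_k$ and $\operatorname{Ext}^1_{\mathfrak g,\mathfrak h}(L_j,L_j)=\mathbb C$ for each simple $L_j$ in the block together with Theorem~\ref{th2}, and observe that a splitting of $\Omega^k(\mu)^{(1)}$ would produce, after intersecting with the socle filtration, a splitting of an extension known to be nontrivial. Either way, the nonsplitting of $\Omega^k(\mu)^{(1)}$ is the engine.

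Then I would run the same induction for $\bar\Omega^k(\mu)=\varinjlim\Omega^k(\mu)^{(m)}$: any irreducible submodule of $\bar\Omega^k(\mu)$ lies in some $\Omega^k(\mu)^{(m)}$ (since it is generated by finitely many, hence boundedly many, elements), so uniqueness of the irreducible submodule of $\bar\Omega^k(\mu)$ follows from that of each $\Omega^k(\mu)^{(m)}$, and indecomposability follows since a module with a unique simple submodule is indecomposable. For the statement about nontrivial quotients: a quotient $Q$ of $\Omega^k(\mu)^{(m)}$ or $\bar\Omega^k(\mu)$ still carries the filtration \eqref{equ31} (or its image), whose successive subquotients are among $L_k$ and $L_{k+1}$; one checks that the image of the bottom $\Omega^k(\mu)$ is nonzero in any nonzero quotient (because, inductively, the top copy $u^m\Omega^k(\mu)$ cannot split off), hence $Q$ contains $L_k$, and any other irreducible submodule would again force a splitting contradicting the indecomposability of $\Omega^k(\mu)^{(1)}$.

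The main obstacle I anticipate is the nonsplitting of $\Omega^k(\mu)^{(1)}$ for the intermediate degrees $1\le k\le n-1$, where $\Omega^k(\mu)$ is itself only indecomposable rather than simple: one must be careful that the self-extension cocycle $c(E_{ij})=t_i/t_j$ remains nontrivial on a module with a two-step composition series $L_k\subset\Omega^k(\mu)\to L_{k+1}$. The cleanest route is probably to reduce to the already-established singular-block computation by restricting to $\mathfrak g_0$ and invoking Lemma~\ref{lm12} together with Lemma~\ref{aux}, exactly paralleling Lemma~\ref{lm6}; the de Rham exactness (which gives $d:\Omega^{k-1}(\mu)\twoheadrightarrow L_k$) is what guarantees the relevant $z$-eigencomponents are nonzero and of the required form. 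The rest — the two inductions and the passage to the direct limit — is routine, following the template of Lemmas~\ref{lm24} and~\ref{lm26}.
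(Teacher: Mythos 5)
Your proposal diverges from the paper's proof of Lemma~\ref{lm31} in two places, one minor and one substantive.

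The minor issue: in your reduction you write that if $L$ is a simple submodule of $\Omega^k(\mu)^{(m)}$ with $L\cap\Omega^k(\mu)^{(m-1)}=0$ then $\Omega^k(\mu)^{(m)}\cong L\oplus\Omega^k(\mu)^{(m-1)}$. This is false for $1\le k\le n-1$: since $L\cong L_k$ maps onto the socle $L_k$ of the top quotient $\Omega^k(\mu)$ but not onto all of it, $L\oplus\Omega^k(\mu)^{(m-1)}$ is a \emph{proper} submodule of $\Omega^k(\mu)^{(m)}$. What is actually available is the weaker (and still sufficient) conclusion: the image of $L$ in $\Omega^k(\mu)^{(m)}/\Omega^k(\mu)^{(m-2)}\cong\Omega^k(\mu)^{(1)}$ is a simple submodule not lying inside the bottom copy of $\Omega^k(\mu)$, which contradicts the base case $m=1$. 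This is how the paper runs the induction.

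The substantive gap is in the base case. You reduce to showing that the self-extension
$0\to\Omega^k(\mu)\to\Omega^k(\mu)^{(1)}\to\Omega^k(\mu)\to 0$
does not split, and you propose to establish this by showing the cocycle class $[c]$ (with $c(E_{ij})=t_i/t_j$) is nonzero in $\operatorname{Ext}^1_{\mathfrak g,\mathfrak h}(\Omega^k(\mu),\Omega^k(\mu))$, via restriction to $\mathfrak{sl}(n)\subset\mathfrak g_0$ à la Lemmas~\ref{lm6} and~\ref{lm12}. But nonsplitting of this full extension is \emph{not equivalent} to the assertion you need. The lemma claims that the socle of $\Omega^k(\mu)^{(1)}$ is exactly $L_k$. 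A second simple submodule $L\cong L_k$ with $L\cap\Omega^k(\mu)=0$ would give a splitting only of the \emph{pullback} extension
$0\to\Omega^k(\mu)\to P\to L_k\to 0$
(the preimage in $\Omega^k(\mu)^{(1)}$ of the socle $L_k$ of the top quotient), whose class is $\iota^*[c]\in\operatorname{Ext}^1(L_k,\Omega^k(\mu))$ for the inclusion $\iota\colon L_k\hookrightarrow\Omega^k(\mu)$. Since the sequence
$\operatorname{Ext}^1(L_{k+1},\Omega^k(\mu))\to\operatorname{Ext}^1(\Omega^k(\mu),\Omega^k(\mu))\xrightarrow{\iota^*}\operatorname{Ext}^1(L_k,\Omega^k(\mu))$
has a potentially nonzero kernel, $[c]\neq 0$ does not imply $\iota^*[c]\neq 0$, and your argument — even if carried through — would only prove indecomposability of $\Omega^k(\mu)^{(1)}$ in the weaker sense, not the uniqueness of its simple submodule (nor the corresponding statement for the quotient $\Omega^k(\mu)^{(1)}/L_k$, which the paper also needs). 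The paper sidesteps this entirely: it shows directly that $L_k\oplus L=L_k+uL_k$ cannot be a $\mathfrak g$-submodule of $\Omega^k(\mu)^{(1)}$, which is precisely the statement $\iota^*[c]\neq 0$, and this follows because multiplication by $t_i/t_j$ fails to preserve the subspace $L_k=\ker d\cap\Omega^k(\mu)$ of closed forms (one has $d\bigl((t_i/t_j)\omega\bigr)=d(t_i/t_j)\wedge\omega\neq 0$ for generic $\omega\in L_k$). Your proposal would need to add a verification of this restricted nontriviality; the de Rham/closed-form computation you hint at in the final paragraph is the right tool, but as written it is aimed at the wrong target ($[c]$ rather than $\iota^*[c]$).
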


\begin{proof} We prove the statement for $ \Omega^{k}\left(\mu\right)^{\left(m\right)} $
by induction on $m$ using the filtration~\eqref{equ31}.
We reason as in the proof of Lemma~\ref{lm26}. It
suffices to prove the statement for $
\Omega^{k}\left(\mu\right)^{\left(1\right)} $.

Suppose that $ L $
is a simple submodule of $
\Omega^{k}\left(\mu\right)^{\left(1\right)} $ and $ L\not=L_{k} $.
Then $ L\cap\Omega^{k}\left(\mu\right)=0 $ by Lemma~\ref{lm30},
hence the image of $ L $ under the natural projection $
\Omega^{k}\left(\mu\right)^{\left(1\right)} \to
\Omega^{k}\left(\mu\right) $ is $ L_{k} $ (since
$\Omega^{k}\left(\mu\right)$ has only one simple submodule and it is
$L(k)$). This implies that $L(k)\oplus L=
L_{k}+uL_{k} $ is a submodule of $
\Omega^{k}\left(\mu\right)^{\left(1\right)} $, which, as one can
easily check, is not true.

Now let $M:=\Omega^{k}\left(\mu\right)^{\left(1\right)}/L_{k} $
and $p:\Omega^{k}\left(\mu\right)^{\left(1\right)} \to M $ be the
natural projection. Then $ p\left(L_{k+1}\right)\subset M $ is a
simple submodule. Suppose that there is another simple submodule $
L $. Then the image of $ L $ in $ \Omega^{k}\left(\mu\right) $
under the natural projection $ M \to \Omega^{k}\left(\mu\right) $
must be $ L_{k} $. This again implies that $ L_{k}+uL_{k} $ is a
submodule of $\Omega^{k}\left(\mu\right)^{\left(1\right)}$, which
leads to a contradiction. The cases $ k=1 $ and $ k=n $ are
similar to the general case.\end{proof}

\begin{corollary} \label{cor41}\myLabel{cor41}\relax  There exists a unique filtration
\begin{equation}
0=F^{0}\subset F^{1}\subset F^{2}\subset F^{3}\subset\dots
\notag\end{equation} of $ \bar{\Omega}^{k}\left(\mu\right) $ such
that all quotients $ F^{i}/F^{i-1} $ are simple. Furthermore,  $
F^{i}/F^{i-1}\cong  L_{1} $ if $ k=0 $, and $ F^{i}/F^{i-1}\cong
L_{n} $ if $ k=n $. If $ 1\leq k\leq n-1 $, then $
F^{i}/F^{i-1}\cong L_{k} $ for odd $ i $ and $ F^{i}/F^{i-1}\cong
L_{k+1} $ for even $ i $.

\end{corollary}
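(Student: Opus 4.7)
My plan is to first construct the filtration explicitly by refining the natural filtration~\eqref{equ31}, and then derive uniqueness from the ``nontrivial quotient'' clause of Lemma~\ref{lm31}.

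For existence, I would handle the edge cases and the generic case separately. When $k=0$ or $k=n$, Lemma~\ref{lm30} says that $\Omega^k(\mu)$ is already simple (isomorphic to $L_1$ or $L_n$), so the natural filtration~\eqref{equ31} already has simple quotients and I can take $F^i:=\Omega^k(\mu)^{(i-1)}$. When $1\leq k\leq n-1$, each successive quotient in~\eqref{equ31} is $\Omega^k(\mu)$, which Lemma~\ref{lm30} endows with the length-two composition series $0\subset L_k\subset\Omega^k(\mu)$ and $\Omega^k(\mu)/L_k\cong L_{k+1}$. I would refine~\eqref{equ31} accordingly: set $F^{2m}:=\Omega^k(\mu)^{(m-1)}$ and let $F^{2m-1}$ be the preimage of $L_k$ under the $\mathfrak g$-equivariant projection $\Omega^k(\mu)^{(m-1)}\to\Omega^k(\mu)^{(m-1)}/\Omega^k(\mu)^{(m-2)}\cong\Omega^k(\mu)$ (with $\Omega^k(\mu)^{(-1)}:=0$). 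This yields the claimed alternating pattern $F^{2m-1}/F^{2m-2}\cong L_k$ and $F^{2m}/F^{2m-1}\cong L_{k+1}$, and exhaustion follows from $\bigcup_m\Omega^k(\mu)^{(m)}=\bar\Omega^k(\mu)$.

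For uniqueness, I would prove by induction on $i$ that any filtration $0=H^0\subset H^1\subset H^2\subset\cdots$ of $\bar\Omega^k(\mu)$ with simple quotients satisfies $H^i=F^i$. The base case holds because $H^1$ is a simple submodule of $\bar\Omega^k(\mu)$, and by Lemma~\ref{lm31} the latter has a unique simple submodule, namely $F^1$. For the inductive step, assuming $H^{i-1}=F^{i-1}$, both $H^i/F^{i-1}$ and $F^i/F^{i-1}$ are simple submodules of the nontrivial quotient $\bar\Omega^k(\mu)/F^{i-1}$, which also has a unique simple submodule by Lemma~\ref{lm31}; hence $H^i=F^i$.

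The main obstacle is exactly this uniqueness in quotients: without the ``nontrivial quotient'' clause of Lemma~\ref{lm31} the inductive step would collapse, since one could in principle try to interleave the $L_k$ and $L_{k+1}$ factors in the wrong order. The remaining points---$\mathfrak g$-equivariance of the projections and the identification $\Omega^k(\mu)/L_k\cong L_{k+1}$---are routine consequences of Lemma~\ref{lm30} and of the $\mathfrak g$-stability of the filtration~\eqref{equ31}.
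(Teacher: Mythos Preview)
Your argument is correct and is precisely the natural elaboration the paper leaves implicit: the corollary is stated without proof immediately after Lemma~\ref{lm31}, and your existence step (refining the filtration~\eqref{equ31} via Lemma~\ref{lm30}) together with your uniqueness induction (using the unique-simple-submodule property of every nontrivial quotient from Lemma~\ref{lm31}) is exactly the intended derivation.
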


\begin{lemma} \label{lm34}\myLabel{lm34}\relax
$ \operatorname{Hom}_{{\mathfrak g}}\left(\bar{\Omega}^{k}\left(\mu\right),\bar{\Omega}^{l}\left(\mu\right)\right)=0 $
 if $ k\not=l $, and $ \operatorname{End}_{{\mathfrak g}}\left(\bar{\Omega}^{k}\left(\mu\right)\right)={\mathbb C}[\left[\frac{\partial}{\partial u}\right]] $.
\end{lemma}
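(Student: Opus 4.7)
The plan is to treat the two claims separately, exploiting the rigid structure of $\bar{\Omega}^k(\mu)$ established in Lemma~\ref{lm31} and Corollary~\ref{cor41}. As a preliminary I would first argue that the lattice of submodules of $\bar{\Omega}^k(\mu)$ is a totally ordered chain $0\subset F^1\subset F^2\subset\dots\subset \bar{\Omega}^k(\mu)$: if two submodules $N_1,N_2$ were incomparable, their images in $\bar{\Omega}^k(\mu)/(N_1\cap N_2)$ would be non-zero submodules with trivial intersection, contradicting the uniqueness of the simple submodule provided by Lemma~\ref{lm31}. Since each $F^i$ is finite-dimensional, every proper non-zero quotient of $\bar{\Omega}^k(\mu)$ is infinite-dimensional; in particular $\bar{\Omega}^k(\mu)$ admits no finite-dimensional proper quotient.

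For the vanishing when $k\neq l$, the image of any $\phi\colon \bar{\Omega}^k(\mu)\to \bar{\Omega}^l(\mu)$ must be simultaneously a submodule of $\bar{\Omega}^l(\mu)$ (either some $F^j$ or $\bar{\Omega}^l(\mu)$ itself) and a quotient of $\bar{\Omega}^k(\mu)$ (either $\bar{\Omega}^k(\mu)$, or $0$, or an infinite-dimensional $\bar{\Omega}^k(\mu)/F^i$). A non-zero finite-dimensional image is therefore impossible, and a surjective $\phi$ would yield an isomorphism $\bar{\Omega}^k(\mu)/F^i\cong \bar{\Omega}^l(\mu)$, ruled out by comparing composition-factor sequences from Corollary~\ref{cor41} (the factor sets are disjoint when $|k-l|\geq 2$, and differ by at least one simple constituent when $|k-l|=1$). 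Hence $\phi=0$.

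For $k=l$ I adapt the strategy of Lemma~\ref{lm26}. A direct calculation from $E_{ij}(u^mf)=mu^{m-1}(t_i/t_j)f+u^mE_{ij}(f)$ shows that $\partial/\partial u$ commutes with $\mathfrak g$, and it is locally nilpotent on $\bar{\Omega}^k(\mu)$, so every formal power series $\sum_{p\geq 0}a_p(\partial/\partial u)^p$ defines a $\mathfrak g$-endomorphism; evaluating on $u^m\omega$ shows these are linearly independent, giving an inclusion $\mathbb C[[\partial/\partial u]]\hookrightarrow \operatorname{End}_{\mathfrak g}(\bar{\Omega}^k(\mu))$. For the reverse inclusion I use $\bar{\Omega}^k(\mu)=\varinjlim \Omega^k(\mu)^{(m)}$ to identify $\operatorname{End}_{\mathfrak g}(\bar{\Omega}^k(\mu))=\varprojlim_m \operatorname{Hom}_{\mathfrak g}(\Omega^k(\mu)^{(m)},\bar{\Omega}^k(\mu))$, and prove by induction on $m$ that this Hom-space is $(m+1)$-dimensional with basis $(\partial/\partial u)^p\circ \iota_m$, $p=0,\dots,m$. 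The base case $m=0$ follows from the chain structure (since $L_k$ is not a simple quotient of $\Omega^k(\mu)$, a non-zero hom must have image $\Omega^k(\mu)=F^2$) together with $\operatorname{End}_{\mathfrak g}(\Omega^k(\mu))=\mathbb C$ by indecomposability. The inductive step applies $\operatorname{Hom}_{\mathfrak g}(-,\bar{\Omega}^k(\mu))$ to the short exact sequence $0\to \Omega^k(\mu)\to \Omega^k(\mu)^{(m)}\to \Omega^k(\mu)^{(m-1)}\to 0$ to yield the upper bound $\dim\leq m+1$, matched by the explicit lower bound. Since the transition maps in the inverse system are the obvious coordinate projections, the inverse limit equals $\mathbb C[[\partial/\partial u]]$.

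The main technical obstacle is the $\mathfrak g$-module identification $\Omega^k(\mu)^{(m)}/\Omega^k(\mu)\cong \Omega^k(\mu)^{(m-1)}$ needed for the short exact sequence: the naive map $u^{p+1}\mapsto u^p$ is off by a factor of $p+1$ in the $E_{ij}$-action, so one must rescale by factorials, $u^{p+1}\mapsto (p+1)^{-1}u^p$ (or equivalently use $\partial/\partial u$ itself as the quotient map), to obtain a genuine isomorphism. Once this identification is in place, the remainder of the endomorphism calculation is a formal dimension count paralleling Lemma~\ref{lm26}.
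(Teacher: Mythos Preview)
Your argument is essentially correct and close in spirit to the paper's, but one slip must be fixed: you write that each $F^i$ is \emph{finite-dimensional} and that therefore a non-zero finite-dimensional image is impossible. In fact each $F^i$ is an infinite-dimensional cuspidal module, so as written this step does not exclude the $F^j$ as possible images. What you need (and clearly intend) is that each $F^i$ has \emph{finite length}, whereas every non-zero quotient of $\bar\Omega^k(\mu)$ has infinite length; hence the image of a non-zero $\phi$, being simultaneously a submodule of $\bar\Omega^l(\mu)$ (so some $F^j$ or the whole thing) and a non-zero quotient of $\bar\Omega^k(\mu)$ (so of infinite length), must be all of $\bar\Omega^l(\mu)$. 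With ``finite length'' in place of ``finite-dimensional'' your composition-factor comparison then finishes the $k\neq l$ case.

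For the endomorphism ring both you and the paper pass to a projective limit. The paper is terser: it observes that every endomorphism preserves the chain filtration (immediate from your own preliminary observation that the submodule lattice is totally ordered) and concludes $\operatorname{End}_{\mathfrak g}(\bar\Omega^k(\mu))=\varprojlim_m\operatorname{End}_{\mathfrak g}(F^m)$, asserting each term to be $\mathbb C[\partial/\partial u]/(\partial^m/\partial u^m)$. Your route via $\varprojlim_m\operatorname{Hom}_{\mathfrak g}(\Omega^k(\mu)^{(m)},\bar\Omega^k(\mu))$ together with the inductive dimension count from the short exact sequence is a bit longer but more self-contained; your remark that the quotient map must be taken to be $\partial/\partial u$ (not the naive $u^{p+1}\mapsto u^p$) is correct and necessary.
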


\begin{proof} Let $ \phi\in\operatorname{Hom}_{{\mathfrak g}}\left(\bar{\Omega}^{k}\left(\mu\right),\bar{\Omega}^{l}\left(\mu\right)\right) $ and $ \phi\not=0 $. Then $ \operatorname{Im} \phi $ contains a
simple submodule $ L_{l}\subset\bar{\Omega}^{l}\left(\mu\right) $. Hence $ \bar{\Omega}^{k}\left(\mu\right) $ contains a simple subquotient isomorphic
to $ L_{l} $ and~\eqref{equ51} implies $ l=k $ or $ k+1 $. On the other hand, by Corollary~%
\ref{cor41}, $ \bar{\Omega}^{k}\left(\mu\right)/\operatorname{Ker}
\phi $ contains a simple subquotient isomorphic to $ L_{k-1} $.
Hence $ \bar{\Omega}^{l}\left(\mu\right) $ has a simple
subquotient isomorphic to $ L_{k-1} $. Therefore $
\operatorname{Hom}_{{\mathfrak
g}}\left(\bar{\Omega}^{k}\left(\mu\right),\bar{\Omega}^{l}\left(\mu\right)\right)\not=0
$ implies $ k=l $. To prove the second statement use
Corollary~\ref{cor41}. Since any endomorphism preserves
the filtration, $\operatorname{End}_{{\mathfrak
    g}}(\bar{\Omega}^{k}(\mu)) $ is the projective limit
  of $\operatorname{End}_{{\mathfrak
    g}}\left(F^m\right)=
\mathbb C[\frac{\partial}{\partial u}]/(\frac{\partial^m}{\partial u^m})$.
 {}\end{proof}

Let $ V $ be the span of the functions $ t_{0},t_{1},\dots ,t_{n}
$ and consider $V$ as the natural $ (n+1) $-dimensional $
{\mathfrak g} $-module. For $ k=1,\dots ,n $ we have the following
sequence
\begin{equation}
0 \to \Omega^{k}\left(\mu\right) \xrightarrow[]{\theta}
\Omega^{k-1}\left(\mu-\varepsilon_{0}\right)\otimes V
\xrightarrow[]{\sigma} \Omega^{k-1}\left(\mu\right) \to\text{ 0,}
\notag\end{equation} where $ \theta=\sum
i_{\frac{\partial}{\partial t_{i}}}\otimes t_{i} $ and $
\sigma=\sum t_{i}\otimes\frac{\partial}{\partial t_{i}} $.
Obviously $ \theta$ and $\sigma $ are $ {\mathfrak g}
$-equivariant. The direct computation shows that $
\sigma\circ\theta=i_E =0 $. Furthermore, assume that $ |\mu|=0 $
and  consider the component of the above exact sequence
corresponding to the trivial generalized central character.
The resulting sequence is
\begin{equation}
0 \to \Omega^{k}\left(\mu\right) \xrightarrow[]{\varphi} S^{k}
\xrightarrow[]{\psi} \Omega^{k-1}\left(\mu\right) \to 0,
\label{equ33}\end{equation}\myLabel{equ33,}\relax where $
S^{k}:=\left(\Omega^{k-1}\left(\mu-\varepsilon_{0}\right)\otimes
V\right)^{\chi_0}.$

\begin{lemma} \label{lm32}\myLabel{lm32}\relax  The sequence ~\eqref{equ33} is exact for $
k=1,\dots ,n $. Moreover, $ S^{k} $ is an indecomposable module
with unique simple submodule and unique simple quotient, both
isomorphic to $ L_{k} $.

\end{lemma}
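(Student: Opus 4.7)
The plan is to prove exactness by a Koszul-type argument, pass to the $\chi_0$-component to obtain the short exact sequence, and then determine the socle and head of $S^k$ using the indecomposability of the $\Omega^j(\mu)$'s from Lemma~\ref{lm30}.

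First, I would establish exactness of
\[
0 \to \Omega^k(\mu)\xrightarrow{\theta}\Omega^{k-1}(\mu-\varepsilon_0)\otimes V\xrightarrow{\sigma}\Omega^{k-1}(\mu)\to 0.
\]
Injectivity of $\theta$ is immediate since $\theta(\omega)=\sum_i(i_{\partial/\partial t_i}\omega)\otimes t_i$ recovers $\omega$ from its contractions. The composition $\sigma\circ\theta=i_E$ vanishes on $\Omega^k(\mu)$ by the defining condition. Surjectivity follows from $\sigma((t_0^{-1}\eta)\otimes t_0)=\eta$. Exactness in the middle is a Koszul-type argument: a relation $\sum_j t_j\omega_j=0$ with $\omega_j\in\Omega^{k-1}(\mu-\varepsilon_0)$ is induced by a single form $\omega\in\Omega^k(\mu)$ via $\omega_j=i_{\partial/\partial t_j}\omega$, as a monomial computation in the Laurent ring verifies. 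To pass to $S^k$ I apply the exact functor $M\mapsto M^{\chi_0}$, noting that the flanking modules already have central character $\chi_0$ by Lemma~\ref{lm30} and Theorem~\ref{th2} (their composition factors are all among $L_1,\dots,L_n$).

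For the structural claim, the composition factors of $S^k$ read off to be $L_{k-1}, L_k, L_k, L_{k+1}$ (with $L_0, L_{n+1}$ absent in the boundary cases). A simple submodule $L\subset S^k$ either lies in $\varphi(\Omega^k(\mu))$ and equals the socle $L_k$ of $\Omega^k(\mu)$, or maps isomorphically via $\psi$ onto the socle $L_{k-1}$ of $\Omega^{k-1}(\mu)$. To exclude the second possibility when $2\le k\le n$, I would note that such an inclusion $L_{k-1}\hookrightarrow S^k$ produces a section of $\psi$ over $L_{k-1}\subset\Omega^{k-1}(\mu)$, splitting the pulled-back sequence $0\to\Omega^k(\mu)\to\psi^{-1}(L_{k-1})\to L_{k-1}\to 0$. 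I would then show this pulled-back sequence is non-split by a direct computation inside $\Omega^{k-1}(\mu-\varepsilon_0)\otimes V$: choose an explicit preimage under $\psi$ of a generator of $L_{k-1}$, apply an appropriate root vector of $\mathfrak g$, and verify that the result does not respect a putative direct-sum decomposition, ruling out any $\mathfrak g$-equivariant section. Uniqueness of the simple quotient follows by the parallel argument applied to $(S^k)^\vee$: the unique simple submodule there is $L_k$ (excluding $L_{k+1}$) by the same method, using the non-splitting of the canonical SES of $\Omega^k(\mu)$ from Lemma~\ref{lm30}. Once $S^k$ has simple socle, indecomposability follows formally. The boundary cases $k=1$ and $k=n$ are automatic since one of $L_0, L_{n+1}$ is absent.

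The main obstacle is the element-level non-splitting computation in the pulled-back sequence, which requires careful tracking of the $\mathfrak g$-action on differential forms inside $\Omega^{k-1}(\mu-\varepsilon_0)\otimes V$. The Koszul exactness step and the projection onto the $\chi_0$-component are routine by comparison.
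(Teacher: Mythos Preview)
Your exactness argument takes a different route from the paper: rather than a direct Koszul computation, the paper identifies $\Omega^{k-1}(\mu-\varepsilon_0)\otimes V$ with $\mathcal{F}_{\mu-k\varepsilon_0}(V(\varepsilon_1+\cdots+\varepsilon_{k-1})\otimes V)$ via Remark~\ref{rem99}, decomposes the $P$-module $V(\varepsilon_1+\cdots+\varepsilon_{k-1})\otimes V$, and reads off the $\chi_0$-component from the induced short exact sequence of $\mathcal{F}_{\mu-k\varepsilon_0}(\cdot)$ modules. Both approaches are valid; yours is more hands-on, the paper's leverages the existing machinery.

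For the structural claim, however, you are missing the key observation that makes the argument short: $(S^k)^\vee\cong S^k$. This holds because $\Omega^{k-1}(\mu-\varepsilon_0)$ is a \emph{simple} cuspidal module (its central character $\chi_{-k\varepsilon_0}$ is singular), hence self-dual under ${}^\vee$, and ${}^\vee$ commutes with tensoring by finite-dimensional modules and with taking the $\chi_0$-component. Once you know $(S^k)^\vee\cong S^k$, the argument is immediate: if $L_{k-1}$ were a second simple submodule of $S^k$, then by self-duality $L_{k-1}$ would also be a simple quotient of $S^k$; but any simple quotient of $S^k$ either factors through $\Omega^{k-1}(\mu)$ (head $L_k$) or restricts nontrivially to $\Omega^k(\mu)$ (head $L_{k+1}$), so $L_{k-1}$ is excluded. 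No explicit non-splitting computation is needed, and the ``main obstacle'' you identify disappears. Your plan to verify non-splitting by a direct element-level calculation inside $\Omega^{k-1}(\mu-\varepsilon_0)\otimes V$ is plausible in principle but is left entirely unexecuted, so as written it is a gap.
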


\begin{proof} Note that $ \Omega^{k}\left(\mu\right)\simeq{\mathcal F}_{\mu-k\varepsilon_{0}}\left(V(\varepsilon_{1}+\dots +\varepsilon_{k})\right) $,
where $ V(\eta) $ is the irreducible $ P $-module with highest weight $ \eta $. By Remark~%
\ref{rem99},
$$ \Omega^{k-1}\left(\mu-\varepsilon_{0}\right)\otimes
V\cong {\mathcal
F}_{\mu-k\varepsilon_{0}}\left(V(\varepsilon_{1}+\dots
+\varepsilon_{k-1})\otimes V\right). $$ Now use the exact sequence
of $ P $-modules
\begin{equation}
0 \to V(2\varepsilon_{1}+\dots +\varepsilon_{k-1})\oplus
V(\varepsilon_{1}+\dots +\varepsilon_{k}) \to
V(\varepsilon_{1}+\dots +\varepsilon_{k-1})\otimes V \to
V(\varepsilon_{0}+\varepsilon_{1}+\dots +\varepsilon_{k-1}) \to 0
\notag\end{equation} which induces the following exact sequence of
$ {\mathfrak g} $-modules
\begin{eqnarray*}
0 & \to &  {\mathcal
F}_{\mu-k\varepsilon_{0}}\left(V(2\varepsilon_{1}+\dots
+\varepsilon_{k})\right)\oplus{\mathcal
F}_{\mu-k\varepsilon_{0}}\left(V(\varepsilon_{1}+\dots
+\varepsilon_{k})\right) \to \\ & \to & {\mathcal
F}_{\mu-k\varepsilon_{0}}\left(V(\varepsilon_{1}+\dots
+\varepsilon_{k+1})\right)\otimes V  \to {\mathcal
F}_{\mu-k\varepsilon_{0}}\left(V(\varepsilon_{0}+\varepsilon_{1}+\dots
+\varepsilon_{k-1})\right) \to\text{ 0.} \notag\end{eqnarray*} But
\begin{equation}
{\mathcal F}_{\mu-k\varepsilon_{0}}\left(V(\varepsilon_{0}+\varepsilon_{1}+\dots +\varepsilon_{k-1})\right)=\Omega^{k-1}\left(\mu\right)\text{, }{\mathcal F}_{\mu-k\varepsilon_{0}}\left(V(\varepsilon_{1}+\dots +\varepsilon_{k})\right)=\Omega^{k}\left(\mu\right),
\notag\end{equation}
\begin{equation}
\left({\mathcal F}_{\mu-k\varepsilon_{0}}\left(V(2\varepsilon_{1}+\dots +\varepsilon_{k})\right)\right)^{\chi_0}=0.
\notag\end{equation}
Hence~\eqref{equ33} is an exact sequence.

Now we will show that $ S^{k} $ has a unique irreducible submodule
isomorphic to $ L_{k} $ (this will imply the indecomposability of
$ S^{k} $ as well). Since the functor ${}^{\vee}$ preserves tensor
products with finite-dimensional modules and maps a simple module
to itself, the irreducibility of $
\Omega^{k-1}\left(\mu-\varepsilon_{0}\right) $ implies $
({S}^{k})^{\vee}=S^{k} $. By Lemma~\ref{lm30}, $
\Omega^{k}\left(\mu\right) $ and $ \Omega^{k-1}\left(\mu\right) $
are indecomposable. If $ S^{k} $ has another irreducible submodule
then, by the indecomposability of $ \Omega^{k}\left(\mu\right) $,
this submodule is isomorphic to $ L_{k-1} $. But then since $
(S^{k})^{\vee}=S^{k} $, $ S^{k} $ must have an irreducible quotient
isomorphic to $ L_{k-1} $, which is impossible due to the
indecomposability of $ \Omega^{k-1}\left(\mu\right) $. Finally,
again by duality, $ S^{k} $ has a unique irreducible quotient
isomorphic to $ L_{k} $.\end{proof}

Recall that $ \Omega^{k-1}\left(\mu-\varepsilon_0\right) $ is a
simple cuspidal module with singular central character
$\chi_{-k\varepsilon_0}$, and
$\bar{\Omega}^{k-1}\left(\mu-\varepsilon_{0}\right)$ is an indecomposable
  injective in this singular block.
Set
$I^{k}:=\left(\bar{\Omega}^{k-1}\left(\mu-\varepsilon_{0}\right)\otimes
V\right)^{\chi_0} $ for $ k=1,\dots ,n $. The exact
sequence~\eqref{equ33} leads to the following exact sequence
\begin{equation}
0 \to \bar{\Omega}^{k}\left(\mu\right) \xrightarrow[]{i_{k}} I^{k} \xrightarrow[]{p_{k}} \bar{\Omega}^{k-1}\left(\mu\right) \to 0
\label{equ34}\end{equation}\myLabel{equ34,}\relax

\begin{lemma} \label{lm33}\myLabel{lm33}\relax The module $ I^{k} $ is an injective object in $ \bar{{\mathcal C}}^{0}_{\overline{\gamma\left(\mu\right)}} $,
and it has a unique simple submodule, which is isomorphic to $L_{k} $.
\end{lemma}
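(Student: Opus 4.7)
The statement splits into an injectivity claim and a socle claim, and I would treat them in that order. For injectivity, note that from the isomorphism $\Omega^{k-1}(\mu-\varepsilon_0)\cong\mathcal F_{\mu-k\varepsilon_0}\bigl(V(\varepsilon_1+\cdots+\varepsilon_{k-1})\bigr)$ recorded in the proof of Lemma \ref{lm32}, the completion $\bar\Omega^{k-1}(\mu-\varepsilon_0)$ is of the form $\bar{\mathcal F}_{\mu'}(V'_0)$ treated in Lemma \ref{lm26}, hence an indecomposable injective in its (singular) block of $\bar{\mathcal C}$. Two applications of Lemma \ref{lm99}---first tensoring with $V$, then taking the $\chi_0$-summand---then yield injectivity of $I^k$ in $\bar{\mathcal C}$, and in particular inside the sub-block $\bar{\mathcal C}^0_{\overline{\gamma(\mu)}}$.

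For the socle analysis, I already have $L_k\subset\bar\Omega^k(\mu)\subset I^k$ through $i_k$, so $L_k$ is a simple submodule. For any simple submodule $L\subset I^k$: if $L\cap\bar\Omega^k(\mu)\neq 0$ then $L=L_k$ by the uniqueness part of Lemma \ref{lm31} applied to $\bar\Omega^k(\mu)$; otherwise $L$ injects into the quotient $\bar\Omega^{k-1}(\mu)$, whose unique simple submodule is $L_{k-1}$ (Lemma \ref{lm31}), so $L\cong L_{k-1}$. To exclude this alternative I would filter $I^k=\bigcup_m S^k_{(m)}$ with $S^k_{(m)}:=(\Omega^{k-1}(\mu-\varepsilon_0)^{(m)}\otimes V)^{\chi_0}$, so $S^k_{(0)}=S^k$. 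Since $\Omega^{k-1}(\mu-\varepsilon_0)^{(m)}/\Omega^{k-1}(\mu-\varepsilon_0)^{(m-1)}\cong\Omega^{k-1}(\mu-\varepsilon_0)$ and $(-)^{\chi_0}$ is exact, we have short exact sequences $0\to S^k_{(m-1)}\to S^k_{(m)}\to S^k\to 0$. A routine induction on $m$ starting from Lemma \ref{lm32} would then show that every simple submodule of each $S^k_{(m)}$ is isomorphic to $L_k$, and by finite generation the same conclusion propagates to $I^k$.

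The main obstacle is showing that this isomorphism class $L_k$ occurs only once in the socle, i.e.\ that $\dim\operatorname{Hom}_{\mathfrak g}(L_k,I^k)=1$. My plan is to reduce this to a computation in the singular block via the identification
\[\operatorname{Hom}_{\mathfrak g}(L_k,I^k)=\operatorname{Hom}_{\mathfrak g}\bigl((L_k\otimes V^*)^{\chi_{-k\varepsilon_0}},\,\bar\Omega^{k-1}(\mu-\varepsilon_0)\bigr),\]
using Lemma \ref{lm26} and Theorem \ref{th6} to recognise $\bar\Omega^{k-1}(\mu-\varepsilon_0)$ as the injective hull of its simple socle $\Omega^{k-1}(\mu-\varepsilon_0)$. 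I expect the delicate step to be verifying that $(L_k\otimes V^*)^{\chi_{-k\varepsilon_0}}$ has, up to equality of submodules, a unique simple quotient isomorphic to $\Omega^{k-1}(\mu-\varepsilon_0)$---equivalently, that the self-extension $0\to\Omega^{k-1}(\mu-\varepsilon_0)\to\Omega^{k-1}(\mu-\varepsilon_0)^{(1)}\to\Omega^{k-1}(\mu-\varepsilon_0)\to 0$ (non-split by Lemma \ref{lm12}) survives the functor $(-\otimes V)^{\chi_0}$ as a non-split extension $0\to S^k\to S^k_{(1)}\to S^k\to 0$, which should follow from Lemma \ref{aux} combined with a central-character analysis of the summand decomposition.
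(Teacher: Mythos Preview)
Your injectivity argument is exactly the paper's: $\bar\Omega^{k-1}(\mu-\varepsilon_0)$ is injective in its singular block by Lemma~\ref{lm26}, and Lemma~\ref{lm99} transports this through $(-\otimes V)^{\chi_0}$.

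For the socle, your steps (b) and (c) are correct but in the end unnecessary, and step (d) has a genuine gap. You correctly set up the adjunction
\[
\operatorname{Hom}_{\mathfrak g}(L_i,I^k)\;\cong\;\operatorname{Hom}_{\mathfrak g}\bigl((L_i\otimes V^*)^{\chi_{-k\varepsilon_0}},\,\bar\Omega^{k-1}(\mu-\varepsilon_0)\bigr),
\]
but then misidentify what must be shown on the right. Since $\bar\Omega^{k-1}(\mu-\varepsilon_0)$ is the injective hull of the \emph{unique} simple in its block, exactness of $\operatorname{Hom}(-,\bar\Omega^{k-1}(\mu-\varepsilon_0))$ gives $\dim\operatorname{Hom}(M,\bar\Omega^{k-1}(\mu-\varepsilon_0))=\text{length}(M)$ for any $M$ in that block. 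So you need $(L_k\otimes V^*)^{\chi_{-k\varepsilon_0}}$ to be \emph{simple}, not merely to have a unique simple quotient; a length-two indecomposable would already give $\dim\operatorname{Hom}=2$. Your proposed verification via non-splitting of $0\to S^k\to S^k_{(1)}\to S^k\to 0$ is neither equivalent to this nor obviously sufficient.

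The paper closes this gap with one extra ingredient you did not invoke: the Bernstein--Gelfand identity $T_\chi^\eta\circ T_\eta^\chi=\mathrm{Id}\oplus\mathrm{Id}$ for $\eta$ singular and $\chi$ regular. Applying $T_\chi^\eta$ to $S^k=T_\eta^\chi(\Omega^{k-1}(\mu-\varepsilon_0))$ and using the composition series of $S^k$ from Lemma~\ref{lm32} forces $T_\chi^\eta(L_i)=(L_i\otimes V^*)^{\chi_{-k\varepsilon_0}}$ to vanish for $i\neq k$ (otherwise it would have the unique simple as a quotient, contradicting $\operatorname{Hom}(L_i,S^k)=0$) and to equal $\Omega^{k-1}(\mu-\varepsilon_0)$ for $i=k$ (by counting lengths). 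The adjunction then gives $\operatorname{Hom}_{\mathfrak g}(L_i,I^k)=\mathbb C^{\delta_{ik}}$ in one stroke, subsuming your (b) and (c).
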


\begin{proof} The injectivity of $ I^{k} $ follows from
  Lemma~\ref{lm99}.
To prove that $ I^{k} $  has a unique simple submodule isomorphic to $L^{k}$
recall that $T_\chi^\eta\circ T_\eta^\chi=\mbox{Id}\oplus\mbox{Id}$ if
$\eta$ is singular and $\chi$ is regular (see \cite{BG}). In our case
$\eta=\chi_{-k\varepsilon_0}$ and $\chi=\chi_0$.
The exact sequence ~\eqref{equ33} implies that
$$(L_k\otimes V^*)^{\chi_{-k\varepsilon_0}}=\Omega^{k-1}(\mu-\varepsilon_0)$$
and
$$(L_i\otimes V^*)^{\chi_{-k\varepsilon_0}}=0$$
if $i\neq k$.
Thus, we have
$$\mbox{Hom}_{\mathfrak g}(L_k,I_k)=\mbox{Hom}_{\mathfrak
  g}(L_k,(\bar{\Omega}^{k-1}(\mu-\varepsilon_0)\otimes V)^{\chi_0})=\mbox{Hom}_{\mathfrak
  g}((L_k\otimes V^{*})^{\chi_{-k \varepsilon_0}},\bar{\Omega}^{k-1}(\mu-\varepsilon_0))=\mathbb C$$
and for $i\neq k$
$$\mbox{Hom}_{\mathfrak g}(L_i,I_k)=\mbox{Hom}_{\mathfrak
  g}(L_i,(\bar{\Omega}^{k-1}(\mu-\varepsilon_0)\otimes V)^{\chi_0})=\mbox{Hom}_{\mathfrak
  g}((L_i\otimes V^{*})^{\chi_{-k \varepsilon_0}},\bar{\Omega}^{k-1}(\mu-\varepsilon_0))=0.$$
\end{proof}

\begin{corollary} \label{cor34}\myLabel{cor34}\relax  $ \operatorname{Hom}_{{\mathfrak g}}\left(\bar{\Omega}^{k}\left(\mu\right),I^{l}\right)=0 $
 if $ k\not=l,l-1 $, and $ \operatorname{Hom}_{{\mathfrak g}}\left(I^{k},I^{l}\right)=0 $ if
$ k\not=l,l\pm1 $.

\end{corollary}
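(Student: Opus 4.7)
The plan is to leverage the short exact sequence~\eqref{equ34}
$$0 \to \bar{\Omega}^{k}(\mu) \xrightarrow{i_{k}} I^{k} \xrightarrow{p_{k}} \bar{\Omega}^{k-1}(\mu) \to 0$$
together with Lemma~\ref{lm34}, which says that $\operatorname{Hom}_{\mathfrak g}(\bar{\Omega}^{a}(\mu),\bar{\Omega}^{b}(\mu))=0$ for $a\neq b$.

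For the first statement, let $\phi\in\operatorname{Hom}_{\mathfrak g}(\bar{\Omega}^{k}(\mu),I^{l})$ and consider the composition $p_{l}\circ\phi\colon\bar{\Omega}^{k}(\mu)\to\bar{\Omega}^{l-1}(\mu)$. By Lemma~\ref{lm34} this composition must vanish unless $k=l-1$. So assume $k\neq l-1$; then $\phi$ factors through $\ker p_{l}=i_{l}(\bar{\Omega}^{l}(\mu))$, yielding an element of $\operatorname{Hom}_{\mathfrak g}(\bar{\Omega}^{k}(\mu),\bar{\Omega}^{l}(\mu))$. Applying Lemma~\ref{lm34} once more, this is zero unless $k=l$. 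Hence $\operatorname{Hom}_{\mathfrak g}(\bar{\Omega}^{k}(\mu),I^{l})=0$ whenever $k\notin\{l-1,l\}$.

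For the second statement, take $\psi\in\operatorname{Hom}_{\mathfrak g}(I^{k},I^{l})$ and examine its restriction $\psi\circ i_{k}\colon\bar{\Omega}^{k}(\mu)\to I^{l}$. If $\psi\circ i_{k}\neq 0$, the first part forces $k\in\{l-1,l\}$. If $\psi\circ i_{k}=0$, then $\psi$ descends to a map $\tilde\psi\colon\bar{\Omega}^{k-1}(\mu)\to I^{l}$ via the surjection $p_{k}$, and the first part applied to $\tilde\psi$ forces $k-1\in\{l-1,l\}$, i.e.\ $k\in\{l,l+1\}$. Combining the two cases, $\psi=0$ unless $k\in\{l-1,l,l+1\}$, which is the claim.

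The argument is essentially formal once Lemmas~\ref{lm33} and~\ref{lm34} and the sequence~\eqref{equ34} are in hand; there is no real obstacle, only a careful case analysis of how a morphism interacts with the submodule/quotient pieces of the two-step filtration on $I^{k}$ and $I^{l}$.
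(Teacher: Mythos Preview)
Your argument is correct and follows exactly the approach indicated in the paper, which simply says the statements ``follow from~\eqref{equ34} and Lemma~\ref{lm34}''; you have merely spelled out the routine diagram chase. One tiny clarification: in the second part, when $\psi\circ i_k=0$ you should note that if the induced $\tilde\psi$ is also zero then $\psi=0$ (since $p_k$ is surjective), so the constraint $k-1\in\{l-1,l\}$ is only needed when $\tilde\psi\neq 0$ --- but this is implicit in your concluding sentence.
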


\begin{proof} The statements follow from~\eqref{equ34} and Lemma~\ref{lm34}.{}\end{proof}

Using~\eqref{equ34}, for $ k=1,...,n-1 $, define $
\psi_{k}\in\operatorname{Hom}_{{\mathfrak
g}}\left(I^{k+1},I^{k}\right) $ by setting $ \psi_{k}:=i_{k}\circ
p_{k+1} $.

Corollary~\ref{cor41} implies that $
\bar{\Omega}^{k-1}\left(\mu\right)/L_{k-1} $ has a submodule
isomorphic to $ L_{k} $. Since $ I^{k} $ is injective and has a
submodule isomorphic to $ L_{k} $, there is a homomorphism $
s_{k}\colon \bar{\Omega}^{k-1}\left(\mu\right)/L_{k-1} \to I^{k}
$. Using the exact sequence~\eqref{equ34} and
Corollary~\ref{cor41},  one can easily prove the existence of an
exact sequence
\begin{equation}
0 \to \bar{\Omega}^{k-1}\left(\mu\right)/L_{k-1}
\xrightarrow[]{s_{k}} I^{k} \xrightarrow[]{t_{k}}
\bar{\Omega}^{k}\left(\mu\right)/L_{k} \to\text{ 0.}
\notag\end{equation} We assume that $ L_{0}=0 $, so that the above
exact sequence is valid for $ k=1 $.

Define $ \varphi_{k}\in\operatorname{Hom}_{{\mathfrak
g}}\left(I^{k},I^{k+1}\right) $ by $ \varphi_{k}:=s_{k+1}\circ
t_{k} $. It is not hard to verify that
\begin{equation}
\varphi_{k+1}\circ\varphi_{k}=\psi_{k}\circ\psi_{k+1}=0.
\notag\end{equation} Finally, introduce $
\xi\in\operatorname{End}_{{\mathfrak g}}\left(I^{1}\right) $ by $
\xi:=s_{1}\circ p_{1} $ and $
\eta\in\operatorname{End}_{{\mathfrak g}}\left(I^{n}\right) $ by $
\eta:=i_{n}\circ t_{n} $ (for the latter we use that $
\bar{\Omega}^{n}\left(\mu\right)/L_{n}\cong
\bar{\Omega}^{n}\left(\mu\right) $). One can check that
\begin{equation}
\xi\circ\psi_{1}=\varphi_{1}\circ\xi=\psi_{n-1}\circ\eta=\eta\circ\varphi_{n-1}=0.
\notag\end{equation} Let $ I:=I^{1}\oplus\dots \oplus I^{n} $ and
$ {\mathcal E}:=\operatorname{End}_{{\mathfrak g}}\left(I\right)
$. Let $ e_{1},\dots ,e_{n} $ be the standard idempotents in $
{\mathcal E} $ and let $ {\mathcal R} $ be the radical of $
{\mathcal E} $. Then $ {\mathcal R} $ defines a filtration of $I$
\begin{equation}
0\subset{\mathcal R}^{1}\left(I\right)\subset{\mathcal
R}^{2}\left(I\right)\subset\dots , \notag\end{equation} such that
$ {\mathcal R}^{m}\left(I\right)=\operatorname{Ker} {\mathcal
R}^{m} $. The quotients
$\mathcal S^m(I):={\mathcal R}^{m}\left(I\right)/{\mathcal
R}^{m-1}\left(I\right) $ are semisimple over $\mathcal E$ and
therefore over $\mathfrak g$ (see Theorem ~\ref{th5}). Moreover,
$\mathcal
S^{1}\left(I\right)\simeq L_{1}\oplus L_{2}\oplus\dots \oplus L_{n} $.

\begin{lemma} \label{lm100}\myLabel{lm100}\relax  $ I^{k}/L_{k}\cong \left( \bar{\Omega}^{k}\left(\mu\right) /L_{k}\right) \oplus\bar{\Omega}^{k-1}\left(\mu\right) $,
for $k=1,...,n$.

\end{lemma}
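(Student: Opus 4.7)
The plan is to split the short exact sequence obtained by quotienting \eqref{equ34} by the simple socle $L_{k}$ of $I^{k}$:
\begin{equation}
0 \to \bar{\Omega}^{k}(\mu)/L_{k} \to I^{k}/L_{k} \xrightarrow{\bar{p}_{k}} \bar{\Omega}^{k-1}(\mu) \to 0, \notag\end{equation}
where $\bar{p}_k$ is induced from $p_k$ (well-defined since $L_k\subset\bar{\Omega}^k(\mu)=\ker p_k$). The splitting will come from the other exact sequence $0\to \bar{\Omega}^{k-1}(\mu)/L_{k-1}\xrightarrow{s_k} I^k\xrightarrow{t_k}\bar{\Omega}^k(\mu)/L_k\to 0$. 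Because $s_k$ sends the socle $L_k$ of $\bar{\Omega}^{k-1}(\mu)/L_{k-1}$ to the unique simple submodule of $I^k$, quotienting by $L_k$ produces an embedding $(\bar{\Omega}^{k-1}(\mu)/L_{k-1})/L_k\hookrightarrow I^k/L_k$.

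Next, I would identify $(\bar{\Omega}^{k-1}(\mu)/L_{k-1})/L_k$ with $\bar{\Omega}^{k-1}(\mu)$. Since $L_k\subset\bar{\Omega}^{k-1}(\mu)/L_{k-1}$ corresponds to $\Omega^{k-1}(\mu)\subset\bar{\Omega}^{k-1}(\mu)$, the third isomorphism theorem gives $(\bar{\Omega}^{k-1}(\mu)/L_{k-1})/L_k = \bar{\Omega}^{k-1}(\mu)/\Omega^{k-1}(\mu)$. The $\mathfrak{g}$-endomorphism $\frac{\partial}{\partial u}\in\operatorname{End}_{\mathfrak{g}}(\bar{\Omega}^{k-1}(\mu))={\mathbb C}[[\frac{\partial}{\partial u}]]$ from Lemma~\ref{lm34} is surjective with kernel exactly $\Omega^{k-1}(\mu)$, providing an isomorphism $\bar{\Omega}^{k-1}(\mu)/\Omega^{k-1}(\mu)\xrightarrow{\sim}\bar{\Omega}^{k-1}(\mu)$. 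Composing, one obtains a $\mathfrak{g}$-embedding $\iota\colon\bar{\Omega}^{k-1}(\mu)\hookrightarrow I^k/L_k$.

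Now set $\phi:=\bar{p}_k\circ\iota\in\operatorname{End}_{\mathfrak{g}}(\bar{\Omega}^{k-1}(\mu))={\mathbb C}[[\frac{\partial}{\partial u}]]$. Since $\frac{\partial}{\partial u}$ annihilates the socle of $\bar{\Omega}^{k-1}(\mu)$, an element of this power series ring is invertible if and only if it acts nontrivially on the socle. Let $L_a$ denote the socle of $\bar{\Omega}^{k-1}(\mu)$ ($L_{k-1}$ if $k\geq 2$, $L_1$ if $k=1$), and $L_b$ the socle of $\bar{\Omega}^k(\mu)/L_k$ ($L_{k+1}$ if $k\leq n-1$, $L_n$ if $k=n$), both read off from Corollary~\ref{cor41}. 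A direct check yields $L_a\neq L_b$ in every case, so the simple submodule $\iota(L_a)\subset I^k/L_k$ cannot lie in $\bar{\Omega}^k(\mu)/L_k=\ker\bar{p}_k$. Hence $\phi|_{L_a}\neq 0$, so $\phi$ is invertible. Then $\sigma:=\iota\circ\phi^{-1}$ satisfies $\bar{p}_k\circ\sigma=\operatorname{id}$, splitting the sequence and yielding $I^k/L_k\cong\bar{\Omega}^k(\mu)/L_k\oplus\bar{\Omega}^{k-1}(\mu)$.

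The main obstacle is the canonical identification $(\bar{\Omega}^{k-1}(\mu)/L_{k-1})/L_k\cong\bar{\Omega}^{k-1}(\mu)$, which relies both on recognizing $\Omega^{k-1}(\mu)$ as the preimage of the relevant socle and on exploiting the nontrivial endomorphism $\frac{\partial}{\partial u}$ from Lemma~\ref{lm34}. The endpoint cases $k=1$ and $k=n$ are absorbed uniformly once the socle incompatibility $L_a\neq L_b$ is verified directly.
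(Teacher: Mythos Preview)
Your proof is correct and follows essentially the same approach as the paper's: pass to the quotient exact sequence, use $s_k$ to produce an embedding $\iota\colon\bar{\Omega}^{k-1}(\mu)\hookrightarrow I^k/L_k$ (the paper calls it $v_k$), and show that the composite $\bar p_k\circ\iota$ (the paper's $u_k\circ v_k$) is an isomorphism by checking it does not kill the socle. The only stylistic difference is that you invoke Lemma~\ref{lm34} to view this composite as a unit in $\mathbb C[[\partial/\partial u]]$, whereas the paper argues injectivity and surjectivity by hand (comparing simple constituents of $\operatorname{Im} v_k$ with the socle of $\operatorname{Ker} u_k$, then using infinite length); the underlying socle comparison $L_a\neq L_b$ is the same in both.
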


\begin{proof} The exact sequence ~\eqref{equ34} leads to the following exact sequence
\begin{equation}
0 \to \bar{\Omega}^{k}\left(\mu\right)/L_{k} \xrightarrow[]{w_{k}}
I^{k}/L_{k} \xrightarrow[]{u_{k}}
\bar{\Omega}^{k-1}\left(\mu\right) \to\text{ 0.}
\label{equ201}\end{equation}\myLabel{equ201,}\relax We will show
that (\ref{equ201}) splits. Recall that $
s_{k}:\bar{\Omega}^{k-1}\left(\mu\right)/L_{k-1} \to I^{k} $ is an
injection. Using  Corollary~\ref{cor41} we obtain the following
 exact sequence
\begin{equation}
0 \to L_{k} \to \bar{\Omega}^{k-1}\left(\mu\right)/L_{k-1} \to \bar{\Omega}^{k-1}\left(\mu\right) \to\text{ 0.}
\notag\end{equation}
Therefore one can construct an injective map
\begin{equation}
v_{k}:\bar{\Omega}^{k-1}\left(\mu\right)\cong
\left(\bar{\Omega}^{k-1}\left(\mu\right)/L_{k-1}\right)/L_{k} \to
I^{k}/L_{k}. \notag\end{equation} We claim that the composite $
u_{k}\circ v_{k}:\bar{\Omega}^{k-1}\left(\mu\right) \to
\bar{\Omega}^{k-1}\left(\mu\right) $ is an isomorphism. We first
note that $ u_{k}\circ v_{k} $ is injective for $ k\leq n-1 $.
Indeed, we have that $ v_{k} $ is injective, $ \operatorname{Ker}
u_{k} $ has a unique simple submodule isomorphic to $ L_{k+1} $,
 and all irreducible constituents of $
\operatorname{Im} v_{k} $ are isomorphic to $ L_{k} $ or $ L_{k-1}
$. Hence, $ \operatorname{Ker} u_{k} $ intersects $
\operatorname{Im} v_{k} $ trivially. In the case $ k=n $, we
notice that $ \operatorname{Ker} u_{n} $ has a unique simple
submodule isomorphic to $ L_{n} $ and $ \operatorname{Im} v_{n} $
has a unique maximal submodule isomorphic to $ L_{n-1} $. Thus,
again $ \operatorname{Im} v_{n}\cap\operatorname{Ker} u_{n}= 0$.
Now note that $ \operatorname{Im} (u_{k}\circ v_{k}) $ has
infinite length, hence, by Corollary~\ref{cor41}, $ u_{k}\circ v_{k}
$ is surjective. Therefore,~\eqref{equ201} splits.\end{proof}

\begin{corollary} \label{cor101}\myLabel{cor101}\relax  If $ k>1 $, then $ {\mathcal S}^{k}\left(I\right)\cong  L_{1}^{\oplus2}\oplus\dots \oplus L_{n}^{\oplus2} $.

\end{corollary}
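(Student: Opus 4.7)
The plan is to compute the socle series of each summand $I^k$ using Lemma~\ref{lm100} and then sum over $k$. The first step is to verify that the filtration $\mathcal R^m(I)$ agrees with the socle filtration of $I$ as a $\mathfrak g$-module. This reduces to showing that every element of the radical $\mathcal R$ annihilates $\mathrm{soc}(I) = L_1 \oplus \cdots \oplus L_n$: the off-diagonal components $e_i \phi e_j$ with $i \ne j$ kill $L_j$ since $\mathrm{Hom}_{\mathfrak g}(L_j, I^i) = 0$ by Lemma~\ref{lm33}, while each diagonal component lies in $\mathrm{End}_{\mathfrak g}(I^i)$, a local ring whose residue field is $\mathbb C$ via restriction to the socle $L_i$.

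By Lemma~\ref{lm100}, $I^k/L_k \cong \bar\Omega^k(\mu)/L_k \oplus \bar\Omega^{k-1}(\mu)$. Lemma~\ref{lm31} together with Corollary~\ref{cor41} show that each $\bar\Omega^j(\mu)$ has a unique composition series, with simple quotients alternating $L_j, L_{j+1}, L_j, \dots$ for $1\le j\le n-1$, all equal to $L_1$ for $j=0$, and all equal to $L_n$ for $j=n$. Hence the socle of $I^k/L_k$ equals $L_{k+1} \oplus L_{k-1}$ for $1 < k < n$, equals $L_1 \oplus L_2$ for $k=1$ (the extra $L_1$ coming from $\bar\Omega^0(\mu)$), and equals $L_{n-1} \oplus L_n$ for $k=n$. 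Summing over $k$, each $L_i$ acquires exactly two contributions — from its interior neighbors $I^{i\pm 1}$ in general, with the boundary copies replaced by contributions from $I^1$ or $I^n$ itself — yielding $\mathcal S^2(I) \cong L_1^{\oplus 2}\oplus\cdots\oplus L_n^{\oplus 2}$.

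For $m \ge 3$ I would iterate this analysis: the composition series of each $\bar\Omega^j(\mu)$ is $2$-periodic from the start (or constant at the endpoints), so the socle series of each $I^k$ is $2$-periodic from layer $2$ onwards, alternating between layers isomorphic to $L_{k\pm 1}$ (with the appropriate boundary modification) and layers $L_k^{\oplus 2}$. The same counting then gives $\mathcal S^m(I) \cong L_1^{\oplus 2} \oplus \cdots \oplus L_n^{\oplus 2}$ for all $m \ge 2$. The main subtlety lies in the boundary combinatorics: one must verify that the ``loops'' at vertices $1$ and $n$ contribute exactly the missing copies of $L_1$ and $L_n$ that would otherwise come from the nonexistent $I^0$ and $I^{n+1}$, so that the uniform $L_i^{\oplus 2}$ pattern holds without defect.
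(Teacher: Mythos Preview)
Your proof is correct and follows exactly the route the paper intends: the corollary is stated without proof immediately after Lemma~\ref{lm100}, and your argument---identifying the $\mathcal R$-filtration with the socle filtration and then reading off each socle layer of $I^k$ from the decomposition $I^k/L_k\cong(\bar\Omega^k(\mu)/L_k)\oplus\bar\Omega^{k-1}(\mu)$ via Corollary~\ref{cor41}---is precisely the computation being left to the reader. Your treatment of the boundary cases $k=1,n$ (where the extra copies of $L_1$ and $L_n$ come from $\bar\Omega^0(\mu)$ and $\bar\Omega^n(\mu)/L_n$ respectively) is also correct.
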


With a slight abuse of notations we will denote the images of $
\varphi_{i},\psi_{i},\xi $ and $ \eta $ under the natural
projection $ {\mathcal E} \to {\mathcal E}/{\mathcal R}^{m} $ by
the same letters.

\begin{theorem} \label{th102}\myLabel{th102}\relax  The set $\{ \xi,\eta,\varphi_{1},\dots ,\varphi_{n-1},\psi_{1},\dots ,\psi_{n-1}\}$
forms a basis of $ {\mathcal R}/{\mathcal R}^{2} $ and generates $
{\mathcal R}/{\mathcal R}^{m} $ for any $ m>0 $.

\end{theorem}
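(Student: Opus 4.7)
The plan is to compute $\mathcal{R}/\mathcal{R}^2$ by decomposing it as $\bigoplus_{i,j} e_j(\mathcal{R}/\mathcal{R}^2)e_i$; by Corollary~\ref{cor34} only the blocks with $|i-j|\le 1$ contribute. In each such block I would compare the dimension with the number of listed generators falling into it (one for each adjacent off-diagonal block, one for $\operatorname{End}(I^1)$ and for $\operatorname{End}(I^n)$, and zero for $\operatorname{End}(I^k)$ with $2\le k\le n-1$), and verify that each stated generator is nonzero in the quotient. Once this basis claim is established, the generation statement for $\mathcal{R}/\mathcal{R}^m$ is immediate: $\mathcal{R}^m/\mathcal{R}^{m+1}$ is spanned by $m$-fold products of any $(\mathcal{E}/\mathcal{R})$-bimodule spanning set of $\mathcal{R}/\mathcal{R}^2$, so induction on $m$ finishes.

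For the off-diagonal blocks I would use the Lemma~\ref{lm100} decomposition $I^k/L_k\cong(\bar{\Omega}^k(\mu)/L_k)\oplus\bar{\Omega}^{k-1}(\mu)$ together with the composition-factor data of Corollary~\ref{cor41}. Since $\operatorname{Hom}(L_j,I^{k+1})=0$ for $j\ne k+1$ by Lemma~\ref{lm33}, and $\bar{\Omega}^{k-1}(\mu)$ has only $L_{k-1},L_k$ as composition factors, one gets $\operatorname{Hom}(\bar{\Omega}^{k-1}(\mu),I^{k+1})=0$, hence $\operatorname{Hom}(I^k,I^{k+1})\cong\operatorname{Hom}(\bar{\Omega}^k(\mu)/L_k,I^{k+1})$. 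The latter is one-dimensional by the injectivity of $I^{k+1}$ and the uniqueness of the lift of the socle embedding $L_{k+1}\hookrightarrow\bar{\Omega}^k(\mu)/L_k\to I^{k+1}$; visibly, this dimension is spanned by $\varphi_k=s_{k+1}\circ t_k$. Applying the duality $^{\vee}$ (under which $I^k$ is self-dual, since $L_k$ is self-dual and $I^k$ is its injective envelope) gives the analogous statement $\operatorname{Hom}(I^{k+1},I^k)=\mathbb{C}\cdot\psi_k$.

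For the diagonal blocks I apply $\operatorname{Hom}(-,I^k)$ to the defining sequence $0\to\bar{\Omega}^k(\mu)\to I^k\to\bar{\Omega}^{k-1}(\mu)\to 0$; the injectivity of $I^k$ kills $\operatorname{Ext}^1$, yielding
\begin{equation}
0\to\operatorname{Hom}(\bar{\Omega}^{k-1}(\mu),I^k)\to\operatorname{End}(I^k)\to\operatorname{Hom}(\bar{\Omega}^k(\mu),I^k)\to 0.
\notag\end{equation}
For interior $k$ ($2\le k\le n-1$), a socle analysis as in the previous paragraph shows that each outer term contributes at most a single ``loop'' class in $\operatorname{End}(I^k)$, and I would then display its lift as the composite $\psi_{k-1}\circ\varphi_{k-1}$ or $\varphi_k\circ\psi_k$, placing it in $\mathcal{R}^2$. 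At $k=1$ the relevant summand yields the class of $\xi=s_1\circ p_1$, which does \emph{not} admit such a factorization (the putative route through a nonexistent $I^0$ is absent), and symmetrically at $k=n$ one obtains $\eta=i_n\circ t_n$.

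The main obstacle is the interior-case computation just described: actually exhibiting every non-identity endomorphism of $I^k$ ($2\le k\le n-1$) as a composition through $I^{k\pm 1}$. I would exploit the translation-functor identity $T^0_{-k\varepsilon_0}\circ T^{-k\varepsilon_0}_0=\operatorname{Id}\oplus\operatorname{Id}$ together with the biadjunction between these two functors to reduce $\operatorname{End}(I^k)$ to an endomorphism calculation in the singular block, where $\operatorname{End}(\bar{\Omega}^{k-1}(\mu-\varepsilon_0))\cong\mathbb{C}[[\partial/\partial u]]$ by Lemma~\ref{lm34}; this identifies each generator of the radical of $\operatorname{End}(I^k)$ with an explicit element of $\mathcal{R}^2$ built from the $\varphi_j$ and $\psi_j$.
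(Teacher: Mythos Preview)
Your off-diagonal computation contains a genuine error: $\operatorname{Hom}_{\mathfrak g}(I^k,I^{k+1})$ is \emph{not} one-dimensional, it is infinite-dimensional. The argument ``$\bar\Omega^k(\mu)/L_k$ has simple socle $L_{k+1}$, so by injectivity of $I^{k+1}$ the map is determined by its restriction to the socle'' is wrong: that reasoning only pins down the \emph{injective} maps. The uniserial module $\bar\Omega^k(\mu)/L_k$ has infinitely many quotients with socle $L_{k+1}$ (kill the first $2j$ layers, for any $j$), and each embeds in $I^{k+1}$. Concretely, $\varphi_k\circ(\psi_k\circ\varphi_k)^p$ gives a nonzero element of $e_{k+1}\mathcal E e_k$ for every $p\ge 0$. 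So you are computing $e_{k+1}\mathcal E e_k$ when what you need is $e_{k+1}(\mathcal R/\mathcal R^2)e_k$; these are very different objects here.

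The paper sidesteps this entirely. Rather than compute the blocks $e_j\mathcal E e_i$, it reads off $\dim\mathcal R^m/\mathcal R^{m+1}$ directly from Corollary~\ref{cor101}: since $\mathcal S^m(I)\cong L_1^{\oplus 2}\oplus\cdots\oplus L_n^{\oplus 2}$ for $m>1$, one gets $\dim\mathcal R^m/\mathcal R^{m+1}=2n$ for every $m\ge 1$. For $m=1$ the $2n$ listed elements lie in distinct pieces $e_j\mathcal E e_i$ and are visibly nonzero modulo $\mathcal R^2$, hence form a basis; for $m>1$ one simply exhibits $2n$ independent monomials in $\xi,\eta,\varphi_i,\psi_i$ of length $m$. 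Your approach can be repaired along the same lines: use Lemma~\ref{lm100} to compute the socle of $I^k/L_k$ (namely $L_{k-1}\oplus L_{k+1}$ for $2\le k\le n-1$, $L_1\oplus L_2$ for $k=1$, $L_{n-1}\oplus L_n$ for $k=n$), which gives $\dim e_j(\mathcal R/\mathcal R^2)e_k$ as the multiplicity of $L_j$ there, and then proceed as you planned. The translation-functor detour for the interior diagonal blocks is then unnecessary.
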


\begin{proof} It is clear from their construction that
$ \xi,\eta,\varphi_{1},\dots ,\varphi_{n-1},\psi_{1},\dots
,\psi_{n-1} $ are linearly independent. Since $ \dim  {\mathcal
R}/{\mathcal R}^{2}=2n $, by Corollary~\ref{cor101}, $
\xi,\eta,\varphi_{1},\dots ,\varphi_{n-1},\psi_{1},\dots
,\psi_{n-1} $ form a basis of $ {\mathcal R}/{\mathcal R}^{2} $.
Similarly, $ \dim  {\mathcal R}^{m}/{\mathcal R}^{m+1}=2n $ for
 $ m>1 $ as well. Set $ \phi_{k}:=\psi_{k}\circ\varphi_{k} $ for $
k=1,\dots ,n-1 $, and
$\bar{\phi}_{k}:=\varphi_{k-1}\circ\psi_{k-1} $ for $ k=2,\dots ,n
$. Then using Corollary \ref{cor101} and dimension calculations we
verify that $ \xi^{2p},\eta^{2p} $, $ \phi_{1}^{p},\dots
,\phi_{n-1}^{p} $, $ \bar{\phi}_{2}^{p},\dots ,\bar{\phi}_{n}^{p}
$ form a basis of $ {\mathcal R}^{2p}/{\mathcal R}^{2p+1} $, and $
\xi^{2p+1},\eta^{2p+1} $, $ \varphi_{1}\circ\phi_{1}^{p},\dots
,\varphi_{n-1}\circ\phi_{n-1}^{p} $, $
\psi_{1}\circ\bar{\phi}_{2}^{p},\dots
,\psi_{n-1}\circ\bar{\phi}_{n}^{p} $ form a basis of $ {\mathcal
R}^{2p+1}/{\mathcal R}^{2p+2} $. \end{proof}

Let $ {\mathcal B} $ denote the subalgebra in $ {\mathcal E} $
generated by $ e_{1},\dots ,e_{n} $, $ \xi,\eta,\varphi_{1},\dots
,\varphi_{n-1},\psi_{1},\dots ,\psi_{n-1} $. Denote by $ {\mathcal
D} $ the subcategory of finite-dimensional $ {\mathcal B}
$-modules on which $ {\mathcal R}\cap{\mathcal B} $ acts
nilpotently.

\begin{corollary} \label{cor103}\myLabel{cor103}\relax  Let $ |\mu|=0 $ and $ \mu_{i}\notin{\mathbb Z} $ for all $ i=0,\dots ,n $. Then the category
$ {\mathcal C}^{0}_{\overline{\gamma\left(\mu\right)}} $ is equivalent
to the category $ {\mathcal D} $.

\end{corollary}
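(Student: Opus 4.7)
The plan is to apply Theorem~\ref{th5} to $\bar{\mathcal A} = \bar{\mathcal C}^{0}_{\overline{\gamma(\mu)}}$ with $\mathcal A = \mathcal C^{0}_{\overline{\gamma(\mu)}}$, obtain an equivalence between $\mathcal C^{0}_{\overline{\gamma(\mu)}}$ and the category of finite-dimensional $\mathcal E$-modules, and then identify the latter with $\mathcal D$ using Theorem~\ref{th102}.

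First I would verify the hypotheses of Theorem~\ref{th5}. Theorem~\ref{th2} supplies the $n$ pairwise non-isomorphic simple cuspidal objects $L_1,\dots,L_n$ in $\mathcal C^{0}_{\overline{\gamma(\mu)}}$, each satisfying $\operatorname{End}_{\mathfrak g}(L_i) = \mathbb C$. Lemma~\ref{lm33} produces the injective modules $I^1,\dots,I^n$ in $\bar{\mathcal C}$, each with unique simple socle $L_k$, so that $\operatorname{Hom}_{\mathfrak g}(L_i, I^k) = \delta_{ik}\mathbb C$. The exhausting filtration condition holds because every module in $\bar{\mathcal C}^{0}_{\overline{\gamma(\mu)}}$ is by construction a directed union of cuspidal (hence finite-length) submodules, and the duality \;${}^{\vee}$ defined at the end of Section~2 is the required involutive contravariant exact functor. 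Thus Theorem~\ref{th5} provides an equivalence $\Phi$ between $\mathcal C^{0}_{\overline{\gamma(\mu)}}$ and the category of finite-dimensional $\mathcal E$-modules.

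Second, I would show that restriction along the inclusion $\mathcal B \hookrightarrow \mathcal E$ gives an equivalence between the category of finite-dimensional $\mathcal E$-modules and $\mathcal D$. Any finite-dimensional $\mathcal E$-module is killed by $\mathcal R^m$ for some $m$, since $\mathcal R$ is the Jacobson radical of $\mathcal E$; by Theorem~\ref{th102}, the composition $\mathcal B \hookrightarrow \mathcal E \twoheadrightarrow \mathcal E/\mathcal R^m$ is surjective, because the elements $e_i,\xi,\eta,\varphi_i,\psi_i$ together with products of the generators from $\{\xi,\eta,\varphi_i,\psi_i\}$ span each quotient $\mathcal R^p/\mathcal R^{p+1}$ for $p<m$. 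Hence on any finite-dimensional $\mathcal E$-module the $\mathcal B$- and $\mathcal E$-actions determine each other, giving full faithfulness of restriction; moreover $\mathcal R \cap \mathcal B$ then acts nilpotently, so the restriction functor lands in $\mathcal D$.

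For essential surjectivity, given $V \in \mathcal D$ with $(\mathcal R \cap \mathcal B)^m V = 0$, the point is to show that $(\mathcal R \cap \mathcal B)^m$ coincides with $\mathcal B \cap \mathcal R^m$, which is by definition the kernel of the surjection $\mathcal B \twoheadrightarrow \mathcal E/\mathcal R^m$; then the $\mathcal B$-action on $V$ factors through $\mathcal E/\mathcal R^m$ and extends uniquely to an $\mathcal E$-action. I expect this identification to be the main obstacle: the inclusion $(\mathcal R \cap \mathcal B)^m \subseteq \mathcal B \cap \mathcal R^m$ is immediate, whereas the reverse inclusion requires the explicit basis of $\mathcal R^p/\mathcal R^{p+1}$ produced in the proof of Theorem~\ref{th102}, where each listed basis vector is displayed as a product of $p$ generators drawn from $\{\xi,\eta,\varphi_i,\psi_i\} \subset \mathcal R \cap \mathcal B$. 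Combining the two inclusions yields $\mathcal B/(\mathcal R \cap \mathcal B)^m \cong \mathcal E/\mathcal R^m$, which completes the identification of $\mathcal D$ with finite-dimensional $\mathcal E$-modules and hence with $\mathcal C^{0}_{\overline{\gamma(\mu)}}$.
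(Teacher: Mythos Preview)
Your proof is correct and follows the same two-step strategy as the paper: apply Theorem~\ref{th5} to identify $\mathcal C^{0}_{\overline{\gamma(\mu)}}$ with finite-dimensional $\mathcal E$-modules, then use Theorem~\ref{th102} to identify the latter with $\mathcal D$ via the isomorphism $\mathcal B/(\mathcal R^m\cap\mathcal B)\cong\mathcal E/\mathcal R^m$. You are in fact more careful than the paper on one point: the paper asserts the equivalence of $\mathcal D$ with finite-dimensional $\mathcal E$-modules directly from that isomorphism, whereas you correctly isolate and address the needed equality $(\mathcal R\cap\mathcal B)^m=\mathcal B\cap\mathcal R^m$, which the paper leaves implicit but which indeed follows from the explicit bases in the proof of Theorem~\ref{th102}.
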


\begin{proof} As follows from Theorem~\ref{th102}, $ {\mathcal B}/\left({\mathcal R}^{m}\cap{\mathcal B}\right)\cong {\mathcal E}/{\mathcal R}^{m} $. Since a
finite-dimensional module over $ {\mathcal E} $ is a module over $
{\mathcal E}/{\mathcal R}^{m} $ for some $ m $, the category $
{\mathcal D} $ coincides with the category of finite-dimensional $
{\mathcal E} $-modules. Thus, the statement follows from
Theorem~\ref{th5}. {}\end{proof}

The correspondence
\begin{eqnarray*} \sum_{i=1}^{\frac{n}{2}}
(\varphi_{2i-1} + \psi_{2i-1}) \mapsto x;&  \; \; \xi +
\displaystyle \sum_{i=1}^{\frac{n}{2}-1} (\varphi_{2i} + \psi_{2i}) + \eta
\mapsto y,& \mbox{ for even }n\\
\sum_{i=1}^{\frac{n-1}{2}} (\varphi_{2i-1} + \psi_{2i-1}) + \eta
 \mapsto  x; & \; \; \xi + \displaystyle  \sum_{i=1}^{\frac{n-1}{2}}
(\varphi_{2i} + \psi_{2i}) \mapsto y,& \mbox{ for odd }n
\end{eqnarray*}
establishes an equivalence of the category $\mathcal D$ and the
locally nilpotent representations
of the quiver $\mathcal Q_n$. Hence
Theorem \ref{reg} is proven.
\begin{remark} \label{Kosz}\myLabel{Kosz}\relax
Note that $\mathcal B$ is a graded quadratic algebra. It is natural to
ask if $\mathcal B$ is Koszul (in the sense of \cite{BGS}). We conjecture
that the answer to this question is positive. A strong indication  that this conjecture is true is that
the numerical criterion (Lemma
2.11.1 \cite{BGS}) holds. Indeed, the matrix $B(t)=P(\mathcal B,t)$ is
given by the formula
$$b_{ij}=0\,\, \text{if}\,\, |i-j|>1,$$
$$b_{ij}=\frac{t}{1-t^2}\,\, \text{if} \,\,|i-j|=1,$$
$$b_{ii}=\frac{1+t^2}{1-t^2}\,\, \text{if} \,\,i\neq 1,n,$$
$$b_{11}=b_{nn}=\frac{1+t+t^2}{1-t^2}.$$
The matrix $C(t)=P(\mathcal B^!,t)$ is a symmetric matrix defined by
$$c_{ij}=\frac{t^{j-i}(1+t^{2i-1})(1+t^{2(n-j)+1})}{1-t^{2n} }\,\,
\text{if}\,\,i\leq j.$$
Then one can check that
$$P(\mathcal B,t)P^t(\mathcal B^!,-t)= 1.$$
\end{remark}

\section{Explicit description of the indecomposable objects in
$ {\protect \mathcal C}^{\chi}_{\bar{\nu}} $ for regular integral $ \chi
$} \label{explicit}

In this section we parameterize and explicitly describe all indecomposable
objects in the category ${\mathcal C}^{\chi}_{\bar{\nu}}$. In particular we show that every indecomposable can be obtained by applying natural combinatorial operations (gluing and polymerization) to subquotients of the modules $\Omega^k (\mu) ^{(m)}$.

\subsection{The quiver ${\mathcal Q}_n$ and its indecomposable representations } \label{linalg}

The quiver ${\mathcal Q}_n $ defines a special biserial algebra the theory of which is well established. The classification of the indecomposable representations of special biserial algebras is usually attributed to Gelfand-Ponomarev. In \cite{GP} they considered a special case  but the proof can be generalized as shown in \cite{Kh} and \cite{R}. A good overview of the representation theory of special biserial quivers can be found for example in \cite{Er}.  In what follows we describe the two types of indecomposable objects of   ${\mathcal Q}_n $ following \cite{GP} and \cite{Kh}.

First, we note that the problem of classifying indecomposable
representations of ${\mathcal Q}_n $ is equivalent to the similar
problem for two linear operators $x$ and
$y$ in a graded vector space $V=V_1\oplus\dots\oplus V_n$ satisfying
the conditions

(C0) $x(V_i)\subseteq V_{\pi_1(i)}$, $y(V_i)\subseteq
V_{\pi_2(i)}$, where $\pi_1$ and $\pi_2$ are the permutations $(12)(34)...$ and $(23)(45)...$, respectively;

(C1) $xy=yx = 0$;

(C2) $x$ and $y$ are nilpotent.

\medskip

{\it Strings.} The first type of indecomposable representations of ${\mathcal
Q}_n$ is parameterized by string quivers. More precisely, a {\it
string quiver} is a quiver $Q = (Q_v, Q_a)$, for which $Q_v=\{
v_1,...,v_k\}$, there are no arrows between $v_i$ and $v_j$ for $|i-j| \geq 2$, and for every $i$, $2\leq i\leq k$, there is
exactly one arrow connecting $v_{i-1}$ and $v_i$. The arrow connecting
$v_{i-1}$ and $v_i$ will be denoted by
$\overrightarrow{v_{i-1}v_i}$ if its head is $v_{i-1}$, and by  $\overleftarrow{v_{i-1}v_{i}}$, otherwise.
To each vertex $v_i \in Q_v$ we attach a  {\it label}  $l(v_i)$, $1\leq
l(v_i) \leq n$, such that: $l(v_i) = \pi_1(l(v_{i-1}))$ whenever
$\overrightarrow{v_{i-1}v_i} \in Q_a$, and $l(v_i) =
\pi_2(l(v_{i-1}))$ whenever $\overleftarrow{v_{i-1}v_{i}} \in
Q_a$. A {\it graded string} $S$ is a pair $(Q, l)$ of a string
quiver $Q$ and a labelling $l: Q_v \to \{ 1,...,n\}$ compatible with the
grading condition (C0) as described above.

\begin{example} \label{exstring}
let $n=3$ and $k=6$. An example of a graded string is pictured
below. The numbers above the vertices are the labels of the
string.
$$
\xymatrix@1{{\bullet}\ar[r]^<{2} & {\bullet} &
\ar[l]_>{1}{\bullet} \ar[r]^<{1} & {\bullet} \ar[r]^<{2} &
{\bullet} \ar[r]^>(1.27){2}^<{1}& {\bullet}}
$$
\end{example}

Every graded string $S= (Q, l)$ determines in a natural way a {\it
string representation} $I(S)$ of ${\mathcal Q}_n$:
we attach one-dimensional spaces ${\mathbb C} e_i$ to each vertex
$v_i$, and set $I(S)= \bigoplus_{j=1}^n V_j$, where
$V_j:=\bigoplus_{l(v_i)=j} {\mathbb C} e_i$. We also set  $x(e_i)
= e_{i+1}$ if $\overrightarrow{v_{i}v_{i+1}} \in Q_a$ and
$y(e_{i+1}) = e_{i}$ if $\overleftarrow{v_{i}v_{i+1}} \in Q_a$.
All remaining $x(e_j)$ and $y(e_j)$ are zeroes.

{\it Bands.} We consider quivers $Q$ whose sets of
vertices $Q_v = \{v_1,...,v_k\} $ form a regular $k$-polygon
($k>2$); the adjacent vertices $v_{i-1}$ and $v_i$ are connected
by exactly one arrow. As in the case of strings we denote this arrow by either $\overrightarrow{v_{i-1}v_{i}}$ or
$\overleftarrow{v_{i-1}v_{i}}$ (we set $v_{k+1} := v_1$). We
again define labelling $l : Q_v
\to \{ 1, ..., n\}$  compatible with the grading condition (C0)
with compatibility conditions identical to those for the
strings. Such labelled quivers will be called {\it graded
polygons}. Note that every graded polygon $P$ can be ``unfolded''
at a sink $v_i$ to a graded string $S(P, v_i)$. Namely,  the
graded string $S(P, v_i)$ has a set of vertices $\{v_i, ...,
v_1,v_2,.. v_{i-1}, \bar{v}_i\}$ and the same arrows as $P$,
except that the arrow $\overrightarrow{v_{i-1}v_i}$ or
$\overleftarrow{v_{i-1}v_{i}}$ is replaced by
$\overrightarrow{v_{i-1}\bar{v}_i}$ or
$\overleftarrow{v_{i-1}\bar{v}_i}$, respectively.

For a graded directed\footnote{Notice that a graded polygon is
directed if and only if not all arrows go clockwise or
counterclockwise.} polygon $P = (Q, l)$ with $k$ vertices, a
nonzero complex number $\lambda$, and a positive integer $r$ we
define the {\it band representation} $I(P, \lambda, r)$ of
${\mathcal Q}_n$ as follows. To each vertex $v_i$ we
attach an $r$-dimensional vector space $U_i$, and set $I(P,
\lambda, r) := \bigoplus_{j=1}^n V_j$, where $V_j :=
\bigoplus_{l(v_i)=j} U_i$. We define $x : U_i \to U_{i+1}$ and $y
: U_{i+1} \to U_{i}$  to be isomorphisms whenever
$\overrightarrow{v_iv_{i+1}} \in Q_a$ and
$\overleftarrow{v_{i}v_{i+1}} \in Q_a$, respectively (in other
words, if the vertices are numbered clockwise, then the $x$'s are
directed clockwise, while the $y$'s - counterclockwise). In all
other cases $x_{|U_j} = 0$ and $y_{|U_j} = 0$. In addition, we
require that the matrix of the composition $d_k...d_2d_1: U_1 \to
U_1$ of all isomorphisms $d_i: U_i \to U_{i+1}$, $d_i = x$ or
$y^{-1}$, is represented by a single Jordan block $J_r(\lambda)$
in an appropriate basis of $U_1$ with non-zero eigenvalue
$\lambda$. It is easy to check that $d_i...d_1d_k...d_{i-1}$ and
$d_k...d_2d_1$ are similar, and thus the choice of $U_1$ is
irrelevant, i.e. $I(P, \lambda, r)$ is indeed uniquely determined
by the triple $(P,\lambda, r )$.

One should note that not every band representation $I(P, \lambda, r)$ is
indecomposable. In order $I(P, \lambda, r)$ to be
indecomposable, $P$ has to be a graded directed polygon  with no
rotational symmetry (a rotational symmetry of $P$ is a rotation
of the plane which preserves the quiver and the
labels of $P$).

\begin{proposition} (\cite{GP, Kh})
Every indecomposable representation of ${\mathcal Q}_n$ is
isomorphic either to a string module $I(S)$ for some (unique)
graded string $S$, or to a band module $I(P, \lambda, r)$ for
some (unique) triple $(P, \lambda, r)$, where  $P$ is a graded
directed polygon with no rotational symmetry,  $\lambda$ is a
nonzero complex number, and $r$ is a positive integer.
\end{proposition}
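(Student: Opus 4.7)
The plan is to reduce the classification to the well-developed theory of indecomposable modules over special biserial (in fact gentle) algebras. The path algebra $A := \mathbb C \mathcal Q_n / \langle xy, yx \rangle$ has at most two arrows entering and two arrows leaving every vertex, and the relations $xy = yx = 0$ are monomial of length two, so $A$ is special biserial in the Butler--Ringel sense. Locally nilpotent representations correspond to modules over the completed path algebra, for which the string-and-band classification applies without any finite-dimensionality assumption.

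I would structure the proof in three parts. First, indecomposability of the candidates. For each string module $I(S)$, direct inspection of the action of $x$ and $y$ on the canonical basis $\{e_i\}$ forces any endomorphism to be a scalar, so $\operatorname{End} I(S) = \mathbb C$. For each band module $I(P,\lambda,r)$ with $P$ an asymmetric directed polygon, one identifies $\operatorname{End} I(P,\lambda,r)$ with $\mathbb C[t]/(t^r)$ through the Jordan block structure of $d_k\cdots d_1$; since this ring is local, $I(P,\lambda,r)$ is indecomposable. Second, distinctness. The underlying labelled string or polygon can be read off from the socle/radical filtration together with the grading; the parameters $\lambda$ and $r$ are then recovered as the unique eigenvalue and Jordan block size of the composition $d_k\cdots d_1$ acting on a top-level component, while strings cannot be isomorphic to bands because they lack any cyclic symmetry in their socle series.

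The main obstacle is completeness: every indecomposable locally nilpotent representation must arise either as $I(S)$ or as $I(P,\lambda,r)$. For this I would invoke the functorial filtration method of Gelfand--Ponomarev \cite{GP}, as extended by Khoroshkin \cite{Kh} to this quiver and developed in full generality for special biserial algebras by Butler--Ringel and Crawley--Boevey (see \cite{Er} for an overview). One associates to each admissible reduced word $w$ in the alphabet $\{x, x^{-1}, y, y^{-1}\}$ a pair of subfunctors $F_w^+ \supseteq F_w^-$ of the identity functor on $A$-mod, built from iterated kernels and images of the compositions realizing $w$. The relations $xy = yx = 0$ together with local nilpotency guarantee that these filtrations stabilize in any given module. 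The central combinatorial lemma identifies the subquotients $F_w^+/F_w^-$ with direct sums of one-dimensional pieces indexed by occurrences of $w$ either as a substring of a string or as a segment of a cyclic traversal of a band; glueing these identifications across all $w$ yields an explicit direct sum decomposition whose indecomposable summands are precisely the modules in the list, proving the proposition.
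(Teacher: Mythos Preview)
The paper does not give its own proof of this proposition: it is quoted as a known result, attributed to Gelfand--Ponomarev \cite{GP} and Khoroshkin \cite{Kh}, with the remark that the quiver $\mathcal Q_n$ is special biserial and that the general theory (see also \cite{R}, \cite{Er}) applies. Your proposal is consistent with this --- you invoke exactly the functorial filtration method of \cite{GP, Kh} and the special biserial framework --- so in substance you are reproducing the argument the paper is citing rather than offering an alternative. Your sketch of indecomposability, distinctness, and completeness via the subfunctors $F_w^+\supseteq F_w^-$ is the standard outline and is correct in spirit; it is simply more detail than the paper itself chooses to include.
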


\subsection{Operations on strings and bands} \label{glue}
In this subsection we introduce three operations on the set of
$\mathfrak g$-modules which help us to describe the string and
band modules in an alternative way. Namely, we reduce the case of
a general indecomposable representation of ${\mathcal Q}_n$
to the case of a  string whose arrows have the same direction. We
follow the terminology and notation of \cite{GGP}.

{\it Gluing.} Let $A_1$ and $A_2$ be vector spaces,
$A_1'$ and $A_2'$ be isomorphic subspaces of $A_1$ and $A_2$,
respectively, and let $\sigma : A_1' \to A_2'$ be an isomorphism.
Set $D_{\sigma}:=\{(a, \sigma(a))\; | \; a\in A_1' \}$. Then the
{\it gluing of $A_1$ and $A_2$ relative to $\sigma$} is the
quotient space $A_1 \oplus A_2 /D_{\sigma}$. The notion of gluing
easily extends for ${\mathfrak g}$-modules. In the case of
$1$-dimensional spaces (respectively, simple {$\mathfrak
g$}-modules) $A_1'$ and $A_2'$,  the gluing $A_1 \oplus A_2
/D_{\sigma}$ does not depend on a choice of $\sigma$, because
$A_1 \oplus A_2 /D_{\sigma} \cong A_1 \oplus A_2 /D_{\lambda
\sigma}$ for any nonzero $\lambda \in {\mathbb C}$. In such cases
we will write $A_1 \oplus A_2 /D(A_1')$ (note that $D(A_1')$ is the
diagonal embedding of $A_1'\cong A_2'$ in $A_1 \oplus A_2$).

{\it Dual gluing.} In the dual setting we start with two pairs of
spaces $A_1'\subseteq A_1$ and $A_2'\subseteq A_2$ and an
isomorphism $\sigma: A_1/A_1' \to A_2/A_2'$. Then {\it the dual
gluing of $A_1$ and $A_2$ relative to $\sigma$} is the subspace
$\{(a_1, a_2)\in A_1 \oplus A_2 \; | \; \sigma(\bar{a}_1) =
\bar{a}_2 \}$ of $A_1 \oplus A_2$ (here $\bar{a}_i = a_i + A_i'$).
In the case of weight ${\mathfrak g}$-modules we will use the
following alternative form of the dual gluing of $A_1$ and $A_2$:
$\left( A_1^{\vee} \oplus A_2^{\vee} /D_{\sigma}\right)^{\vee}$.

{\it Polymerization.} Let $A$ be a vector space and $A_1 \neq A_2$
be two isomorphic subspaces of $A$. Fix an isomorphism $\sigma :
A_1 \to A_2$. The {\it polymerization of $p$ copies of $A$
relative to $\sigma$} is by definition the vector space
$A^{(p)}(\lambda, \sigma):=A^{\oplus p}/A_{\sigma}^{\lambda}$
where $A_{\sigma}^{\lambda}$ is the submodule of $A^{\oplus p}$
consisting of $(\sigma(a_1) - \lambda a_1, \sigma(a_2)
- \lambda a_2 - a_1,..., \sigma(a_p) - \lambda a_p - a_{p-1})$.
The polymerization is also well defined for a ${\mathfrak
g}$-module $A$ and  two isomorphic submodules $A_1$ and
$A_2$ of $A$.

Notice that for a graded string $S$ with a set of vertices $\{v_1,...,v_k\}$, $I(S)$ can be obtained by gluing $I(S_1)$ and $I(S_2)$ at a sink $v_i$ where $S_1$ and $S_2$ have sets of vertices $\{ v_1,...,v_i\}$ and  $\{ v_{i+1},...,v_n\}$, respectively. We similarly represent $I(S)$ by
dual gluing of  $I(S_1)$ and $I(S_2)$ at a source $v_i$. In both cases we will write $S= S_1S_2$. We also may represent a band representation $I(P, \lambda, r)$ as a polymerization of the unfolded graded string representation $I(S(P, v_i))$ for any sink $v_i$ of $P$.

\subsection{Explicit description of the indecomposables in  ${\mathcal C}^{\chi}_{\bar{\nu}} $}

As
discussed in the beginning of the previous section, we may restrict
our attention to the category ${\mathcal
C}^0_{\overline{\gamma(\mu})}$ for  $|\mu|=0$ and $\mu_i \notin
\mathbb Z$.

In order to describe explicitly all indecomposables in ${\mathcal
C}^{0}_{\overline{\gamma(\mu})}$ we use Theorem \ref{reg} and
combine it with results in  \S \ref{linalg}, and
\S \ref{glue}. We provide the description in three steps. With
small abuse of notation we will denote the indecomposable objects
(defined up to an isomorphism) of ${\mathcal
C}^{0}_{\overline{\gamma(\mu})}$ by $I(S)$ and $I(P, \lambda,
r)$ as well.

{\it Homogeneous strings.} Here we list all indecomposables $I(S)$
that correspond to {\it homogeneous} graded strings $S$, i.e. such
that all arrows have the same direction. There are exactly two
homogeneous graded strings with $m$ vertices whose leftmost vertex
is labelled by $s$, $1\leq s \leq n$: the string $X_m(s)$ where
all arrows go in the left-to-right direction; and the string
$Y_m(s)$ where all arrows go in the right-to-left direction. With the aid of Lemma \ref{lm31} and Corollary \ref{cor41} and using a case-by-case verification we easily find an explicit realization of $I(X_m(s))$ and $I(Y_m(s))$ as subquotients of $\Omega^{k}(\mu)^{(r)}$ for suitable $k$ and $r$. Alternatively, we may use quotients of  $\Omega^{k}(\mu)^{(r)}$  and  $\left( \Omega^{k}(\mu)^{(r)} \right)^{\vee}$. For example if $m$ is even and $s$ is odd, then $I(X_m(s))\cong \left(\Omega^{s}(\mu)^{\left(\frac{m}{2} + 1\right)}\right)^{\vee}/L_{s}$. The details are left to the reader.

\medskip

{\it Arbitrary strings.} In the case of an arbitrary graded string
$S$ we first represent $S$ as a product  of homogeneous strings and then apply gluing and dual gluing to find $I(S)$.
More explicitly, if $S = X_m(s)S'$ (or, respectively, $S=Y_m(s)S'$), where $S'$ is a graded
string with a right-to-left (respectively, left-to-right) leftmost
arrow, then
\begin{eqnarray*}
I(X_m(s)S') & = &  \left( I(X_m(s)) \oplus I(S')
\right)/D(L_{(\pi_1)^m(s)}),\\
I(Y_m(s)S') & = &  \left( \left(I(Y_m(S))^{\vee} \oplus
I(S')^{\vee}\right)/D(L_{(\pi_2)^m(s)}) \right)^{\vee},
\end{eqnarray*}
respectively.
\medskip

{\it Bands.} Following the description of all bands in \S
\ref{linalg} and using polymerization we can
easily present  a band representation $I(P, \lambda, r)$ as a polymerization of a string representation.
Namely, for any sink $v_i$ of $P$ and an isomorphism $\sigma_i : {\mathbb C v_i} \to  {\mathbb C \bar{v}_i} $, we have $I(P, \lambda, r) \cong I(S(P,
v_i))^{(r)}(\lambda, \sigma_i)$.

\subsection{Socle series of the indecomposables} Recall that the socle
filtration of a module $M$ is the increasing filtration
$0 = \mbox{soc}_0 M\subset \mbox{soc}_1 M \subset ...
\subset \mbox{soc}_s M =  M$ uniquely defined by the property that
$\mbox{soc}_i M / \mbox{soc}_{i-1}M$ is the maximal semisimple
submodule of $M/\mbox{soc}_{i-1}M$.
Here we list all
semisimple quotients $\mbox{soc}_i M / \mbox{soc}_{i-1}M$ in the
socle series $0 = \mbox{soc}_0 M\subset \mbox{soc}_1 M \subset ...
\subset \mbox{soc}_s M =  M$ of any indecomposable $M$ in
${\mathcal C}^{0}_{\overline{\gamma(\mu})}$ ($s$ is the Loewy
length of $M$).

For a graded string or a graded directed polygon  $Q$ denote by $\mbox{Sink}(Q)$ the set of
sinks of $Q$. Set $Q^{(1)}:=Q$ and for $i>1$ let $Q^{(i)}$ be the
quiver obtained from $Q^{(i-1)}$ by removing all sinks and all
arrows whose tails are sinks of $Q^{(i-1)}$.

\begin{proposition}
(i) Let $S$ be a graded string. Then for $i\geq 1$,
$$\mbox{\rm soc}_i I(S) / \mbox{\rm  soc}_{i-1}I(S) \cong \bigoplus_{v \in {\rm Sink}(S^{(i)})}
L_{l(v)}.$$

(ii) Let $P$ be a graded directed polygon. Then  for $i\geq 1$,
$$ \mbox{\rm soc}_i I(P, \lambda, r) / \mbox{\rm soc}_{i-1}I(P,
\lambda, r)  \cong  \bigoplus_{v \in {\rm Sink}(P^{(i)})} L_{l(v)}^{\oplus r}.$$
\end{proposition}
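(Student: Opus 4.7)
The plan is to use the equivalence of categories provided by Theorem~\ref{reg} and work inside the category of locally nilpotent representations of the quiver $\mathcal Q_n$. Under this equivalence the simple module $L_j$ corresponds to the one-dimensional representation concentrated at vertex $j$, and the socle of any representation $W$ is precisely the subspace $\ker x\cap\ker y$ of elements annihilated by both $x$ and $y$. Thus the proposition reduces to a combinatorial computation of the socle filtration of $I(S)$ and $I(P,\lambda,r)$ as representations of $\mathcal Q_n$.

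For part (i), my first step would be to verify that
$$\mbox{soc}_1\, I(S)=\bigoplus_{v\in \mbox{Sink}(S)}\mathbb C e_v.$$
At a sink $v$ both arrows incident to $v$ point inward, so $x(e_v)=y(e_v)=0$; conversely, at a non-sink $v$ the basis vector $e_v$ is sent to a distinct basis vector $e_w$ by either $x$ or $y$, and since distinct basis vectors of $I(S)$ map to linearly independent (or zero) basis vectors, no combination supported at non-sinks lies in $\ker x\cap\ker y$. The key inductive step is then to identify $I(S)/\mbox{soc}_1\, I(S)$ with $\bigoplus_{S'} I(S')$, where the $S'$ range over the connected components of the quiver $S^{(2)}$; this is immediate once one notes that passing to the quotient kills exactly the basis vectors at sinks together with the arrows ending at them. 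Iterating and using additivity of the socle functor under direct sums then gives $\mbox{soc}_i/\mbox{soc}_{i-1}\cong \bigoplus_{v\in \mbox{Sink}(S^{(i)})}L_{l(v)}$ by induction on $i$.

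For part (ii) the strategy is the same, but at each vertex $v$ of $P$ one carries an $r$-dimensional space $U_v$: at a sink $v$ the whole $U_v$ lies in the socle (both $x$ and $y$ vanish on it), contributing $L_{l(v)}^{\oplus r}$, whereas at a non-sink at least one of $x|_{U_v}, y|_{U_v}$ is an isomorphism by the band construction, so no further contribution arises. The only genuinely new point, and the main obstacle in the band case, is the description of the quotient: removing the sinks of $P$ together with the arrows ending at them breaks the polygon into a disjoint union of paths, and because every remaining structure map is an isomorphism, each resulting ``thick'' path representation is isomorphic to $r$ copies of the thin string module $I(S')$ on the corresponding component. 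Hence
$$I(P,\lambda,r)/\mbox{soc}_1\, I(P,\lambda,r)\;\cong\;\bigoplus_{S'} I(S')^{\oplus r}$$
with $S'$ ranging over the components of $P^{(2)}$. Applying part (i) componentwise, and using that the sinks of $P^{(i)}$ are exactly the union of the sinks of $(S')^{(i-1)}$, then yields the stated formula for each $\mbox{soc}_i/\mbox{soc}_{i-1}$.
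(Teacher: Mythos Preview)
Your argument is correct. The paper states this proposition without proof, so there is no approach to compare against; your reduction via Theorem~\ref{reg} to a direct computation of socles in the quiver category is exactly the natural way to fill the gap, and both the string and band cases are handled soundly. One small remark on part~(ii): you implicitly use that a graded \emph{directed} polygon has at least one sink (so that removing sinks really does break the cycle into strings); this is guaranteed by the paper's footnote characterising ``directed'' as ``not all arrows go the same way around,'' but it is worth making explicit.
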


\begin{example} The components of the socle series of the string module $I(S)$
corresponding to the string $S$ described in Example
\ref{exstring} are as follows:
\begin{eqnarray*}
\mbox{soc}_1 I(S) \cong L_1 \oplus L_2;& &\mbox{soc}_2 I(S)/
\mbox{soc}_1 I(S)  \cong L_1 \oplus L_2;\\
 \mbox{soc}_3 I(S)/
\mbox{soc}_2 I(S) \cong L_2;& & \mbox{soc}_4 I(S)/ \mbox{soc}_3
I(S) \cong L_1.
\end{eqnarray*}
\end{example}

\bibliography{ref,outref,mathsci}

\begin{thebibliography}{10}


\bibitem{BBL} G. Benkart, D. Britten, F. Lemire,
Modules with bounded weight multiplicities for simple Lie
algebras. {\it Math. Z.} {\bf 225} (1997), no. 2, 333--353.

\bibitem{BGS} A. Beilinson, V. Ginzburg, W. Soergel,
Koszul duality patterns in representation theory.
{\it J. Amer. Math. Soc.} {\bf 9} (1996), no. 2, 473--527.


\bibitem{BG} J. Bernstein, S. Gelfand,  Tensor products of finite
and infinite-dimensional representations
of semisimple Lie algebras, {\it Compositio Math.} {\bf 41}
(1980), 245--285.

\bibitem{BL1} D. Britten, F. Lemire, A classification of simple Lie modules
having a $1$-dimensional weight space. {\it Trans. Amer. Math.
Soc.} {\bf 299} (1987), 683--697.


\bibitem{BL2} D. Britten, F. Lemire, Tensor product realizations of simple
torsion free modules. {\it Canad. J. Math.} {\bf 53} (2001),
225--243.


\bibitem{BKLM} D. Britten, O. Khomenko, F. Lemire, V. Mazorchuk, Complete reducibility
 of torsion free $C_n$-modules of finite degree,
{\it J. Algebra}  {\bf 276}  (2004), 129--142.



\bibitem{Dix} J. Dixmier, Alg\`ebres enveloppantes, Gauthier-Villars, Paris, 1974.



\bibitem{Dr} Yu. Drozd, Representations of Lie algebras ${\germ s}{\germ l}(2)$.
 (Ukrainian)  {\it V\=\i snik Kiev. Un\=\i v. Ser. Mat. Mekh.}  {\bf 25}  (1983), 70--77.

\bibitem{Er} K. Erdmann, Blocks of Tame Representation Type and Related Algebras, {\it Lecture Notes in Math.} {\bf 1428}, Springer-Verlag, New York, 1990.

\bibitem{Fe} S. Fernando, Lie algebra modules with finite-dimensional weight
spaces I, {\it Trans. Amer. Math. Soc.}  {\bf 322} (1990),
757-781.

\bibitem{Fuk} D. B. Fuks, Cohomology of infinite-dimensional Lie algebras,
{\it Contemporary Soviet Mathematics}, Consultants Bureau, New
York, 1986.

\bibitem{Fu} V. Futorny, The weight representations of semisimple finite-dimensional Lie algebras, Ph. D. Thesis, Kiev University, 1987.


\bibitem{Gab1} P. Gabriel, Expos\'e au Se\'minaire Godement, Paris (1959--1960), unpublished.


\bibitem{Gab2}P. Gabriel, Auslander-Reiten sequences and
  representation-finite algebras. Representation theory, I
(Proc. Workshop, Carleton Univ., Ottawa, Ont., 1979), pp. 1--71,
{\it Lecture Notes in Math.} {\bf 831}, Springer, Berlin, 1980.

\bibitem{Gab3} P. Gabriel, Indecomposable representations II,
{\it Symposia Mathematica}, pp. 81--104, Vol XI,Academic Press, London (1973)

\bibitem{GGP} I. Gelfand, M. Graev, V.  Ponomarev, A classification of
the linear representations of the group ${\rm SL}(2,\,{\mathbb
C})$ (Russian), {\it Dokl. Akad. Nauk SSSR} {\bf 194} (1970)
1002--1005.

\bibitem{GM} S. Gelfand, Yu. Manin, Homological algebra,
  Springer-Verlag, Berlin, 1999.

\bibitem{GP} I. Gelfand, V.  Ponomarev, Indecomposable
representations of the Lorentz group, {\it Uspehi Mat. Nauk} {\bf
28}  (1968), no. 2, 1--60.

\bibitem{GS} D. Grantcharov, V. Serganova, Category of
$\mathfrak{sp}(2n)$-modules with bounded weight multiplicities,
{\it Mosc. Math. J.} {\bf 6} (2006), 119--134.

\bibitem{Kh} S. Khoroshkin, Indecomposable representations of Lorentz
groups, {\it Functional Anal. Appl.} {\bf 15} (1981), no. 2,
114--122.

\bibitem{M} O. Mathieu, Classification of irreducible weight modules, {\it
Ann. Inst. Fourier} {\bf 50} (2000), 537--592.

\bibitem{R} C. M. Ringel, The indecomposable representations of the dihedral $2$-groups, {\it Math. Ann.}
 {\bf 214} (1975), 19--34.

\bibitem{Sh} G. Shen, Graded modules of graded Lie algebras of Cartan
  type. I. Mixed products, {\it
Sci. Sinica, Ser. A} {\bf 29} (1986), 570--581.

\end{thebibliography}
\def\cprime{$'$} \def\cprime{$'$} \def\cprime{$'$} \def\cprime{$'$}
  \def\cprime{$'$}
\providecommand{\bysame}{\leavevmode\hbox
to3em{\hrulefill}\thinspace}
\providecommand{\MR}{\relax\ifhmode\unskip\space\fi MR }
\providecommand{\MRhref}[2]{%
  \href{http://www.ams.org/mathscinet-getitem?mr=#1}{#2}
} \providecommand{\href}[2]{#2}

\end{document}